\newcommand{\bD}{\mathbb{D}}
\DeclareMathOperator*{\wlim}{w\textsuperscript{*}-}
\newcommand{\Res}{\mathop{\mathrm{Res}}\nolimits}
\newcommand{\E}{\mathbb E}
\newcommand{\R}{\mathbb{R}}
\newcommand{\N}{\mathbb{N}}
\newcommand{\C}{\mathbb{C}}
\renewcommand{\Im}{\operatorname{Im}}
\newcommand{\supp}{\mathop{\mathrm{supp}}\nolimits}
\newcommand{\eps}{\varepsilon}
\newcommand{\toweak}{\overset{w}{\underset{n\to\infty}\longrightarrow}}
\newcommand{\toweaknon}{\overset{w}{\longrightarrow}}
\newcommand{\toprobab}{\overset{P}{\underset{n\to\infty}\longrightarrow}}
\newcommand{\ton}{\overset{}{\underset{n\to\infty}\longrightarrow}}
\newcommand{\ind}{\mathbbm{1}}
\newcommand{\dd}{{\rm d}}
\newcommand{\eee}{{\rm e}}
\newcommand{\ii}{{\rm i}}
\theoremstyle{plain}
\newtheorem{theorem}{Theorem}[section]
\theoremstyle{definition}
\newtheorem{example}[theorem]{Example}
\newtheorem{conjecture}[theorem]{Conjecture}
\theoremstyle{remark}
\newtheorem{remark}[theorem]{Remark}
\begin{document}

\author{Jeremy Hoskins and Zakhar Kabluchko}

\address{Jeremy Hoskins:
University of Chicago,
Department of Statistics.}
\email{jeremyhoskins@uchicago.edu}

\address{Zakhar Kabluchko: Institut f\"ur Mathematische Stochastik,
Westf\"alische Wilhelms-Universit\"at M\"unster,
Orl\'eans-Ring 10,
48149 M\"unster, Germany.}
\email{zakhar.kabluchko@uni-muenster.de}

\title[Zeroes under repeated differentiation]{Dynamics of zeroes under repeated differentiation}

\keywords{Random polynomials, zeroes, critical points, repeated differentiation, PDE's, Cauchy-Stieltjes transform, logarithmic potentials, Legendre-Fenchel transform, free probability, free binomial distribution}

\subjclass[2010]{Primary: 30C15; Secondary: 35A25, 60B10, 60B20, 60F10,	82C70, 35Q70, 46L54, 44A15, 31A99}

\begin{abstract}
Consider a random polynomial $P_n$ of degree $n$ whose roots are independent random variables sampled according to some probability distribution $\mu_0$ on the complex plane $\mathbb C$. It is natural to conjecture that, for a fixed $t\in [0,1)$ and as $n\to\infty$, the zeroes of the $[tn]$-th derivative of $P_n$ are distributed according to some measure $\mu_t$ on $\mathbb C$. Assuming either that $\mu_0$ is concentrated on the real line or that it is rotationally invariant, Steinerberger~[Proc.\ AMS, 2019] and O'Rourke and Steinerberger~[arXiv:1910.12161] derived nonlocal transport equations for the density of roots. We introduce a different method to treat such problems.  In the rotationally invariant case, we obtain a closed formula for $\psi(x,t)$, the asymptotic density of the radial parts of the roots of the $[tn]$-th derivative of $P_n$.  Although its derivation is non-rigorous, we provide numerical evidence for its correctness and prove that it solves the PDE of O'Rourke and Steinerberger. Moreover, we present several examples in which the solution is fully explicit (including the special case in which the initial condition $\psi(x,0)$ is an arbitrary convex combination of delta functions) and analyze some properties of the solutions such as the behavior of void annuli and circles of zeroes.
As an additional support for the correctness of the method, we show that a similar method, applied to the case when $\mu_0$ is concentrated on the real line, gives a correct result which is known to have an interpretation in terms of free probability.
\end{abstract}

\maketitle

\section{Introduction}

\subsection{Statement of the problem}
Take some probability distribution $\mu_0$ on the complex plane $\C$ and consider independent complex-valued random variables $Z_1,Z_2,\ldots$ distributed according to $\mu_0$.  Let $P_n$ be a monic random polynomial of degree $n$ whose zeroes are $Z_1,\ldots,Z_n$, that is
$$
P_n(z) := \prod_{k=1}^n (z-Z_k), \qquad z\in \C.
$$
The critical points of the  polynomial $P_n$ are defined as the zeroes of its derivative $P_n'$. It has been conjectured by Pemantle and Rivin~\cite{pemantle_rivin} and proved by one of the authors in~\cite{kabluchko15} that the critical points of $P_n$ have the same asymptotic distribution as the roots. More precisely, we have
$$
\frac{1}{n-1} \sum_{z\in \C: P_n'(z) =0} \delta_z \toprobab \mu_0
$$
in probability, where $\delta_z$ denotes the unit mass at $z\in \C$, and both sides are viewed as random elements with values in the space $\mathcal M(\C)$ of finite measures on $\C$ endowed with the topology of weak convergence. Although this is  not essential in most cases, let us agree that the zeroes are always counted with multiplicities.  For further results on critical points of random polynomials  we refer to~\cite{subramanian,subramanian_phd,orourke_matrices,hu_chang,byun,Tulasi16,tulasi_phd}. One particularly interesting phenomenon is the existence of  ``pairing'' between the roots and the critical points established in various forms in~\cite{hanin_riemann,hanin_gauss,hanin_poly,orourke_williams_pairing,orourke_williams_local_pairing,kabluchko_seidel,steinerberger}. For example, if $\mu_0$ has a continuous Lebesgue density, then it is known that for each zero $Z_k$ of $P_n$ with high probability there exists a zero of $P_n'$ having a distance of order $\text{1}/n$ to $Z_k$, whereas the distance between neighboring zeroes of $P_n$ is of much larger order $\text{1}/\sqrt n$; see, e.g.,~\cite[Theorem~2.1]{kabluchko_seidel}.  Thus, under a single differentiation, the zeroes of $P_n$ move  by distances of order $\text{1}/n$. For this reason, it is natural to conjecture that if the differentiation operation is repeated $[tn]$ times, where $t$ is viewed as the ``time'' parameter ranging between  $0$ and $1$, some  natural and non-trivial macroscopic dynamics of roots should emerge.

We are therefore interested in the asymptotic distribution, as $n\to\infty$, of the zeroes of the  $[tn]$-th derivative of $P_n$, denoted by $P_n^{([tn])}$. Here, $t\in [0,1)$ stays fixed, and $[x]$ denotes the integer part of $x$. This question has been raised and studied in the papers of Steinerberger~\cite{steinerberger_real}, O'Rourke and Steinerberger~\cite{orourke_steinerberger_nonlocal} and Feng and Yao~\cite{feng_yao}; see also~\cite{hoskins_steinerberger,steinerberger_conservation,steinerberger_free}. Assigning a weight $1/n$ to each zero of the $[tn]$-th derivative, one can construct a sub-probability measure on $\C$ denoted by
$$
\mu_t^{(n)} := \frac 1n \sum_{z\in \C: P_n^{([tn])}(z) = 0} \delta_z.
$$
Following~\cite{steinerberger_real} and~\cite{orourke_steinerberger_nonlocal}, it is then natural to conjecture that for every fixed $t\in [0,1)$, this measure converges to certain deterministic limit measure denoted by $\mu_t$, as $n\to\infty$. More precisely, we should have
\begin{equation}\label{eq:mu_t_n_conv_mu_t_conj}
\mu_t^{(n)} \toprobab \mu_t
\end{equation}
in probability, where both sides are viewed as random elements with values in $\mathcal M(\C)$.
Since the total number of roots of the $[tn]$-th derivative is $n- [tn]$, the total mass of $\mu_t$ should be $\mu_t(\C) = 1-t$. For instance, Figure~\ref{pic:three_circles_zeroes_plane} shows the roots of the repeated derivatives of a polynomial of degree $n=30000$ in the special case when $\mu_0$ is a convex combination of $3$ uniform distributions on concentric circles.

The main problem studied in the present paper is how to determine $\mu_t$ given the initial distribution $\mu_0$.
We shall be  interested in the following two special cases:
\begin{itemize}
\item (Rotationally invariant) \emph{complex zeroes:} The initial distribution $\mu_0$ is invariant under rotations of the complex plane around the origin.
\item \emph{Real zeroes:} $\mu_0$ is concentrated on the real line.
\end{itemize}
Let us discuss these cases in more detail.

\begin{figure}[!tbp]
\includegraphics[width=0.32\textwidth]{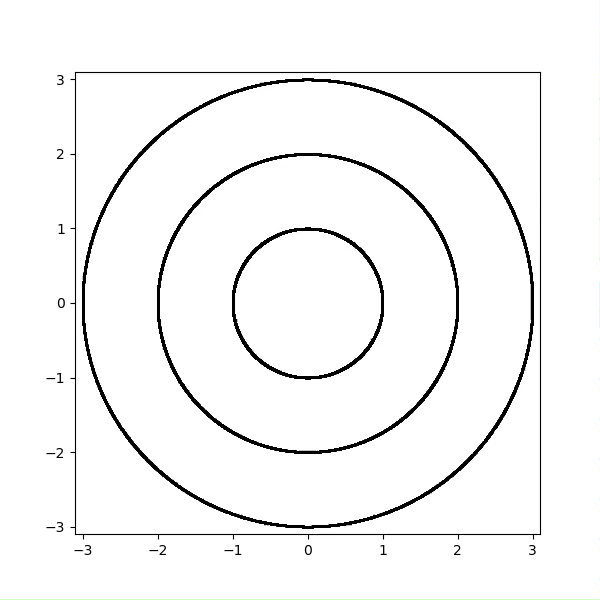}
\includegraphics[width=0.32\textwidth]{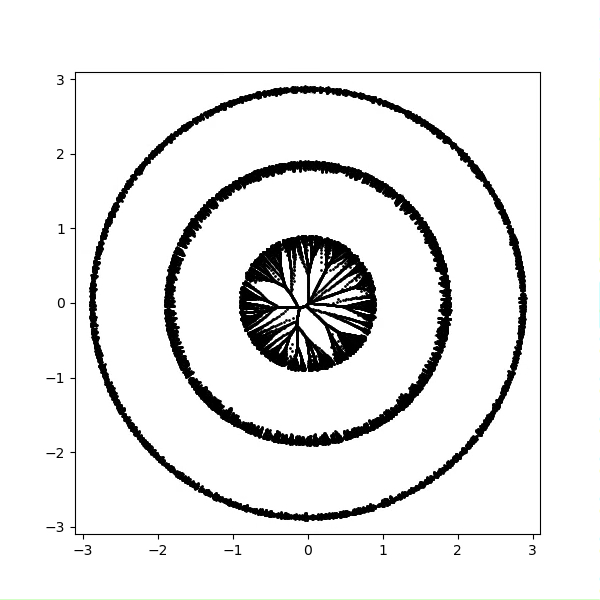}
\includegraphics[width=0.32\textwidth]{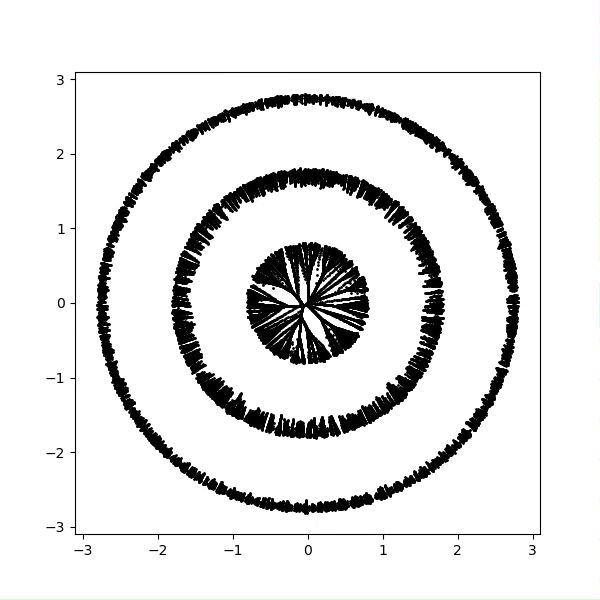}
\includegraphics[width=0.32\textwidth]{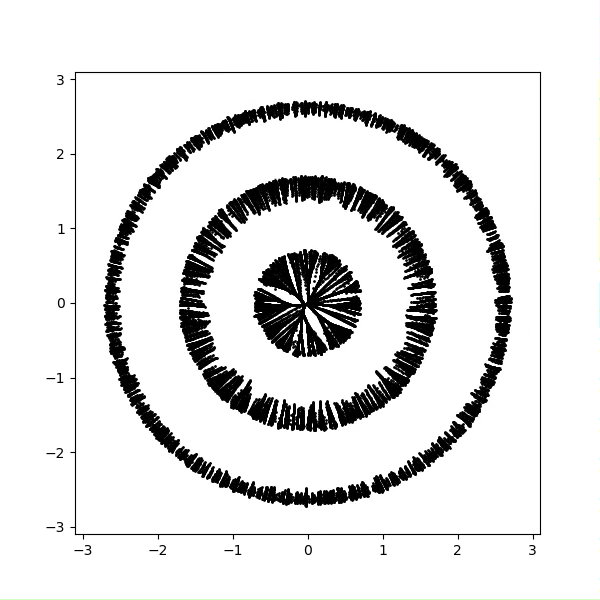}
\includegraphics[width=0.32\textwidth]{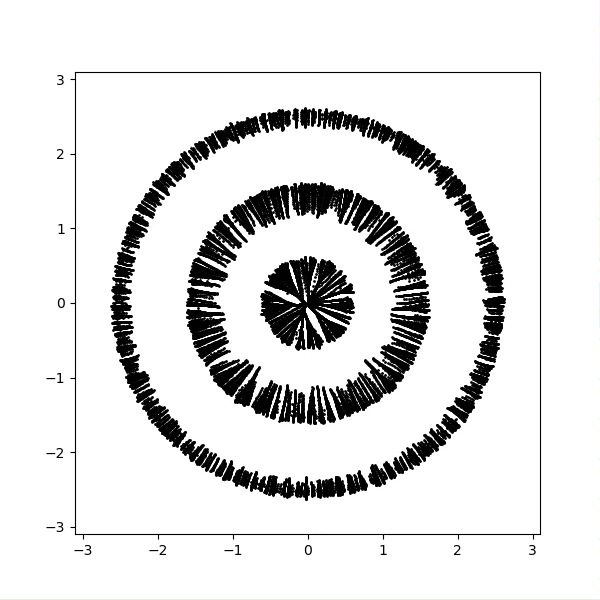}
\includegraphics[width=0.32\textwidth]{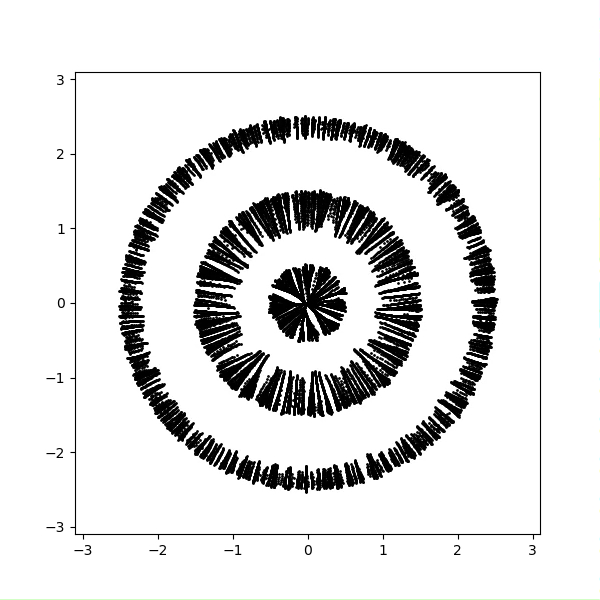}
\includegraphics[width=0.32\textwidth]{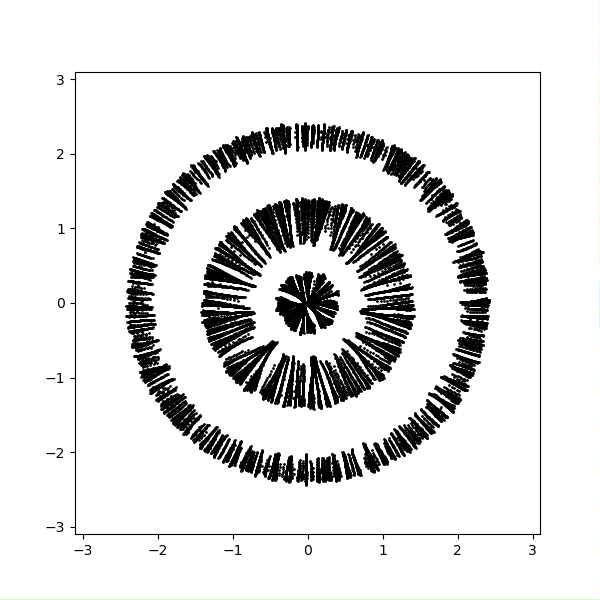}
\includegraphics[width=0.32\textwidth]{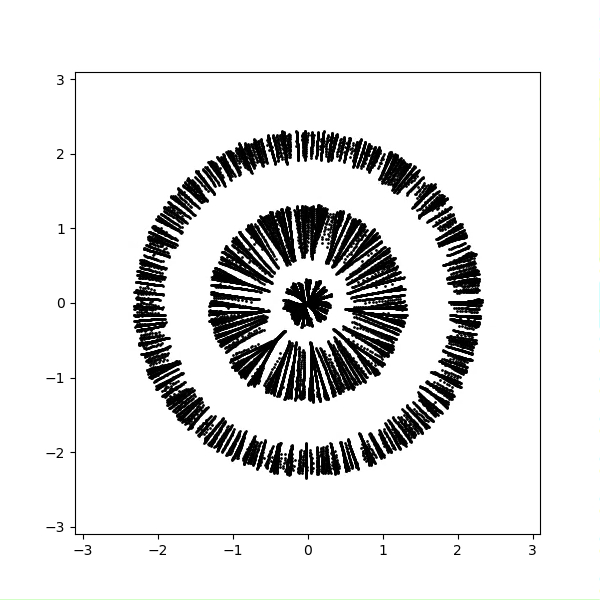}
\includegraphics[width=0.32\textwidth]{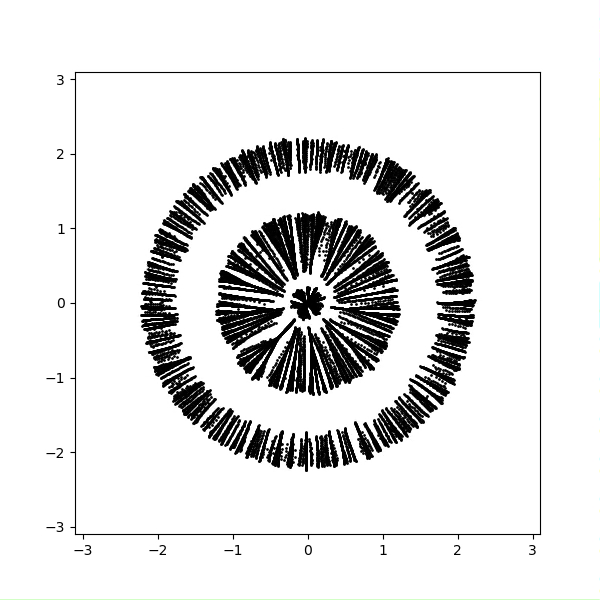}
\includegraphics[width=0.32\textwidth]{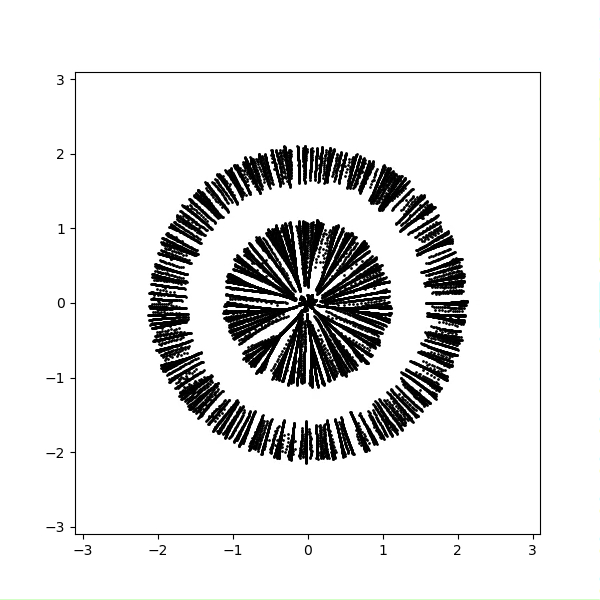}
\includegraphics[width=0.32\textwidth]{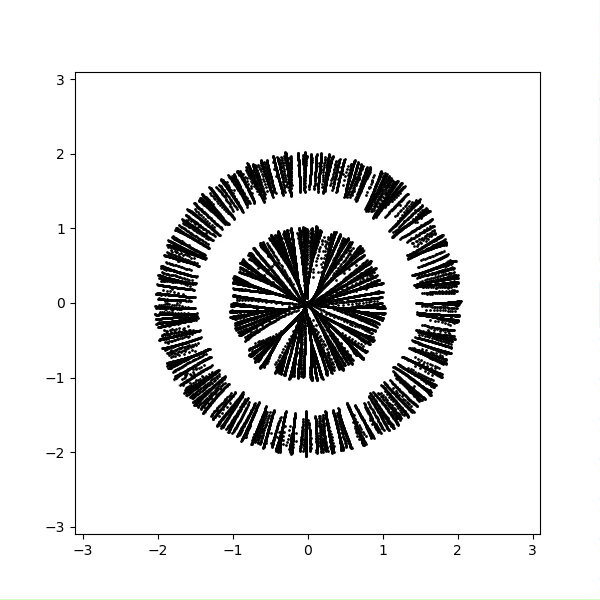}
\caption{
Roots of repeated derivatives for a random polynomial whose roots are independent and uniformly distributed on $3$ circles.  The number of roots at the beginning is $n=30000$.
The last snapshot shows the roots of the $10000$-th derivative, which corresponds to $t=1/3$.
}
\label{pic:three_circles_zeroes_plane}
\end{figure}

\subsection{Rotationally invariant complex zeroes}
It is natural to expect that the rotational invariance of $\mu_0$ is inherited by each $\mu_t$.
For this reason, it suffices to study the distribution of the absolute values (also called radial parts) of the zeroes.  We let $\psi(x,t)$ denote the limit density (if it exists) of the radial parts of the zeroes of the $[tn]$-th derivative at a point $x\geq 0$, that is
$$
\mu_t(\bD_r)  =  \int_0^r \psi(x,t) \dd x =: \Psi(r,t), \qquad t\in [0,1),
$$
where $\bD_r=\{z\in \C: |z|<r\}$ is an open disk of radius $r > 0$ centered at the origin.
The density of the measure $\mu_t$ with respect to the standard Lebesgue measure $\lambda$ on $\C$ is then given by
$$
u(z,t) := \frac{\dd \mu_t}{\dd \lambda} (z) =   \frac{\psi(|z|,t)}{2\pi |z|},
\qquad z\in\C.
$$
Given the initial density $\psi(x,0)$ of the radial parts of the zeroes at time $t=0$, we are interested in determining $\psi(x,t)$ for all $0\leq t<1$. A non-rigorous solution to this problem has been obtained by O'Rourke and Steinerberger~\cite{orourke_steinerberger_nonlocal} who derived the following PDE for the function $\psi(x,t)$:
\begin{equation}\label{eq:PDE}
\frac{\partial \psi (x,t)}{\partial t} = \frac{\partial}{\partial x}  \left( \frac{\psi(x,t)}{\frac 1x \int_0^x \psi(y,t) \dd y}\right),
\qquad
x\geq 0, \;\; t\in [0,1).
\end{equation}

In Section~\ref{sec:two_approaches} of the present paper, we shall use a completely different approach 
to derive an expression for $\psi(x,t)$ which is explicit up to evaluating certain inverse functions at two places. At a first glance, both approaches lead to very different results, but we shall show their equivalence by verifying that our explicit solution satisfies the PDE~\eqref{eq:PDE} of O'Rourke and Steinerberger. This will be done in the same Section~\ref{sec:two_approaches}.   In Section~\ref{sec:solvable_cases} we shall present a number of special cases in which the solution can be written down in a fully explicit form. For example, we shall analyze the case in which $\psi(x, 0)$ is  an arbitrary convex combination of delta functions.  Although we do not have a rigorous proof that our $\psi(x,t)$ indeed describes the asymptotic distribution of zeroes (in the sense that~\eqref{eq:mu_t_n_conv_mu_t_conj} holds), we shall present  strong numerical evidence for the validity of~\eqref{eq:mu_t_n_conv_mu_t_conj}. This will be done in the same Section~\ref{sec:solvable_cases}.

\subsection{Real zeroes}\label{subsec:real_zeroes}
Let now $\mu_0$ be concentrated on the real axis. This property is shared by all $\mu_t$'s since by Rolle's theorem all zeroes of any derivative of $P_n$ are real. Let $u(x,t)$ be the Lebesgue density of the measure $\mu_t$. Steinerberger~\cite{steinerberger_real} argued that $u(x,t)$  should satisfy the following PDE:
\begin{equation}\label{eq:PDE_real}
\frac{\partial u}{\partial t}  +  \frac 1 \pi \frac{\partial}{\partial x} \left(\arctan \left(\frac {Hu}{u}\right)\right) = 0,
\end{equation}
where
$$
H u(x,t) = \frac 1\pi \, \text{p.v.} \int_\R \frac{u(y,t)}{x-y} \dd y
$$
is the Hilbert transform of $y\mapsto u(y,t)$ and  the integral is taken in the sense of principal value.  A periodic version of this PDE describing the roots of trigonometric polynomials has been studied by Kiselev and Tan~\cite{kiselev_tan}. In~\cite{steinerberger_conservation}, Steinerberger has shown that $u(x,t)$ satisfies an infinite number of conservation laws.  Finally, in a recent paper~\cite{steinerberger_free}  he derived an interpretation of $u(x,t)$ as a free convolution power of the initial condition $u(x,0)$.
It has been pointed out by an anonymous referee that the most natural way to treat the real  case is to interpret it in terms of finite free probability, a subject developed in~\cite{marcus}, \cite{marcus_spielman_srivastava}, \cite{gorin_marcus}. We quote the following three observations from the referee's report: ``1) The finite free probability converges to the usual free probability in its limit; 2) Taking the derivative corresponds to a finite free projection of an $n\times n$-matrix into $n-1$ dimensions; 3) The operation of taking $tn$ derivatives of an $n$-dimensional polynomial and then taking the limit as $n$ goes to infinity should then converge to the the free multiplicative convolution of (a) an operator with spectral distribution matching the original root distribution, and (b) a projection having normalized trace $t$ (modulo an extra weight of $1-t$ placed at $0$)'' - end of quote.

In Section~\ref{sec:real_zeroes}, we shall present a different  approach to the real case which is similar in spirit to the one used in the case of complex zeroes. The idea is first to relate the zeroes of $P_n$ to the exponential rate of growth of the coefficients of $P_n$, which is a result of Van Assche, Fano and Ortolani~\cite{van_assche_fano_ortolani},  then to compute the impact of repeated differentiation on the coefficients, and finally to go back to zeroes.
In contrast to the case of the complex zeroes, this approach is rigorous. While the case of the real zeroes seems to be known in the free probability community (although we were not able to find an explicit statement in the literature), the approach we present in Section~\ref{sec:real_zeroes} may be of some interest because it provides an additional support for the conjectures in the complex case which do not admit a known interpretation in terms of free probability.


\subsection{Remark on continuous-time dynamics}
For simplicity of notation, we usually consider the $[tn]$-th derivatives of polynomials, although the results apply without changes to derivatives of any order $tn + o(n)$.  Let us note in passing that although the $tn$-th derivative is well-defined for $t\in \{0, \frac 1n, \frac 2n, \ldots, 1\}$ only, it is possible to embed this discrete-time dynamics into a continuous-time one as follows. Define the fractional derivatives of $P_n$  of any order $\alpha = tn  \in [0,n]$ by
\begin{equation}\label{eq:fract_der}
z^\alpha \cdot  P_n^{(\alpha)} (z) := \sum_{k=\lfloor \alpha\rfloor}^n  \frac{P^{(k)}(0)}{\Gamma(k-\alpha+1)}  z^{k}.
\end{equation}
Note that the right-hand side of~\eqref{eq:fract_der} is a polynomial of degree $n$ (which is the reason why we prefer to consider $z^\alpha \cdot  P_n^{(\alpha)} (z)$ rather than $P_n^{(\alpha)} (z)$). The right-hand side has $n$ zeroes for every, not necessarily integer, $\alpha \in [0,n]$. If $\alpha = m\in \{0,1,\ldots,n\}$ happens to be integer, these zeroes coincide with the $n-m$ zeroes of $P_n^{(m)} (z)$ together with a zero at the origin having multiplicity $m$. Since the coefficients of the right-hand side of~\eqref{eq:fract_der}  depend on $\alpha$ continuously, the same conclusion holds for its zeroes. As $\alpha$ approaches an integer value $m\in \{1,2,\ldots,n\}$ from the left, one of the zeroes converges to the origin and stays there for $\alpha>m$.

\section{Complex zeroes of repeated derivatives}\label{sec:two_approaches}
\subsection{The PDE approach}
The argument of O'Rourke and Steinerberger~\cite{orourke_steinerberger_nonlocal} used to derive~\eqref{eq:PDE} goes essentially as follows.
It is known from several papers, see, e.g.,~\cite[Eqn.~(2.8)]{kabluchko_seidel}, that near each zero $z=Z_k$ of the polynomial $P_n$ there is (with probability converging to $1$) a zero $\zeta$ of $P_n'$ located at $z  - \frac {1}{n G_0(z)} + o(\frac 1n)$, where
\begin{equation}\label{eq:cauchy_tranf_def}
G_0(z) = \int_\C  \frac {\mu_0 (\dd u)} {z-u}, \qquad z\in \C,
\end{equation}
is the Cauchy-Stieltjes transform of $\mu_0$. On a non-rigorous level, this formula can be easily guessed by noting that any critical point $\zeta$ is a  zero of the logarithmic derivative
$$
\frac{P_n'(z)}{P_n(z)} = \sum_{j=1}^n \frac {1}{z-Z_j}
$$
 and therefore the critical point $\zeta$ close to $z=Z_k$ satisfies
$$
\frac{1}{\zeta - z} = - \sum_{\substack{j\in \{1,\ldots,n\}\\j\neq k}} \frac 1 {\zeta - Z_j}
\sim - n \int_\C  \frac {\mu_0 (\dd u)} {\zeta-u}
\sim - n G_0(z)
$$
by the law of large numbers. This yields the claimed formula for $\zeta$.

Since passing from the original polynomial $P_n$ to its first derivative $P_n'$ corresponds to passing from $t=0$ to $t=1/n$,
this means that at time $t=0$ the complex root at $z\in \C$ moves at a speed given by $-1/G_0(z)$.  Assuming that this can be extended to the dynamics of the roots at any time $t$ (even though the roots of $P_n^{(tn)}$  are not stochastically independent anymore), one arrives at the conjecture that the speed of the root at position $z$ and time $t\in [0,1)$ is given by $-1/G_t(z)$, where $G_t$ is the Cauchy-Stieltjes transform  of $\mu_t$. Let us note in passing that, conjecturally, this conclusion applies to distributions of roots that are not  necessarily rotationally invariant. In the special case of rotationally invariant distributions, the Cauchy-Stieltjes transform can be computed explicitly, namely
$$
G_t (z)
:=
\int_\C  \frac {\mu_t (\dd u)} {z-u}
=
\frac {2\pi}{z} \int_0^{|z|} y u(y,t) \dd y
=
\frac {1}{z} \int_0^{|z|} \psi(y,t) \dd y;
$$
see, e.g., \cite[Proposition~3.1]{kabluchko_seidel}.
It follows that  under repeated differentiation the roots move in the radial direction towards the origin and the speed of the radial parts at $x>0$ is given by
$$
v(x,t) = -\left(\frac 1x \int_0^x \psi(y,t) \dd y\right)^{-1}.
$$
Thus, the density of the radial parts of the roots evolves according to the convection equation
$$
\frac{\partial \psi (x,t)}{\partial t} = - \frac{\partial}{\partial x}  \left( v(x,t) \psi(x,t) \right),
\qquad
x\geq 0, \;\; t\in [0,1),
$$
which is the PDE~\eqref{eq:PDE} derived by O'Rourke and Steinerberger~\cite{orourke_steinerberger_nonlocal}.
For polynomials whose roots are real, similar arguments~\cite{steinerberger_real} yield the non-local transport equation~\eqref{eq:PDE_real}; see also~\cite{steinerberger_conservation,steinerberger_free}.

\subsection{Approach based on polynomials with independent coefficients}\label{subsec:log_potential}
The basic idea of our approach is to pass from polynomials whose \textit{roots} are stochastically independent to polynomials whose \textit{coefficients} are stochastically independent. Although we are not able to justify this in a rigorous manner, it is natural to assume that both types of random polynomials behave in a similar way  and lead to the same dynamics of the roots under repeated differentiation in the large degree limit. We start by recalling the results on polynomials with independent coefficients from~\cite{kabluchko_zaporozhets12a}.

\subsubsection{Polynomials with independent coefficients}
Let $\xi_0,\xi_1,\ldots$ be independent and identically distributed (i.i.d.)\ non-degenerate random variables with values in $\C$ such that $\E \log (1+|\xi_0|) <\infty$. The exact form of the distribution is irrelevant for what follows. The reader may think of real or complex Gaussian variables, for example.  We are interested in the following random polynomials of the complex variable $z$:
\begin{equation}\label{eq:G_n_def}
G_n(z) = \sum_{k=0}^n \xi_k f_{k,n} z^k.
\end{equation}
The coefficients $f_{k,n}$, where $n\in\N$ and $k\in \{0,\ldots,n\}$, are required to be deterministic complex numbers satisfying the condition
\begin{equation}\label{eq:f_k_n_v}
\lim_{n\to\infty} \max_{k\in \{0,\ldots,n\}} \left|  \frac 1n \log |f_{k,n}| + v\left(\frac kn\right) \right|=0
\end{equation}
for some continuous function $v:[0,1]\to \R$. Equivalently, we have $f_{k,n} = \eee^{- n v(k/n) + o(n)}$ with an $o$-term that is uniform in $k$, which is why the function $-v$ may be called the \textit{exponential profile} of the coefficients. As we shall see in a moment, the exponential profile determines the distribution of the roots.

Under the above (or even weaker) assumptions, it is known~\cite[Theorem~2.8]{kabluchko_zaporozhets12a} that the empirical measure of zeroes of $G_n$ converges to a well-defined limit as $n\to\infty$, namely
\begin{equation}
\frac 1n \sum_{z\in \C: G_n(z) = 0} \delta_z \toprobab \mu_0,
\end{equation}
in probability on the space $\mathcal M(\C)$ of finite measures on $\C$ endowed with the topology of weak convergence. Here, $\mu_0$ is a rotationally invariant deterministic probability measure on $\C$ characterized by the formula
\begin{equation}\label{eq:mu_0_I'}
\mu_0(\bD_r) = I'_-(\log r), \qquad r>0,
\end{equation}
where $\bD_r=\{z\in \C: |z|<r\}$ is an open disc of radius $r>0$ centered at the origin, while $I'_-(s)$ denotes the left derivative of the convex function $I(s)$ defined as  the Legendre transform of $v(x)$, that is
\begin{equation}\label{eq:I_def}
I(s) = \sup_{x \in [0,1]} (sx - v(x)), \qquad s\in \R.
\end{equation}

Let the  function $v:[0,1]\to \R$ be convex (as is the case in all examples studied below). Then, $v$ has well-defined left and right derivatives $v_-'$ and $v_+'$. The support of $\mu_0$ is contained in the annulus $\{z\in \C: r_-\leq |z|\leq r_+\}$ whose inner and outer radii are $r_-=\eee^{v'_+(0)}$ and $r_+=\eee^{v_-'(1)}$, respectively (the former number may be $0$, while the latter one may be $+\infty$). If the function $v$ is differentiable on $(0,1)$ and  $v'$ is \emph{strictly} increasing (i.e.\ $v$ has no linearity intervals), then $I':(v_+'(0), v_-'(1))\to (0,1)$ is just the inverse function of $v':(0,1)\to (v_+'(0), v_-'(1))$ and vice versa, which is a well-known property of the Legendre transform. This remark will be frequently used to compute $v'$ or $I'$ in the explicit examples given below.
In the general case, two sorts of complications are possible.

\vspace*{2mm}
\noindent
\textit{Jumps of $v'$ correspond to void annuli}. It may happen that at some point $x\in (0,1)$ we have $v_-'(x) < v_+'(x)$. Such a jump of the derivative corresponds to an interval $(v_-'(x), v_+'(x))$ on which  $I'=x$ stays constant, which means that there is a void annulus in the support of $\mu_0$. The inner and outer radii of this void annulus are $\eee^{v_-'(x)}$ and $\eee^{v_+'(x)}$, respectively.

\vspace*{2mm}
\noindent
\textit{Constancy intervals of $v'$ correspond to circles of zeroes}.
If the function $v'$ takes a constant value $c$ on some interval $(x_0,x_1)$, then the function $I'$ has a jump at $c$, and the size of the jump is $x_1-x_0$. This means that the radial component of the measure $\mu_0$ has an atom of mass $x_1-x_0$ at the point $\eee^c$. That is,  there is a circle of zeroes of radius $\eee^c$ and total mass $x_1-x_0$.

\vspace*{2mm}
Generalizing the above considerations, we can characterize the left derivative $I_-':(v_+'(0), v_-'(1))\to (0,1)$ as the generalized left-continuous inverse of the function $v_-':(0,1)\to (v_+'(0), v_-'(1))$, namely
$$
I_-'(s) = \inf\{x \in  \R : v'_-(x) \geq  s\}.
$$

\subsubsection{Repeated derivatives}
We shall now describe how the exponential profile of a random polynomial changes under taking repeated derivatives. This has been done by Feng and Yao~\cite[Theorem~5]{feng_yao}. We provide the details of the argument since it will be needed in the following.
We take some $t\in [0,1)$ and look at the $[tn]$-th derivative of $G_n$ as defined in~\eqref{eq:G_n_def}:
\begin{align*}
G_n^{([tn])}(z)
&=
 \sum_{k=[nt]}^{n} \xi_k f_{k,n} k(k-1)\ldots (k-[nt]+1) z^{k-[nt]}\\
&=
\sum_{\ell=0}^{n-[nt]} \xi_{\ell + [nt]} f_{\ell + [nt],n} (\ell+[nt])(\ell + [nt] -1) \ldots (\ell + 1) z^{\ell}.
\end{align*}
This function has almost the same form as the original function $G_n$, but the coefficients $f_{k,n}$ should be replaced by the following new ones:
\begin{align*}
\tilde f_{\ell, n}
&:=
f_{\ell + [nt],n} (\ell+[nt])(\ell + [nt] -1) \ldots (\ell + 1)\\
&=
f_{\ell + [nt],n} \frac{\Gamma(\ell + [nt] + 1)}{\Gamma(\ell+1)},
\qquad
\ell\in \{0,\ldots, n-[nt]\}.
\end{align*}
Let us now put $\alpha:= \ell/n\in [0,1-t+o(1)]$ and compute the $v$-function of the new coefficients:
\begin{align*}
-\frac 1n \log \tilde f_{\ell, n}
&=
- \frac 1n \log  f_{\alpha n+ [t n], n} - \frac 1n \log \frac{\Gamma(\alpha n + [t n] + 1)}{\Gamma(\alpha n+1)}\\
&=
v(\alpha+t) - \left((\alpha+t) \log (\alpha+t)  - \alpha \log \alpha \right) + t - t\log n + o(1),
\end{align*}
as $n\to\infty$, where we used~\eqref{eq:f_k_n_v}, the continuity of $v$ and the asymptotics $\Gamma(x+1) = x\log x - x + o(x)$, as $x\to\infty$. The $o$-term is uniform in $\ell\in \{0,\ldots, [nt]\}$. The terms $t$ and $t\log n$ do not depend on $\alpha$ and can be eliminated by multiplying all coefficients $\tilde f_{\ell,n}$ by $\eee^{tn-tn\log n}$, which does not influence the distribution of  zeroes. Thus, we can drop these terms and arrive at the conclusion that the distribution of zeroes of the $[tn]$-th derivative of $G_n$ can be computed using the same recipe as for $G_n,$ the only difference being that the function $v(x)$ should be replaced  by the following one:
\begin{equation}\label{eq:v_x_t_def}
v(x,t) = v(x+t) -  (x+t) \log (x+t)  + x \log x, \qquad 0\leq x \leq 1-t, \;\; 0\leq t <1.
\end{equation}
Note that the function $v(\cdot, t)$ is defined on the interval $[0,1-t]$, which is slightly different from the setting introduced above but is covered by the more general assumptions of~\cite{kabluchko_zaporozhets12a} (one may naturally extend $v(\cdot, t)$ to the interval $[0,1]$ by putting $v(x,t) = +\infty$ for $x>t$). We define the Legendre transform of $x\mapsto v(x,t)$ as follows:
\begin{equation}\label{eq:I_s_t_def}
I(s,t) := \sup_{x\in [0,1-t]} (sx - v(x,t)), \qquad s\in \R.
\end{equation}
Then, applying~\cite[Theorem~2.8]{kabluchko_zaporozhets12a}  yields that the zeroes of the $[tn]$-th derivative of $G_n$ are distributed according to a certain rotationally invariant measure $\mu_t$ on $\C$, namely
\begin{equation}
\frac 1n \sum_{z\in \C: G_n^{([tn])}(z) = 0} \delta_z \toprobab \mu_t
\end{equation}
in probability on $\mathcal M(\C)$. Feng and Yao~\cite[Theorem~5]{feng_yao} proved this claim rigorously.
Moreover, $\mu_t$ has  total mass $1-t$ and is characterized by
\begin{equation}\label{eq:mu_t_formula}
\mu_t(\bD_r) = (\partial_1 I)(\log r, t), \qquad r>0,
\end{equation}
where $\partial_1 I(s,t) =: J(s,t)$ denotes the (left) partial derivative of $I(s,t)$ with respect to its first argument. Again, the function $\partial_1 I(\cdot, t)$ is the generalized left-continuous inverse of the function $\partial_1 v(\cdot, t)$ and vice versa.

\subsubsection{Polynomials with i.i.d.\ roots}
Consider now a polynomial $P_n(z) := \prod_{k=1}^n (z-Z_k)$ whose zeroes $Z_1,\ldots,Z_n$ are i.i.d.\ random variables distributed according to certain rotationally invariant probability measure $\mu_0$ on $\C$ having no atom at $0$. The above considerations suggest the following recipe for computing the limit density of zeroes of the $[tn]$-th derivative.   First, find a convex function $v:[0,1]\to\R$ such that~\eqref{eq:mu_0_I'} and~\eqref{eq:I_def} hold. To this end, put $I'_-(s) := \mu_0(\bD_{e^s})$, $s\in \R$, and define $v'_-$ by inverting this function. Consider random polynomials $G_n$ defined by~\eqref{eq:G_n_def} with $f_{k,n}:= \eee^{-n v(k/n)}$. Then, the roots of $P_n$ and $G_n$ have the same asymptotic distribution $\mu_0$ in the large degree limit. It is natural to conjecture that in the large degree limit, the the roots of both polynomials behave in the same way under repeated differentiation.
More precisely, we have the following
\begin{conjecture}
For every $t\in [0,1)$ we have
$$
\frac 1n \sum_{z\in \C: P_n^{([tn])}(z)=0} \delta_z \toprobab \mu_t
$$
in probability on the space $\mathcal M(\C)$, where $\mu_t$ is a deterministic rotationally invariant measure on $\C$ given by~\eqref{eq:v_x_t_def},  \eqref{eq:I_s_t_def}, \eqref{eq:mu_t_formula}.
\end{conjecture}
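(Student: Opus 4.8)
\emph{Proof strategy.} For $t=0$ the assertion is simply the strong law of large numbers for the empirical measure of the i.i.d.\ sample $Z_1,\dots,Z_n$, so the content lies entirely in the range $t\in(0,1)$, where $P_n^{([tn])}$ is no longer a product of linear factors. The plan is to reduce the statement to the already rigorous theorem for polynomials with independent coefficients (Feng and Yao~\cite[Theorem~5]{feng_yao}, built on~\cite[Theorem~2.8]{kabluchko_zaporozhets12a}) by proving that $P_n$ has asymptotically the \emph{same exponential profile of coefficients} as the comparison polynomial $G_n$ of~\eqref{eq:G_n_def} with $f_{k,n}=\eee^{-nv(k/n)}$, and then showing that this profile still pins down the limiting distribution of zeroes even though, unlike for $G_n$, the coefficients of $P_n$ and of its derivatives are \emph{correlated}. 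Since the empirical measure of zeroes is $\tfrac{1}{2\pi n}$ times the distributional Laplacian of $\tfrac1n\log|P_n^{([tn])}|$, it suffices to show that for Lebesgue-a.e.\ $z\in\C$ one has
$$
\frac1n\log\bigl|P_n^{([tn])}(z)\bigr|-c_n \;\toprobab\; \Phi(z,t):=\sup_{x\in[0,1-t]}\bigl(x\log|z|-v(x,t)\bigr)=I(\log|z|,t),
$$
for a suitable deterministic, $z$-independent normalization $c_n$ (which is annihilated by the Laplacian and hence irrelevant for the zero-counting measure), with $v(x,t)$ and $I(s,t)$ as in~\eqref{eq:v_x_t_def}--\eqref{eq:I_s_t_def}. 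One then checks, using the identity $\nu(\bD_r)=rU_\nu'(r)$ for the logarithmic potential $U_\nu$ of a rotationally invariant measure $\nu$, that $z\mapsto I(\log|z|,t)$ coincides up to an additive constant with the logarithmic potential of the rotationally invariant measure $\mu_t$ of~\eqref{eq:mu_t_formula}, so that the standard potential-theoretic argument of~\cite{kabluchko_zaporozhets12a} (convergence of logarithmic potentials implies weak convergence of the zero-counting measures) turns the convergence above into the desired $\mu_t^{(n)}\toprobab\mu_t$ on $\mathcal M(\C)$.

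\emph{Computing the profile.} Write $P_n^{([tn])}(z)=\sum_{\ell=0}^{n-[tn]}\tilde a_{\ell,n}z^\ell$ with $\tilde a_{\ell,n}=\tfrac{\Gamma(\ell+[tn]+1)}{\Gamma(\ell+1)}\,a_{\ell+[tn],n}$ and $a_{j,n}=[z^j]P_n=\pm\,e_{n-j}(Z_1,\dots,Z_n)$. The first step is to prove that $\tfrac1n\log|a_{j,n}|$ converges in probability, for $j/n\to\alpha$, to the exponential profile $-v(\alpha)$ attached to $\mu_0$ (with $v$ the convex function whose left derivative inverts $s\mapsto\mu_0(\bD_{\eee^s})$, normalized by $v(1)=0$). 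The upper bound is the Cauchy coefficient estimate $|a_{j,n}|\le r^{-j}\max_{|w|=r}|P_n(w)|$ combined with the classical convergence $\tfrac1n\log\max_{|w|=r}|P_n(w)|\toprobab U_{\mu_0}(r)$, optimized over $r>0$; the identity $rU_{\mu_0}'(r)=\mu_0(\bD_r)$ identifies the resulting infimum with $-v(\alpha)$ and also reproduces the void-annulus and circle-of-zeroes phenomena discussed after~\eqref{eq:I_def}. Feeding this into the Stirling computation already performed in~\eqref{eq:v_x_t_def} yields $\tfrac1n\log|\tilde a_{\ell,n}|-(t\log n-t)\toprobab -v(\alpha,t)$ for $\ell/n\to\alpha$; the triangle inequality then gives the upper bound $\tfrac1n\log|P_n^{([tn])}(z)|-c_n\le\Phi(z,t)+o(1)$ with $c_n=t\log n-t$, and reduces everything to the matching \emph{lower} bound.

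\textbf{The main obstacle.} Both lower bounds --- $\tfrac1n\log|a_{j,n}|\gtrsim-v(\alpha)$ and $\tfrac1n\log|P_n^{([tn])}(z)|-c_n\gtrsim\Phi(z,t)$ --- amount to excluding atypical exponential cancellation among many correlated terms, and this is where the real difficulty lies (and the reason the statement is currently only a conjecture). The mechanism that should make it work is rotational invariance: conditioning on the moduli $R_k=|Z_k|$, the arguments of the $Z_k$ are i.i.d.\ uniform, which forces the coefficients $\{a_{j,n}\}_j$ to be \emph{conditionally uncorrelated}, with $\E\bigl[\,|a_{j,n}|^2\mid R\,\bigr]=e_{n-j}(R_1^2,\dots,R_n^2)$; a large-deviations statement for elementary symmetric functions of i.i.d.\ variables identifies $\tfrac1n\log e_{n-j}(R_1^2,\dots,R_n^2)\to-2v(\alpha)$, so the conditional second moment of $P_n^{([tn])}(z)$ has exactly the right exponential size $\eee^{2n(\Phi(z,t)+c_n)+o(n)}$. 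To convert an $L^2$-lower bound into an almost-sure one, one needs a fourth-moment (Paley--Zygmund) estimate $\E\bigl[\,|P_n^{([tn])}(z)|^4\mid R\,\bigr]\le\eee^{o(n)}\bigl(\E\bigl[\,|P_n^{([tn])}(z)|^2\mid R\,\bigr]\bigr)^2$, which after expansion reduces to a combinatorial bound on $\E\bigl[a_{j_1}\overline{a_{j_2}}a_{j_3}\overline{a_{j_4}}\mid R\bigr]$ --- nonzero only when the four underlying index multisets pair off --- in terms of products of elementary symmetric functions of the $R_k^2$. Paley--Zygmund then gives the lower bound with probability bounded away from $0$; promoting this to convergence in probability can be done by testing $\log|P_n^{([tn])}|$ against a smooth compactly supported weight (a linear functional of the zero measure, which concentrates) rather than working pointwise, or by exploiting the near-independence of widely separated coefficients more directly. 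Two further technical points must be handled along the way: unbounded support of $\mu_0$, dealt with by truncating the $Z_k$ and invoking Gauss--Lucas to keep all zeroes of every derivative inside the convex hull of $\{Z_1,\dots,Z_n\}$; and the behaviour near $z=0$, where $\log|z|$ is singular and $P_n^{([tn])}$ may carry a macroscopic cluster of zeroes collapsing to the origin --- precisely the ``circle of zeroes of radius $0$''/atom-at-the-origin effect --- whose control requires the $\alpha=0$ endpoint of the coefficient estimate, i.e.\ a lower bound of the correct exponential order on $|a_{[tn],n}|$.
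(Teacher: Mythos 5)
There is no proof of this statement to compare against: in the paper it is stated (deliberately) as a \emph{conjecture}, supported only by the heuristic comparison with independent-coefficient polynomials via the profile $v(x,t)$ of \eqref{eq:v_x_t_def}--\eqref{eq:mu_t_formula}, by the rigorous result of \cite{feng_yao} for the polynomials $G_n$ of \eqref{eq:G_n_def}, by numerical evidence, and by the consistency check of Theorem~\ref{theo:PDE_solution}. Your proposal is essentially a more quantitative version of that same heuristic (transfer the problem to the exponential profile of the coefficients $a_{j,n}=\pm e_{n-j}(Z_1,\dots,Z_n)$, apply the Stirling computation behind \eqref{eq:v_x_t_def}, and conclude by potential theory as in \cite{kabluchko_zaporozhets12a}), and as you yourself admit, it is not a proof: the steps you leave open are exactly the ones that make the statement a conjecture.

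Concretely, two gaps are fatal as the argument stands. First, the lower bounds $\tfrac1n\log|a_{j,n}|\gtrsim -v(j/n)$ and, more importantly, $\tfrac1n\log|P_n^{([tn])}(z)|-c_n\gtrsim I(\log|z|,t)$ for a.e.\ fixed $z$ are anti-concentration statements for sums of exponentially many \emph{correlated} terms. Your conditional second-moment identity (given the moduli, the phases are i.i.d.\ uniform, so $\E[|e_{n-j}|^2\mid R]=e_{n-j}(R_1^2,\dots,R_n^2)$) is correct and gives the matching upper bound on the exponential scale, but the proposed Paley--Zygmund route requires the fourth-moment bound $\E[|P_n^{([tn])}(z)|^4\mid R]\le\eee^{o(n)}\bigl(\E[|P_n^{([tn])}(z)|^2\mid R]\bigr)^2$, which you do not prove; the combinatorics of quadruples of index sets with matching multisets is not obviously controlled by the square of the pair count, uniformly in the (random) radii. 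Second, even granting it, Paley--Zygmund yields the lower bound only with probability bounded away from zero, whereas the potential-theoretic machinery (convergence of $\tfrac1n\log|P_n^{([tn])}|$ in $L^1_{\mathrm{loc}}$, or pointwise a.e.\ in probability with uniform integrability) needs it with probability tending to one; your two suggested fixes (testing against smooth weights, ``near-independence of widely separated coefficients'') are not carried out and are not routine --- this upgrade is precisely the hard concentration step in all rigorous results of this type. The remaining issues you flag (unbounded support via truncation and Gauss--Lucas, the possible atom at the origin, i.e.\ the lower bound on $|a_{[tn],n}|$) are also left unresolved. So the proposal is a reasonable research programme, aligned with the paper's own heuristic, but it does not establish the statement, and the paper does not claim to either.
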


Numerical evidence for the validity of the conjecture will be provided below.  A simple way to express $\mu_t$ through $\mu_0$ will be stated in Section~\ref{subsec:Psi_t_Psi_0}.

\subsection{Comparison to the PDE approach}
Let $\psi(x,t)$ be the limit density of the absolute values of the zeroes of the $[tn]$-th derivative, for $t\in [0,1)$. Recall that the PDE derived by O'Rourke and Steinerberger~\cite{orourke_steinerberger_nonlocal} reads as follows:
\begin{equation}\label{eq:PDE_psi}
\frac{\partial \psi (x,t)}{\partial t} = \frac{\partial}{\partial x}  \left( \frac{\psi(x,t)}{\frac 1x \int_0^x \psi(y,t) \dd y}\right).
\end{equation}
Let us restate this PDE in terms of the corresponding distribution function
$$
\Psi(x,t) = \int_0^x \psi(y,t)\dd y.
$$
Then, we claim that $\Psi(x,t)$ satisfies the following PDE:
\begin{equation}\label{eq:PDE_Psi}
\frac{\partial \Psi (x,t)}{\partial t} = x  \, \frac{\frac{\partial}{\partial x} \Psi(x,t)}{\Psi(x,t)} - 1.
\end{equation}
Indeed,
\begin{align*}
\frac{\partial \Psi (x,t)}{\partial t}
&=
\frac{\partial}{\partial t}\int_0^x \psi(y,t)\dd y\\
&=
\int_0^x \frac{\partial}{\partial t} \psi(y,t)\dd y\\
&=
\int_0^x \frac{\partial}{\partial y}  \left( \frac{\psi(y,t)}{\frac 1y \int_0^y \psi(z,t) \dd z}\right) \dd y\\
&=
\int_0^x \frac{\partial}{\partial y}   \left( y  \, \frac{\frac{\partial}{\partial y} \Psi(y,t)}{\Psi(y,t)}\right)  \dd y\\
&=
x  \, \frac{\frac{\partial}{\partial x} \Psi(x,t)}{\Psi(x,t)}-1.
\end{align*}
Conversely, if $\Psi(x,t)$ solves~\eqref{eq:PDE_Psi}, then $\psi(x,t):= \frac{\partial}{\partial x} \Psi(x,t)$ solves~\eqref{eq:PDE_psi}, as one can easily check by taking the derivative in $x$ on both sides of~\eqref{eq:PDE_Psi}.

We are now ready to describe a method  assigning to each convex function $v(x)$ a certain solution of~\eqref{eq:PDE_Psi}. Given a function $f(x_1,x_2,\ldots)$, we denote by $\partial_i f$ its partial derivative with respect to the $i$-th argument $x_i$.

\begin{theorem}\label{theo:PDE_solution}
Let $v:[0,1] \to \R$ be a convex, two times differentiable function 
and define
\begin{equation}\label{eq:v_two_var}
v(x,t) := v(x+t) + x\log x - (x+t) \log (x+t),
\qquad
0\leq x \leq 1-t, \;\; 0 \leq t < 1.
\end{equation}
For every fixed $t\in (0,1)$ let $y\mapsto J(y,t)$ be the inverse function of the strictly monotone function $x\mapsto \partial_1 v(x,t)$, that is
\begin{equation}\label{eq:inverse_v_prime}
J(\partial_1 v(x,t),t) = x,
\end{equation}
where $y := \partial_1 v(x,t)$ takes values in the range $-\infty < y < v'_-(1)+\log (1-t)$.
Then, the following function solves~\eqref{eq:PDE_Psi}:
\begin{equation}\label{eq:Psi_J}
\Psi(r,t) := J(\log r, t), \qquad 0 < r < (1-t) \eee^{v'_-(1)}.
\end{equation}
\end{theorem}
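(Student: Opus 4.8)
The plan is to verify \eqref{eq:PDE_Psi} directly for the function $\Psi(r,t)=J(\log r,t)$, by differentiating the implicit relation \eqref{eq:inverse_v_prime} that defines $J$ and reducing the PDE to an elementary identity for the partial derivatives of the two-variable function $v(x,t)$ from \eqref{eq:v_two_var}. First, I would record the well-definedness of $J$. Differentiating \eqref{eq:v_two_var} in its first slot gives $\partial_1 v(x,t)=v'(x+t)+\log x-\log(x+t)$, hence $\partial_1^2 v(x,t)=v''(x+t)+\frac1x-\frac1{x+t}$. Since $v$ is convex we have $v''\ge 0$, and for $t\in(0,1)$ and $0<x<1-t$ the term $\frac1x-\frac1{x+t}$ is strictly positive; therefore $\partial_1^2 v(\cdot,t)>0$ on $(0,1-t)$, so $x\mapsto\partial_1 v(x,t)$ is a strictly increasing $C^1$ bijection onto $(-\infty,\,v'_-(1)+\log(1-t))$, and its inverse $J(\cdot,t)$ is a well-defined $C^1$ function on that interval. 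Under $r=\eee^s$ this is precisely the range $0<r<(1-t)\eee^{v'_-(1)}$ appearing in \eqref{eq:Psi_J}.

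Next, implicit differentiation. The relation \eqref{eq:inverse_v_prime} reads $\partial_1 v(J(s,t),t)=s$; differentiating it in $s$ and in $t$ yields, with $J=J(s,t)$,
\[
\partial_1^2 v(J,t)\,\partial_s J=1,
\qquad
\partial_1^2 v(J,t)\,\partial_t J+\partial_1\partial_2 v(J,t)=0 .
\]
On the other hand, $\Psi(r,t)=J(\log r,t)$ gives by the chain rule $\partial_t\Psi=\partial_t J$ and $r\,\partial_r\Psi=\partial_s J$, both evaluated at $s=\log r$. Plugging $\partial_s J=1/\partial_1^2 v(J,t)$ and $\partial_t J=-\partial_1\partial_2 v(J,t)/\partial_1^2 v(J,t)$ into \eqref{eq:PDE_Psi} and multiplying through by $\partial_1^2 v(J,t)>0$ shows that \eqref{eq:PDE_Psi} is equivalent to the pointwise identity
\[
\partial_1^2 v(x,t)-\partial_1\partial_2 v(x,t)=\frac1x ,
\qquad 0<x<1-t ,
\]
evaluated at $x=J$; and since $J$ sweeps out all of $(0,1-t)$ as $s$ varies over its range, it suffices to establish this identity for every $x\in(0,1-t)$.

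Finally, the identity is a one-line computation from \eqref{eq:v_two_var}. We already have $\partial_1^2 v(x,t)=v''(x+t)+\frac1x-\frac1{x+t}$; also $\partial_2 v(x,t)=v'(x+t)-\log(x+t)-1$, so that $\partial_1\partial_2 v(x,t)=v''(x+t)-\frac1{x+t}$ (and the opposite order of mixed differentiation gives the same result by direct computation, so twice-differentiability of $v$ suffices). Subtracting, the $v''(x+t)$ and $\frac1{x+t}$ terms cancel and what remains is exactly $\frac1x$, proving the identity; running the equivalences of the previous paragraph backwards shows that $\Psi(r,t)=J(\log r,t)$ solves \eqref{eq:PDE_Psi}. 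I do not expect a genuine obstacle here: the argument is just the inverse-function theorem plus the chain rule plus a cancellation. The only points needing care are the strict positivity of $\partial_1^2 v$ (which legitimizes both the inversion in the first step and the division in the second, and which explains the restriction $t\in(0,1)$, the term $\frac1x-\frac1{x+t}$ degenerating at $t=0$) and the bookkeeping of the three nested domains for $x$, for $y=\partial_1 v(x,t)$, and for $r=\eee^s$, which is what pins down the precise interval in \eqref{eq:Psi_J}.
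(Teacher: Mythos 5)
Your proof is correct and follows essentially the same route as the paper: establish strict convexity of $x\mapsto v(x,t)$ (via the $\frac1x-\frac1{x+t}$ term) to justify the inverse $J$, differentiate the relation $J(\partial_1 v(x,t),t)=x$ in both variables, and reduce the PDE to the cancellation identity $\partial_1^2 v(x,t)-\partial_1\partial_2 v(x,t)=\frac1x$, verified directly from \eqref{eq:v_two_var}. The only difference is cosmetic bookkeeping (you also spell out the domain correspondence under $r=\eee^s$), so there is nothing to add.
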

\begin{remark}
The requirement of differentiability of $v$ can be relaxed to piecewise differentiability provided one carefully treats the points $x$ where $v_-'(x) < v_+'(x)$; see Section~\ref{subsec:log_potential} for details. 
\end{remark}
\begin{proof}[Proof of Theorem~\ref{theo:PDE_solution}]
First of all, note that for every fixed $t\in (0,1)$, the function $v(x,t)$ is  \textit{strictly} convex in $x\in [0,1-t]$ because
$$
\frac{\partial^2}{\partial x^2} \left(x\log x - (x+t) \log (x+t)\right) =  \frac{1}{x} - \frac 1{x+t} > 0.
$$
Therefore, the function $x\mapsto \partial_1 v(x,t)$ is strictly monotone and the inverse function $J$ exists. To prove~\eqref{eq:PDE_Psi}, it suffices to check that
$$
(\partial_2 J) (\log x, t) = \frac{(\partial_1 J)(\log x,t)}{J(\log x,t)}-1.
$$
Writing  $y:=\log x$, it suffices to show that
$$
(\partial_2 J) (y, t)  - \frac{(\partial_1 J)(y,t)}{J(y,t)} = -1.
$$
Writing $y := \partial_1 v(z,t)$, our task reduces to showing that
\begin{equation}\label{eq:PDE_techn1}
(\partial_2 J) (\partial_1 v(z,t), t)  - \frac{(\partial_1 J)(\partial_1 v(z,t),t)}{J(\partial_1 v(z,t),t)} = -1.
\end{equation}
To verify this relation, we proceed as follows.
Applying $\frac{\partial}{\partial t}$ to both sides of~\eqref{eq:inverse_v_prime}, we obtain
\begin{equation}\label{eq:J_partial_1}
(\partial_1 J)(\partial_1 v(x,t),t)\cdot (\partial_1\partial_2 v)(x,t) + (\partial_2 J)(\partial_1 v(x,t),t) = 0.
\end{equation}
Similarly, applying $\frac{\partial}{\partial x}$ to~\eqref{eq:inverse_v_prime}, yields
\begin{equation}\label{eq:J_partial_2}
(\partial_1 J)(\partial_1 v(x,t),t) \cdot (\partial_1 \partial_1 v)(x,t) =  1.
\end{equation}
Using~\eqref{eq:J_partial_1}, \eqref{eq:inverse_v_prime} and finally~\eqref{eq:J_partial_2}, we can rewrite the left-hand side of~\eqref{eq:PDE_techn1} as follows:
\begin{align*}
(\partial_2 J) (\partial_1 v(z,t), t)  &- \frac{(\partial_1 J)(\partial_1 v(z,t),t)}{J(\partial_1 v(z,t),t)}\\
&=
-(\partial_1 J)(\partial_1 v(z,t),t)\cdot (\partial_1\partial_2 v)(z,t) -\frac{(\partial_1 J)(\partial_1 v(z,t),t)}{z}
\\
&=
- \frac{(\partial_1\partial_2 v)(z,t)}{(\partial_1 \partial_1 v)(z,t)} -\frac{1}{z (\partial_1 \partial_1 v)(z,t)}.
\end{align*}
Thus, to prove~\eqref{eq:PDE_techn1} it suffices to verify that
\begin{equation}\label{eq:suffices_to_verify}
(\partial_1\partial_2 v)(z,t) + \frac{1}{z} = (\partial_1 \partial_1 v)(z,t).
\end{equation}
Differentiating~\eqref{eq:v_two_var}, we have
\begin{align*}
(\partial_1\partial_2 v)(z,t)  &= v''(z+t) - \frac 1 {z+t}, \\
(\partial_1 \partial_1 v)(z,t) &= v''(z+t) + \frac 1 z - \frac 1 {z+t},
\end{align*}
and the claim~\eqref{eq:suffices_to_verify} follows.
\end{proof}

\subsection{Solution to the PDE}\label{subsec:Psi_t_Psi_0}
The simplest way in which $\Psi_t(r) := \Psi(r,t) = \mu_t(\bD_r)$ can be related to $\Psi_0(r):= \Psi(r,0)=\mu_0(\bD_r)$ is the following one:
\begin{equation}\label{eq:Psi_t_Psi_0}
\frac{\Psi_t^{-1}(x)}{x} = \frac{\Psi_0^{-1}(x+t)}{x+t}, \qquad 0< x < 1-t, \;\; 0 \leq t < 1,
\end{equation}
where $\Psi_t^{-1}(x)$ denotes the  inverse function of $\Psi_t$. Let us derive this relation assuming for simplicity that the initial density $\psi(x,0)$ is a measurable function strictly positive on the interval $[0,R]$ and vanishing outside it. This guarantees that the inverse functions are well defined.  Define a function $w_0(x) = \eee^{v'(x)}: [0,1]\to [0,R]$ by the relation $\Psi_0(w_0(x)) = x$.  By differentiating~\eqref{eq:v_x_t_def}, the function $w_t(x) := \eee^{\partial_1 v(x,t)}:[0,1-t]\to [0, (1-t) R]$ satisfies
$$
w_t(x) = w_0(x+t) \frac{x}{x+t}, \qquad 0 \leq t < 1,\;\;  0\leq x \leq 1-t.
$$
Moreover, \eqref{eq:mu_t_formula} yields
$$
\Psi_t (w_t(x)) = \Psi_t (\eee^{\partial_1 v(x,t)}) = \mu_t (\bD_{\eee^{\partial_1 v(x,t)}}) = \partial_1 I( \partial_1 v(x,t),t ) = x.
$$
Specializing this to $t=0$, we have $w_0(x) = \Psi_0^{-1}(x)$. For arbitrary $t\in [0,1)$ and $x\in [0,1-t]$, we obtain
$$
\Psi_t^{-1}(x) = w_t(x) = w_0(x+t) \frac{x}{x+t} = \Psi_0^{-1}(x+t)\frac{x}{x+t},
$$
which proves~\eqref{eq:Psi_t_Psi_0}.

\section{Explicitly solvable special cases for complex zeroes}\label{sec:solvable_cases}
\subsection{Uniform distribution on the circle: Kac polynomials}
Let the roots of the polynomial $P_n$ be i.i.d.\ random variables  with the uniform distribution on the unit circle $\{|z| = 1\}$. The radial parts of all zeroes are equal to $1$, hence
$$
\psi(x,0) = \delta(x-1), \qquad x\geq 0,
$$
is the Dirac delta-function with peak at $1$.  The corresponding distribution function of the radial parts is
$$
\Psi(x,0)
=
\begin{cases}
0, &\text{if } x\in [0,1],\\
1,  &\text{if } x>1.
\end{cases}
$$
An example of random polynomials whose zeroes are asymptotically uniformly distributed on the unit circle is given by the
Kac polynomials
\begin{equation}\label{eq:kac_poly_def}
K_n(z) = \sum_{k=0}^n \xi_k z^k,
\end{equation}
where $\xi_0,\xi_1,\ldots$ are i.i.d.\ random variables with $\E \log (1+|\xi_0|)<\infty$;  see, e.g., \cite{iz_log}.
The corresponding coefficients $f_{k,n}=1$, $k\in \{0,\ldots,n\}$,  satisfy~\eqref{eq:f_k_n_v} with
$$
v(x)
=
0, \qquad x\in [0,1].
$$
It would be possible to compute $v(x)$ directly using~\eqref{eq:mu_0_I'} and~\eqref{eq:I_def}, see Section~\ref{subsec:log_potential}, but we omit this since a more general computation will be done in Section~\ref{subsec:void_annuli}.
We are now going to use Theorem~\ref{theo:PDE_solution} to compute the corresponding solution of PDE~\eqref{eq:PDE}. For arbitrary $t\in [0,1)$ we have
$$
v(x,t)
=
-(x+t)\log (x+t) + x\log x, \qquad 0\leq x\leq 1-t.
$$
The derivative in $x$ is given by
$$
\partial_1 v(x,t) = \log \frac{x}{x+t}, \qquad 0 < x < 1-t.
$$
For $t\neq 0$ this function is monotone increasing and its range is the interval $(-\infty, \log (1-t))$. The inverse function $y\mapsto J(y,t)$ is therefore characterized  by
$$
\log \left(\frac{J(y,t)}{J(y,t) + t}\right) = y,\qquad y\in (-\infty, \log (1-t)).
$$
Solving this equation, we arrive at
$$
J(y,t) = \frac{\eee^y t}{1-\eee^y}, \qquad y\in (-\infty, \log (1-t)).
$$
It follows that
$$
\Psi(x,t) = J(\log x, t) = \frac{xt}{1-x}, \qquad 0 < x <  1-t, \;\; 0<t<1.
$$
Differentiating in $x$, we arrive at the following expression for the density of the radial parts at time $t$:
\begin{equation}\label{eq:kac_evolution}
\psi(x,t) = \frac{t}{(1-x)^2} \ind_{\{0 <  x < 1-t\}}, \qquad  x\geq 0, \;\; 0 < t < 1.
\end{equation}
We recall that our normalization is such that $\int_{0}^\infty \psi(x,t) \dd x = 1-t$. For the repeated derivatives of the Kac polynomials $K_n(z)$, Feng and Yao~\cite[Theorem~3, part~(2)]{feng_yao} proved rigorously that at time $t\in (0,1)$  the radial parts of the roots are distributed according to~\eqref{eq:kac_evolution}. To this end, they showed that the repeated derivatives of $K_n(z)$ satisfy the general conditions of Theorem~2.5 in~\cite{kabluchko_zaporozhets12a}.  A visualization of this setting is shown in the first row of Figure~\ref{pic:zeroes_kac_and_three_circles}. For polynomials with independent roots, the same claim remains a conjecture.
Dropping the i.i.d.\ assumption, one may conjecture that~\eqref{eq:kac_evolution} continues to hold if the empirical measure of roots of the initial polynomial converges weakly to the uniform distribution on the unit circle and some additional condition excluding the trivial counterexample $P_n(z) = z^n-1$ is satisfied.

\begin{figure}[!tbp]
\includegraphics[width=0.32\textwidth]{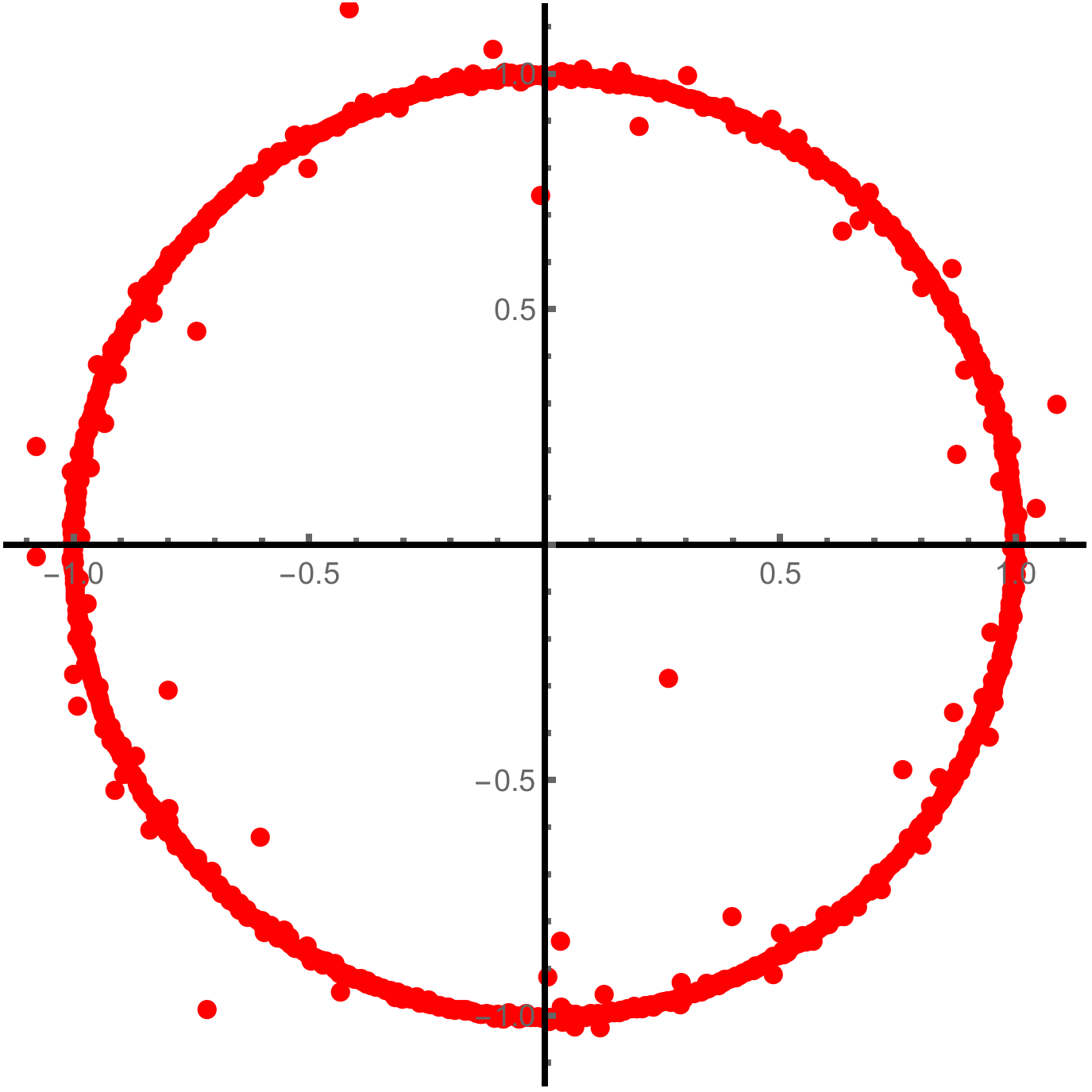}
\includegraphics[width=0.32\textwidth]{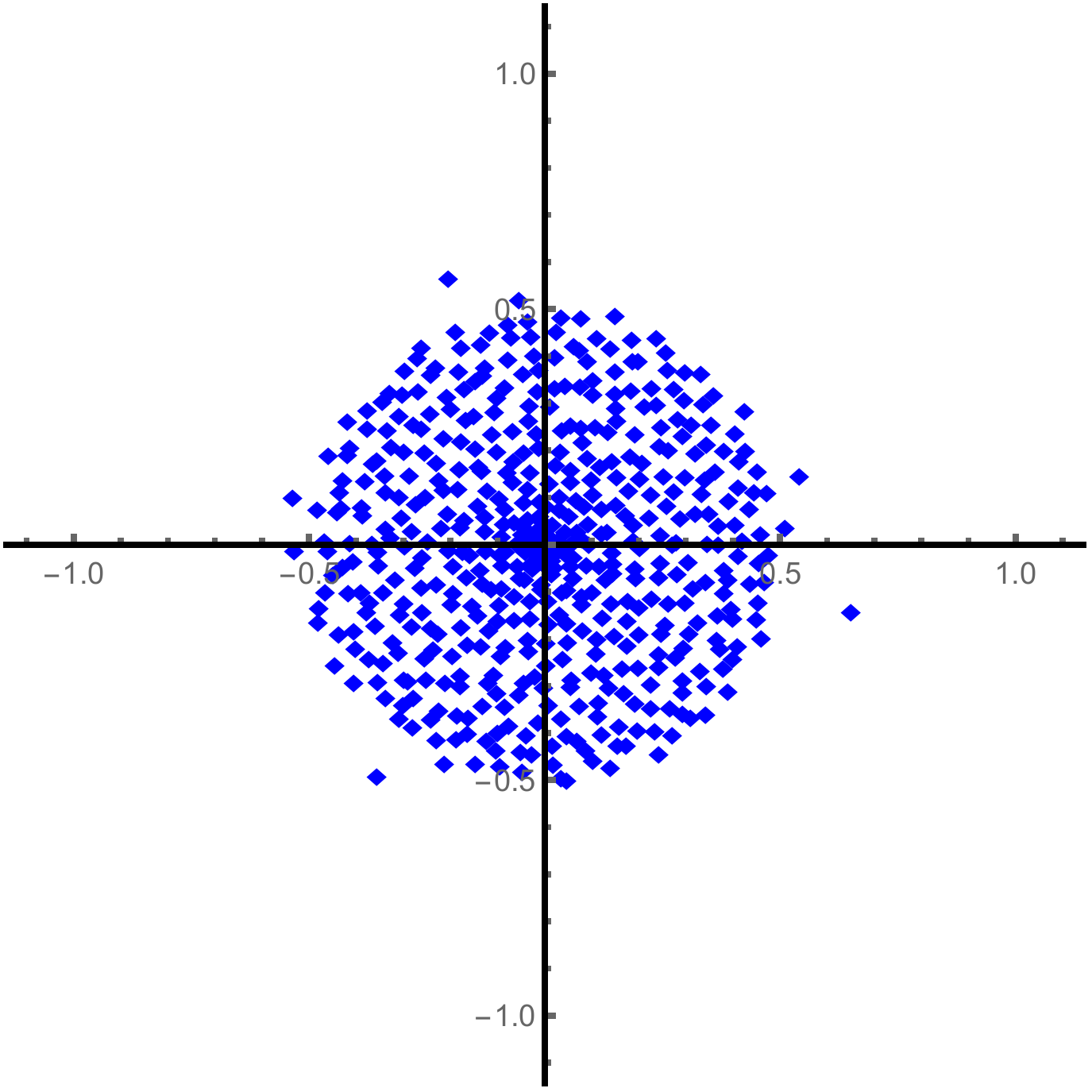}
\includegraphics[width=0.32\textwidth]{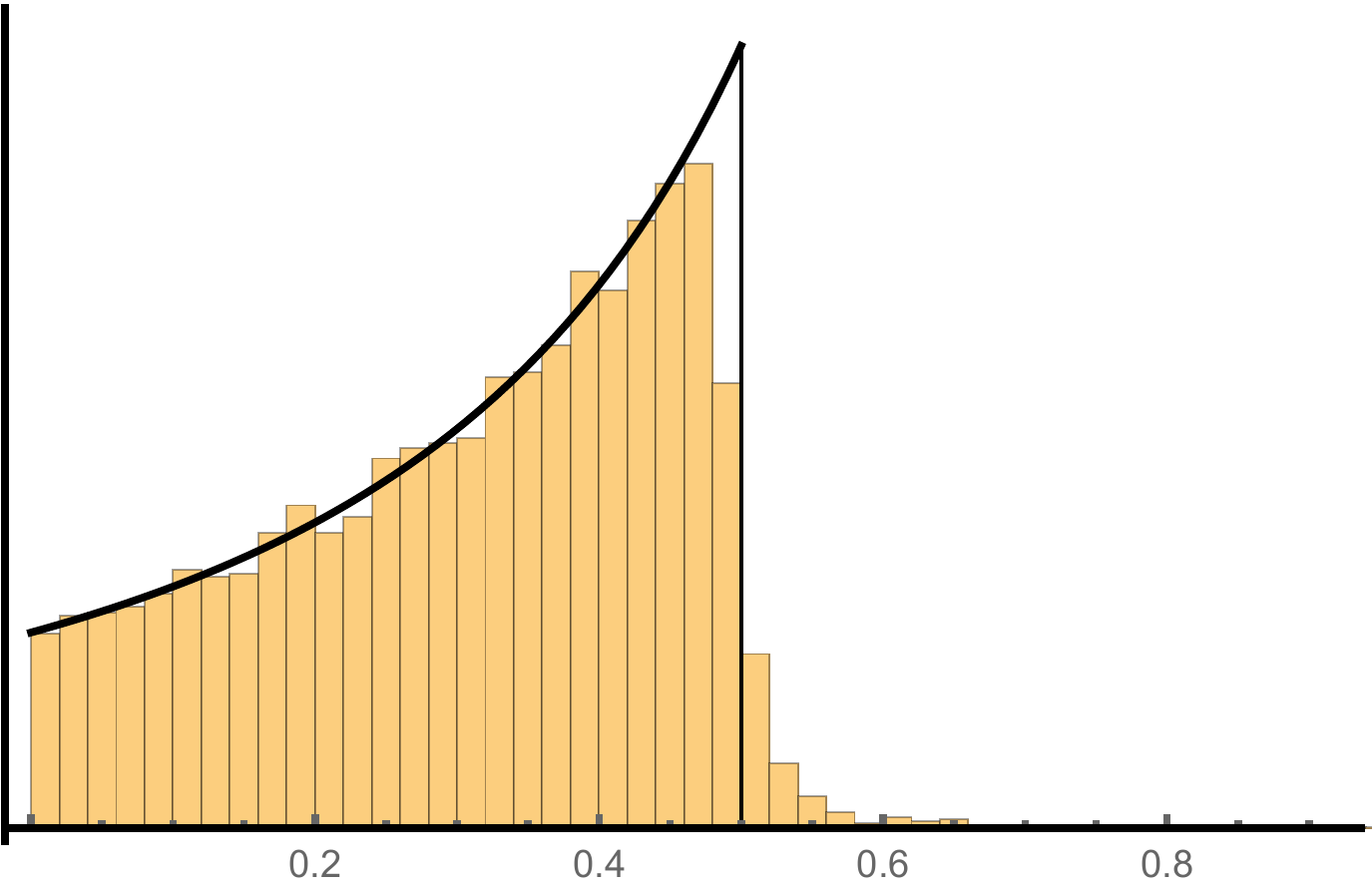}
\includegraphics[width=0.32\textwidth]{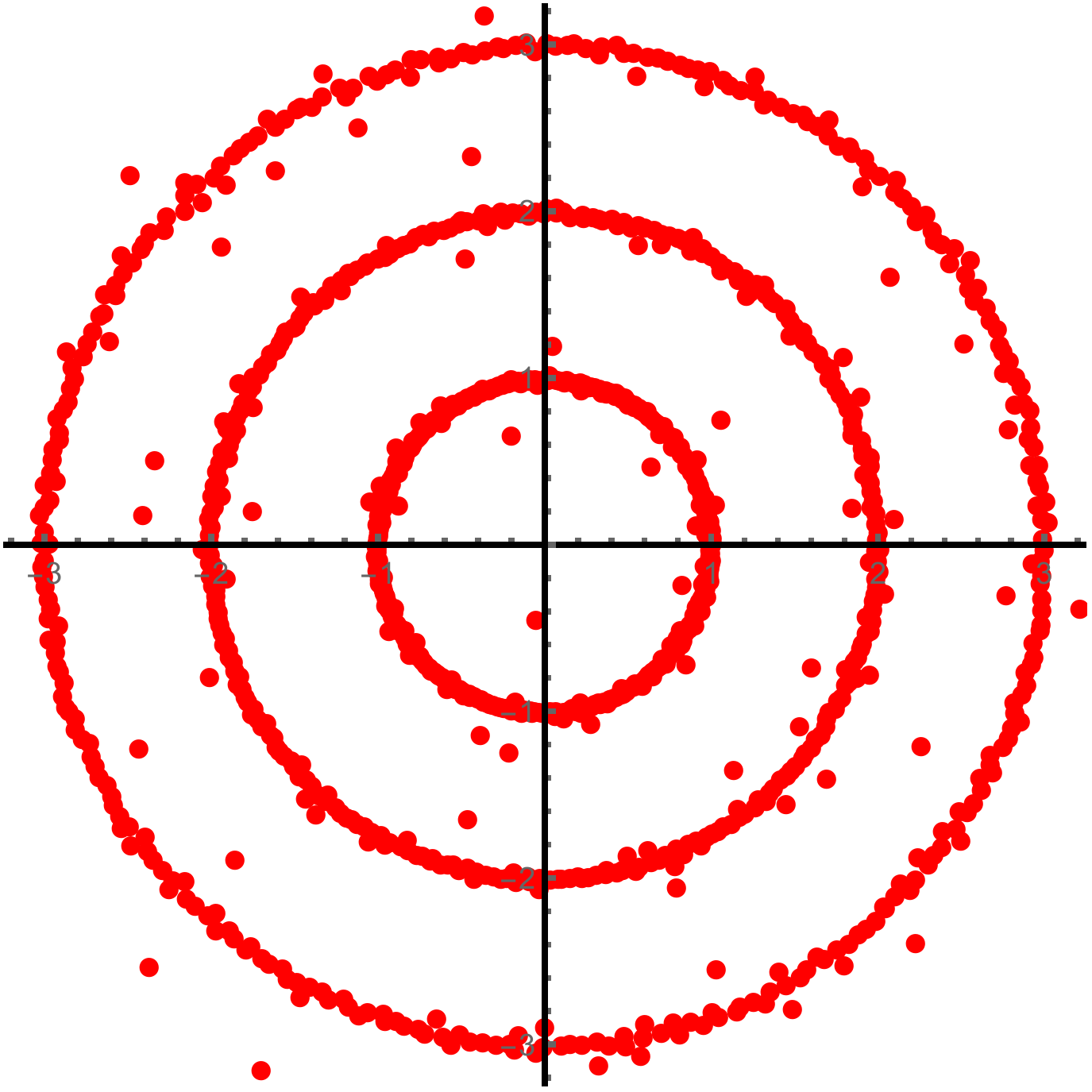}
\includegraphics[width=0.32\textwidth]{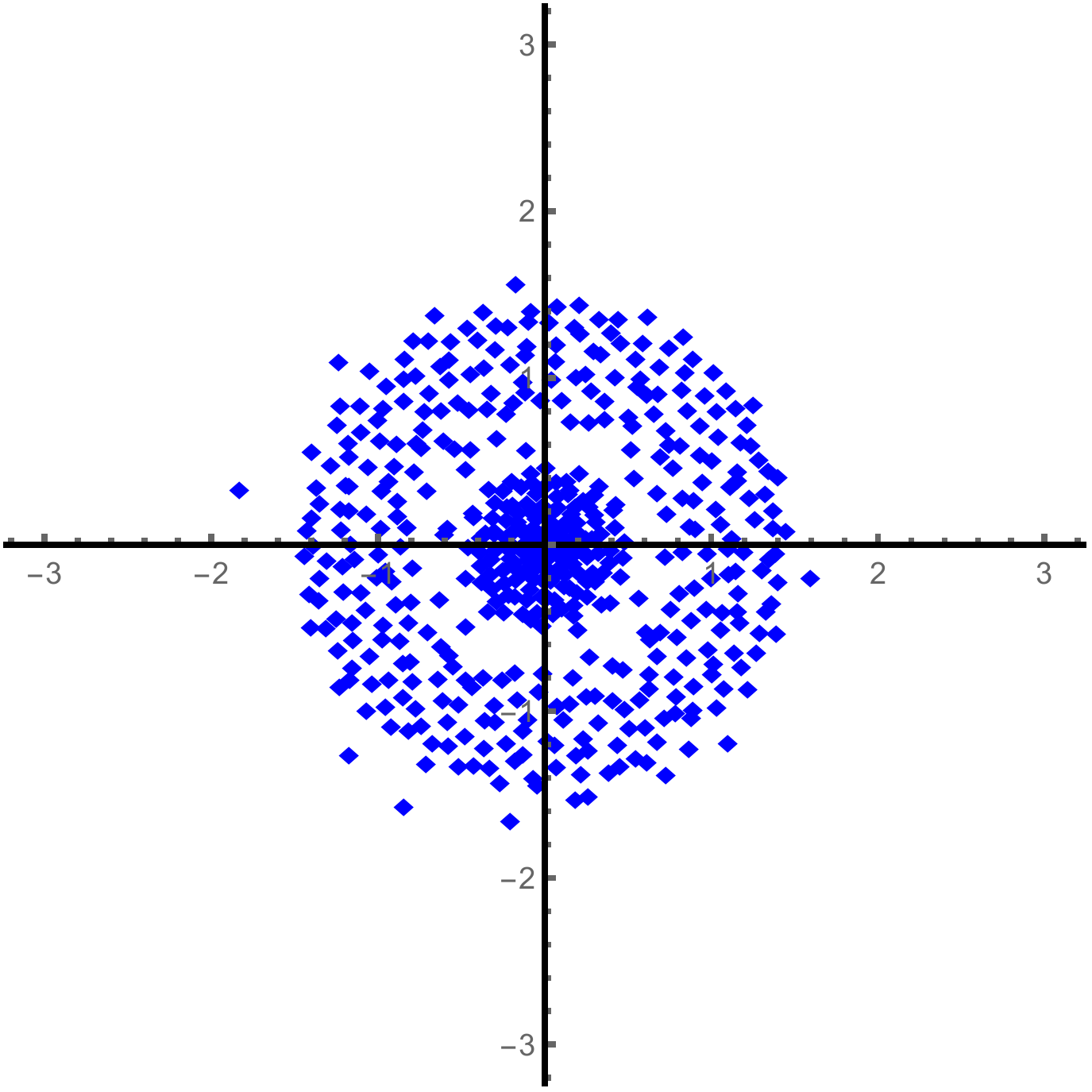}
\includegraphics[width=0.32\textwidth]{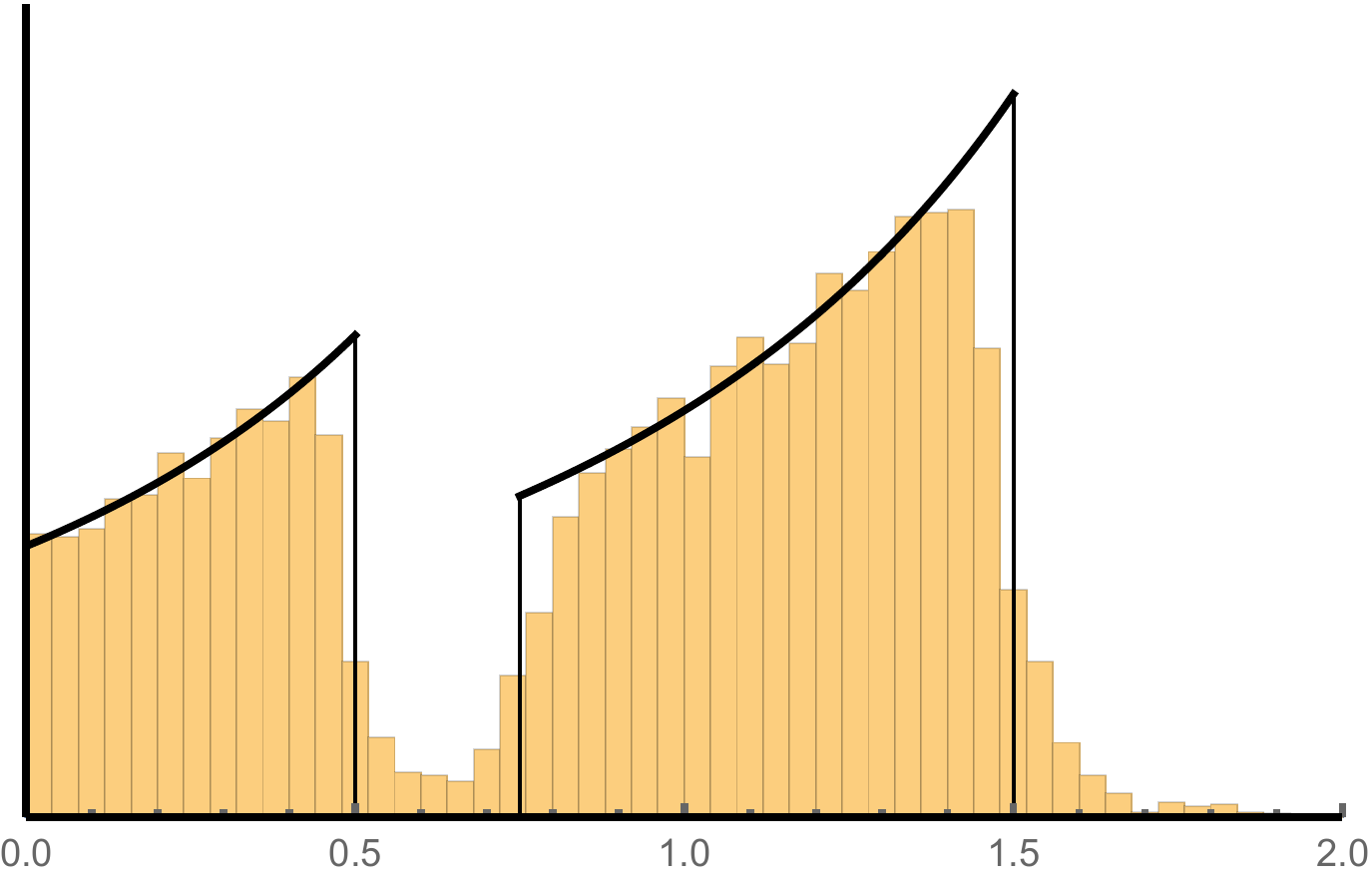}
\caption{
First row. Left: Zeroes of a Kac polynomial $K_n(z)$ of degree $n=1000$ as defined in~\eqref{eq:kac_poly_def}.  Middle: Zeroes of its $500$-th derivative. Right: Histogram together with the theoretical density~\eqref{eq:kac_evolution} derived in~\cite{feng_yao}. Second row: Same for polynomials with independent coefficients whose exponential profile is such that the zeroes are uniformly distributed on a union of three circles with radii $r_1=1,r_2=2, r_3=3$; see Section~\ref{subsec:void_annuli}.
Both histograms show roots of $20$ independent realizations.
}
\label{pic:zeroes_kac_and_three_circles}
\end{figure}

\subsection{Several circles of  zeroes}\label{subsec:void_annuli}
Let us now consider an example in which the initial condition consists of several circles of zeroes.  We shall describe the solution of PDE~\eqref{eq:PDE} with the initial condition of the form
\begin{equation}\label{eq:initial_dirac_deltas}
\psi(x,0) = \sum_{i=1}^k p_i \delta(x-r_i),
\end{equation}
where $\delta(\cdot)$ is the Dirac delta-function,  $k\in\N$, $0 < r_1 < r_2 < \ldots < r_k<\infty$ and  $p_1,\ldots,p_k\geq 0$ satisfy $p_1+\ldots+p_k = 1$.  The distribution of the zeroes of the corresponding polynomial $P_n$ is thus a mixture of $k$ uniform distributions on  circles with radii $r_1,\ldots,r_k$, with $p_1,\ldots, p_k$ being the weights of the circles. The corresponding distribution function is piecewise constant and given by
$$
\Psi(x,0) = p_1+\ldots + p_\ell  = P_\ell \qquad \text{ if } r_\ell <  x  \leq  r_{\ell+1}, \;\; \ell\in \{0,\ldots,k\},
$$
where we defined $r_0:= 0$, $r_{k+1} := +\infty$ and
$$
P_\ell := p_1+\ldots + p_\ell, \qquad \ell\in \{1,\ldots, k\}, \qquad P_0:=0, \;\;\; P_k := 1.
$$
Let us determine the corresponding function $v(x)$. Using the identity $I'_-(\log r) = \Psi(r,0)$ we can first determine the function $I'_-(s)$ as follows:
$$
I'_-(s) = P_\ell \qquad \text{ if } \log r_\ell <  s  \leq  \log r_{\ell+1}, \;\; \ell\in \{0,\ldots,k\}.
$$
The generalized inverse function of $I'_-(s)$ is the piecewise constant function $v'_-(x)$, defined for $x\in (0,1]$ and given by
$$
v'_-(x) = \log r_\ell \qquad \text{ if }   P_{\ell-1} <  x  \leq  P_\ell, \;\; \ell\in \{1,\ldots,k\}.
$$
Now, let us take some $0<t<1$. Then, by~\eqref{eq:v_x_t_def}, the function $\partial_1 v(x,t)$ is defined for $x\in (0,1-t]$ and is explicitly given by
$$
\partial_1 v(x,t) = \log \frac{r_\ell x}{x+t},  \quad \text{ if }  P_{\ell-1}-t <  x  \leq  P_\ell-t, \;\; \ell\in \{1,\ldots,k\}.
$$

Let now $t\in (P_{m-1}, P_m)$ for some $m\in \{1,\ldots,k\}$. Then,  $x\mapsto \partial_1 v(x,t)$ is a piecewise continuous function on the interval $(0,1-t]$ with jumps at the points $P_m - t, P_{m+1}-t,\ldots, P_{k-1}-t$. Let us consider the intervals on which this function is continuous separately.

Let first $x\in (0,P_m-t)$. This is the first continuity interval of $\partial_1 v(x,t)$, and the range of this function on this interval is $(-\infty, \log (r_m(P_m-t)/P_m))$. The inverse function $J(y,t)$ is given by
$$
J(y,t) =  \frac{t \eee^y}{r_m-\eee^y},
\qquad
\text{ if }
-\infty < y < \log \frac{r_m(P_m-t)}{P_m}.
$$
Let now $P_{\ell-1}-t <  x  <  P_\ell-t$ for some $\ell\in \{m+1,\ldots,k\}$. Then, the inverse function is given by
$$
J(y,t) =  \frac{t \eee^y}{r_\ell-\eee^y},
\qquad
\text{ if }
\log \frac{r_\ell(P_{\ell-1}-t)}{P_{\ell-1}} < y < \log \frac{r_\ell(P_\ell-t)}{P_\ell}.
$$
Summarizing, it follows that
$$
\Psi(x,t)
=
J(\log x,t)
=
\begin{cases}
\frac{t x}{r_m-x}, &\text{ if } 0 < x < \frac{r_m(P_m-t)}{P_m},\\
\frac{t x}{r_\ell-x}, &\text{ if } \frac{r_\ell(P_{\ell-1}-t)}{P_{\ell-1}} < x < \frac{r_\ell(P_\ell-t)}{P_\ell} \text{ for some } \ell\in \{m+1,\ldots,k\}.
\end{cases}
$$
Differentiating with respect to $x$, we obtain the following solution to the PDE~\eqref{eq:PDE} with the initial condition~\eqref{eq:initial_dirac_deltas}: If  $t\in (P_{m-1}, P_m)$ for some $m\in \{1,\ldots,k\}$, then
\begin{equation}\label{eq:solution_dirac_deltas}
\psi(x,t)
=
\begin{cases}
\frac{t r_m}{(r_m-x)^2}, &\text{ if } 0 < x < \frac{r_m(P_m-t)}{P_m},\\
\frac{t r_\ell}{(r_\ell-x)^2}, &\text{ if } \frac{r_\ell(P_{\ell-1}-t)}{P_{\ell-1}} < x < \frac{r_\ell(P_\ell-t)}{P_\ell} \text{ for some } \ell\in \{m+1,\ldots,k\}.
\end{cases}
\end{equation}

\begin{figure}[!tbp]
\includegraphics[width=0.32\textwidth]{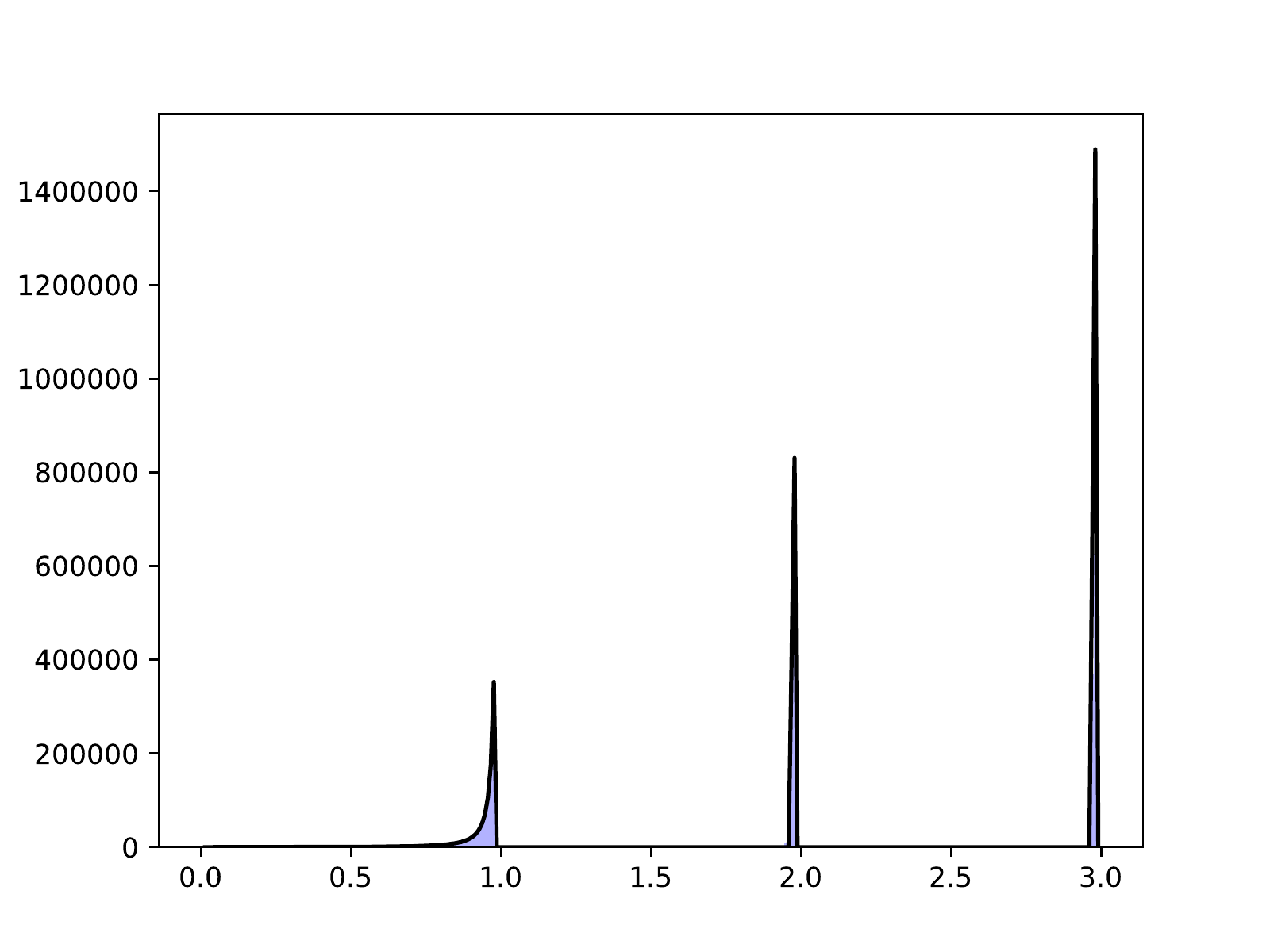}
\includegraphics[width=0.32\textwidth]{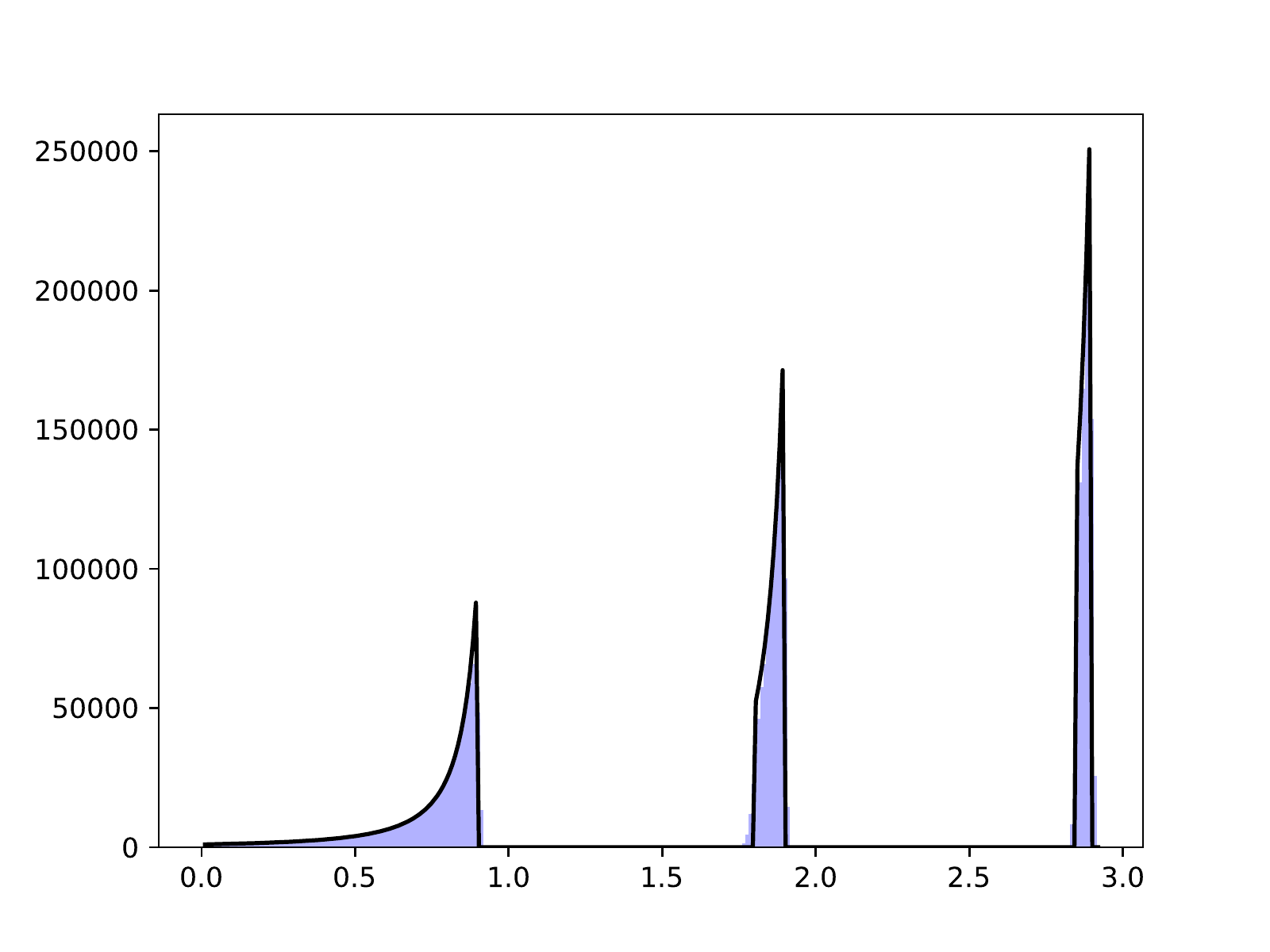}
\includegraphics[width=0.32\textwidth]{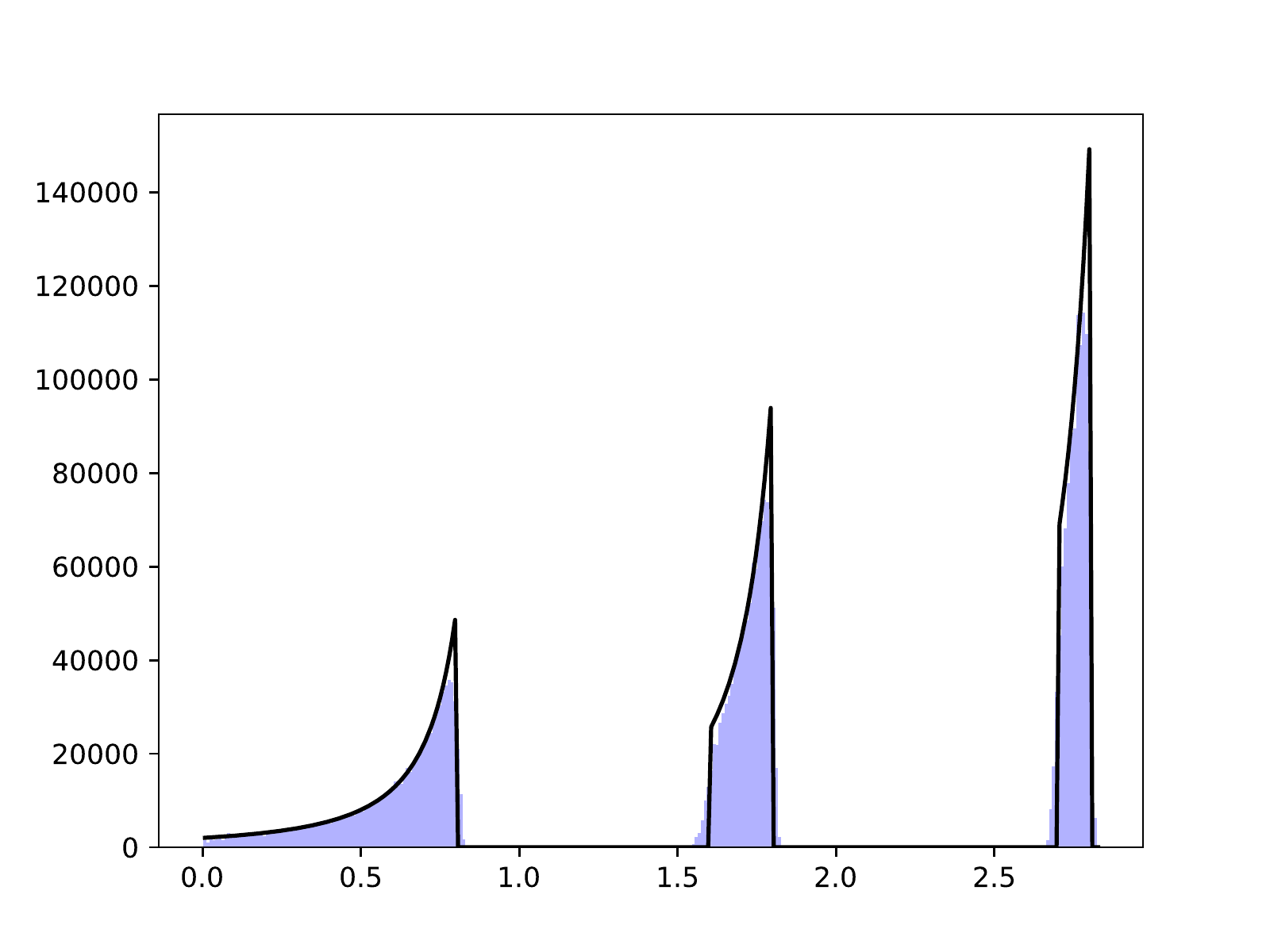}
\includegraphics[width=0.32\textwidth]{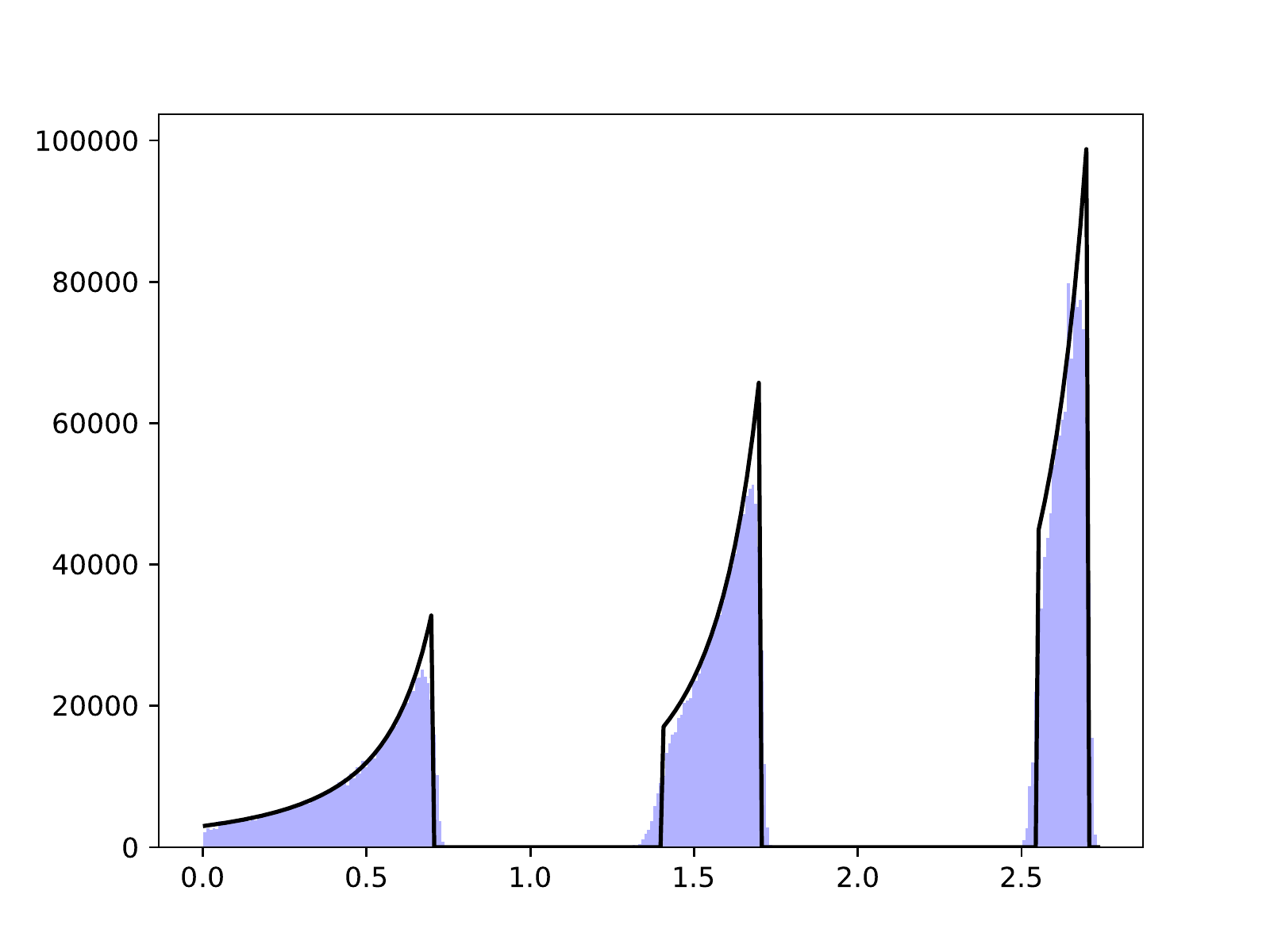}
\includegraphics[width=0.32\textwidth]{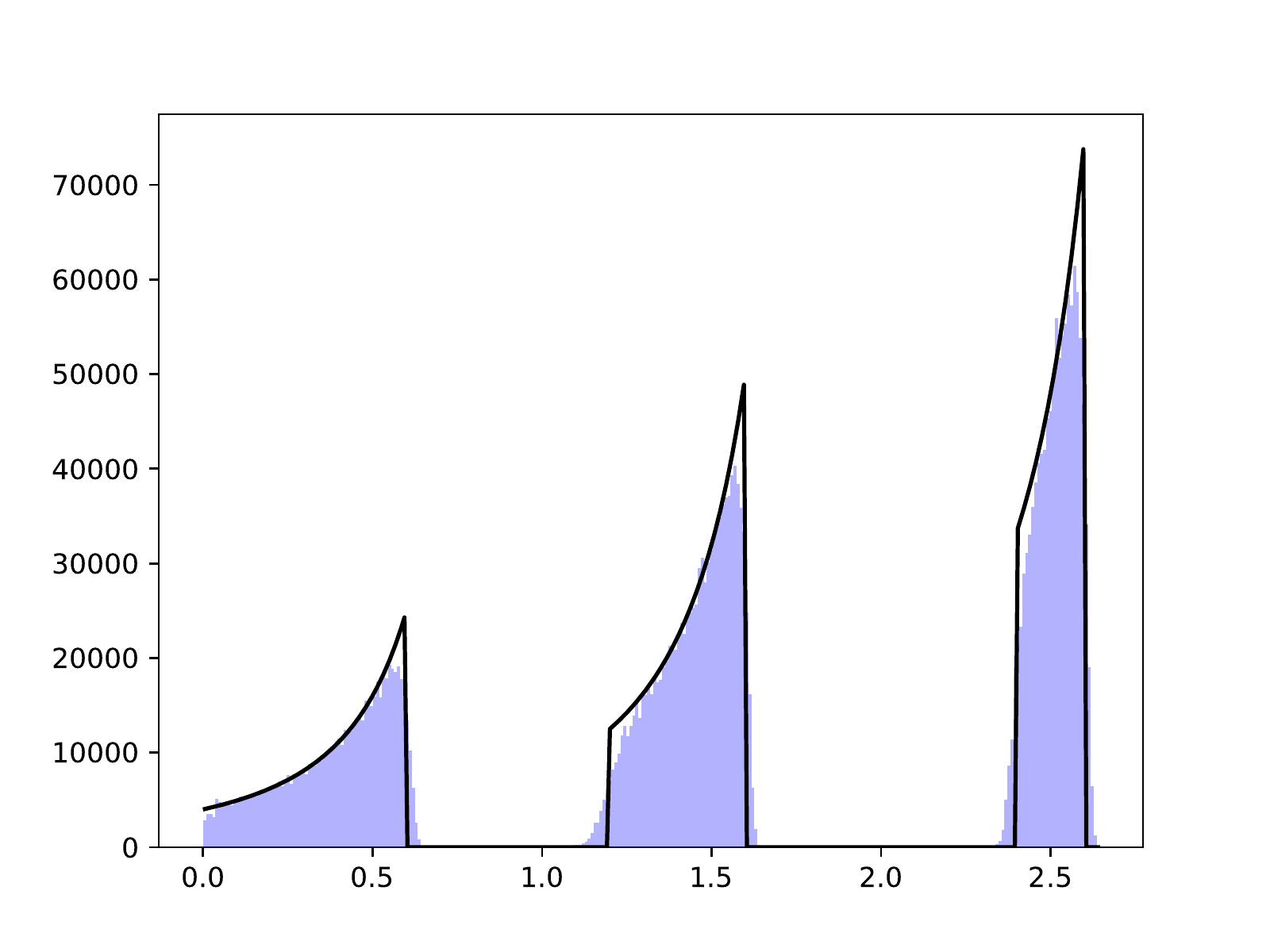}
\includegraphics[width=0.32\textwidth]{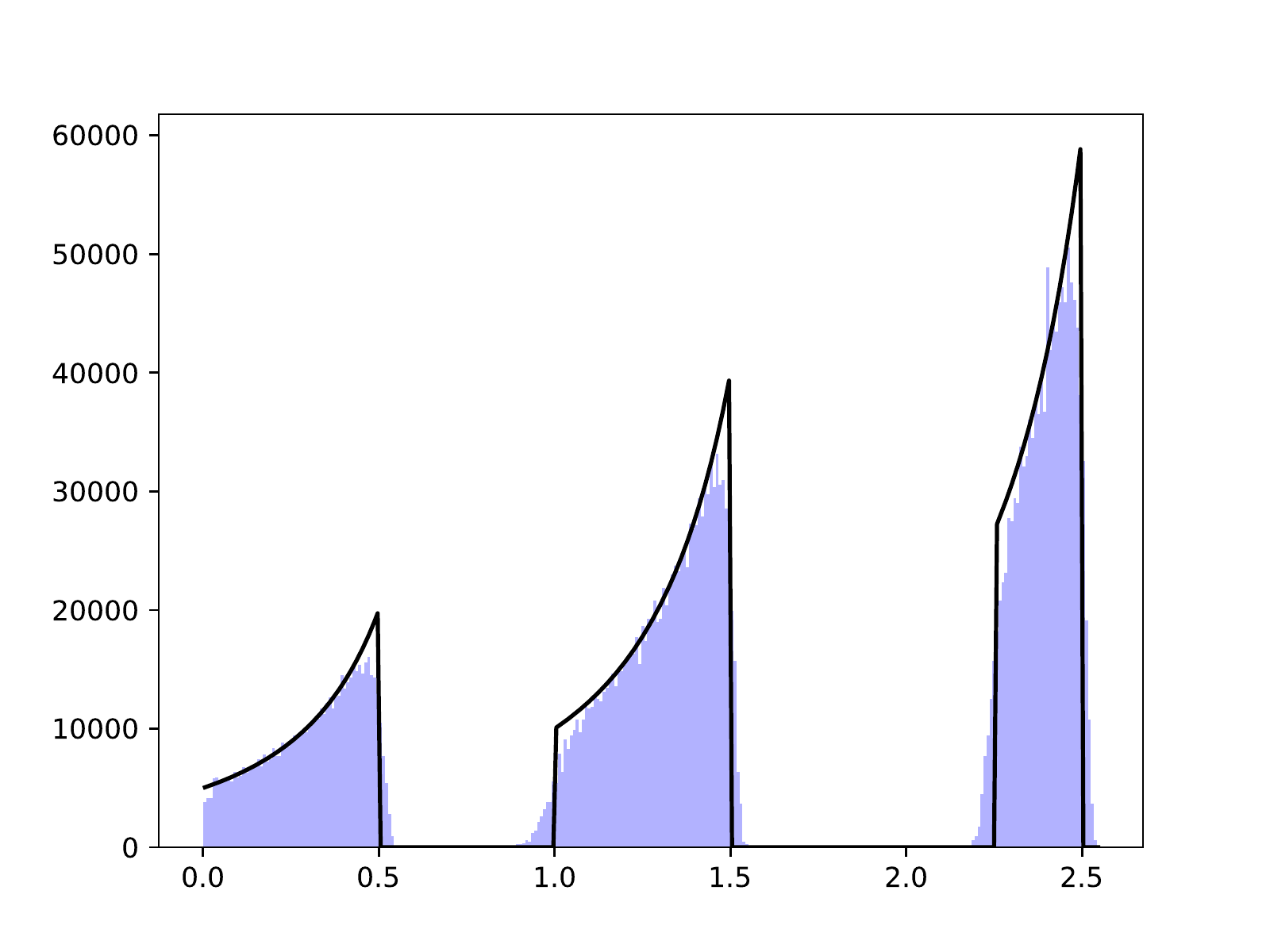}
\includegraphics[width=0.32\textwidth]{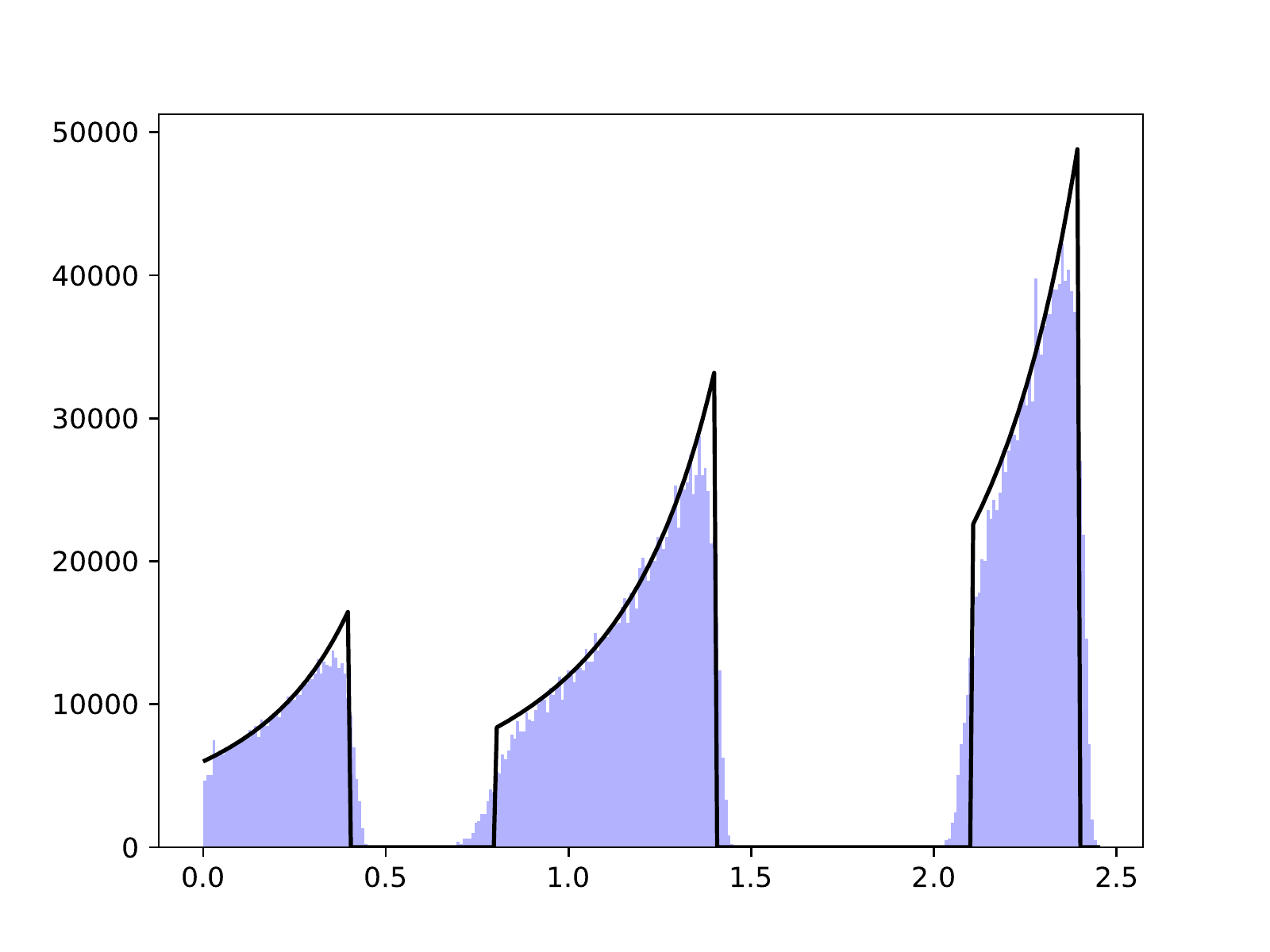}
\includegraphics[width=0.32\textwidth]{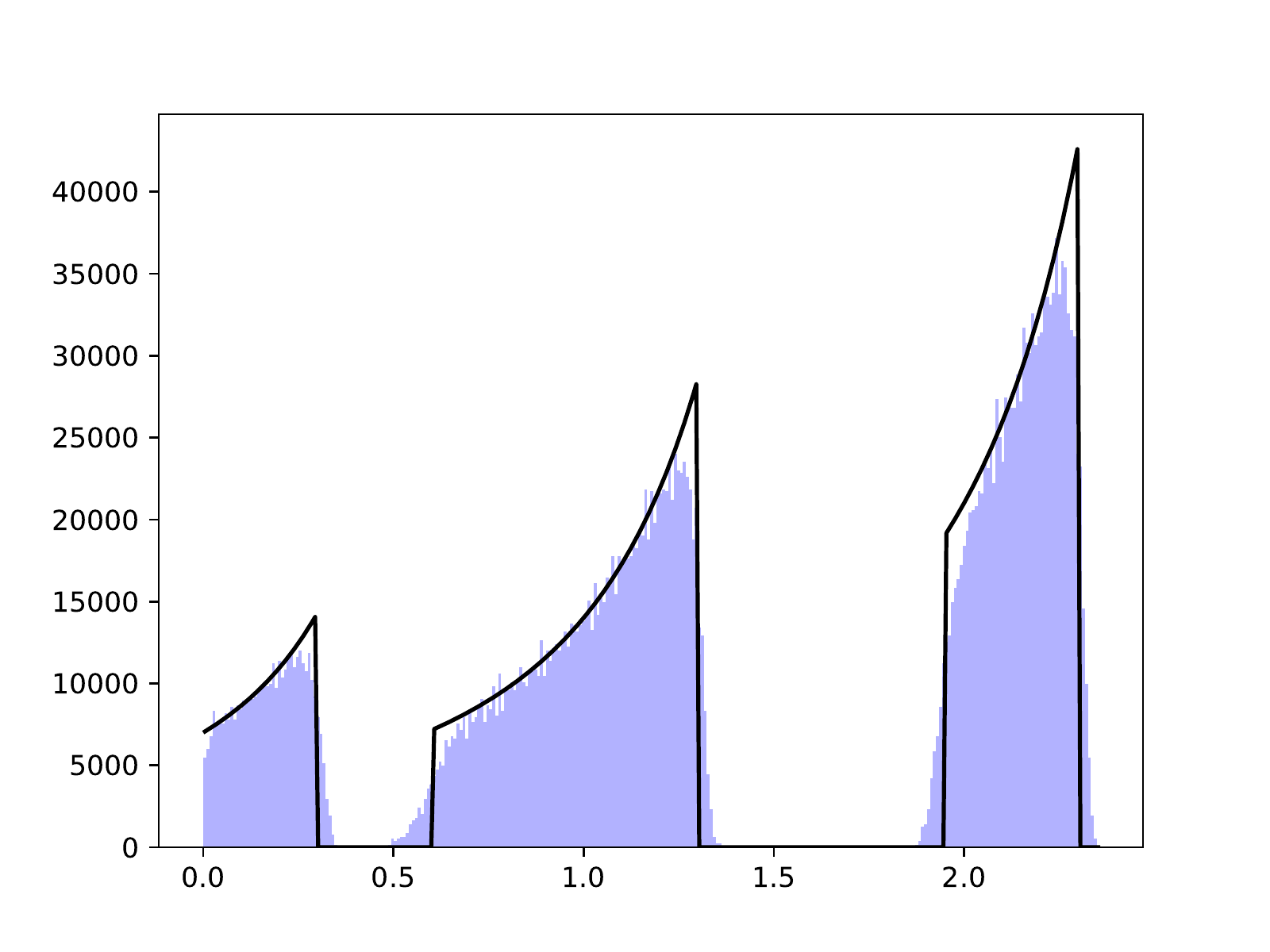}
\includegraphics[width=0.32\textwidth]{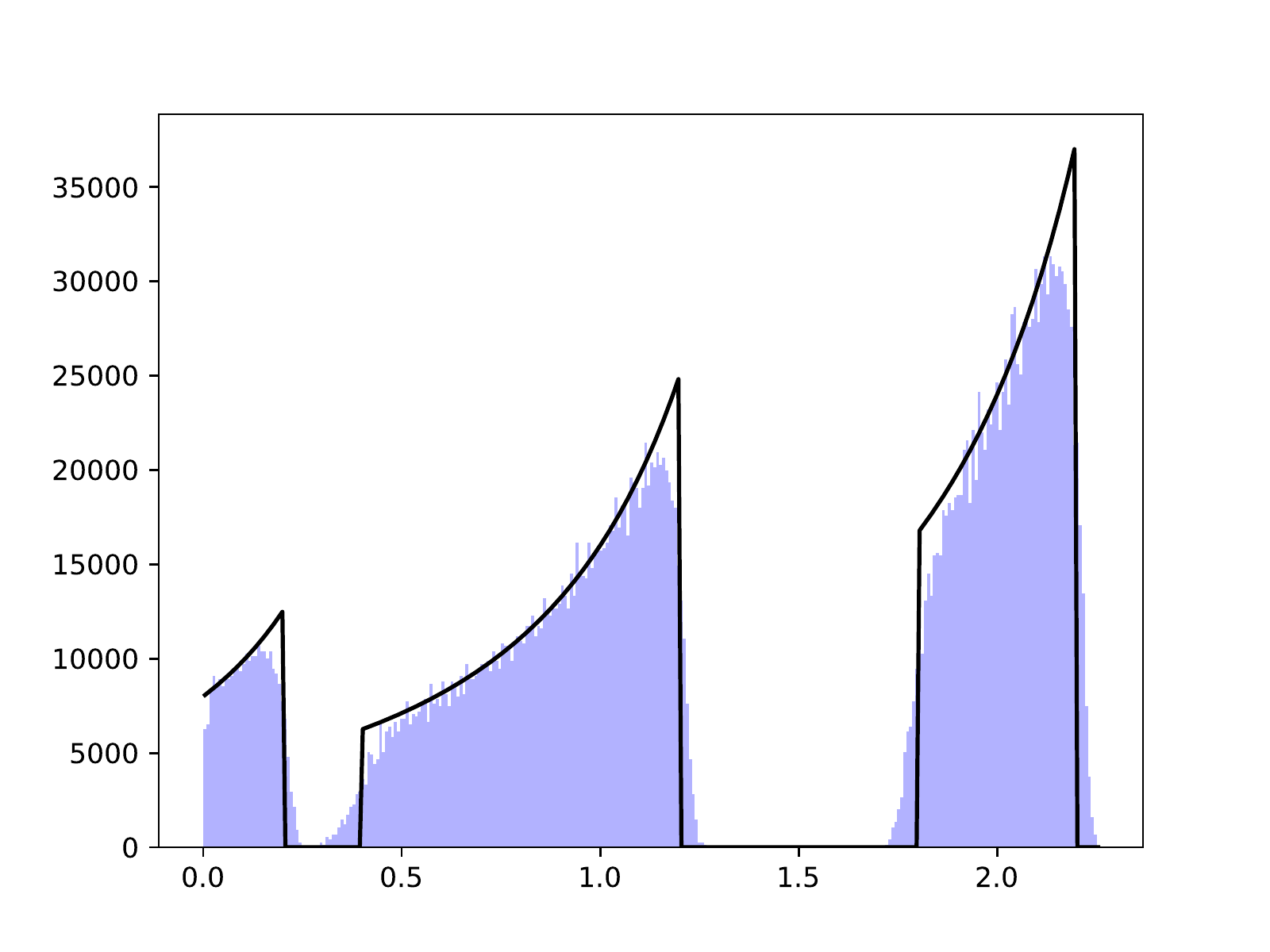}
\includegraphics[width=0.32\textwidth]{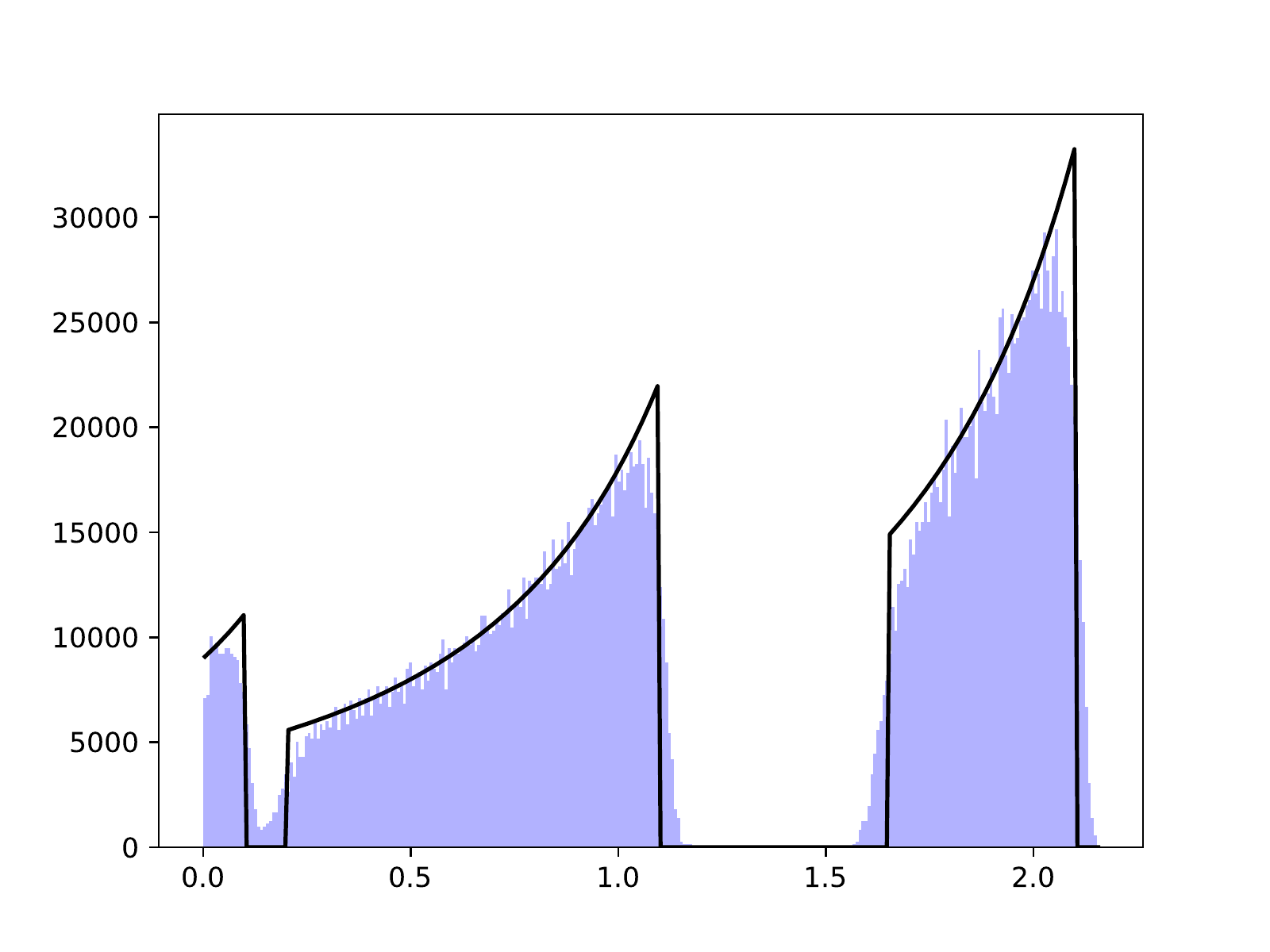}
\includegraphics[width=0.32\textwidth]{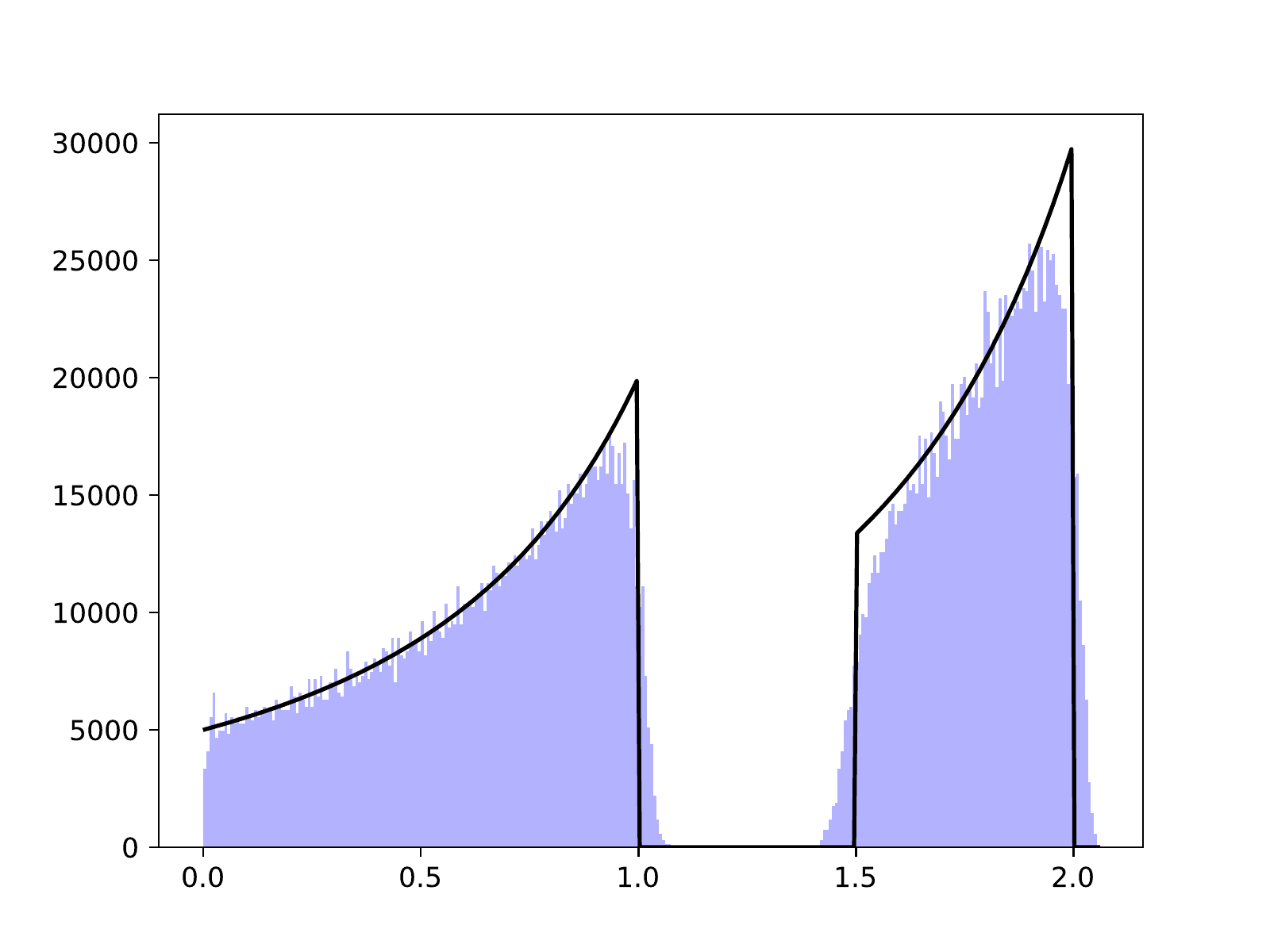}
\caption{
Histograms for an initial polynomial of degree $n=30000$ whose zeroes are i.i.d.\ on $k=3$ circles. The initial distribution of radial parts is given by~\eqref{eq:initial_dirac_deltas} with radii $r_1=1$, $r_2=2$, $r_3=3$ and weights $p_1=p_2=p_3 =1/3$.
The orders of the derivatives are $200, 1000, 2000, \ldots, 10000$.  The black curve shows the theoretical density given in~\eqref{eq:solution_dirac_deltas}.}
\label{pic:three_circles_zeroes_histogram}
\end{figure}
If the degree of the initial polynomial is large enough, this formula is in excellent agreement with the results of numerical simulation; see Figure~\ref{pic:three_circles_zeroes_histogram} showing histograms with the initial number of zeroes being $n=30000$. Figure~\ref{pic:three_circles_zeroes_plane} visualizes the same zeroes in the complex plane. It is interesting that the local structure of zeroes seems to be different for the model with independent roots and the model with independent coefficients, compare Figure~\ref{pic:three_circles_zeroes_plane} to the second row of Figure~\ref{pic:zeroes_kac_and_three_circles}, but the global behavior is the same.

The above results can be interpreted as follows. Take some $m\in \{1,\ldots,k\}$. The circles of zeroes with radii $r_1,\ldots, r_{m-1}$ present in the initial condition at time $0$ are killed by the repeated differentiation, if $t\in (P_{m-1}, P_m)$.  The circle of zeroes of radius $r_m$ turns into a two-dimensional distribution of zeroes on the disk with radius $\frac{r_m(P_m-t)}{P_m}$ with the explicit density of radial parts given by the first case of~\eqref{eq:solution_dirac_deltas}. For every $\ell\in \{m+1,\ldots,k\}$, the $\ell$-th circle of zeroes with radius $r_\ell$ turns into a two-dimensional distribution of zeroes on an annulus whose inner and outer radii are given by $\frac{r_\ell(P_{\ell-1}-t)}{P_{\ell-1}}$ and $\frac{r_\ell(P_\ell-t)}{P_\ell}$, respectively, with the density of radial parts given by the second case in~\eqref{eq:solution_dirac_deltas}. Finally, if $t$ approaches the value $P_m$ from the left, the disk of zeroes with radius $\frac{r_m(P_m-t)}{P_m}$ disappears and at the same time the annulus of zeroes with radii $\frac{r_{m+1}(P_{m}-t)}{P_{m}}$ and $\frac{r_{m+1}(P_{m+1}-t)}{P_{m+1}}$ turns into a disk of zeroes with radius $\frac{r_{m+1}(P_{m+1}-P_m)}{P_{m+1}}$.  This transition is visualized in the last two snapshots/histograms of Figures~\ref{pic:three_circles_zeroes_plane} and~\ref{pic:three_circles_zeroes_histogram}.

\subsection{Remarks on circles of zeroes and void annuli}
Let us make several general observations on the behavior of circles of zeroes and void annuli under repeated differentiation; see  Figures~\ref{pic:three_circles_zeroes_plane}, \ref{pic:zeroes_kac_and_three_circles}, \ref{pic:three_circles_zeroes_histogram} for simulations.

\vspace*{2mm}
\noindent
\textit{Circles of zeroes}.
As we already explained in Section~\ref{subsec:log_potential}, any constancy interval of the function $v'(x)$ corresponds to a circle of zeroes in the initial distribution $\mu_0$. On the other hand, we argued in the proof of Theorem~\ref{theo:PDE_solution} that for every $t\neq 0$  the function $x\mapsto v(x,t)$ is strictly convex. Thus, even though circles of zeroes may be present in the initial condition at time $t=0$, they are instantaneously  destroyed by repeated differentiation at any time $t\neq 0$.

\vspace*{2mm}
\noindent
\textit{Void annuli}.
On the other hand, any point $x_0\in (0,1)$ at which $v_-'(x_0) < v_+'(x_0)$ corresponds to a void annulus in $\mu_0$ with inner/outer radius equal to $\eee^{v_-'(x_0)}$, respectively $\eee^{v_+'(x_0)}$. The definition of $v(x,t)$ given in~\eqref{eq:v_x_t_def} implies that the derivative of $x\mapsto v(x,t)$ has a jump at $x=x_0-t$ provided that $t<x_0$. This means that  the void annulus persists as long as $t < x_0$ and its inner/outer radii are given by
$$
r_-(t) = \eee^{v_-'(x_0)} \cdot \frac{x_0-t}{x_0},
\qquad
r_+(t) = \eee^{v_+'(x_0)} \cdot \frac{x_0-t}{x_0},
\qquad
0 < t < x_0.
$$
Note that the quotient of the radii $r_+(t)/r_-(t)$ stays constant. At time $t=x_0$, the void annulus disappears and does not exist for $t>x_0$.

\vspace*{2mm}
\noindent
\textit{Void disk}.
Let us finally consider the situation with $v_+'(0)\neq -\infty$ which means that in $\mu_0$ there is a void disk of radius $\eee^{v_+'(0)}$. From the definition of $v(x,t)$ given in~\eqref{eq:v_x_t_def} it follows that for every $t\in (0,1)$, the right derivative of $x\mapsto v(x,t)$ at $x=0$ is $-\infty$, which means that the void disk is instantaneously destroyed at any time $t\neq 0$;  see the first two snapshots of Figure~\ref{pic:three_circles_zeroes_plane} for a visualization.

\subsection{Uniform distribution on the disk and its generalizations: Littlewood-Offord and Weyl polynomials}
Our next example is the following initial density of the complex roots:
\begin{equation}\label{eq:u_initial_weyl}
u(z,0)=\frac 1 {2\pi \alpha} |z|^{(1/\alpha)-2}\ind_{\{|z| \leq 1\}},
\qquad z\in \C.
\end{equation}
Here, $\alpha>0$ is a parameter. Note that the case $\alpha=1/2$ corresponds to the uniform distribution of the complex roots on the unit disk.
The density of the radial parts is given by
\begin{equation}\label{eq:psi_initial_weyl}
\psi(x,0)
=
2\pi x u(x,0)
=
\frac 1 {\alpha} x^{(1/\alpha)-1}\ind_{\{0<x<1\}},
\qquad x\geq 0.
\end{equation}
This asymptotic distribution of roots is realized by the following sequence of Littlewood-Offord random polynomials with independent coefficients:
\begin{equation}\label{eq:weyl_poly_def}
W_n(z) :=
\sum_{k=0}^n \frac{\xi_k}{(k!)^\alpha} (n^{\alpha} z)^k,
\qquad z\in \C;
\end{equation}
see Theorem~2.3 in~\cite{kabluchko_zaporozhets12a}.
The special case $\alpha=1/2$ is known under the name Weyl polynomials.

Let us now compute the evolution of the density of roots under repeated differentiation. To compute  $v(x)$ by means of~\eqref{eq:mu_0_I'} and~\eqref{eq:I_def}, we first  observe that
$$
\Psi(x,0) = \mu_0(\bD_x)
=
\int_0^x \psi(y,0)\dd y
=
\begin{cases}
 x^{1/\alpha}, &\text{if } x\in [0,1],\\
1,  &\text{if } x>1.
\end{cases}
$$
The function $I'(y) = J(y,0)$ is given by
$$
I'(y)
=
\Psi(\eee^y, 0)
=
\mu_0(\bD_{\eee^y})
=
\begin{cases}
\eee^{y/\alpha}, &\text{if } y\leq 0,\\
1,  &\text{if } y\geq0.
\end{cases}
$$
The inverse function is
$$
\partial_1 v(x,0)
=
\alpha \log x, \qquad x \in [0,1].
$$
Integrating, we obtain the following function $v(x) = v(x,0)$ corresponding to the initial conditions~\eqref{eq:u_initial_weyl} and~\eqref{eq:psi_initial_weyl}:
$$
v(x)
=
\alpha (x\log x - x), \qquad  x\in [0,1].
$$
Alternatively, one could compute $v(x)$ from the asymptotics of the coefficients of the polynomials~\eqref{eq:weyl_poly_def} using the Stirling formula;  see~\cite[p.~1385]{kabluchko_zaporozhets12a}.

Now, we can compute $\psi(x,t)$ using the recipe described in Section~\ref{subsec:log_potential}. By~\eqref{eq:v_x_t_def}, for arbitrary $t\in [0,1]$ we have
$$
v(x,t) = \alpha ((x+t)\log (x+t) - (x+t)) - (x+t) \log (x+t) + x\log x, \qquad 0 \leq x \leq 1-t.
$$
The derivative in $x$ is given by
$$
\partial_1 v(x,t) = (\alpha-1) \log (x+t) + \log x,  \qquad 0 \leq x \leq 1-t.
$$
Note that on the interval $x\in (0, 1-t]$, the function $\partial_1 v(x,t)$ is monotone increasing and its image is the interval $(-\infty, \log (1-t)]$. For every fixed $t\in [0,1)$,  the inverse function denoted by $y\mapsto J(y,t)$ satisfies
$$
(\alpha-1) \log (J(y,t) + t) + \log J(y,t) = y, \qquad  -\infty < y  < \log (1-t).
$$
Taking $y= \log x$, we obtain the following implicit equation for $\Psi(x,t) = J(\log x, t)$:
\begin{equation}\label{eq:Psi_Weyl_implicit}
(\alpha-1) \log (\Psi(x,t) + t) + \log \Psi(x,t) = \log x, \qquad  0 < x  < 1-t.
\end{equation}

Now we consider several special cases in which the inverse function can be expressed in a closed form.

\begin{figure}[!tbp]
\includegraphics[width=0.24\textwidth]{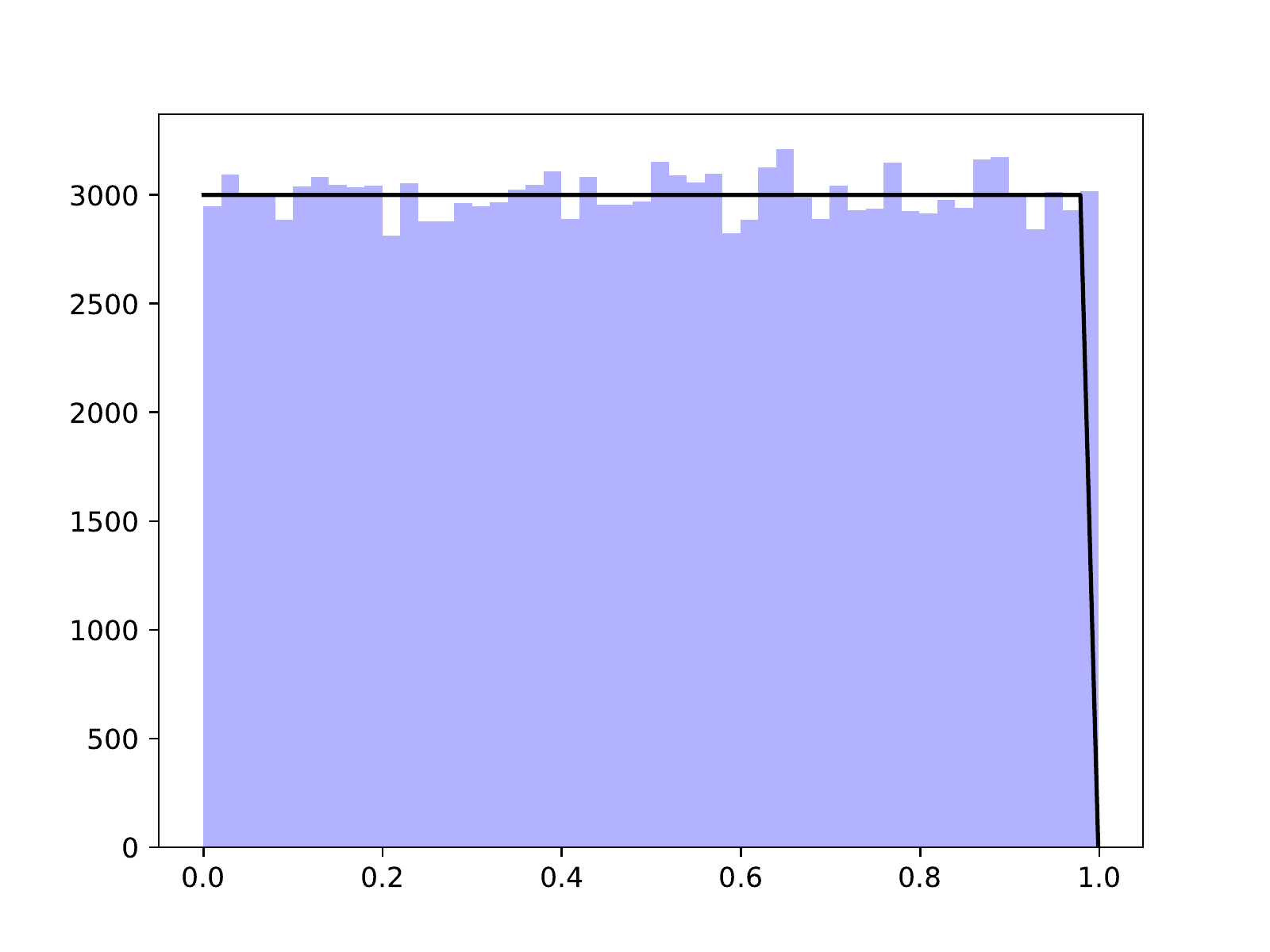}
\includegraphics[width=0.24\textwidth]{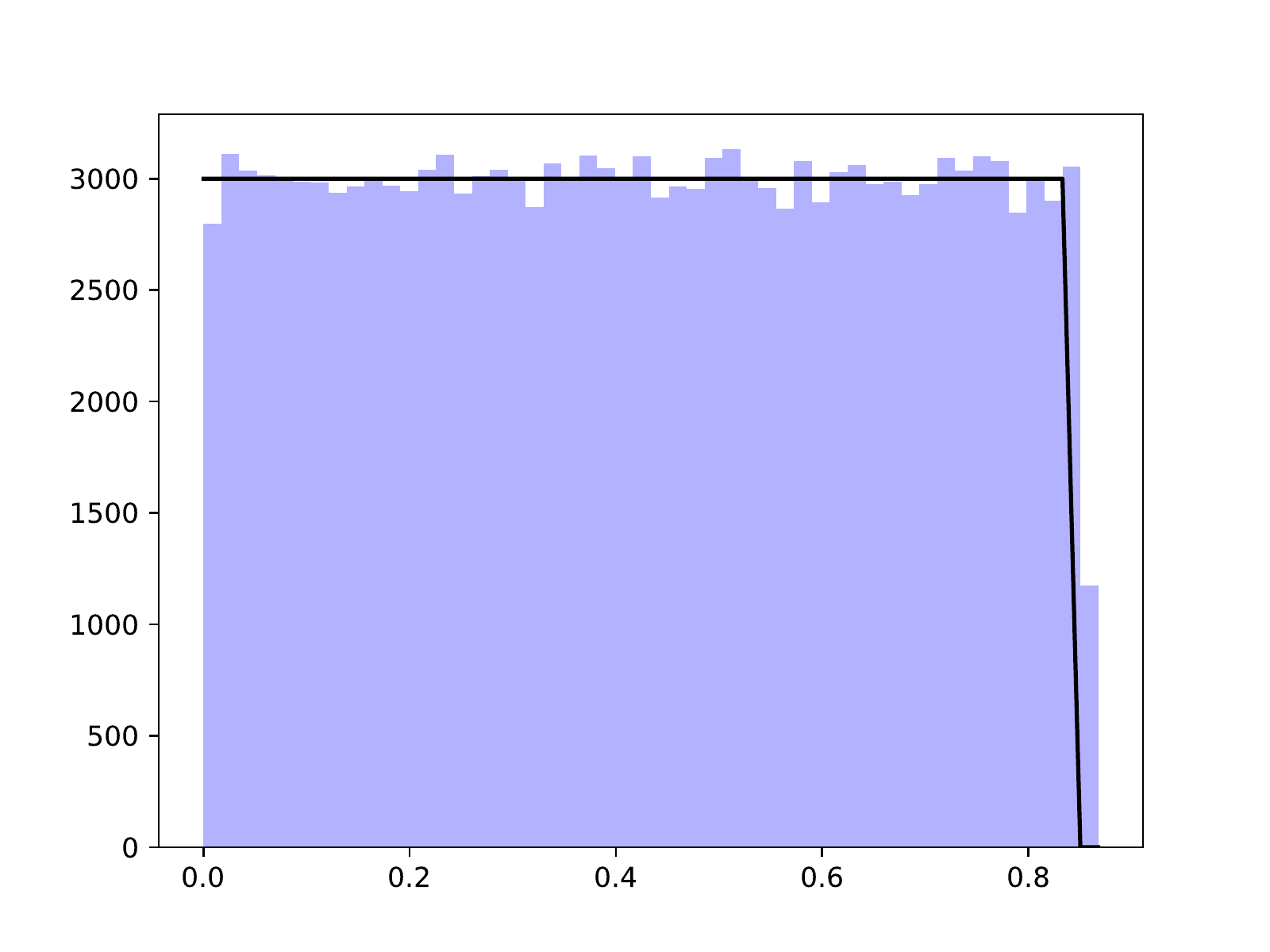}
\includegraphics[width=0.24\textwidth]{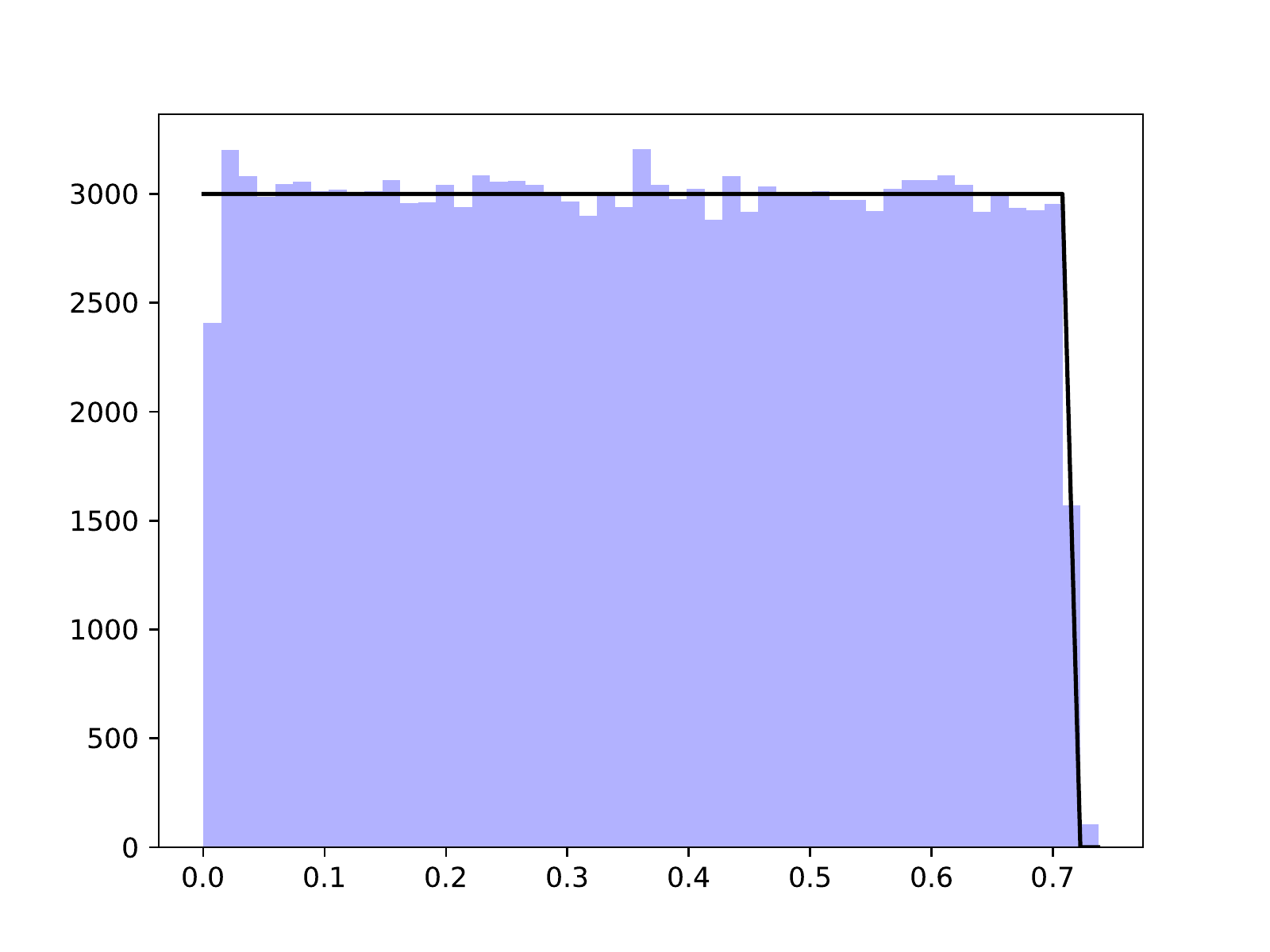}
\includegraphics[width=0.24\textwidth]{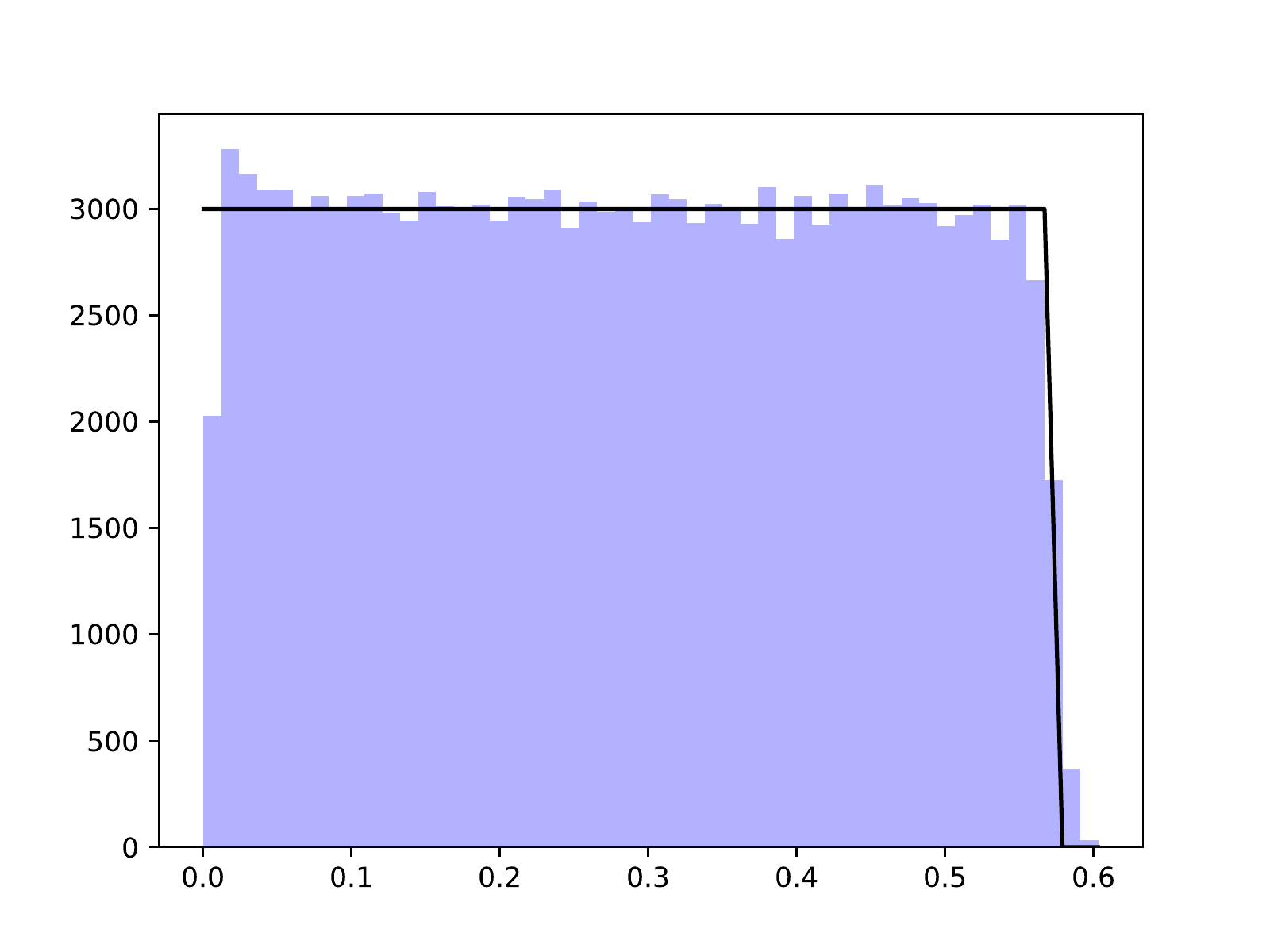}
\includegraphics[width=0.24\textwidth]{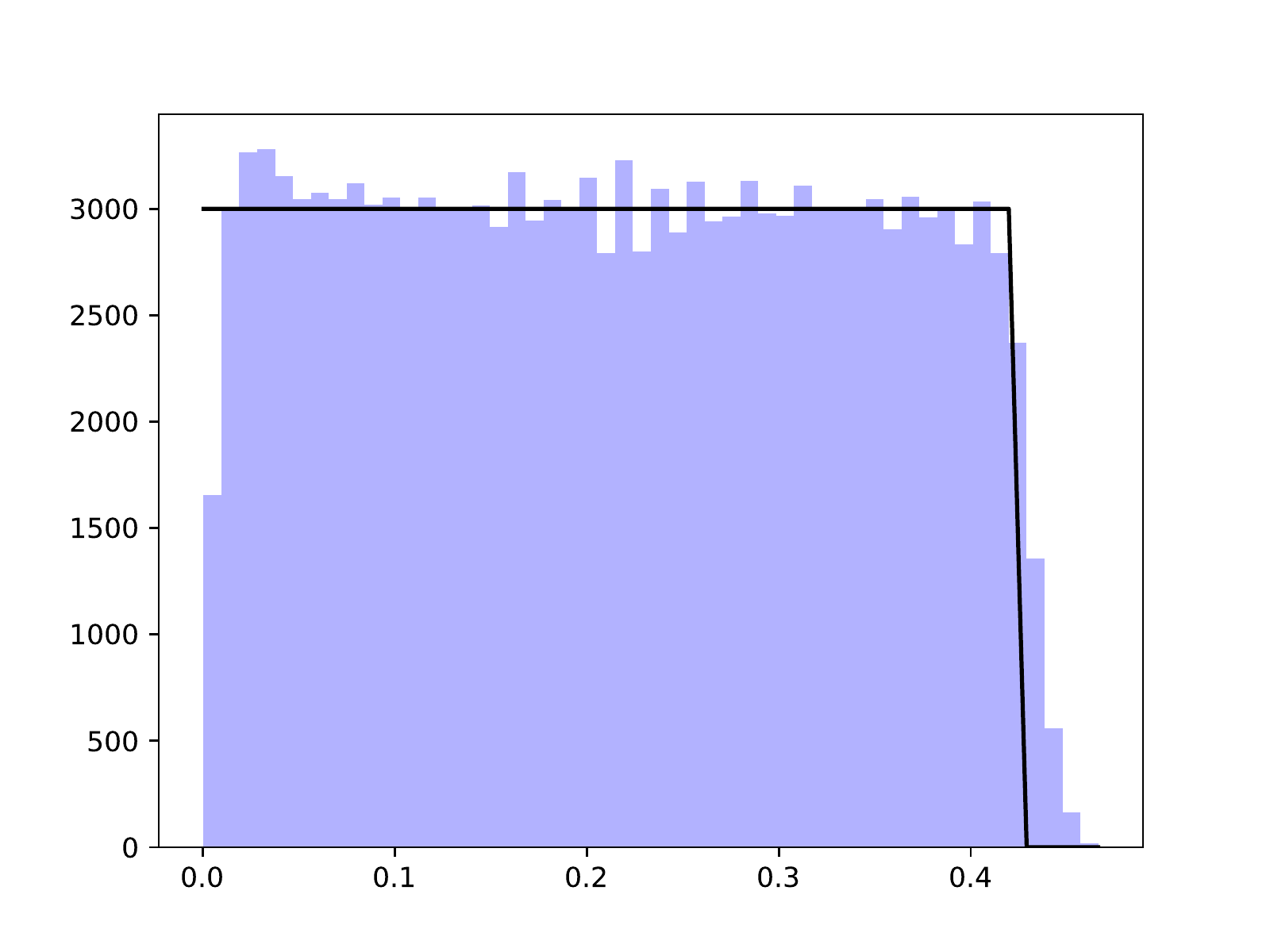}
\includegraphics[width=0.24\textwidth]{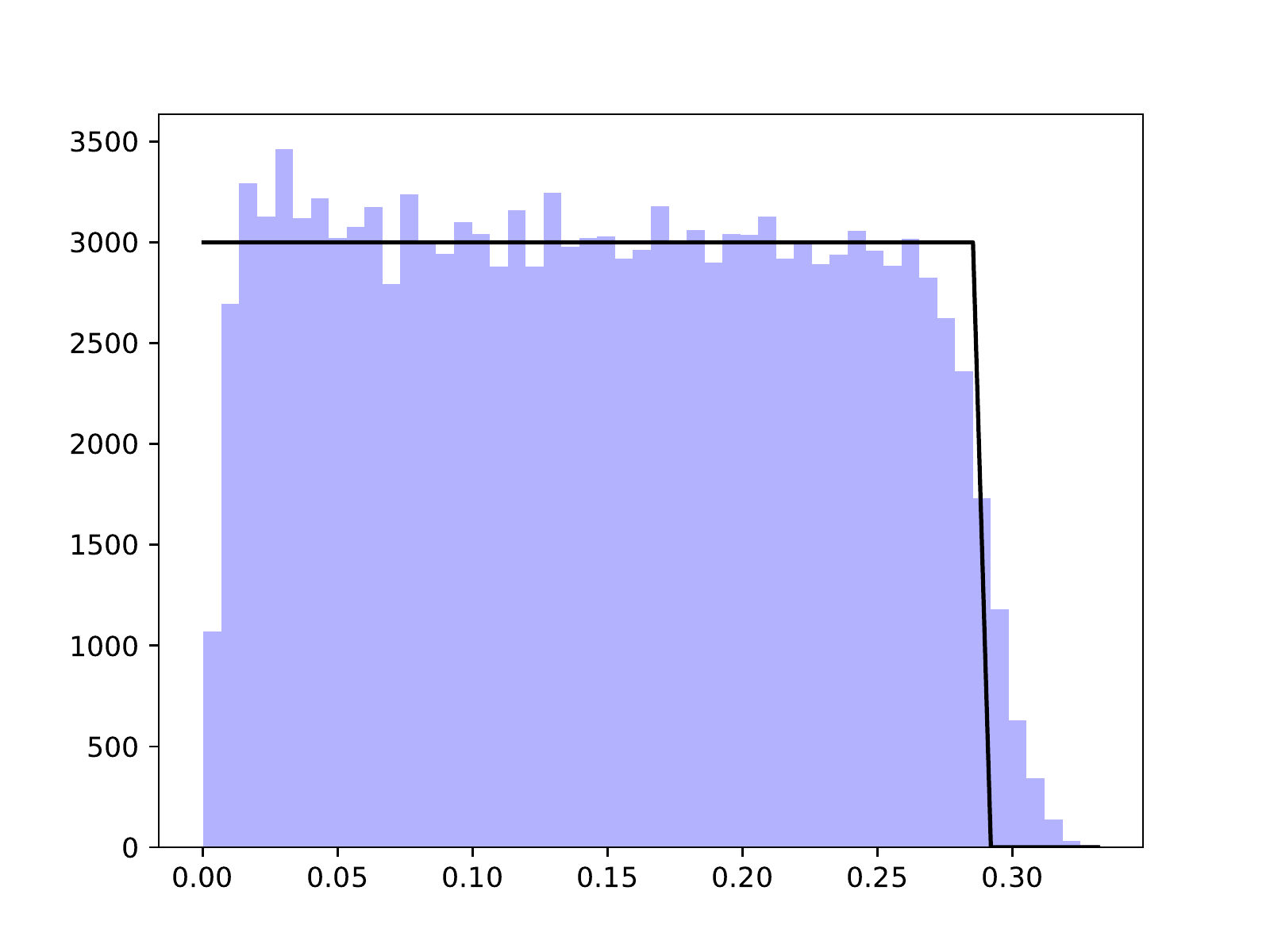}
\includegraphics[width=0.24\textwidth]{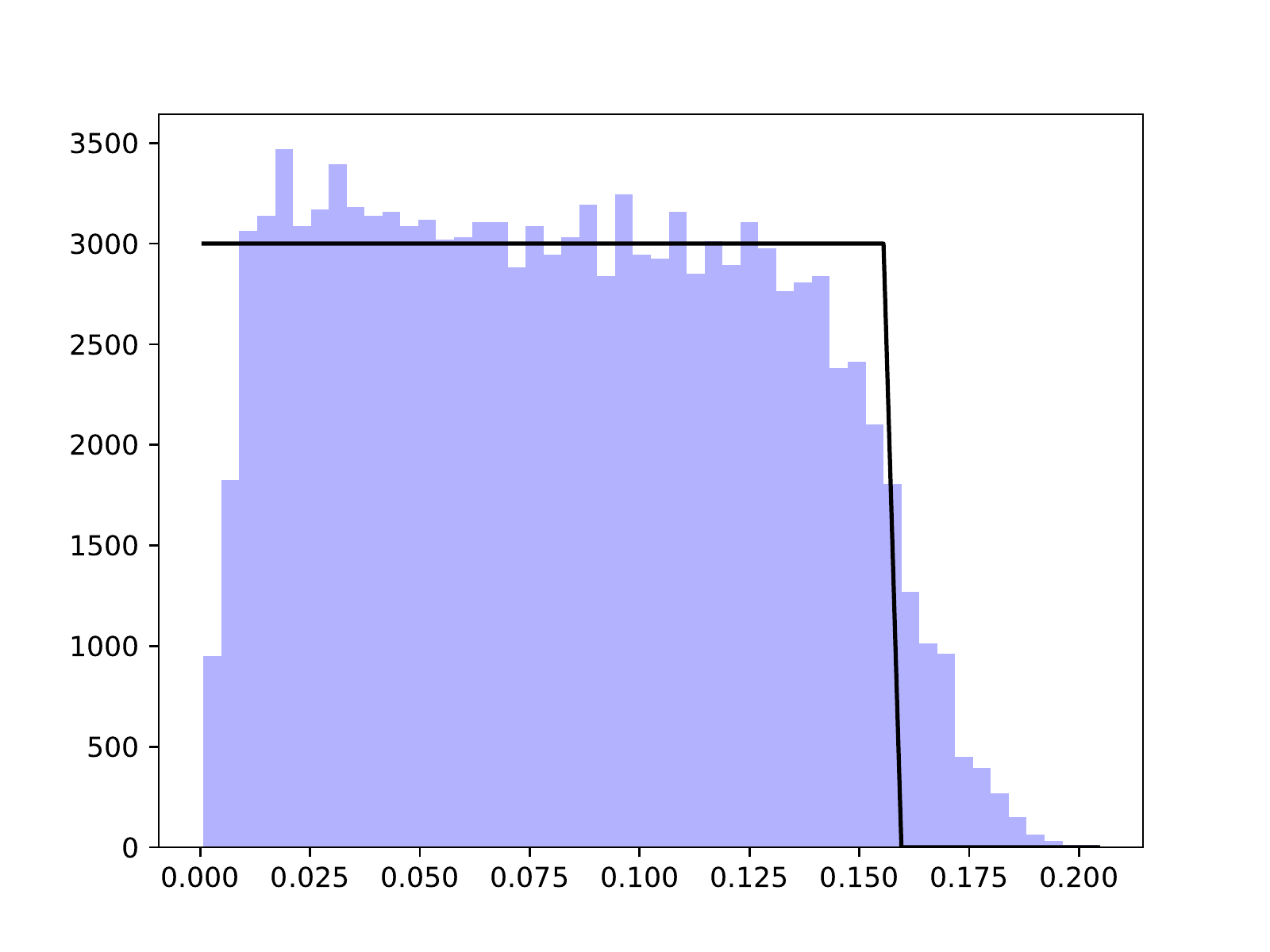}
\includegraphics[width=0.24\textwidth]{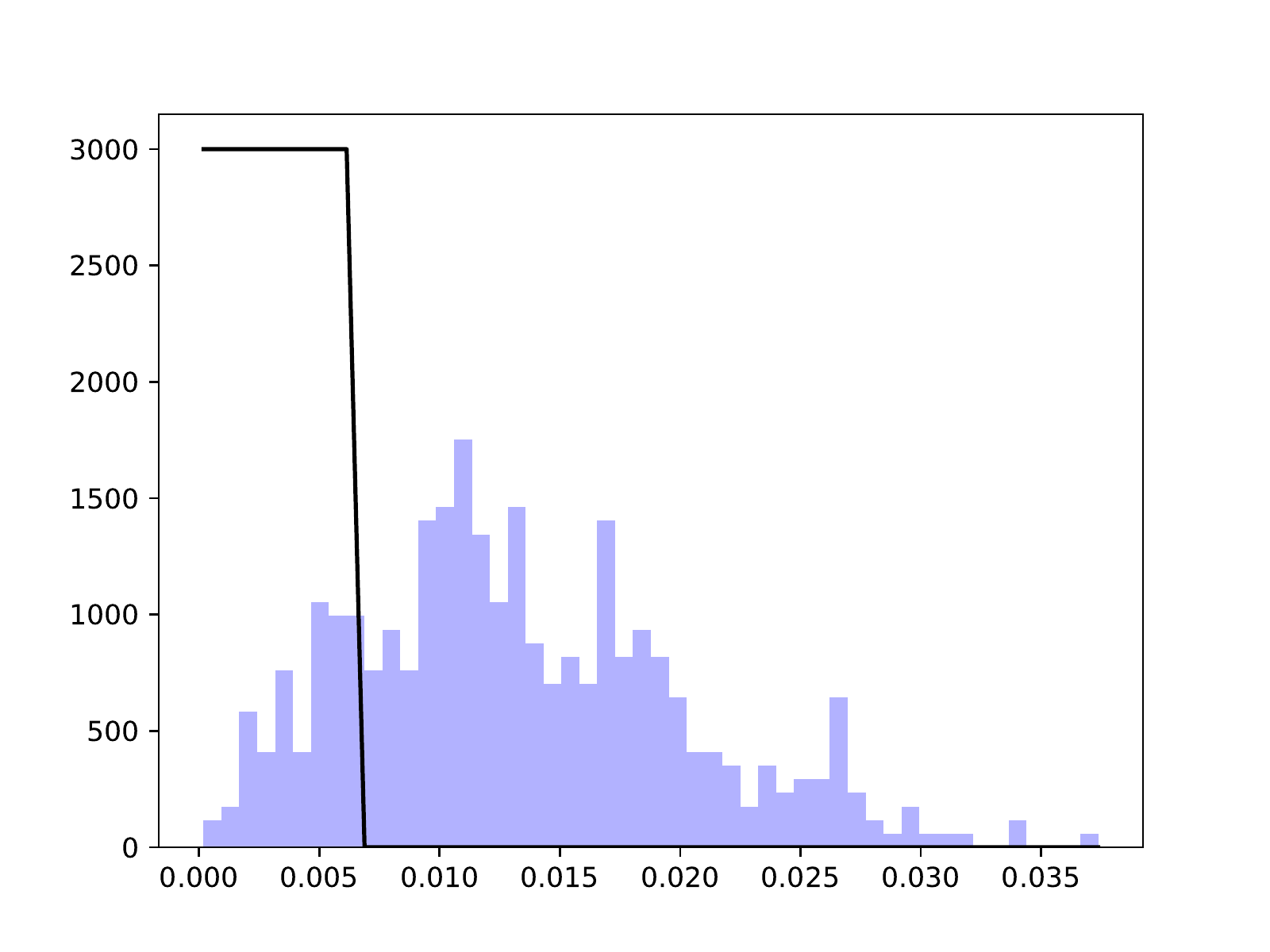}
\caption{
Histograms of the radial parts for the initial density $\psi(x,0)= 1$, $0<x<1$.
The zeroes of the initial polynomial are i.i.d.\ and the degree is $n=3000$.
The orders of the derivatives are $4 + 425 k$ with $k\in \{0,\ldots,7\}$.
The black curve shows the theoretical density given in~\eqref{eq:evolution_weyl_alpha_1}.
In these and similar histograms, an average over $14-20$ independent realizations of the polynomial is shown.
}
\label{pic:unif_zeroes_histogram}
\end{figure}

\vspace*{2mm}
\noindent
\textsc{Case $\alpha=1$} (Uniformly distributed radial parts).  In this case, the radial parts at time $t=0$ are uniformly distributed on the interval $[0,1]$, that is the initial condition is
\begin{equation}\label{eq:initial_cond_weyl_alpha_1}
\psi(x,0) = \ind_{\{0<x<1\}}, \qquad x\geq 0.
\end{equation}
The implicit equation~\eqref{eq:Psi_Weyl_implicit} is solved by
$$
\Psi(x,t) = x, \qquad 0 < x <  1-t.
$$
The density of the absolute values of the roots is thus
\begin{equation}\label{eq:evolution_weyl_alpha_1}
\psi(x,t) = \ind_{\{0<x<1-t\}}, \qquad x\geq 0.
\end{equation}
So, the absolute values of the roots of the $[tn]$-th derivative of $P_n$ are asymptotically  uniformly distributed on the interval $[0,1-t]$. This agrees with the results of the numerical simulation presented on Figure~\ref{pic:unif_zeroes_histogram}.
Note that the roots themselves are distributed according to a rotationally invariant subprobability measure on the disk of radius $1-t$ centered at the origin with the Lebesgue density
$$
u(z,t)
= \frac{1}{2\pi |z|}\ind_{\{|z|\leq 1-t\}}, \qquad z\in \C.
$$
This special solution has been already mentioned in~\cite{orourke_steinerberger_nonlocal}. It is interesting that the shape of the density does not change, just its support shrinks with growing $t$.  To explain this  phenomenon, consider the random polynomials
\begin{equation}\label{eq:poly_exp_Q_n}
Q_n(z):= \sum_{k=0}^n \xi_k \frac {(nz)^k}{k!},
\end{equation}
where $\xi_0,\xi_1,\ldots$ are i.i.d.\ random variables with values in $\C$ and such that $\E \log (1+|\xi_0|) <\infty$. In the large degree limit $n\to\infty$,  the radial parts of the roots become uniformly distributed on the interval $[0,1]$, see Theorem~2.3 in~\cite{kabluchko_zaporozhets12a}, meaning that these polynomials correspond to the initial condition~\eqref{eq:initial_cond_weyl_alpha_1}. Under repeated differentiation, the polynomials behave as follows:
$$
Q_n^{(m)}(z)
=
n^m \sum_{k=m}^n \xi_k \frac {(nz)^{k-m}}{(k-m)!}
=
n^m \sum_{k=0}^{n-m} \xi_{k+m} \frac {(nz)^{k}}{k!},
$$
which has the same distribution as $Q_{n-m}(zn/(n-m))$, up to a constant factor. Thus, at time $t$ (meaning that $m=[nt]$), the density of the radial parts is given by~\eqref{eq:evolution_weyl_alpha_1}.

\begin{figure}[!tbp]
\includegraphics[width=0.24\textwidth]{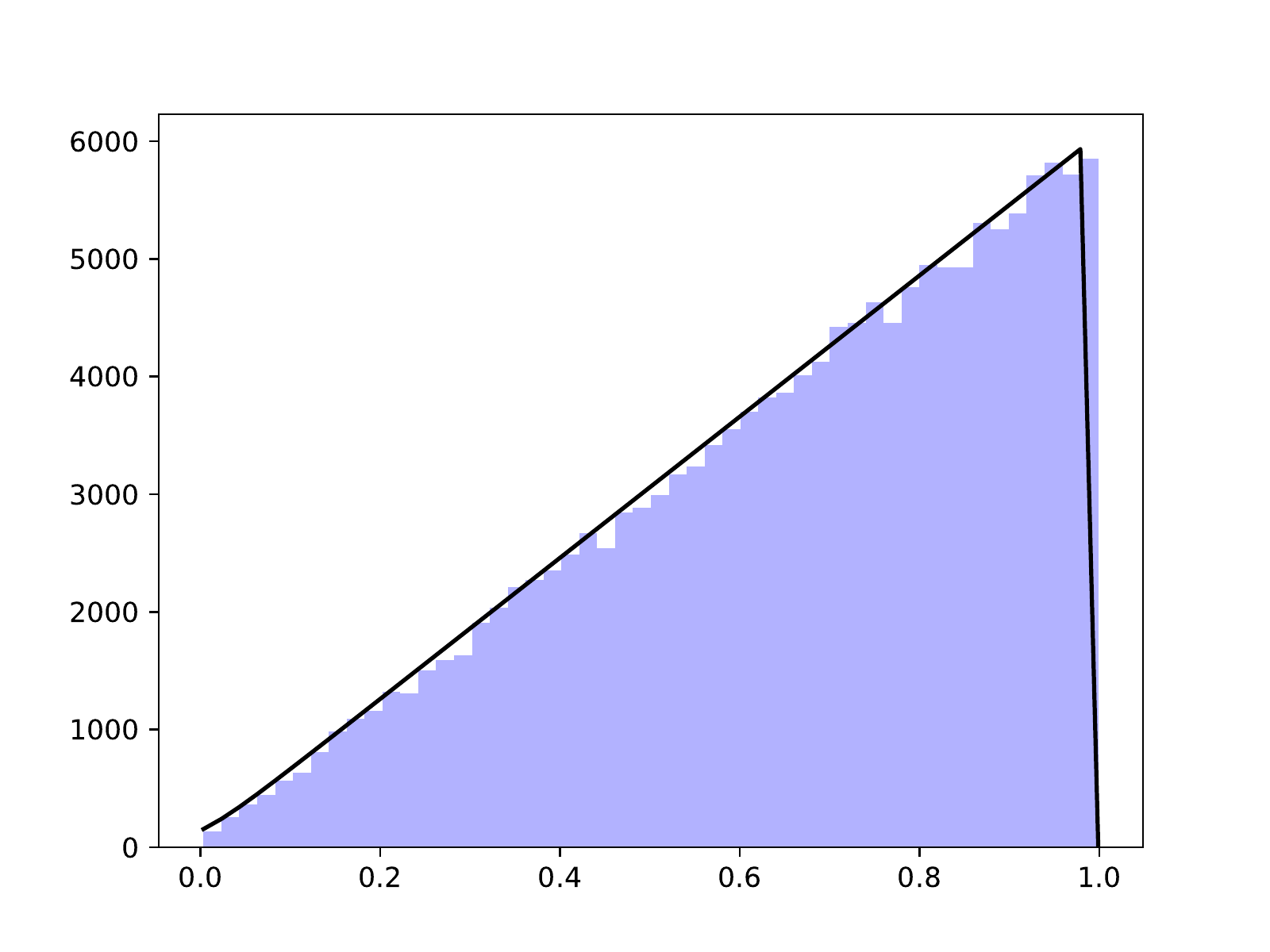}
\includegraphics[width=0.24\textwidth]{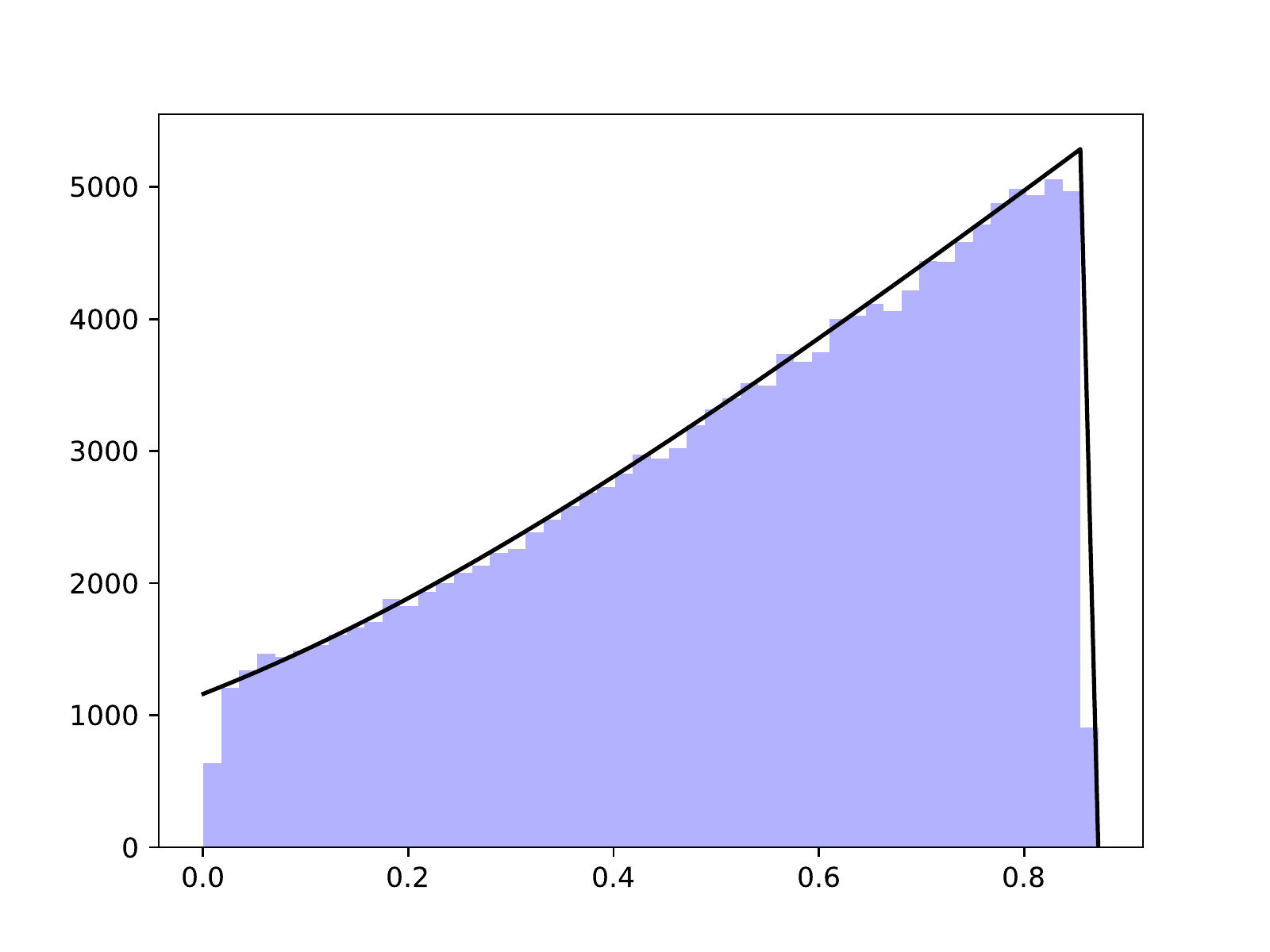}
\includegraphics[width=0.24\textwidth]{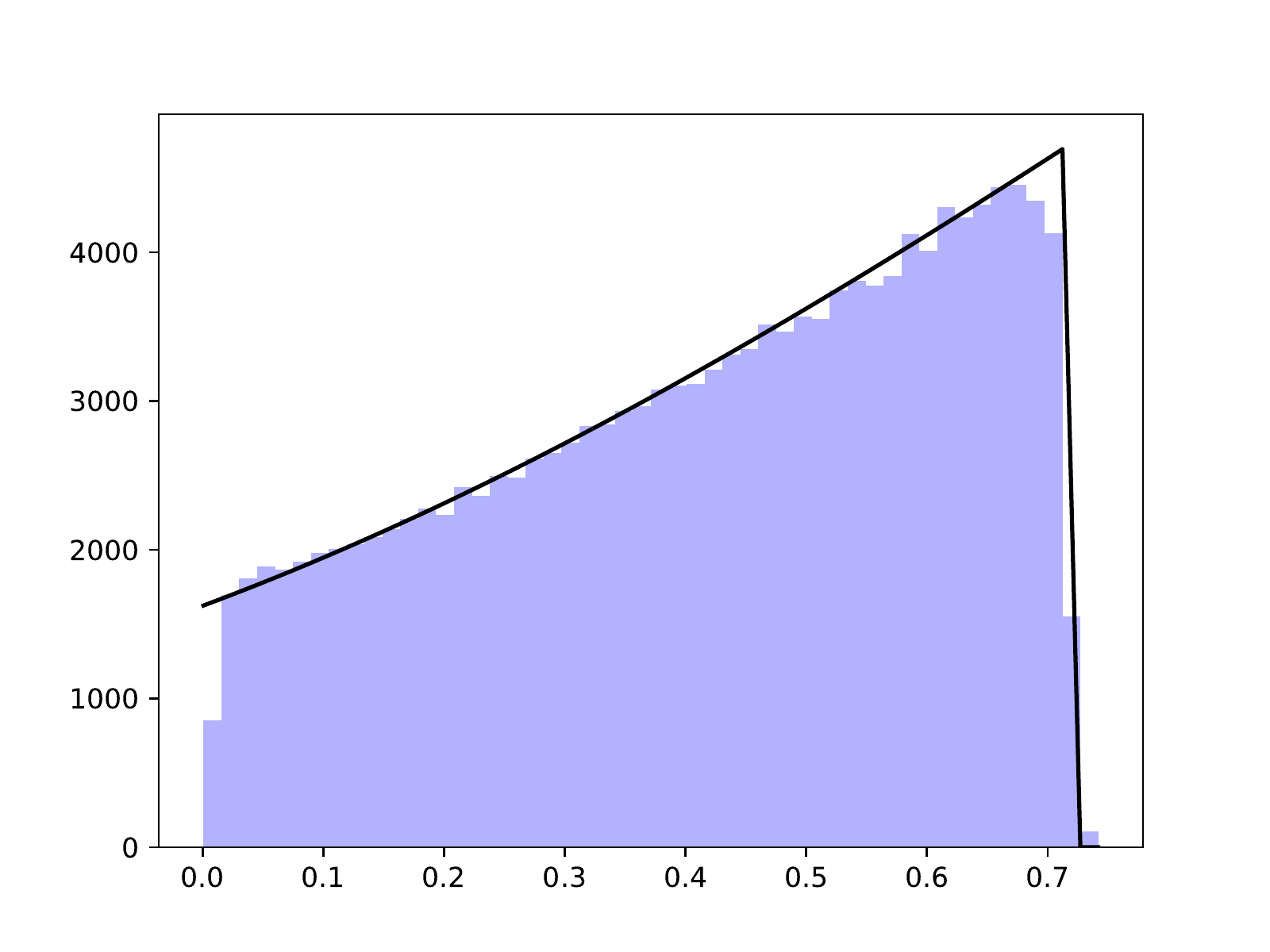}
\includegraphics[width=0.24\textwidth]{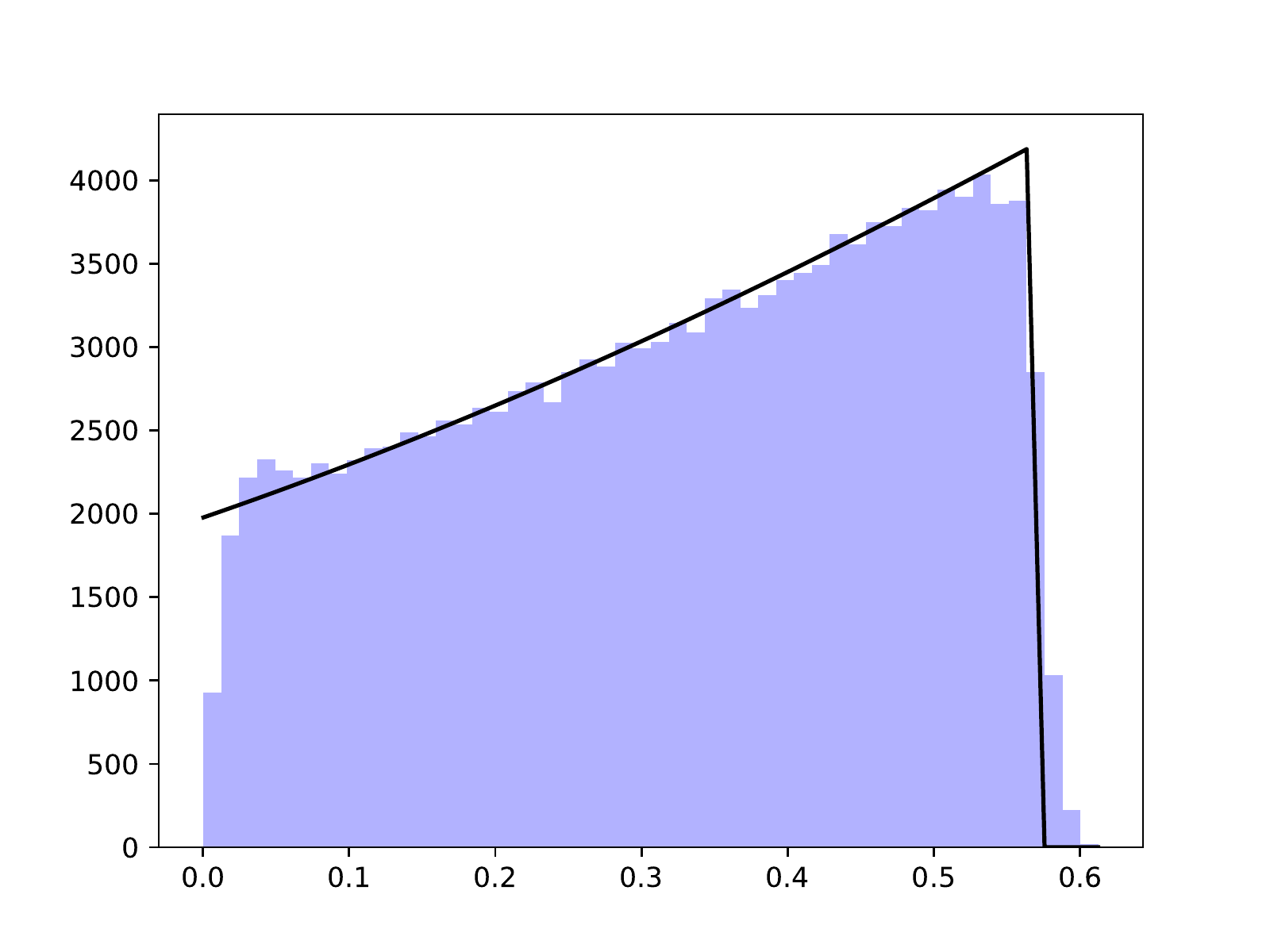}
\includegraphics[width=0.24\textwidth]{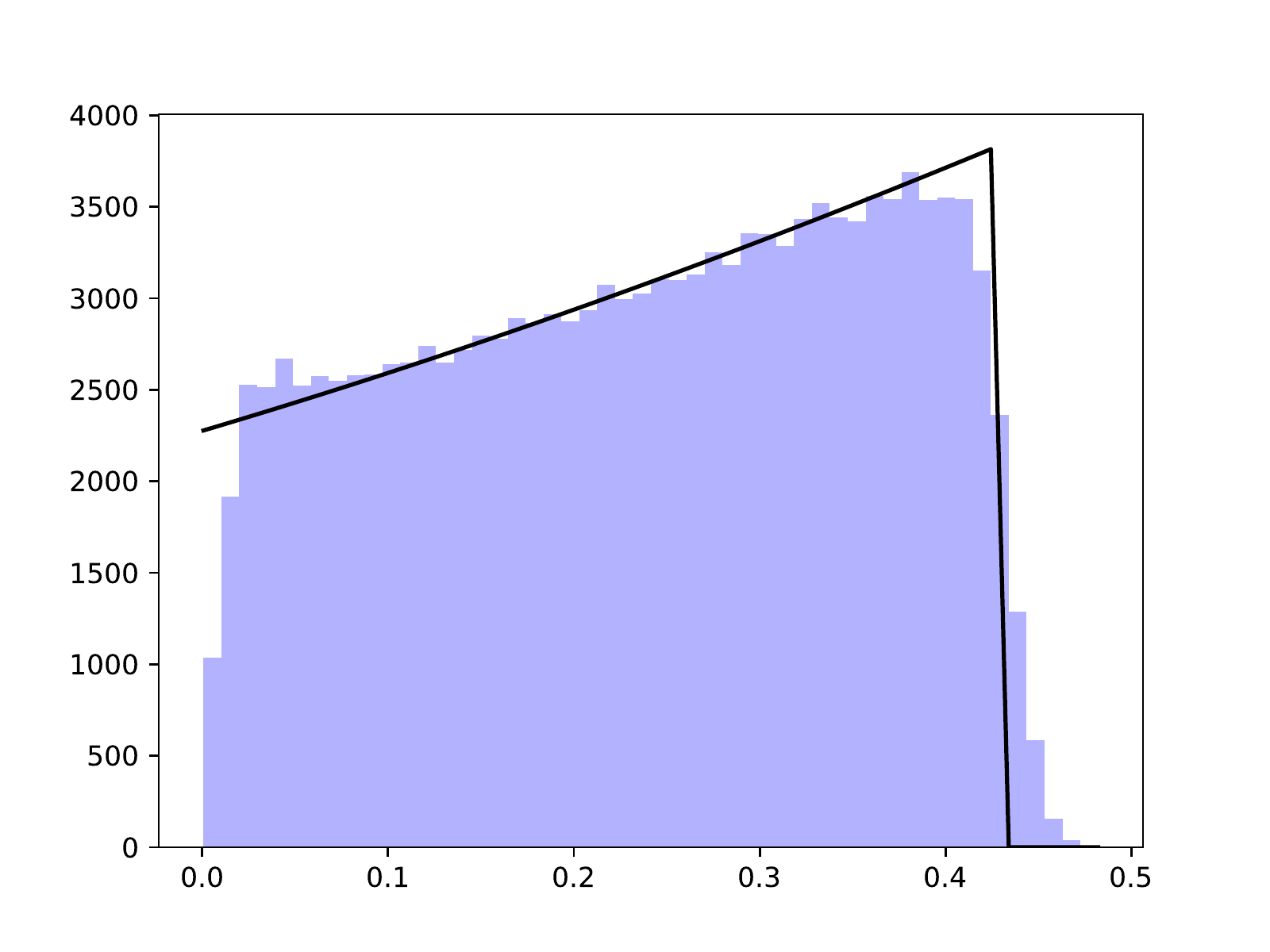}
\includegraphics[width=0.24\textwidth]{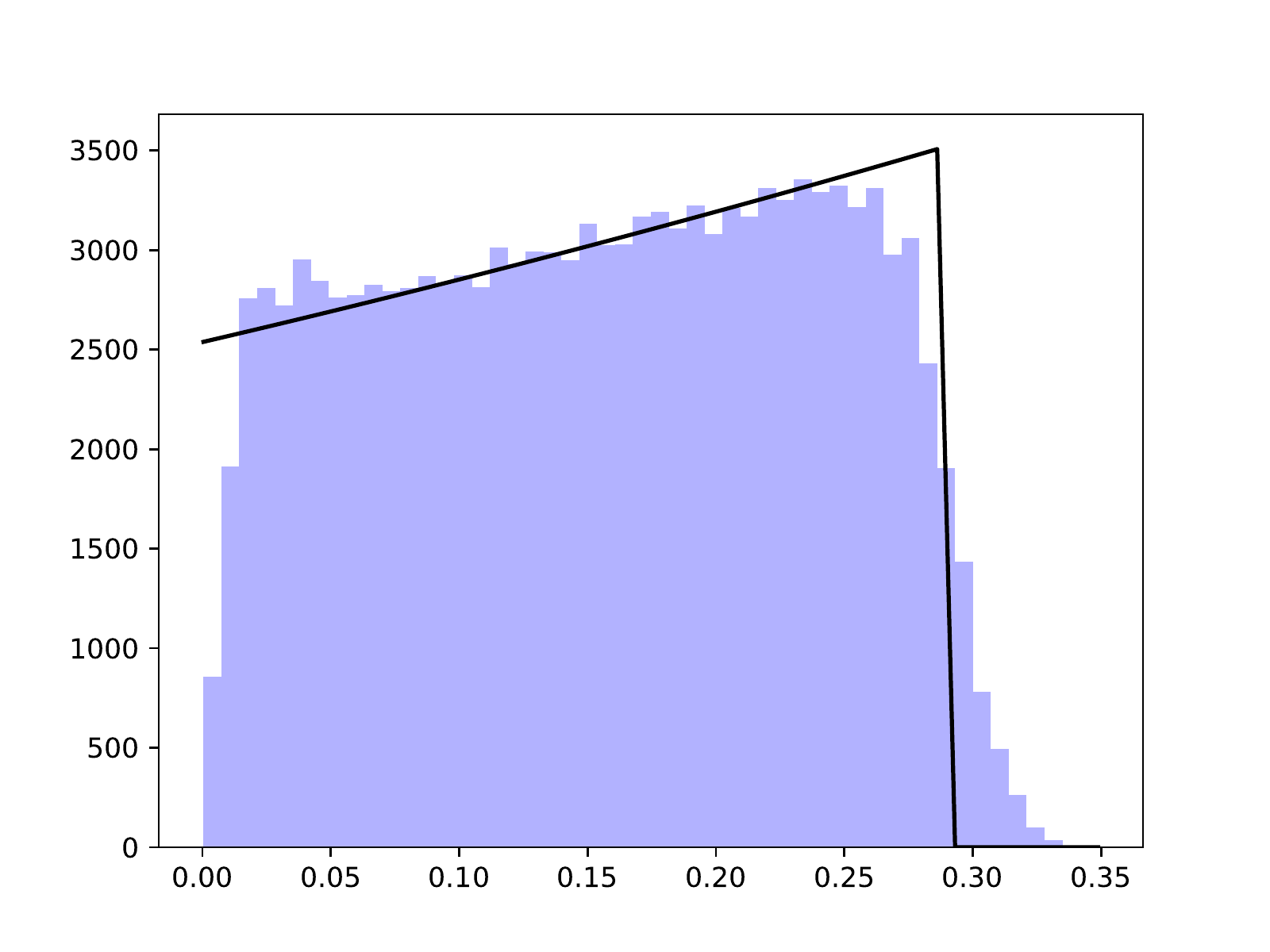}
\includegraphics[width=0.24\textwidth]{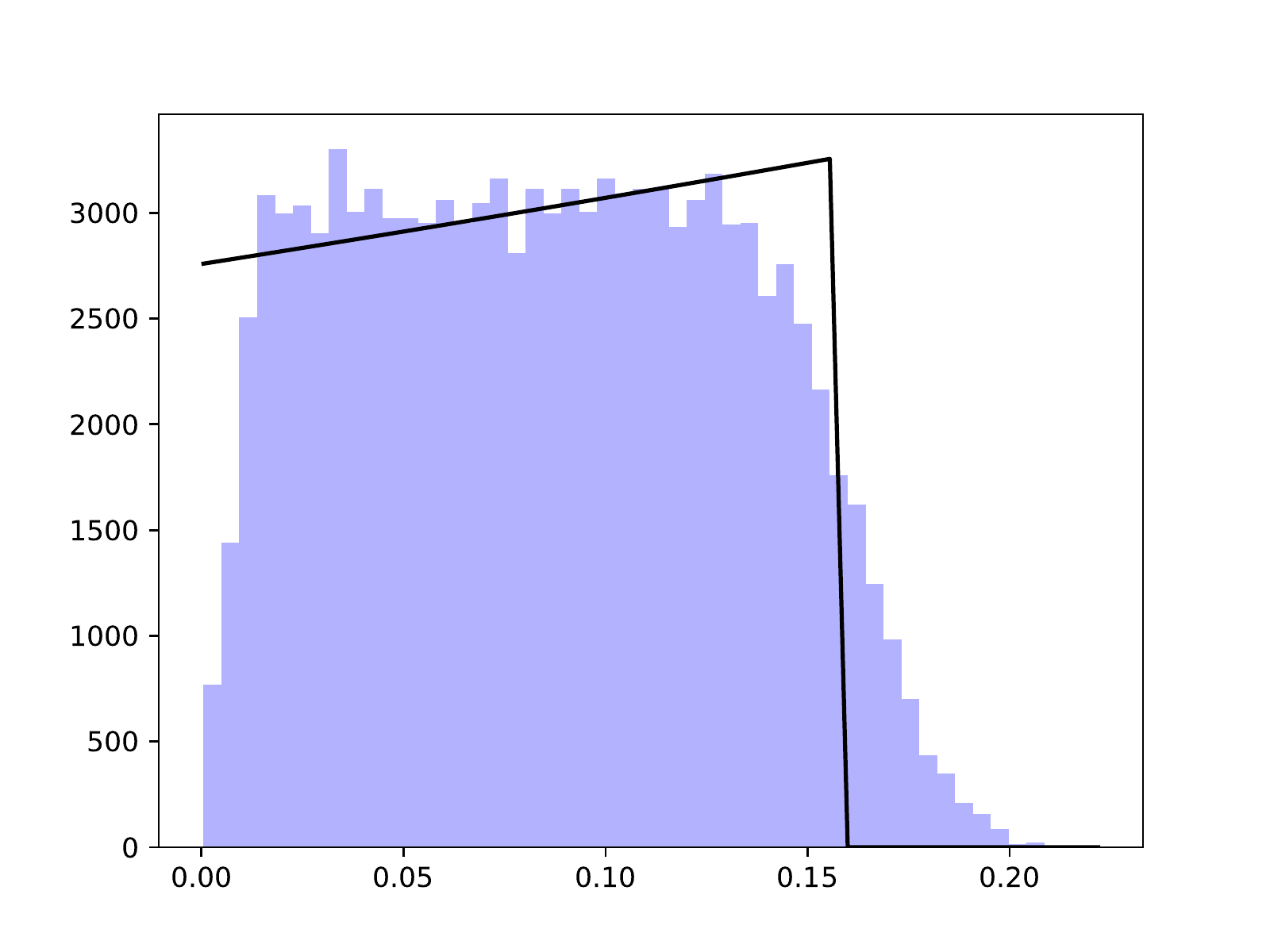}
\includegraphics[width=0.24\textwidth]{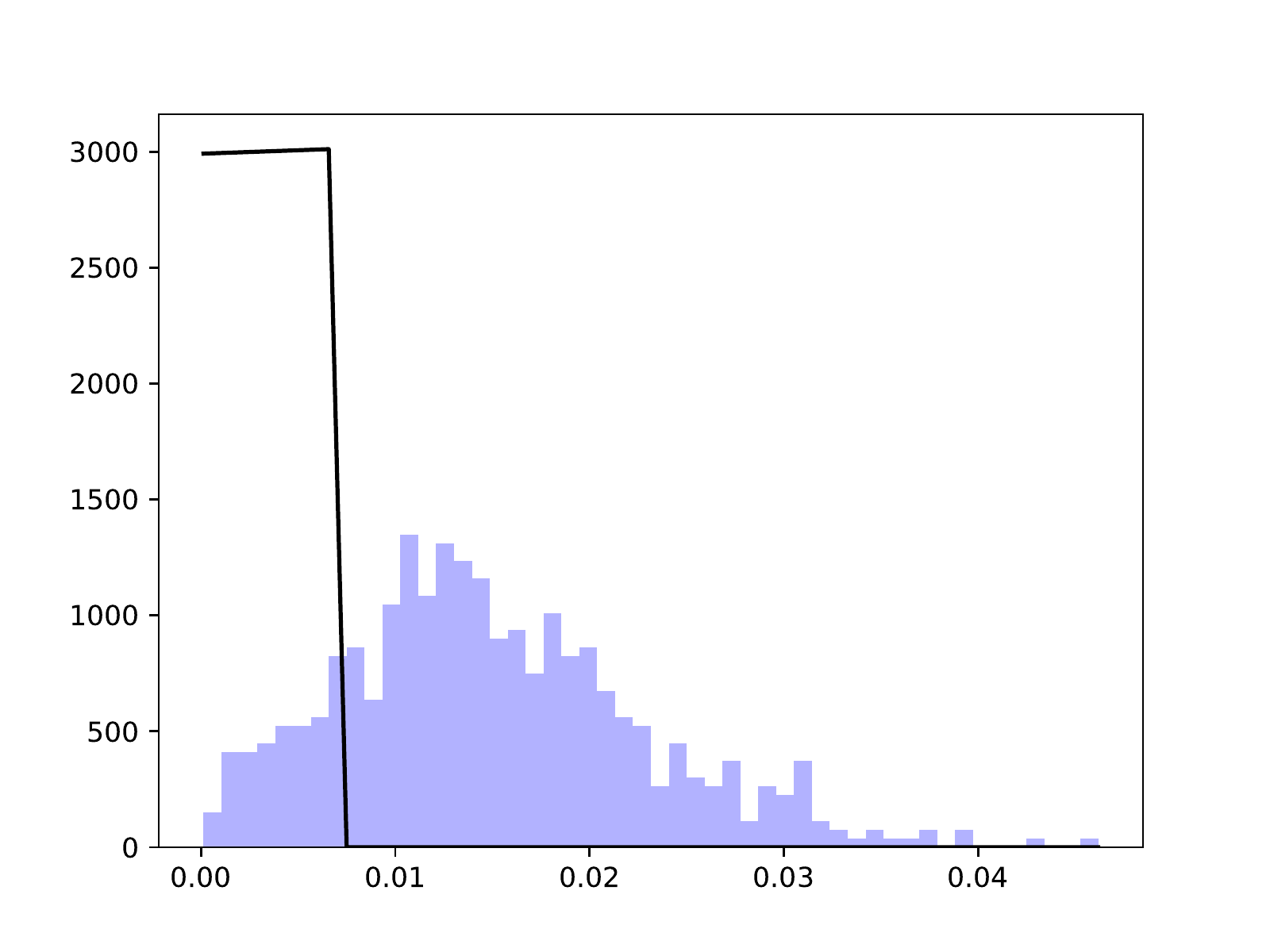}
\caption{
Histograms of the radial parts for the initial density $\psi(x,0)= 2 x$, $0<x<1$. The zeroes of the initial polynomial are i.i.d.\ and the degree is $n=3000$.  The orders of the derivatives are $4 + 425 k$ with $k\in \{0,\ldots,7\}$. The black curve shows the theoretical density given in~\eqref{eq:evolution_weyl_alpha_1/2}.
}
\label{pic:unif_disk_histogram}
\end{figure}

\vspace*{2mm}
\noindent
\textsc{Case $\alpha=1/2$} (Uniform distribution on the disk).   At time $t=0$, the zeroes are uniformly distributed on the unit disk meaning that their Lebesgue density is
$$
u(z,0) = \frac 1 \pi \ind_{\{|z|\leq 1\}}, \qquad z\in \C.
$$
The density of the radial parts is therefore
$$
\psi(x,0) = 2\pi x u(x,0) = 2 x \ind_{\{x\leq 1\}}, \qquad x\geq 0.
$$
This case corresponds to the Weyl polynomials defined by~\eqref{eq:weyl_poly_def} with $\alpha=1/2$.  Let us compute the asymptotic density of the radial parts of the zeroes of the  $[tn]$-th derivative. The implicit equation~\eqref{eq:Psi_Weyl_implicit} for $\Psi(x,t)$ takes the form
$$
-\frac 12 \log (\Psi(x,t) + t) + \log \Psi(x,t) = \log x, \qquad  -\infty < x <  1-t.
$$
Exponentiating and then squaring, we obtain
$$
x^2 = \frac{\Psi^2(x,t)}{\Psi(x,t) + t}, \qquad 0 < x <  1-t.
$$
This yields the following quadratic equation for $\Psi(x,t)$:
$$
\Psi^2(x,t) - x^2 \Psi(x,t)  - x^2 t = 0, \qquad  0 < x <  1-t.
$$
Solving it, we obtain
%
that the  distribution function of the radial parts is given by
$$
\Psi(x,t)  = \frac{x^2 + \sqrt{x^4 + 4x^2 t}}{2}, \qquad 0 < x <  1-t.
$$
The other solution of the quadratic equation is negative and can be discarded.
Taking the derivative  in $x$ yields the following density of the radial parts at time $t$:
\begin{equation}\label{eq:evolution_weyl_alpha_1/2}
\psi(x,t) = \left(x + \frac{x^2+2t}{\sqrt{x^2 + 4 t}}\right) \ind_{\{0 < x <  1-t\}}, \qquad x\geq 0,\;\; 0 \leq t < 1.
\end{equation}
One may conjecture that this formula applies to several cases in which the roots at time $0$ are asymptotically uniform on the unit disk, for example to the i.i.d.\ roots (see Figure~\ref{pic:unif_disk_histogram} for numerical simulations), the Weyl polynomials (for which the above derivation is essentially rigorous), or to the eigenvalues of a random Ginibre matrix; see Figure~\ref{pic:zeroes_weyl_unif_disk}.

\begin{figure}[!tbp]
\includegraphics[width=0.32\textwidth]{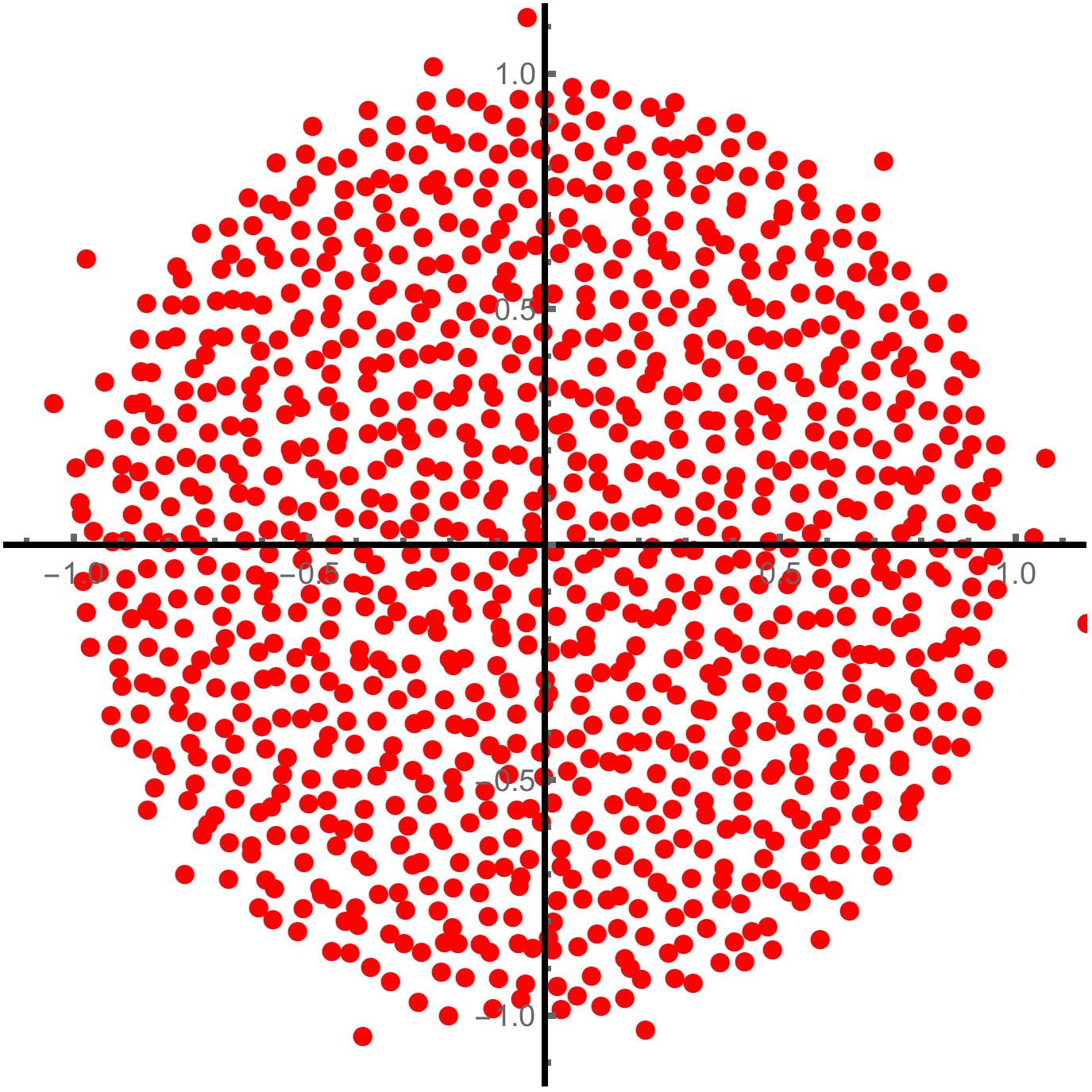}
\includegraphics[width=0.32\textwidth]{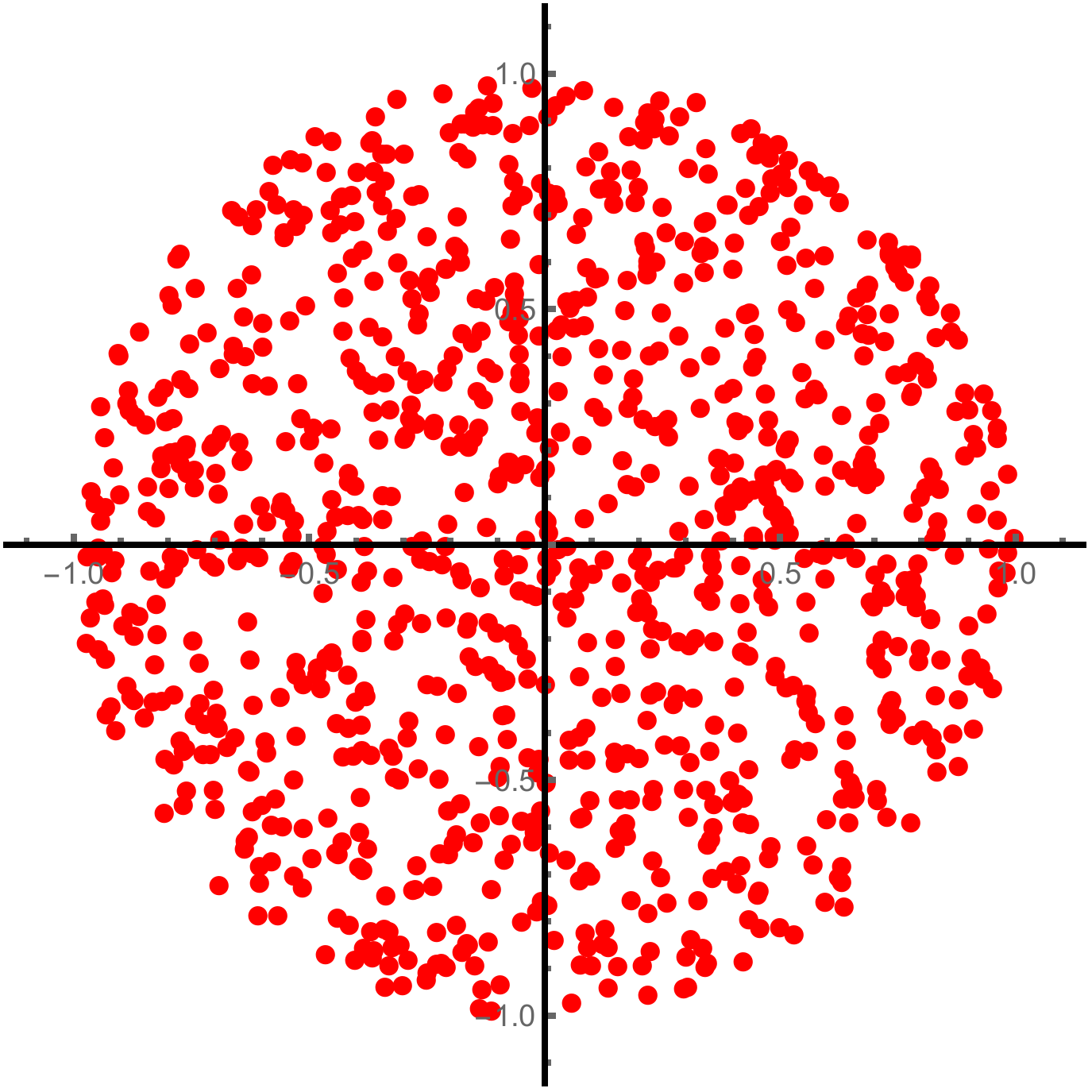}
\includegraphics[width=0.32\textwidth]{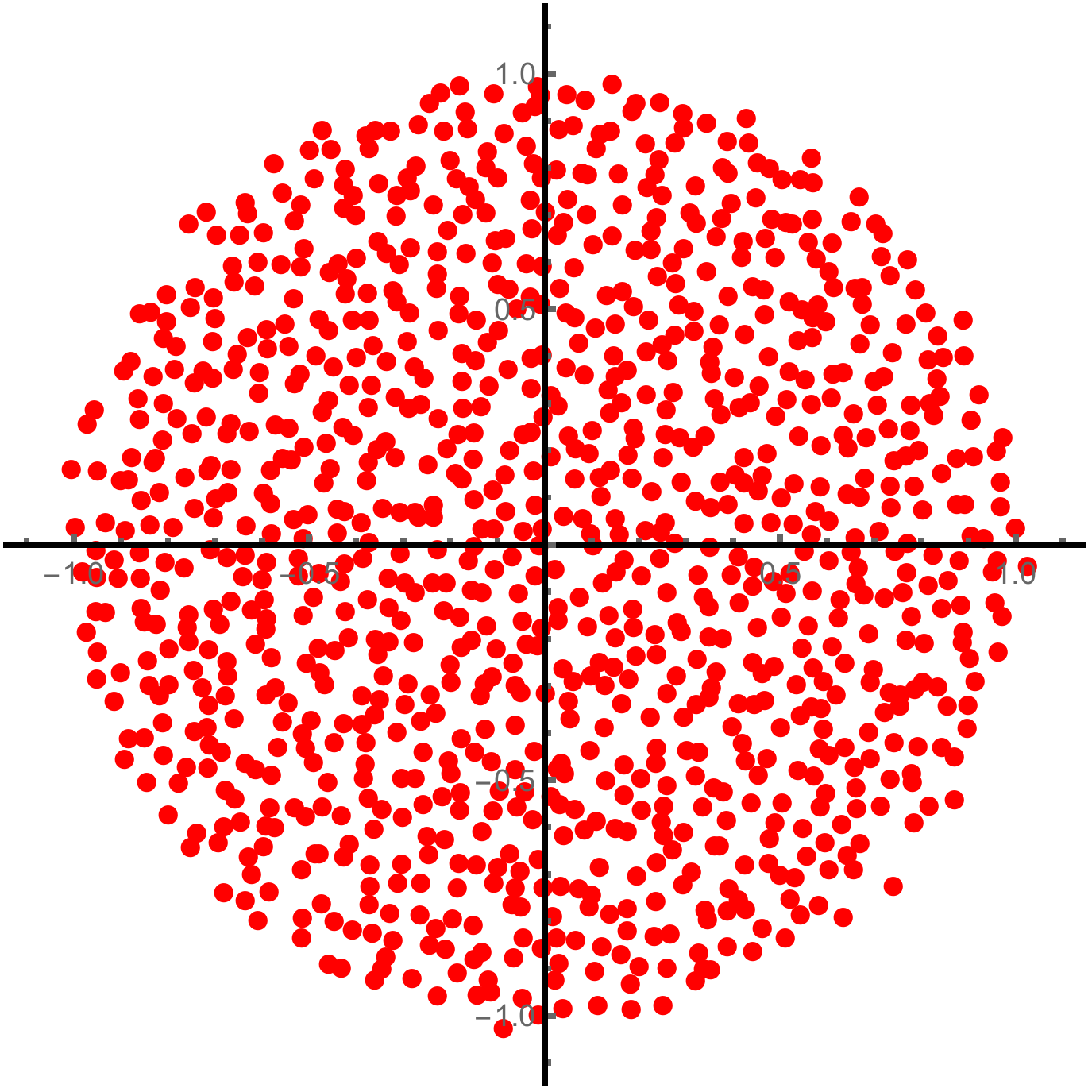}
\includegraphics[width=0.32\textwidth]{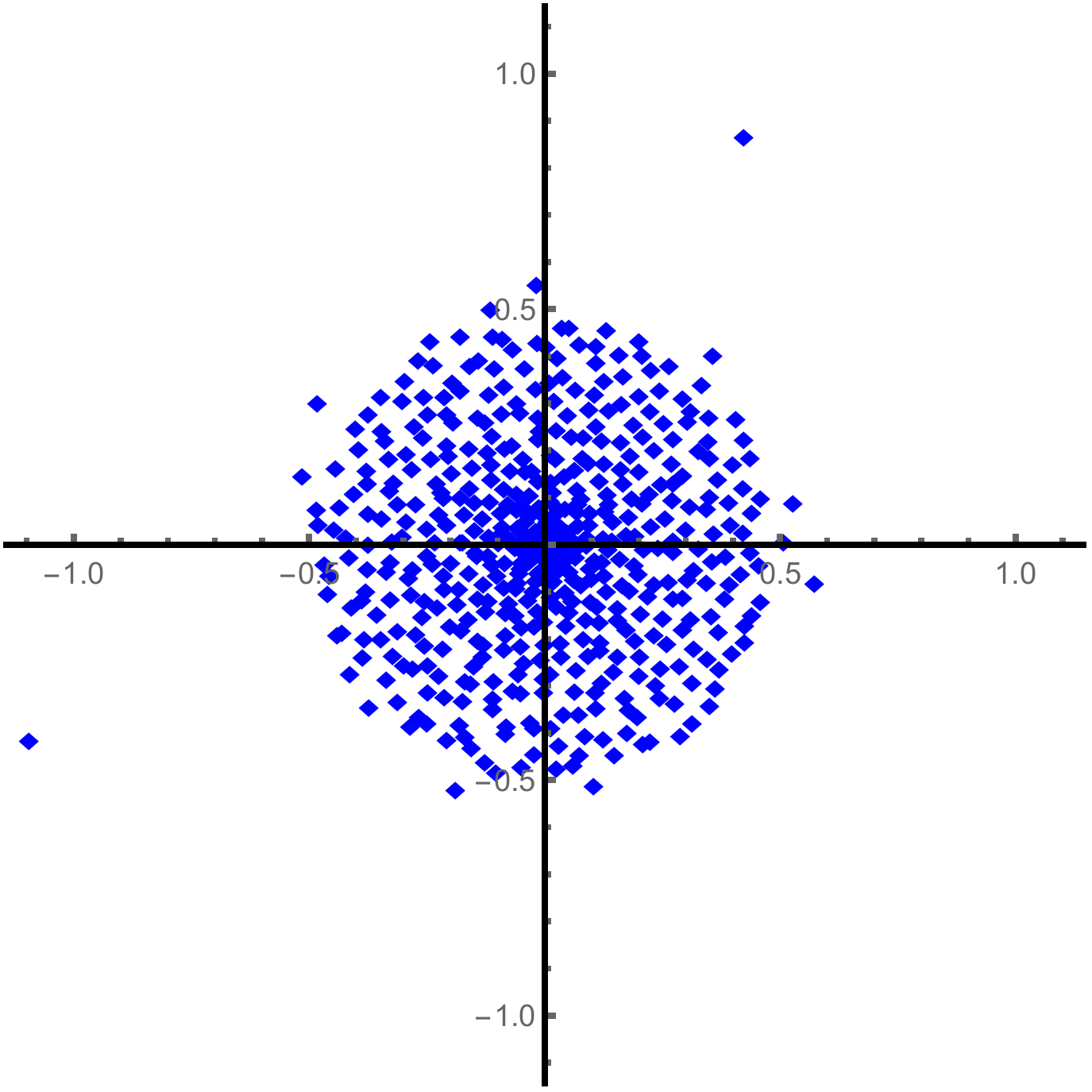}
\includegraphics[width=0.32\textwidth]{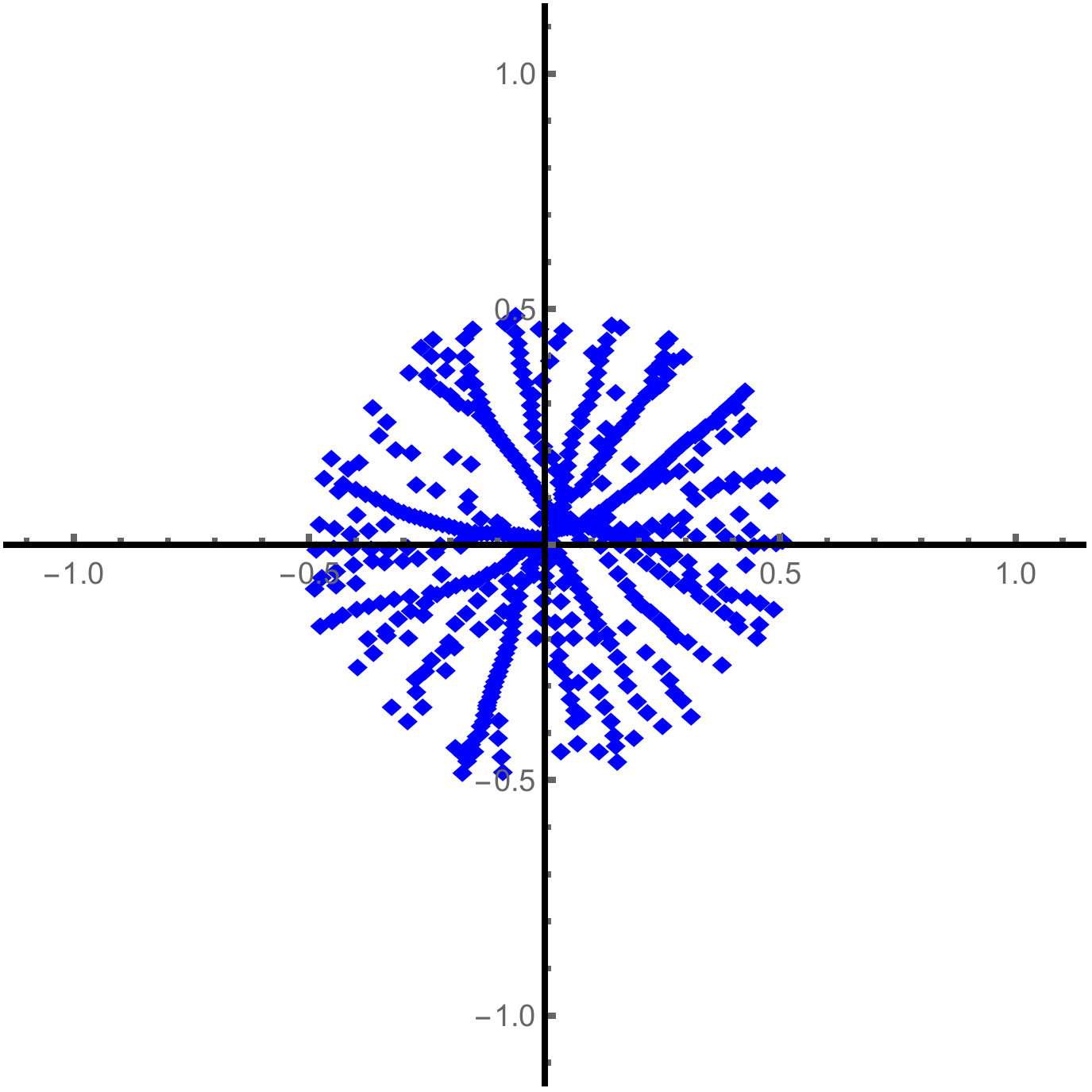}
\includegraphics[width=0.32\textwidth]{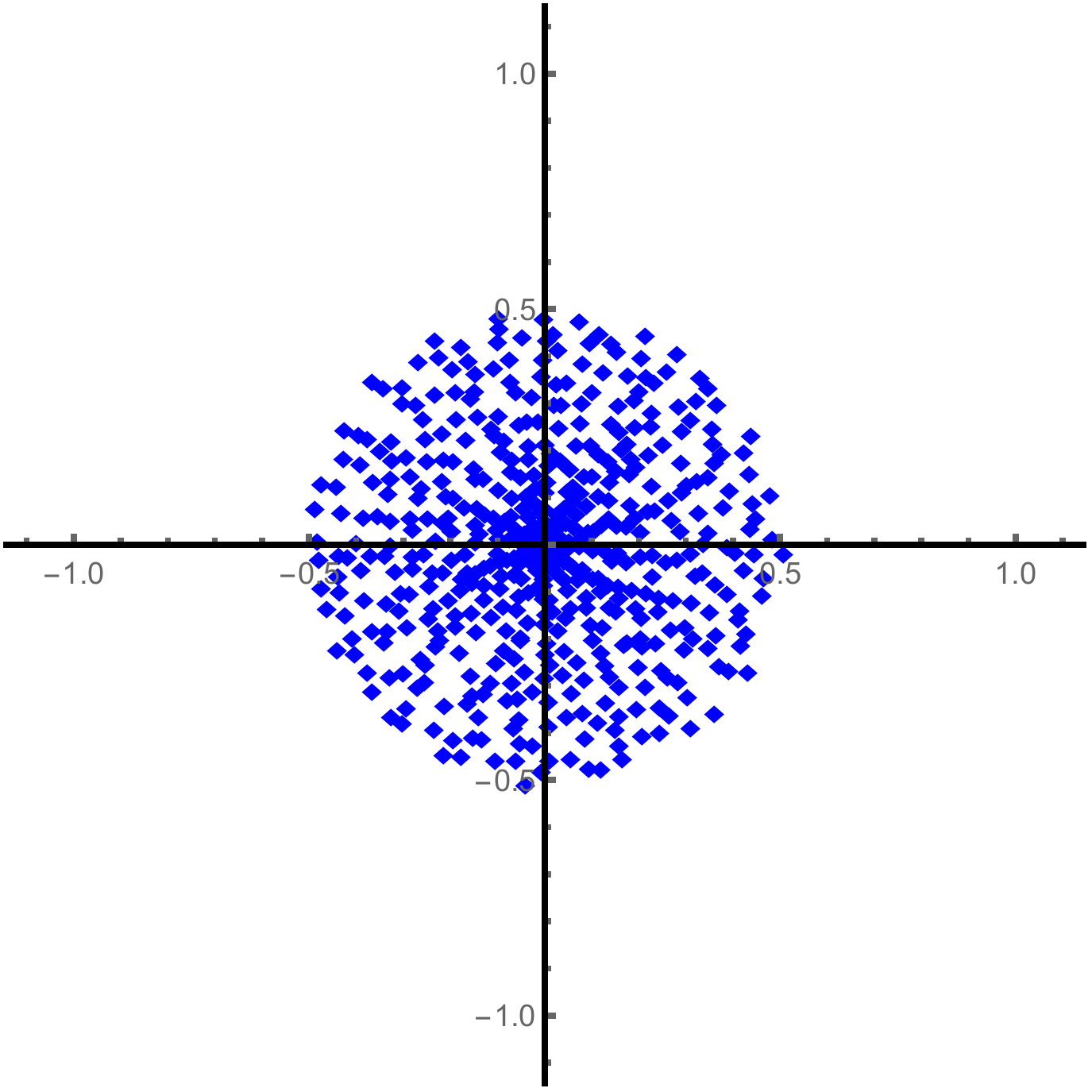}
\caption{
First column: Zeroes of the Weyl polynomial of degree $n=1000$  and the zeroes of its $500$-th derivative.
Second column: Same for $n=1000$ independent zeroes distributed uniformly on the unit disk.
Third column: Same for the characteristic polynomial of the Ginibre matrix of size $n=1000$.
}
\label{pic:zeroes_weyl_unif_disk}
\end{figure}

\begin{figure}[!tbp]
\includegraphics[width=0.24\textwidth]{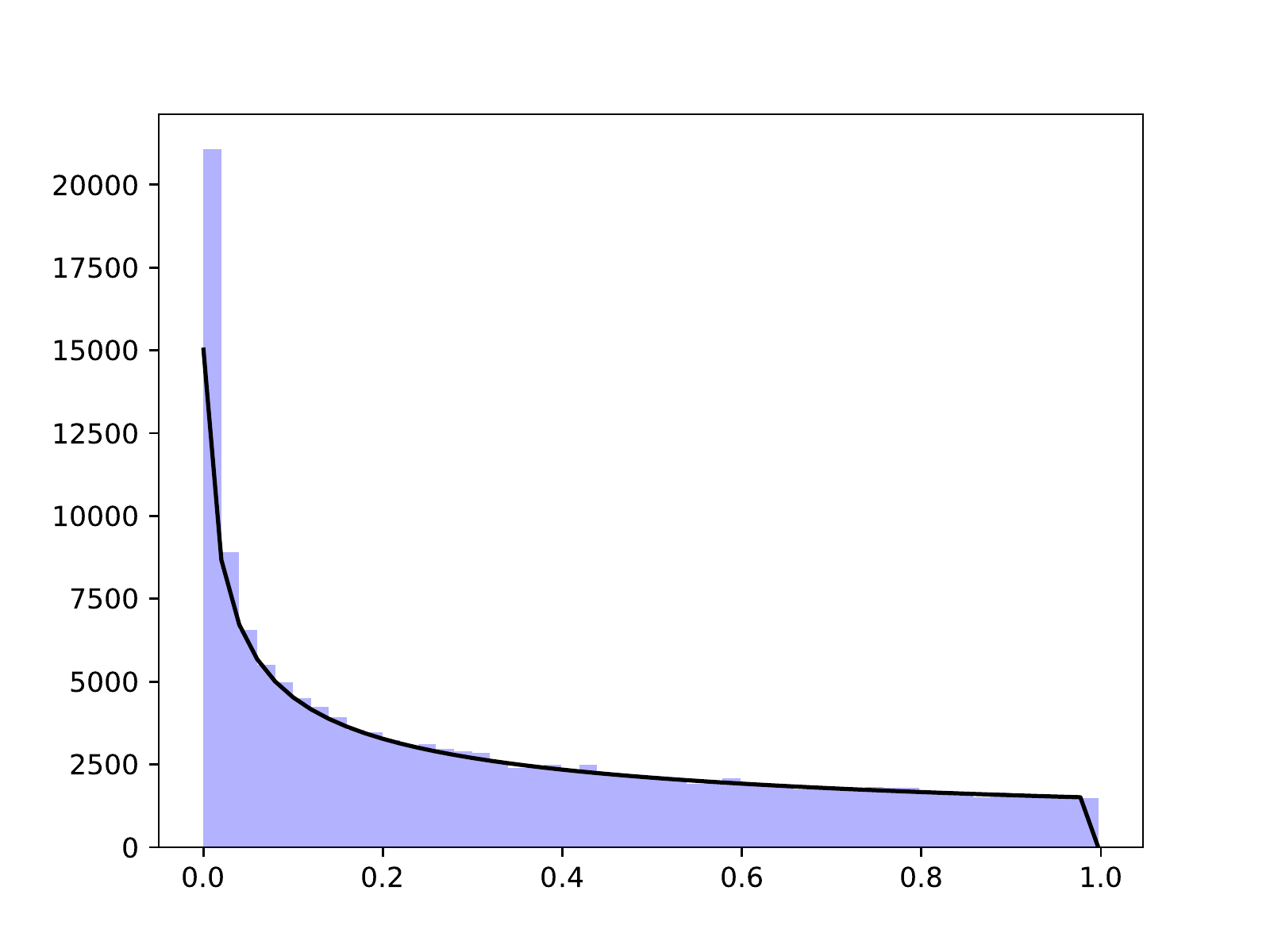}
\includegraphics[width=0.24\textwidth]{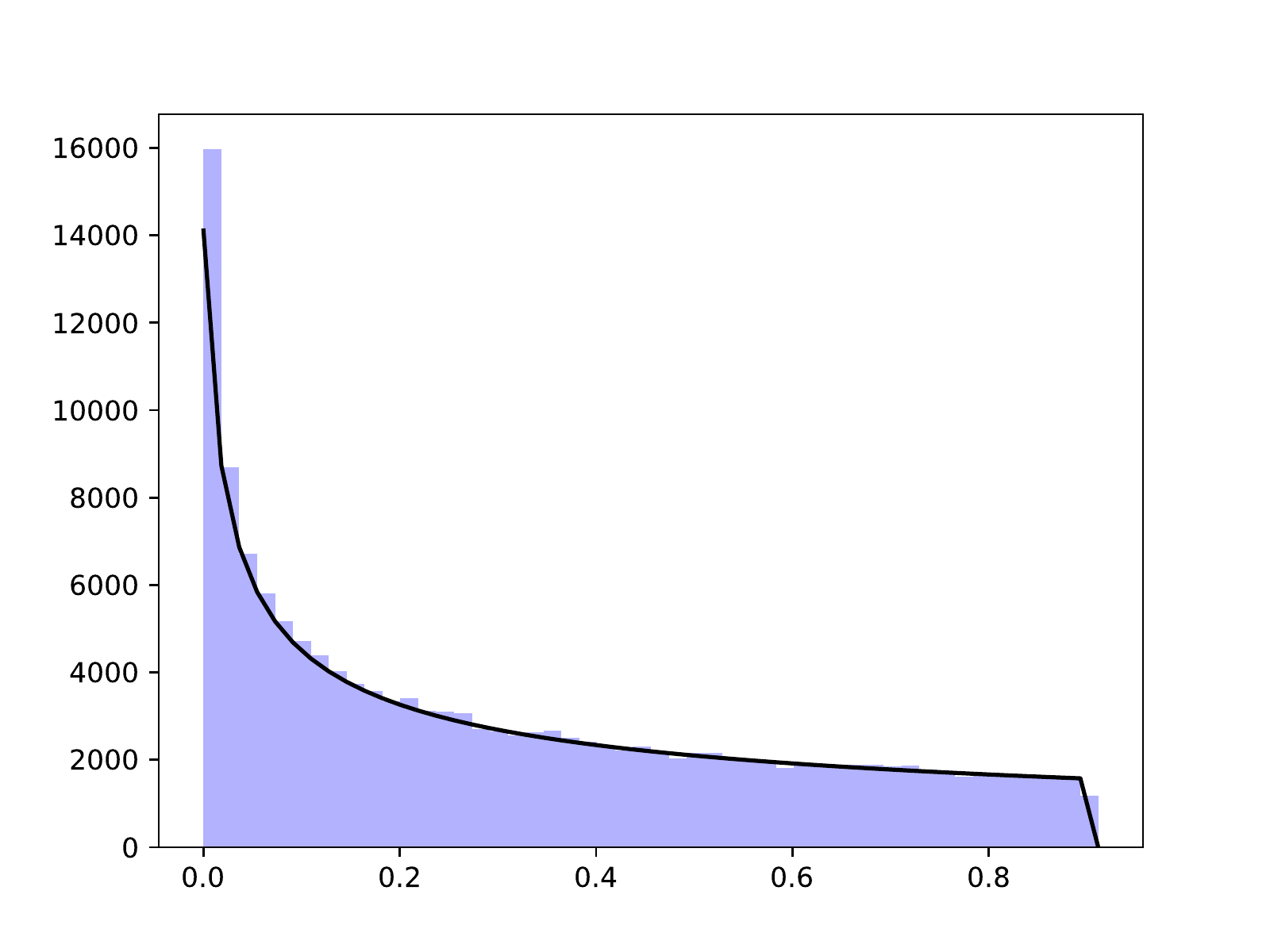}
\includegraphics[width=0.24\textwidth]{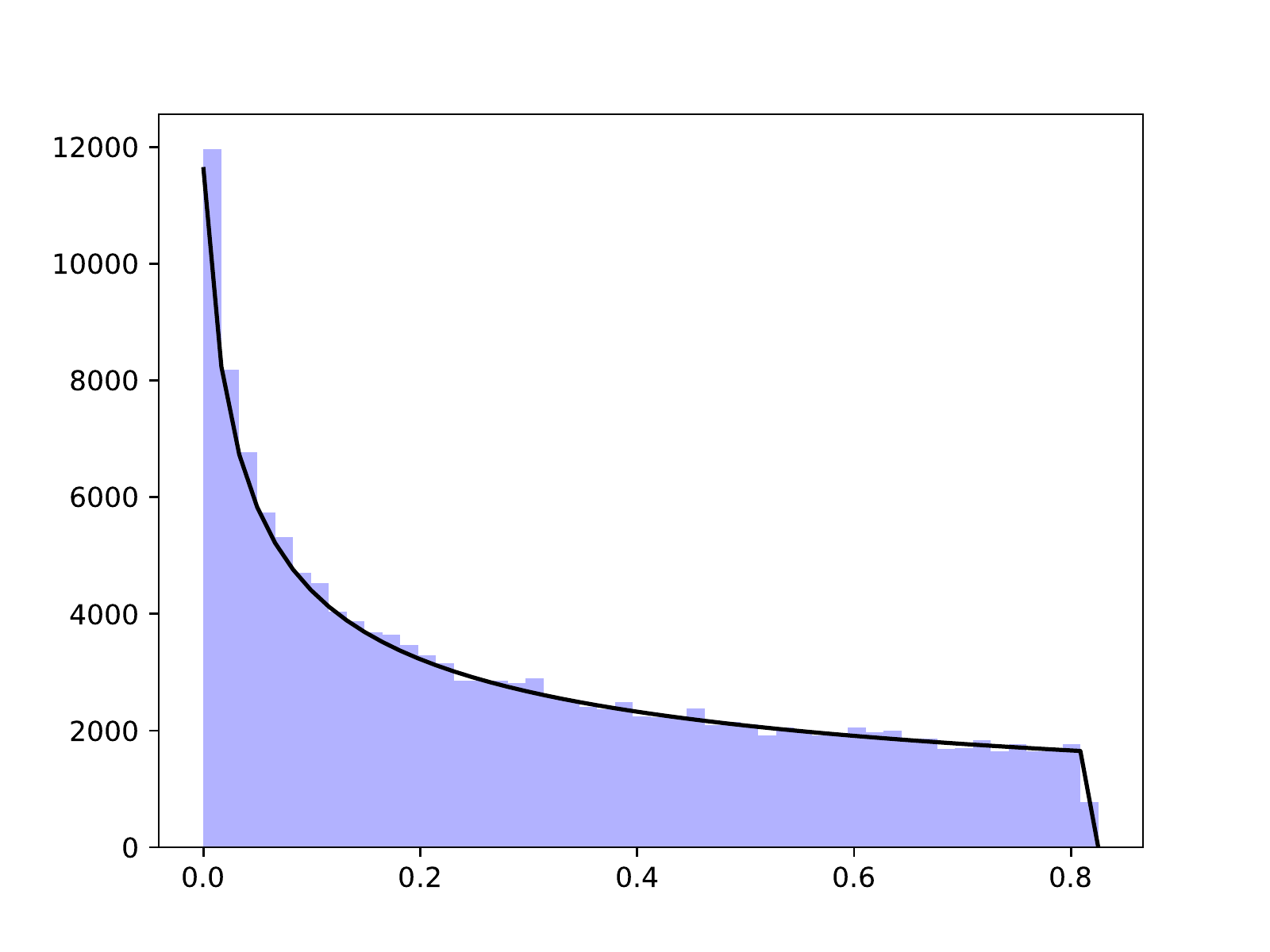}
\includegraphics[width=0.24\textwidth]{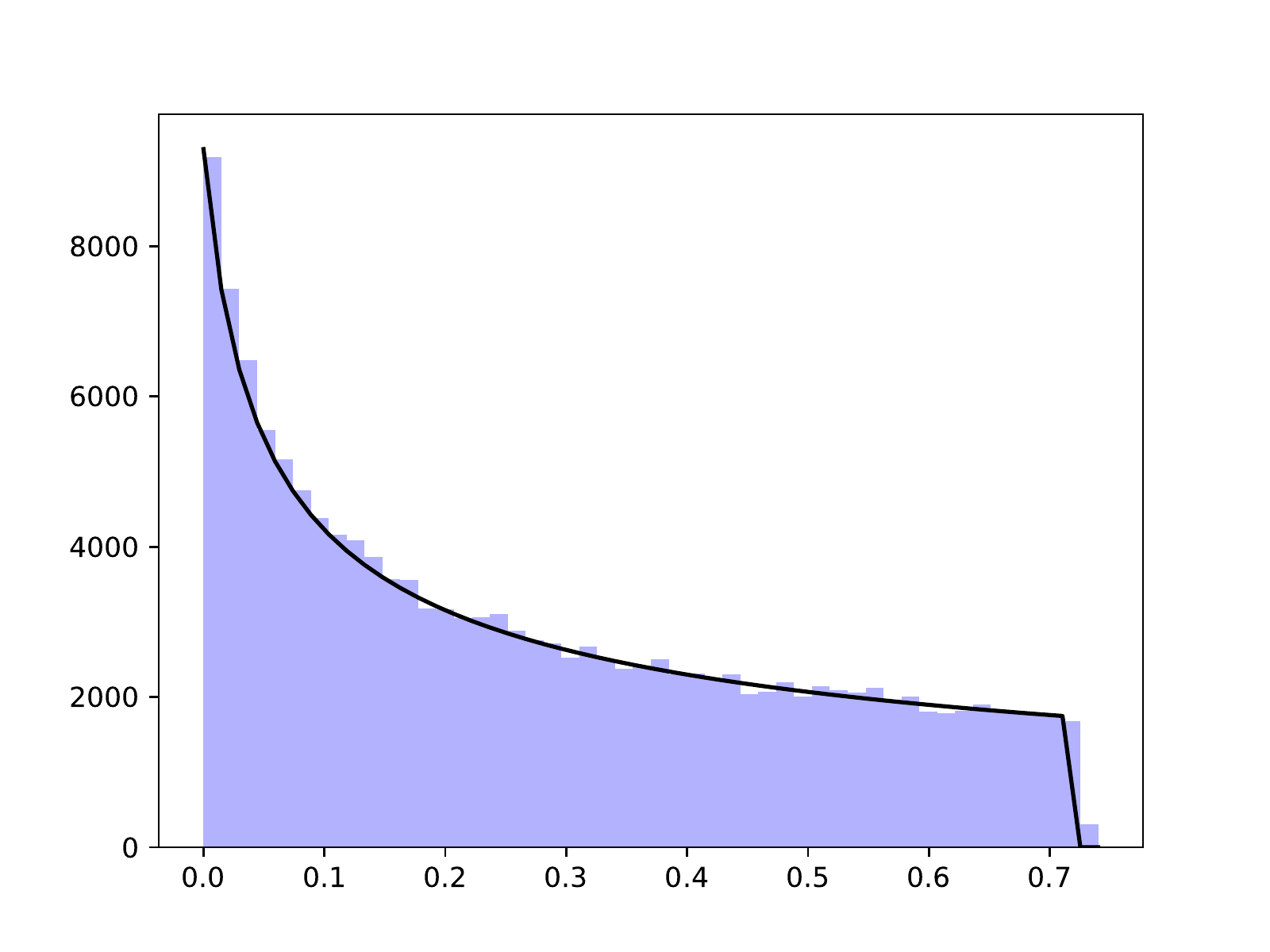}
\includegraphics[width=0.24\textwidth]{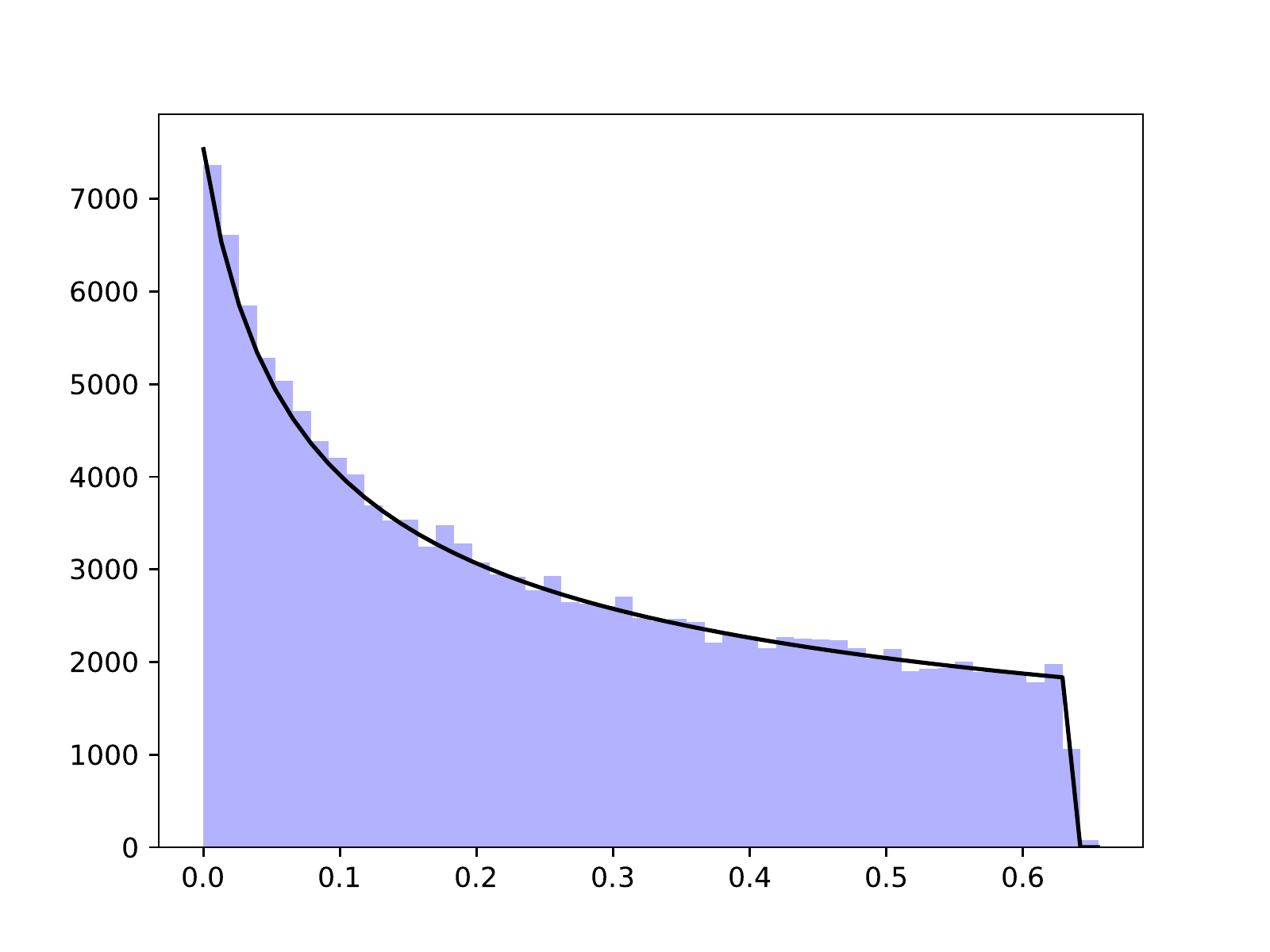}
\includegraphics[width=0.24\textwidth]{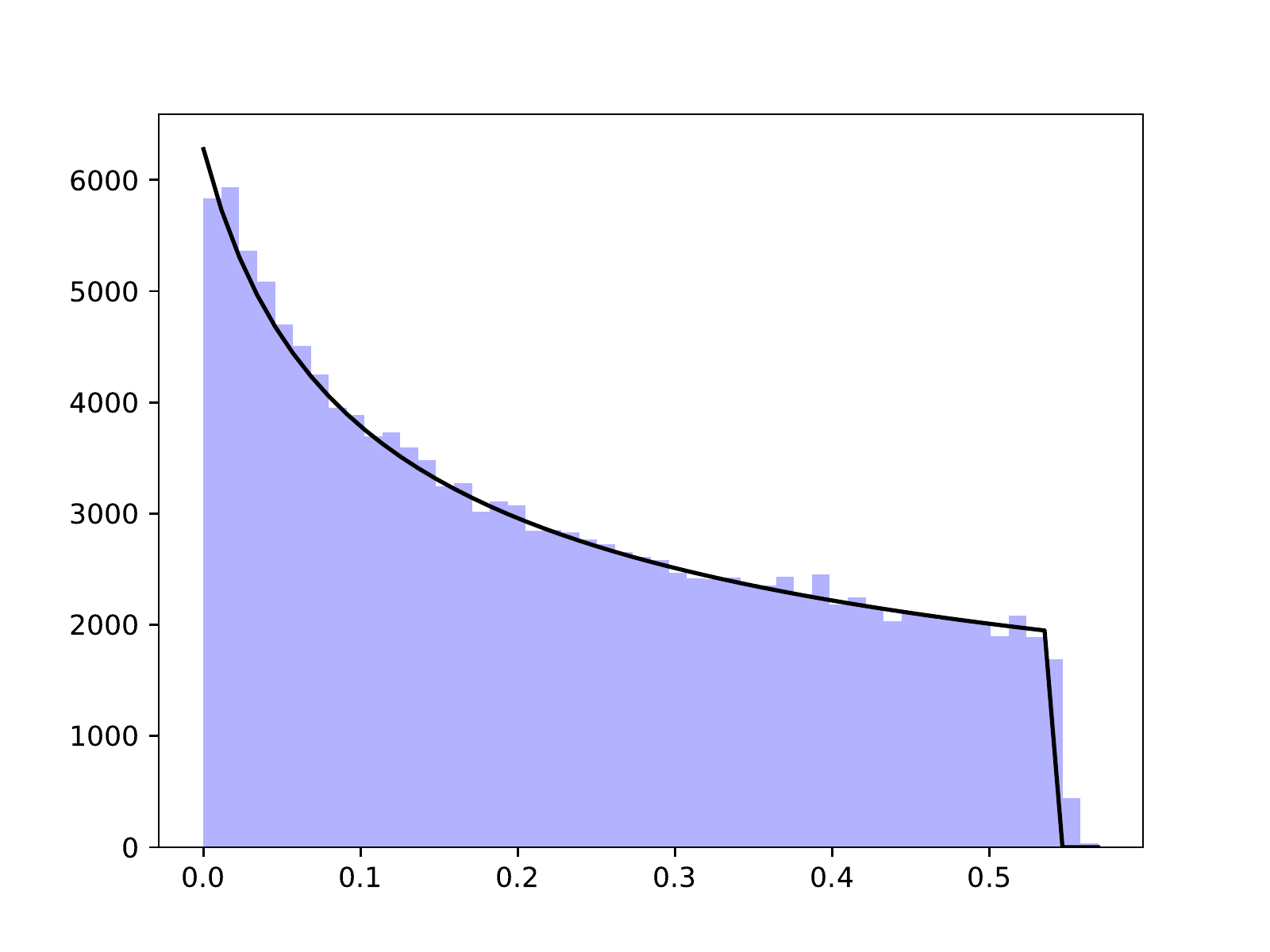}
\includegraphics[width=0.24\textwidth]{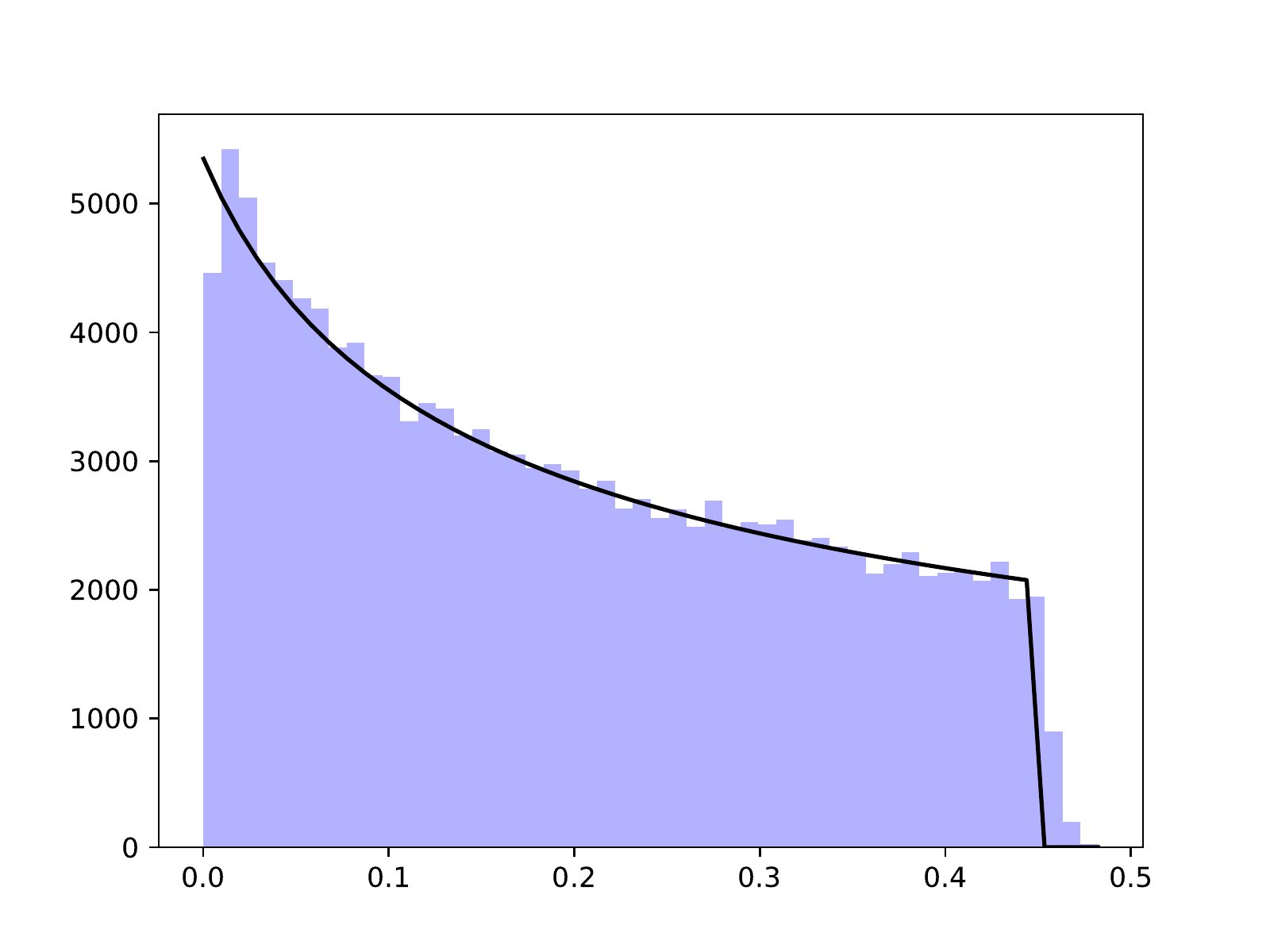}
\includegraphics[width=0.24\textwidth]{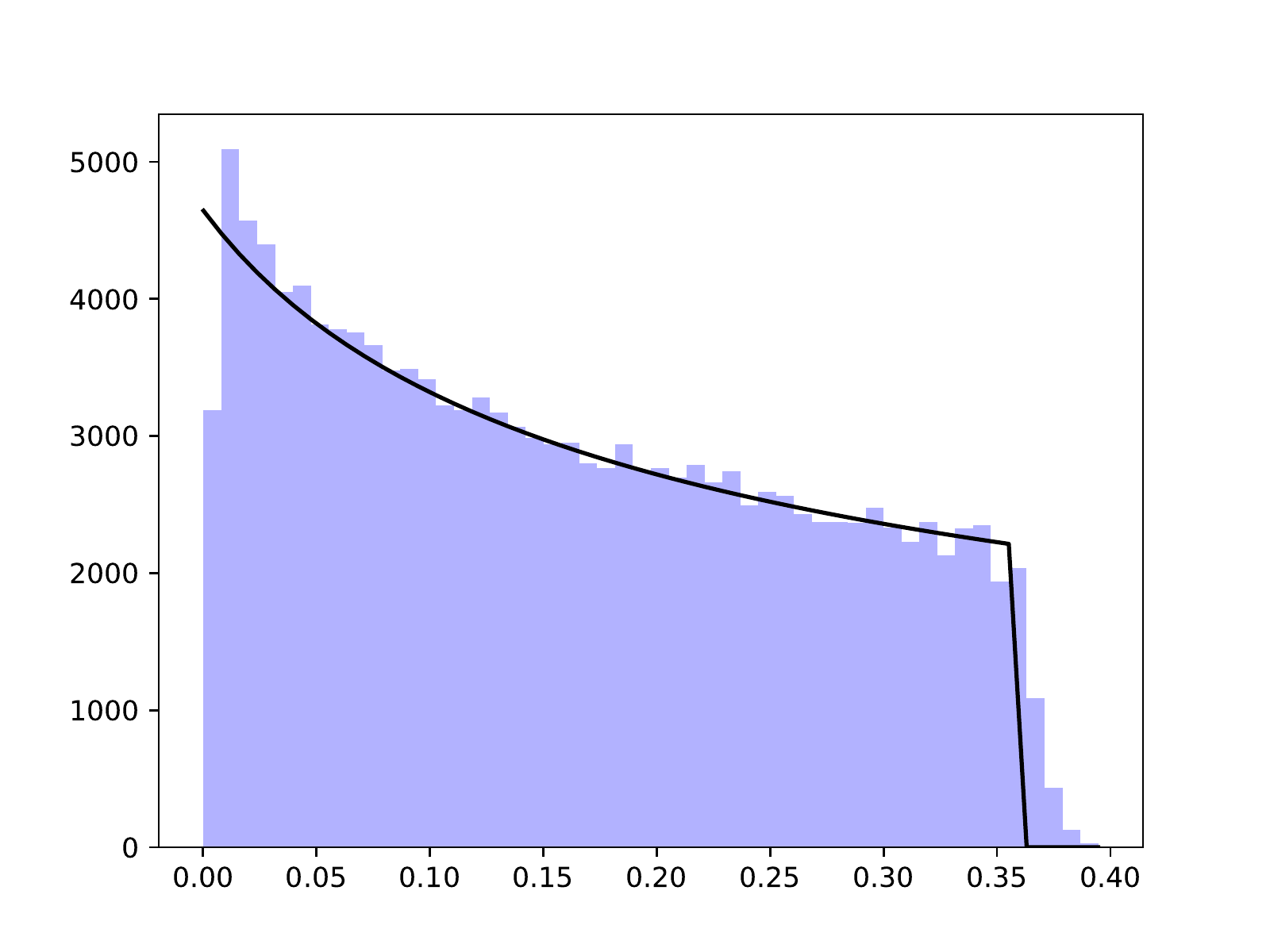}
\includegraphics[width=0.24\textwidth]{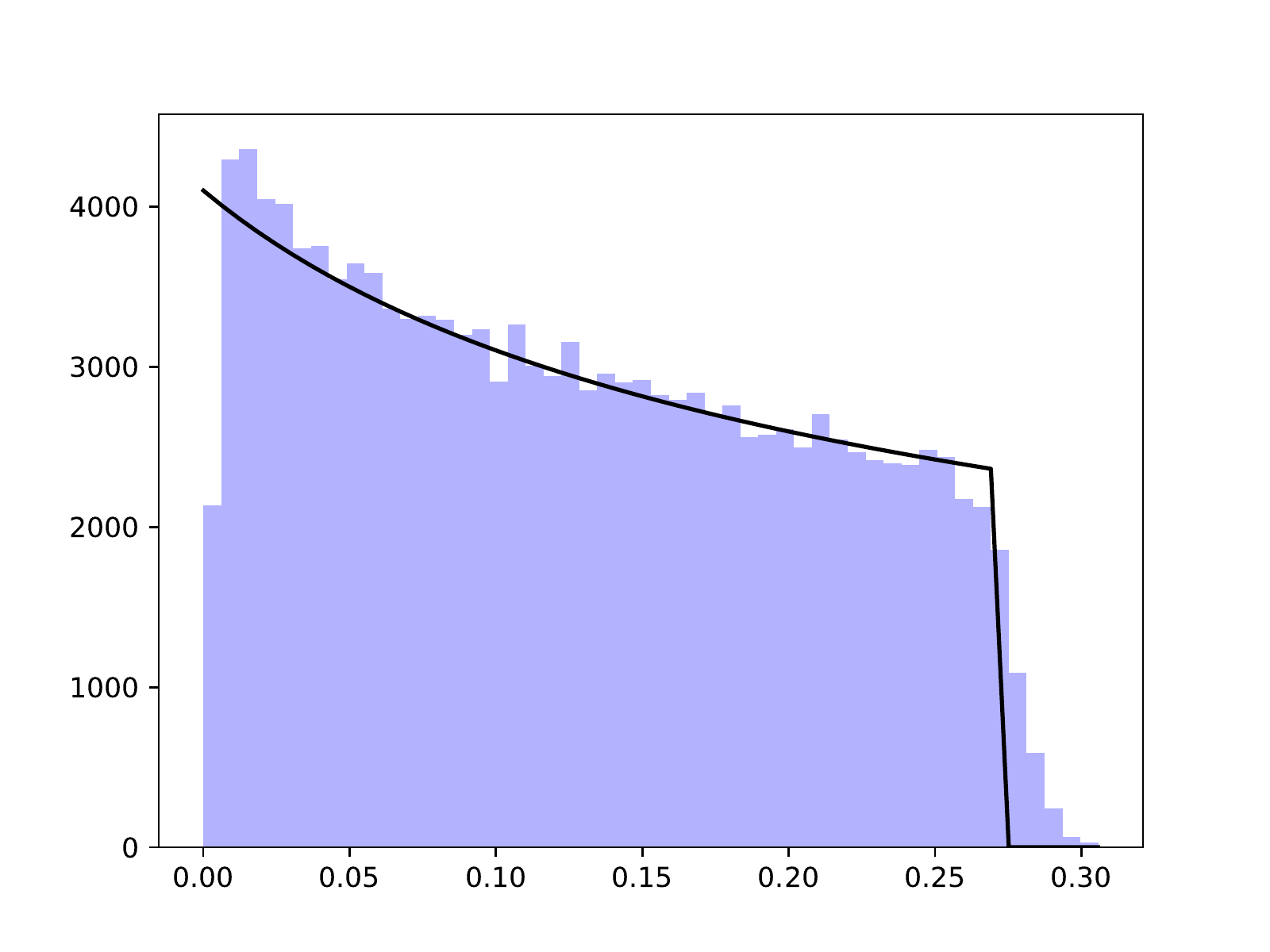}
\includegraphics[width=0.24\textwidth]{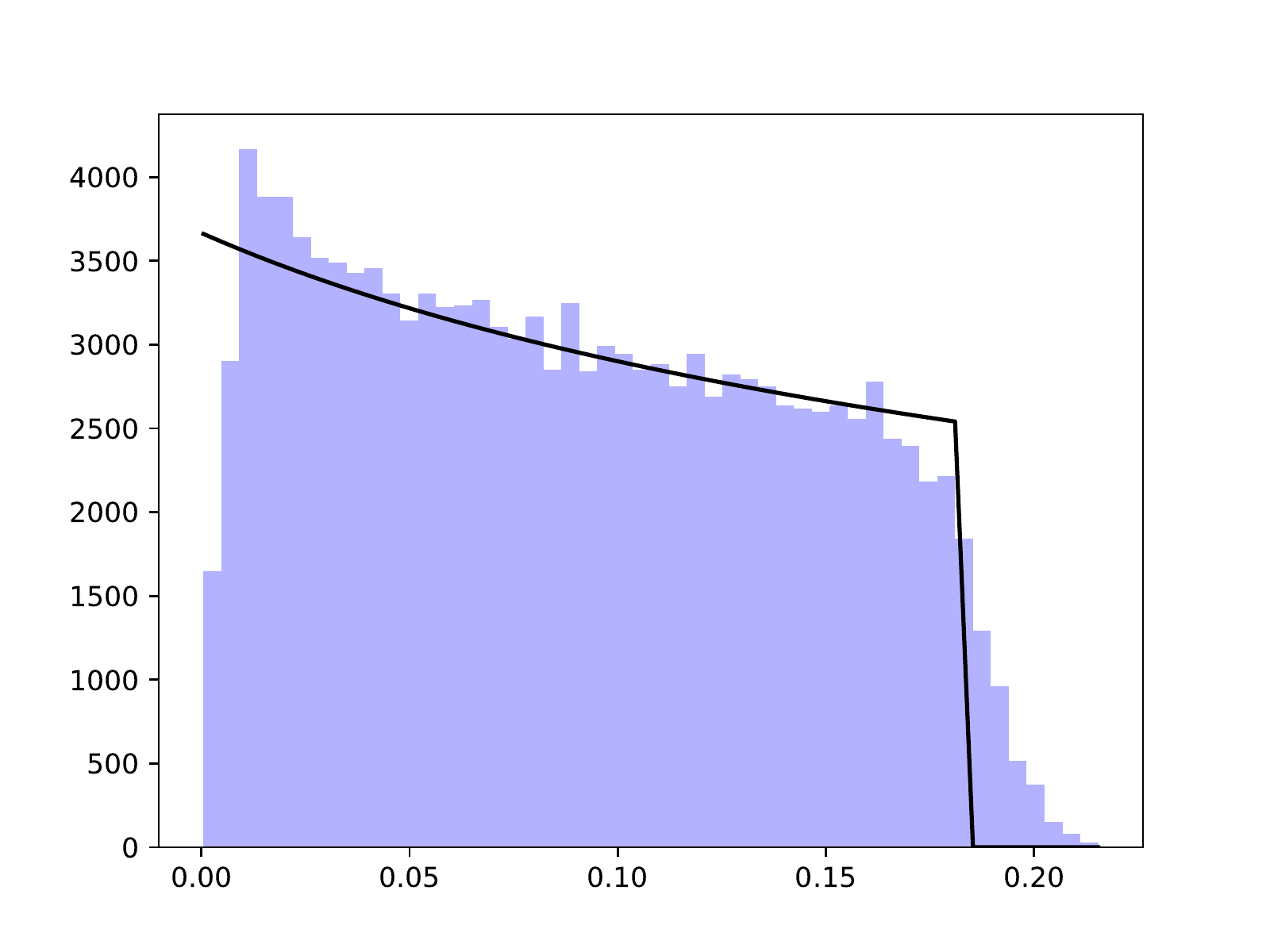}
\includegraphics[width=0.24\textwidth]{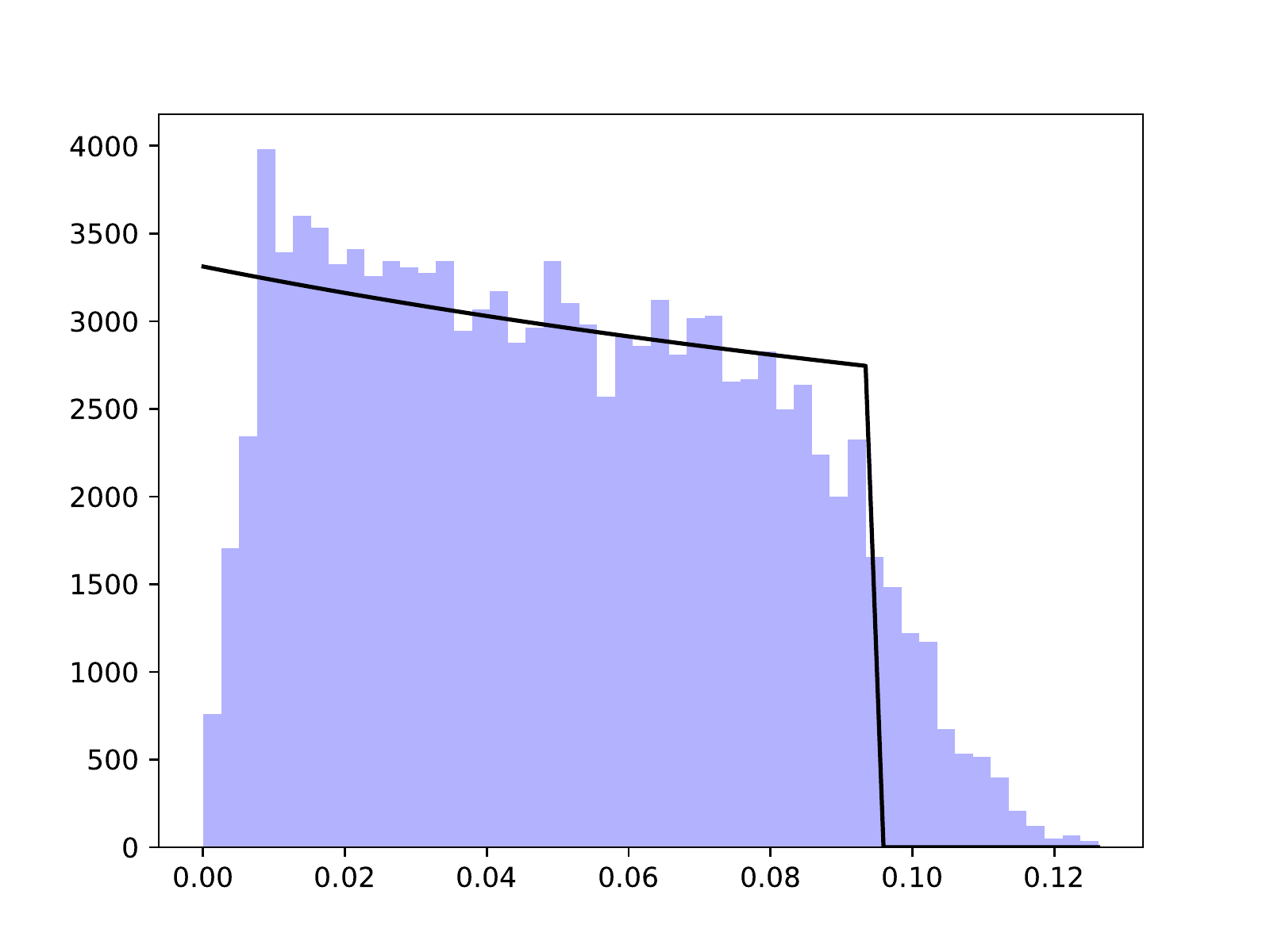}
\includegraphics[width=0.24\textwidth]{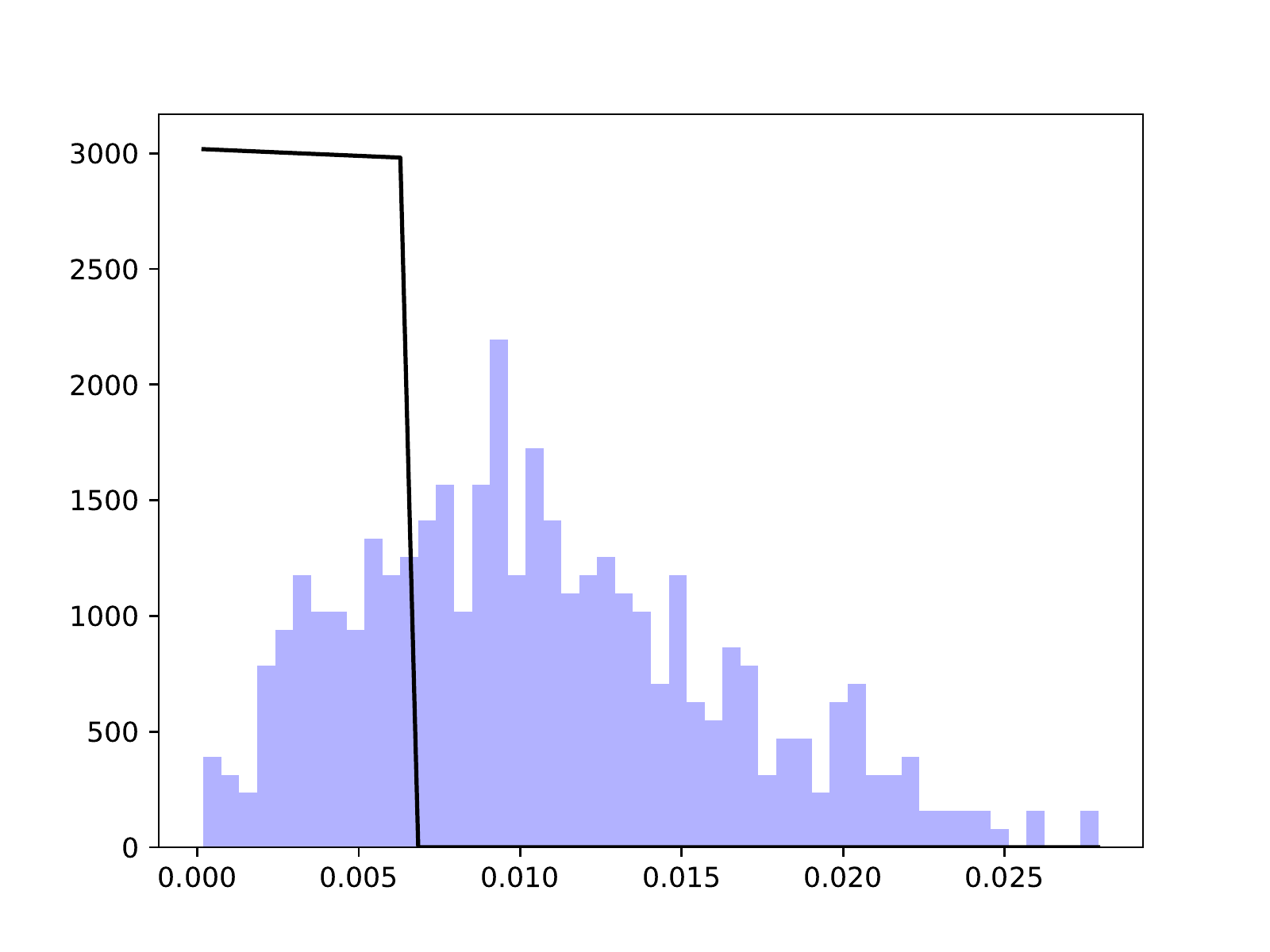}
\caption{
Histograms for the initial density $\psi(x,0)= 1/(2\sqrt x)$, $0<x<1$. The zeroes of the initial polynomial are i.i.d. The degree is $n=3000$ and the orders of the derivatives are $9+270 k$ with $k\in \{0,\ldots, 11\}$.  The black curve shows the theoretical density given in~\eqref{eq:psi_sqrt_density}.
}
\label{pic:sqrt_zeroes_histogram}
\end{figure}

\vspace*{2mm}
\noindent
\textsc{Case $\alpha=2$}. The initial condition is
$$
\psi(x,0)
=
\frac {\ind_{\{0<x<1\}}} {2\sqrt x},
\qquad x\geq 0.
$$
That is to say, the square roots of the radial parts are uniformly distributed on $[0,1]$. The implicit equation~\eqref{eq:Psi_Weyl_implicit} takes the form
$$
\log (\Psi(x,t) + t) + \log \Psi(x,t) = \log x, \qquad  0 < x < 1-t.
$$
This results in the following quadratic equation for $\Psi(x,t)$:
$$
\Psi^2(x,t) + t \Psi(x,t) - x = 0.
$$
Solving it yields
$$
\Psi(x,t) = \frac {-t + \sqrt{t^2+4x}}{2}, \qquad  0< x  < 1-t.
$$
Differentiating in $x$ we arrive at
\begin{equation}\label{eq:psi_sqrt_density}
\psi(x,t) = \frac {\ind_{\{0 < x <  1-t\}}} {\sqrt{t^2+4x}}, \qquad  x>0, \;\; 0 \leq t < 1.
\end{equation}
This result is in a very good agreement with numerical simulation shown on Figure~\ref{pic:sqrt_zeroes_histogram}.

\begin{remark}
It is also possible to perform similar computations for the initial density of zeroes of the form
$$
u(z,0)=\frac 1 {2\pi \alpha} |z|^{(1/\alpha)-2},
\qquad z\in \C.
$$
It should be stressed that the integral of this density is infinite, which means that it corresponds to zeroes of a random analytic function rather than a polynomial.
The previous calculations apply with the only difference that now the indicator function $\ind_{\{0 < x <  1-t\}}$ has to be removed from the final results and that these are valid for arbitrary $t>0$.
\end{remark}

\subsection{Uniform distribution of radial parts on an interval}
Consider now zeroes whose radial parts are uniformly distributed on the interval $[r_1,r_2]$, for some $0\leq r_1 < r_2 < \infty$. The initial condition is
$$
\psi(x,0) = (r_2-r_1)^{-1} \ind_{[r_1,r_2]}(x), \qquad x>0.
$$
Applying the usual scheme, one easily gets $v'(x) = \log (r_1 + (r_2-r_1)x)$ for $x\in [0,1]$ and hence
$$
\partial_1(x,t) = \log\left((r_1 + (r_2-r_1)(x+t))\cdot \frac{x}{x+t}\right), \qquad  0 < x < 1-t.
$$
Inverting this function, we obtain
$$
\Psi(x,t) = J(\log x,t) = \frac{x - (r_2-r_1)t -r_1 + \sqrt{(r_1+(r_2-r_1)t - x)^2 + 4t(r_2-r_1)x}}{2(r_2-r_1)},
$$
for $0 <  t <1$ and $0\leq x \leq (1-t)r_2$.
Differentiating in $x$, we arrive at the general solution
$$
\psi(x,t) = \frac 1 {2 (r_2-r_1)} + \frac{x + (r_2-r_1)t - r_1}{2 (r_2-r_1)\sqrt{(r_1 + (r_2-r_1)t-x)^2 + 4 t (r_2-r_1) x}},
$$
for $0 < t <1$ and $0\leq x \leq (1-t)r_2$.  Note that the void disk of radius $r_1$ present in the initial condition disappears instantaneously for every $t\neq 0$.  This solution is easily seen to reduce to~\eqref{eq:evolution_weyl_alpha_1} if $r_1=0$ and $r_2=1$, and  to~\eqref{eq:kac_evolution} if $r_1=1$ and $r_2=1+\eps$ with $\eps\downarrow 0$.

\subsection{Elliptic polynomials}
Another family of initial densities of complex zeroes for which the complete dynamics can be written down explicitly has the form
$$
u(z,0)
= \frac{1}{2\pi \alpha} \cdot \frac{|z|^{(1/\alpha) - 2}}{(1+|z|^{1/\alpha})^2}, \qquad z\in \C,
$$
where the parameter $\alpha$ satisfies $\alpha \geq 0$. The corresponding density of the radial parts has the form
$$
\psi(x,0) = 2 \pi x   u(x,0) = \frac{1}{\alpha} \cdot \frac{x^{(1/\alpha) - 1}}{(1+x^{1/\alpha})^2},\qquad x\geq 0.
$$
An example of a random polynomials with stochastically independent coefficients having this asymptotic distribution of zeroes is given by
\begin{equation}\label{eq:elliptic_poly_def}
E_n(z)
=
\sum_{k=0}^n \xi_k \left(\frac{n(n-1)\ldots (n-k+1)}{k!}\right)^\alpha z^k,
\qquad
|z|<1;
\end{equation}
see~\cite[Theorem~2.1]{kabluchko_zaporozhets12a}. The case with $\alpha=1/2$ and complex Gaussian $\xi_k$'s plays a special role~\cite[Chapter~2]{peres_book} and corresponds to the zeroes distributed in a $\text{SO}(3)$-invariant way  on the Riemann sphere, after identifying it with the complex plane using the stereographic projection.

The corresponding function $v$ can be determined either using~\eqref{eq:mu_0_I'} and~\eqref{eq:I_def} or by computing the exponential profile of the coefficients in~\eqref{eq:elliptic_poly_def}; see~\cite[p.~1385]{kabluchko_zaporozhets12a}. It is given by
$$
v(x) = \alpha (x\log x + (1-x)\log (1-x)), \qquad 0\leq x \leq 1.
$$
It follows from~\eqref{eq:v_x_t_def} that for all $0\leq t <1$,
$$
v(x,t) = (\alpha-1) (x+t)\log (x+t) + \alpha (1 - x - t)\log (1 - x - t) +  x\log x, \quad 0\leq x \leq 1-t.
$$
The derivative in $x$ is given by
$$
\partial_1 v(x,t) = (\alpha-1) \log (x+t) - \alpha \log (1 - x - t)  + \log x, \qquad 0 \leq x \leq 1-t.
$$
The function $x\mapsto \partial_1 v(x,t)$  is monotone increasing and its range is the whole real line. The implicit equation for the corresponding inverse function $y\mapsto J(y,t)$ takes the form
$$
(\alpha-1) \log (J(y,t) + t) - \alpha \log (1 - J(y,t) - t)  + \log J(y,t) = y, \qquad y\in \R.
$$
Taking $y=\log x$, we obtain the following implicit equation for $\Psi(x,t) = J(\log x, t)$:
\begin{equation}\label{eq:Psi_elliptic_implicit}
(\alpha-1) \log (\Psi(x,t) + t) - \alpha \log (1 - \Psi(x,t) - t)  + \log \Psi(x,t) = \log x, \qquad x>0.
\end{equation}

\vspace*{2mm}
\noindent
\textsc{Case $\alpha=1$.}
Equation~\eqref{eq:Psi_elliptic_implicit} takes the form
$$
 -  \log (1 - \Psi(x,t) - t)  + \log \Psi(x,t) = \log x, \qquad x>0.
$$
Solving it yields
$$
\Psi(x,t) = \frac {x(1-t)}{1+x}, \qquad x>0.
$$
Differentiating in $x$, we arrive at the following solution:
$$
\psi(x,t) = \frac {1-t}{(1+x)^2}, \qquad x>0, \;\; 0\leq t<1.
$$
This solution is stable in the sense that the function $\psi(x,t)/(1-t)$ (which is a \textit{probability} density) does not depend on $t$.

\vspace*{2mm}
\noindent
\textsc{Case $\alpha=1/2$} (Uniform distribution on the Riemann sphere).
The implicit equation~\eqref{eq:Psi_elliptic_implicit} takes the form
$$
-\frac 12  \log (\Psi(x,t) + t) - \frac 12 \log (1-\Psi(x,t)-t)  + \log \Psi(x,t) = \log x, \qquad x>0.
$$
Multiplying by $2$ and exponentiating yields
$$
\Psi^2(x,t) = x^2 (\Psi(x,t) + t)(1 - \Psi(x,t) -  t),  \qquad x>0.
$$
Multiplying out, we arrive at the following quadratic equation for $\Psi(x,t)$:
$$
(1 + x^2) \Psi^2(x,t) + (2t-1) x^2 \Psi(x,t) +  x^2 t(t-1) = 0.
$$
Solving it gives the distribution function of the radial parts
$$
\Psi(x,t)
=
\frac{-(2t-1)x^2 + \sqrt{x^4 - 4 x^2 (t-1)t}}{2(1+x^2)},
\qquad
x>0.
$$
Differentiating, we obtain the following solution
$$
\psi(x,t) = -\frac{x(2t-1)}{(1+x^2)^2} + \frac{x^2-2t^2+2t^2x^2+2t-2tx^2}{(1+x^2)^2  \sqrt{x^2 - 4  (t-1)t}},
\qquad
x>0,\;\; 0\leq t\leq 1.
$$
Repeated derivatives of the elliptic polynomials with $\alpha=1/2$ have been studied by Feng and Yao~\cite[Theorem~6]{feng_yao} who did not give an explicit formula for $\psi(x,t)$. In part~(2) of their theorem they considered the regime when $t\to 1$ and therefore obtained a different limit distribution for the zeroes.

Let us also mention that the case $\alpha=2$ also corresponds to a quadratic equation and can be solved explicitly. We omit the details.

\subsection{Hyperbolic functions}
The last family of initial densities of complex zeroes for which we are able to explicitly determine the complete dynamics has the form
$$
u(z,0)
= \frac{1}{2\pi \alpha} \cdot \frac{|z|^{(1/\alpha) - 2}}{(1-|z|^{1/\alpha})^2}, \qquad |z|<1,
$$
where the parameter $\alpha$ satisfies $\alpha \geq 0$. The corresponding density of the radial parts has the form
$$
\psi(x,0) = 2 \pi x  u(x,0) = \frac{1}{\alpha} \cdot \frac{x^{(1/\alpha) - 1}}{(1-x^{1/\alpha})^2} \ind_{\{x<1\}},\qquad x\geq 0.
$$
In the special case $\alpha=1/2$, the zeroes are distributed according to the hyperbolic area measure on the unit disk. An example of a random analytic function with stochastically independent Taylor coefficients having this asymptotic distribution of zeroes is given by
\begin{equation}\label{eq:hyperbloic_poly_def}
F_n(z)
=
\sum_{k=0}^\infty \xi_k \left(\frac{n(n+1)\ldots (n+k-1)}{k!}\right)^\alpha z^k,
\qquad
|z|<1;
\end{equation}
see Theorem~2.1 in~\cite{kabluchko_zaporozhets12a}. The special case $\alpha=1/2$  (and when the $\xi_k$'s are standard complex Gaussian) is known under the name hyperbolic Gaussian analytic function; see~\cite[Chapters~2 and~5]{peres_book}.

Although the function $\psi(x,0)$ is not a probability density (since $\int_0^1 \psi(x,0)\dd x = +\infty$) and the function $F_n(z)$ is not a polynomial, the recipe described in Section~\ref{subsec:log_potential} can be applied with minor modifications. The main difference is that now the function $v:[0,\infty) \to\R$ describing the exponential profile of the coefficients in~\eqref{eq:hyperbloic_poly_def} is defined on the interval $[0,\infty)$ rather than on $[0,1]$.   It  is given by
$$
v(x) = \alpha (x\log x - (1+x)\log (1+x)), \qquad x\geq 0;
$$
see~\cite[p.~1385]{kabluchko_zaporozhets12a}.
It follows by~\eqref{eq:v_x_t_def} that
$$
v(x,t) = (\alpha-1) (x+t)\log (x+t) - \alpha (1+x+t)\log (1+x+t)  +  x\log x, \qquad x\geq 0.
$$
The derivative in $x$ is given by
$$
\partial_1 v(x,t) = (\alpha-1) \log (x+t) - \alpha \log (1+x+t)  + \log x, \qquad x\geq 0.
$$
The function $x\mapsto \partial_1 v(x,t)$  is monotone increasing and its range is the interval $(-\infty, 0)$. The implicit equation for the corresponding inverse function $y\mapsto J(y,t)$ takes the form
$$
(\alpha-1) \log (J(y,t) + t) - \alpha \log (J(y,t) + 1 + t) + \log J(y,t) = y, \qquad y<0.
$$
Taking $y=\log x$, we obtain the following implicit equation for $\Psi(x,t) = J(\log x, t)$:
\begin{equation}\label{eq:Psi_hyperbolic_implicit}
(\alpha-1) \log (\Psi(x,t) + t) - \alpha \log (\Psi(x,t) + 1 + t)  + \log \Psi(x,t) = \log x, \qquad 0<x<1.
\end{equation}

\vspace*{2mm}
\noindent
\textsc{Case $\alpha=1$.}
Equation~\eqref{eq:Psi_hyperbolic_implicit} takes the form
$$
 -  \log (\Psi(x,t) + 1 + t)  + \log \Psi(x,t) = \log x, \qquad 0<x<1.
$$
Solving it yields
$$
\Psi(x,t) = \frac {x(t+1)}{1-x}, \qquad 0<x<1.
$$
Differentiating, we arrive at the following simple solution:
$$
\psi(x,t) = \frac {t+1}{(1-x)^2}, \qquad 0<x<1, \;\; t\geq 0.
$$
Note that this solution makes sense for arbitrary $t\geq 0$.  The density increases under repeated differentiation, but this is not a contradiction since the total number of zeroes is infinite.

\vspace*{2mm}
\noindent
\textsc{Case $\alpha=1/2$} (Hyperbolic area measure).
The initial condition has the form
$$
u(z,0)
=
\frac{1}{\pi \,(1-|z|^{2})^2}, \qquad |z|<1,
\qquad
\psi(x,0) = \frac{2 x \ind_{\{x<1\}}}{(1-x^{2})^2},
\qquad x\geq 0,
$$
and corresponds to zeroes distributed according to  the hyperbolic area measure on the unit disk.
The implicit equation~\eqref{eq:Psi_hyperbolic_implicit} takes the form
$$
-\frac 12  \log (\Psi(x,t) + t) - \frac 12 \log (\Psi(x,t) + 1 + t)  + \log \Psi(x,t) = \log x, \qquad 0<x<1.
$$
Multiplying by $2$ and exponentiating yields
$$
\Psi^2(x,t) = x^2 (\Psi(x,t) + t)(\Psi(x,t) + 1 + t),  \qquad 0<x<1.
$$
Multiplying out, we arrive at the following quadratic equation for $\Psi(x,t)$:
$$
(1- x^2) \Psi^2(x,t) - (2t+1) x^2 \Psi(x,t) -  x^2 t(t+1) = 0.
$$
Solving it gives the distribution function of the radial parts
$$
\Psi(x,t)
=
\frac{(2t+1)x^2 + \sqrt{x^4 + 4 x^2 (t+1)t}}{2(1-x^2)},
\qquad
0 \leq  x <1,
$$
because the second solution, being negative, can be discarded.  
Differentiating, we obtain the following solution
$$
\psi(x,t) = \frac{x(2t+1)}{(1-x^2)^2} + \frac{x^2+2t^2+2t^2x^2+2t+2tx^2}{(1-x^2)^2  \sqrt{x^2 + 4 (t+1)t}},
\qquad
0 \leq  x <1.
$$
Note that this solution makes sense for arbitrary $t\geq 0$.

We omit the details in the case $\alpha=2$ which also can be solved explicitly.

\section{Real zeroes of repeated derivatives}\label{sec:real_zeroes}
\subsection{Recipe for real zeroes}\label{subsec:recipe_real}
We now explain how the above methods can be modified to treat polynomials with real zeroes only. As already mentioned in Section~\ref{subsec:real_zeroes}, the method proposed below need not be optimal and an approach based on finite free probability may be more natural.

Consider a sequence of monic (deterministic) polynomials $(Q_n)_{n\in\N}$ whose zeroes are real and belong to the interval $[-C, 0]$ for some constant $C>0$. Moreover, assume that the zeroes are distributed according to some finite measure $\mu_0$ concentrated on $[-C,0]$, that is
\begin{equation}\label{eq:real_zeroes_weak_conv}
\frac 1n\sum_{z\in \R: Q_n(z) = 0} \delta_z \toweak \mu_0,
\end{equation}
where $\toweaknon$ denotes weak convergence of finite measures.
One of the special cases we have in mind is when $\mu_0$ is a probability measure and the zeroes of $Q_n$ are $n$ i.i.d.\ random variables sampled according to $\mu_0$.  Then, \eqref{eq:real_zeroes_weak_conv} holds for a.e.\ realization of these random variables.   In general, $\mu_0$ need not be a probability measure and the degree of $Q_n$ is $mn + o(n)$, where $m= \mu_0 ([-C, 0])$ is the total mass of measure $\mu_0$.

In the following we shall derive a formula for the asymptotic distribution of zeroes of the $[tn]$-th derivative of $Q_n$, where $0\leq t<m$.  Note that by Rolle's theorem, all zeroes of all derivatives of $Q_n$ stay real and do not leave the interval $[-C,0]$.
The first step is to relate the distribution of zeroes of $Q_n$ to the exponential profile of the coefficients of the polynomial $Q_n$, defined (up to sign) by
\begin{equation}\label{eq:real_profile_v_def}
v(\alpha) = -\lim_{n\to\infty} \frac 1n \log \left([x^{[\alpha n]}] Q_n(x)\right), \qquad 0 < \alpha < m.
\end{equation}
Here, $[x^k]Q_n(x)$ is the coefficient of $x^k$ in $Q_n(x)$. First of all, observe that for $x>0$ we have
$$
\frac 1n \log Q_n(x)
=
\frac 1n \log \sum_{\alpha\in \{0,\frac 1n,\frac 2n, \ldots\}} \eee^{-n v(\alpha) + o(n)} x^{\alpha n}
=
\frac 1n \log \sum_{\alpha\in \{0,\frac 1n,\frac 2n, \ldots\}} \eee^{n (\alpha \log x - v(\alpha) +o(1))}.
$$
Letting $n\to\infty$ and assuming that the Laplace asymptotics can be justified, we obtain
\begin{equation}\label{eq:lim_log_Q_n_1}
\lim_{n\to\infty} \frac 1n \log Q_n(x)   = I(\log x), \qquad x>0,
\end{equation}
where $I$ is the Legendre transform of $v$ defined by
\begin{equation}
I(s) = \sup_{\alpha\in [0,m]} (s\alpha - v(\alpha)), \qquad s\in \R.
\end{equation}
On the other hand, writing $Q_n(x)$ as a product of the terms  $(x-z)$, where $z$ runs through all zeroes of $Q_n$ counting multiplicities, and passing  to logarithmic potentials by means of~\eqref{eq:real_zeroes_weak_conv}, we have that for all $x>0$,
\begin{equation}\label{eq:lim_log_Q_n_2}
\frac 1n \log Q_n(x) =  \frac 1n \sum_{z\leq 0: Q_n(z) = 0}  \log (x - z) \ton \int_{-C}^0 \log (x-z) \mu_0(\dd z).
\end{equation}
Comparing~\eqref{eq:lim_log_Q_n_1} and~\eqref{eq:lim_log_Q_n_2}, we get
$$
I(\log x) = \int_{-C}^0 \log (x-z) \mu_0(\dd z), \qquad x>0.
$$
Differentiating in $x$, we arrive at
\begin{equation}\label{eq:I'_G}
I'(\log x) = x G_0(x), \qquad x>0,
\end{equation}
where $G_0$ is the Cauchy-Stieltjes transform of $\mu_0$ given by
\begin{equation}\label{eq:def_f_0_real}
G_0(x) = \int_{-C}^0  \frac {\mu_0 (\dd u)} {x-u}, \qquad x\in \C\backslash[-C,0].
\end{equation}
It is convenient to put $x = w_0(y)$ with $w_0(y):= \eee^{v'(y)}$. Since  the functions $I'$ and $v'$ are inverse to each other, we arrive at the identity
\begin{equation}\label{eq:w_0_inverse_real}
y = w_0(y) G_0(w_0(y)), \qquad \mu_0(\{0\}) < y < m.
\end{equation}
The above derivation was non-rigorous, but the same equation has been established rigorously by Van Assche, Fano and Ortolani~\cite{van_assche_fano_ortolani}, see also~\cite[Theorem~5.1]{van_assche_book}. These authors  also established that the function $v(\alpha)$ is convex (which also follows from Newton's real roots theorem), differentiable and finite in the range $\mu_0(\{0\}) < \alpha <  m$. Since $\lim_{x\to +\infty} x G_0(x) = m$, it follows from~\eqref{eq:I'_G} that $\lim_{\alpha \uparrow m} v'(\alpha)=+\infty$ and hence $\lim_{\alpha \uparrow m} w_0(\alpha) =  +\infty$.
For $0 < \alpha < \mu_0(\{0\})$ the definition~\eqref{eq:real_profile_v_def} suggests to put  $v(\alpha) = +\infty$.  A more refined, distributional  result for the coefficients of $Q_n$  in the setting of i.i.d.\ zeroes has been obtained by Major~\cite[Theorem~1]{major}. For a closely related work see also~\cite{fano_gallavotti}.

Consider now the $[tn]$-th derivative of $Q_n$, where $0\leq t <m$. Its exponential profile is defined (up to sign) by
$$
v(\alpha,t) = - \lim_{n\to\infty} \frac 1n \log \left([x^{[\alpha n]}] Q_n^{([tn])}(x)\right), \qquad 0< \alpha < m-t.
$$
As we already know from~\eqref{eq:v_x_t_def}, $v(\alpha,t)$ is related to the exponential profile $v(\alpha) = v(\alpha,0)$ of $Q_n$ by
$$
v(\alpha, t) = v(\alpha+t) -  (\alpha+t) \log (\alpha+t)  + \alpha \log \alpha, \qquad 0< \alpha < m-t.
$$
Taking the derivative in $\alpha$ and then the exponential, we arrive at
$$
\eee^{\partial_1 v(\alpha, t)} = \eee^{v'(\alpha + t)} \cdot \frac{\alpha}{\alpha + t}, \qquad 0 < \alpha <  m-t.
$$
With the notation $w_t(\alpha):= \eee^{\partial_1 v(\alpha, t)}$, this takes the form
\begin{equation}\label{eq:w_t_w_0}
w_t(\alpha) = w_0(\alpha + t) \cdot \frac{\alpha}{\alpha + t}.
\end{equation}
Finally, repeating the above argument backwards, we arrive at the following analogue of~\eqref{eq:w_0_inverse_real} at time $t$:
\begin{equation}\label{eq:w_t_inverse_real}
y = w_t(y) G_t(w_t(y)).
\end{equation}
where $G_t$ is the Cauchy-Stieltjes transform of $\mu_t$. This identity holds in the range $\mu_0(\{0\})-t < y < m-t$, $y>0$.

We can now state the recipe for computing the distribution of zeroes at time $t$. Compute the Cauchy-Stieltjes transform $G_0$ of the initial distribution  $\mu_0$ by means of~\eqref{eq:def_f_0_real}. Compute $w_0$ by means of~\eqref{eq:w_0_inverse_real}, $w_t$ by means of~\eqref{eq:w_t_inverse_real}, and finally $G_t$ by means of~\eqref{eq:w_t_inverse_real}.  The Cauchy-Stieltjes transform  $G_t$ is an analytic function on $\C\backslash\supp \mu_t$, and the distribution $\mu_t$ of zeroes at time $t$ can be computed by means of the Stieltjes inversion formula~\cite[Section~3.1, p.~93]{hiai_petz_book}, namely as the following weak*-limit:
\begin{equation}\label{eq:stieltjes}
\mu_t(\dd x) =  -\frac 1 \pi \wlim\lim_{y\downarrow 0 } \Im G_t(x+\ii y).
\end{equation}
In the next section we shall apply this recipe to the special case when $\mu_0$ is a  combination of two delta-measures.
As we shall argue in a moment, the above derivation was, in fact, rigorous. We summarize our findings in the following
\begin{theorem}\label{theo:real_recipe}
Consider a sequence of deterministic monic  polynomials $(Q_n)_{n\in\N}$ whose zeroes belong to the interval $[-C, 0]$ for some constant $C>0$, and satisfy
\begin{equation*}
\frac 1n \sum_{z\in \R: Q_n(z) = 0} \delta_z \toweak \mu_0
\end{equation*}
in the sense of weak convergence of finite measures on $\R$,  for some finite measure $\mu_0$ on $[-C,0]$ with total mass $m>0$. Then, for every $0\leq t <m$ we have
$$
\frac 1n \sum_{z\in \R: Q_n^{([tn])}(z) = 0} \delta_z \toweak \mu_t,
$$
where $\mu_t$ is a finite measure on $[-C,0]$ whose Cauchy-Stieltjes transform $G_t$ satisfies~\eqref{eq:w_t_inverse_real}, \eqref{eq:w_t_w_0} and~\eqref{eq:w_0_inverse_real}.
\end{theorem}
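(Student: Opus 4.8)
The plan is to turn the non-rigorous derivation of Section~\ref{subsec:recipe_real} into a proof by combining a compactness argument with the (rigorous) coefficient asymptotics of Van~Assche--Fano--Ortolani. First I would record that, since $Q_n$ is monic with all zeroes in $[-C,0]$, it has nonnegative coefficients, and hence so does each $Q_n^{([tn])}$; moreover the zeroes of $Q_n^{([tn])}$ stay in $[-C,0]$ by Rolle's theorem and $\deg Q_n^{([tn])} = (m-t)n + o(n)$. Therefore the measures $\tfrac1n\sum_{z:\,Q_n^{([tn])}(z)=0}\delta_z$ are all supported in the fixed compact set $[-C,0]$ and have total mass tending to $m-t$, so by weak-$*$ compactness every subsequence contains a further subsequence along which they converge weakly to some finite measure $\nu$ on $[-C,0]$ of mass $m-t$. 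It then suffices to show $\nu = \mu_t$ regardless of the chosen subsequence; fix one.

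To identify $\nu$, I would compute $\tfrac1n\log \tilde Q_n(x)$ for real $x>0$ in two ways, where $\tilde Q_n := Q_n^{([tn])}/\kappa_n$ is the monic renormalization of $Q_n^{([tn])}$ ($\kappa_n$ being its leading coefficient, which Stirling's formula controls). On one hand, writing $\tilde Q_n(x) = \prod_{z}(x-z)$ over its zeroes and using the weak convergence together with the boundedness and continuity of $z\mapsto\log(x-z)$ on $[-C,0]$ for fixed $x>0$,
\[
\frac1n\log\tilde Q_n(x) \longrightarrow \int_{-C}^0 \log(x-z)\,\nu(\dd z), \qquad x>0,
\]
locally uniformly in $x$. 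On the other hand, $\tilde Q_n$ still has nonnegative coefficients, so $\tfrac1n\log\tilde Q_n(x)$ equals $\max_{\ell}\tfrac1n\bigl(\ell\log x + \log[x^\ell]\tilde Q_n\bigr)$ up to $o(1)$; combining the identity $[x^\ell]Q_n^{([tn])} = [x^{\ell+[tn]}]Q_n\cdot(\ell+[tn])!/\ell!$, Stirling's formula, and the coefficient asymptotics $-\tfrac1n\log[x^{[\alpha n]}]Q_n\to v(\alpha)$ of Van~Assche--Fano--Ortolani~\cite{van_assche_fano_ortolani,van_assche_book} (who also prove $v$ convex and finite on $(\mu_0(\{0\}),m)$, so that the pointwise convergence upgrades to a locally uniform one), this maximum converges to $\sup_\alpha(\alpha\log x - v(\alpha,t)) = I(\log x,t)$ up to an additive constant independent of $x$. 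Both limits are convex in $s=\log x$ (the first as a logarithm of a sum of exponentials in $s$, the second because $\partial_s^2\log(\eee^s+|z|)\ge 0$ for $z\le 0$), so the identity between these smooth functions may be differentiated in $x$, yielding $x\,G_\nu(x) = \partial_1 I(\log x,t)$ for $x>0$, where $G_\nu$ is the Cauchy--Stieltjes transform of $\nu$.

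It then remains to close the loop exactly as in~\eqref{eq:w_0_inverse_real}--\eqref{eq:w_t_inverse_real}. Since $\partial_1 I(\cdot,t)$ is the generalized left-continuous inverse of $\partial_1 v(\cdot,t)$, substituting $x = w_t(y) := \eee^{\partial_1 v(y,t)}$ into $x\,G_\nu(x) = \partial_1 I(\log x,t)$ gives $y = w_t(y)\,G_\nu(w_t(y))$, which is precisely~\eqref{eq:w_t_inverse_real}, with $w_t$ expressed through $w_0$ by~\eqref{eq:w_t_w_0} and $w_0$ through $G_0$ by~\eqref{eq:w_0_inverse_real}; all of these data are determined by $\mu_0$. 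As $w_t$ is an explicit strictly increasing function, this relation determines $G_\nu$ on an interval of $(0,\infty)$, hence on all of $\C\backslash[-C,0]$ by analytic continuation, and hence determines $\nu$ via the Stieltjes inversion formula~\eqref{eq:stieltjes}. Thus every subsequential limit is the same measure, which we call $\mu_t$, and $\tfrac1n\sum_{z:\,Q_n^{([tn])}(z)=0}\delta_z\toweak\mu_t$.

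The step I expect to be the main obstacle is the rigorous Laplace asymptotics: one must control the contribution of the indices $\ell$ near the endpoints $0$ and $\deg Q_n^{([tn])}$, where $v(\cdot,t)$ can equal $+\infty$ (for instance when $\mu_0$ has an atom at $0$), argue that the convexity of $v$ established in~\cite{van_assche_fano_ortolani} genuinely upgrades the pointwise convergence of $-\tfrac1n\log[x^{[\alpha n]}]Q_n$ to local uniform convergence on the interior, and justify differentiating the limiting identity in $x$. The remaining ingredients — the compactness reduction, the Stirling estimate for $\kappa_n$, and the final inversion — are routine.
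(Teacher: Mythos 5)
Your proposal is correct and follows essentially the same route as the paper: Rolle plus Helly compactness, identification of the Cauchy--Stieltjes transform of any subsequential limit through the Van Assche--Fano--Ortolani coefficient--zero correspondence combined with the Stirling computation behind~\eqref{eq:w_t_w_0}, and uniqueness via $\lim_{y\uparrow m-t}w_t(y)=+\infty$, analytic continuation, and the Stieltjes inversion formula~\eqref{eq:stieltjes}. The only difference is that where you re-derive the identification step by hand via Laplace asymptotics of the coefficients (and flag the endpoint control and differentiation of the limiting identity as the main obstacle), the paper simply applies the result of~\cite{van_assche_fano_ortolani} a second time, directly to the polynomials $Q_n^{([tn])}$ along the convergent subsequence, so that step needs no additional justification.
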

\begin{proof}
The zeroes of $Q_n^{([tn])}$ are contained in $[-C,0]$ by Rolle's theorem. By Helly's compactness theorem,  there is a subsequential limit of the empirical measures of its zeroes. By the result of~\cite{van_assche_fano_ortolani}, the Cauchy-Stieltjes transform $G_t$ of any such subsequential limit satisfies~\eqref{eq:w_t_inverse_real}. Since $\lim_{y\uparrow m-t}w_t(y) = +\infty$ (which follows from~\eqref{eq:w_t_w_0} and the similar property of $w_0$ established above), condition~\eqref{eq:w_t_inverse_real} defines $G_t(x)$ uniquely if $x>0$ is sufficiently large. By the uniqueness of analytic continuation, the Cauchy-Stieltjes transforms of all subsequential limits coincide on $\C \backslash [-A,0]$. By the Stieltjes inversion formula~\eqref{eq:stieltjes}, this allows to conclude that all subsequential limits are equal and satisfy~\eqref{eq:w_t_inverse_real}.
\end{proof}

\subsection{Example: Polynomials with two zeroes of high multiplicity}
Fix some parameters $m_1>0$ and $m_2>0$ and consider a polynomial of the form
$$
Q_n(x) = x^{m_1 n +o(n)} (x+1)^{m_2 n + o(n)}.
$$
We claim that the asymptotic density  $u(x,t)$ of zeroes of the $[tn]$-th derivative of this polynomial is given by
\begin{multline}\label{eq:u_x_t_real}
u(x,t)
=
\frac{(m_1+m_2) \sqrt{(x_+(t)-x)(x-x_-(t))}}{2\pi |x| (1+x)} \ind_{\{x_-(t) < x < x_+(t)\}}
\\+
(m_1-t) \ind_{\{0\leq t<m_1\}} \delta(x)  + (m_2-t) \ind_{\{0\leq t<m_2\}} \delta(x+1),
\end{multline}
for all $0\leq t < m_1+m_2$, where
\begin{equation}\label{eq:x_+-}
x_\pm(t) = \frac{(t-m_1)m_1 - m_2 (t+m_1) \pm 2\sqrt{m_1m_2 t(m_1+m_2 -t)}}{(m_1+m_2)^2}.
\end{equation}
The normalization is chosen such that $\int_{\R} u(x,t) \dd x = m_1+m_2 -t$.

Before deriving this formula for $u(x,t)$, let us discuss some of its properties; see Figures~\ref{pic:two_zeroes} and~\ref{pic:arcsine}  which show plots of $u(x,t)$ in two special cases.
The presence of atoms  at $0$, respectively, $1$, in~\eqref{eq:u_x_t_real} as long as $t<m_1$, respectively, $t<m_2$,  is not surprising and is due to the fact that  $Q_n(x)$ has multiple zeroes at these points. The multiplicities of these zeroes decrease under repeated differentiation until the zeroes disappear. Regarding the behavior of the interval  $[x_-(t), x_+(t)]$ on which the continuous part of $u(x,t)$ is supported, we can make the following remarks. At times $t=0$ and $t=m_1+m_2$, we have
\begin{equation}\label{eq:x_+-0_x+-final}
x_-(0) = x_+(0) = -\frac{m_1}{m_1+m_2},
\qquad
x_-(m_1+m_2) = x_+(m_1+m_2) = - \frac{m_2}{m_1+m_2}.
\end{equation}
Both for $t\approx 0$ and $t\approx m_1+m_2$, the absolutely continuous part of $u(x,t)$ can be approximated by a Wigner semicircle law on the small interval $[x_-(t), x_+(t)]$; see Figure~\ref{pic:two_zeroes}. To explain this, note that the function $|x|(1+x)$ appearing in the denominator of~\eqref{eq:u_x_t_real} can be approximated by a constant on this small interval. The appearance of the Wigner law is discussed in~\cite{hoskins_steinerberger}.  At time $t=m_1$, we have $x_+(m_1) = 0$, and $u(x,m_1)\sim \text{const}/\sqrt {-x}$ has a singularity as $x\uparrow 0$. On the other hand, with some effort it is possible to check that $x_+(t) <0$ for $t\neq m_1$. Similarly, we have $x_-(m_2) = -1$ with $u(x,m_2)\sim \text{const}/\sqrt{1+x}$ becoming singular as $x\downarrow -1$,   and $x_-(t) >-1$ for $t\neq m_2$; see Figure~\ref{pic:two_zeroes}.  In general, we have $-1\leq x_-(t) <  x_+(t) \leq 0$, which is not surprising in view of Rolle's theorem implying that all zeroes are contained in $[-1,0]$. Also, one can check that a given $x\in (-1,0)$ belongs to the support of the absolutely continuous part of $u(x,t)$ if and only if $t\in [t_-(x), t_+(x)]$, where
$$
t_\pm(x) = m_1 + x(m_1-m_2) \pm 2\sqrt {m_1m_2 |x| (x+1)}.
$$
For example, the points $-\frac{m_1}{m_1+m_2}$ and $-\frac{m_2}{m_1+m_2}$ appearing in~\eqref{eq:x_+-0_x+-final} belong to the support if and only if $0\leq t \leq \frac{4m_1m_2}{(m_1+m_2)}$, respectively $\frac{(m_1-m_2)^2}{m_1+m_2} \leq t \leq m_1+m_2$.

Finally, let us mention that the solution $u(x,t)$ has the following symmetry  modulo delta-functions:
$$
u(x,t) = u(-x-1, m_1+m_2-t) \pm \text{$\delta$-functions}.
$$

\begin{figure}[!tbp]
\includegraphics[width=0.32\textwidth]{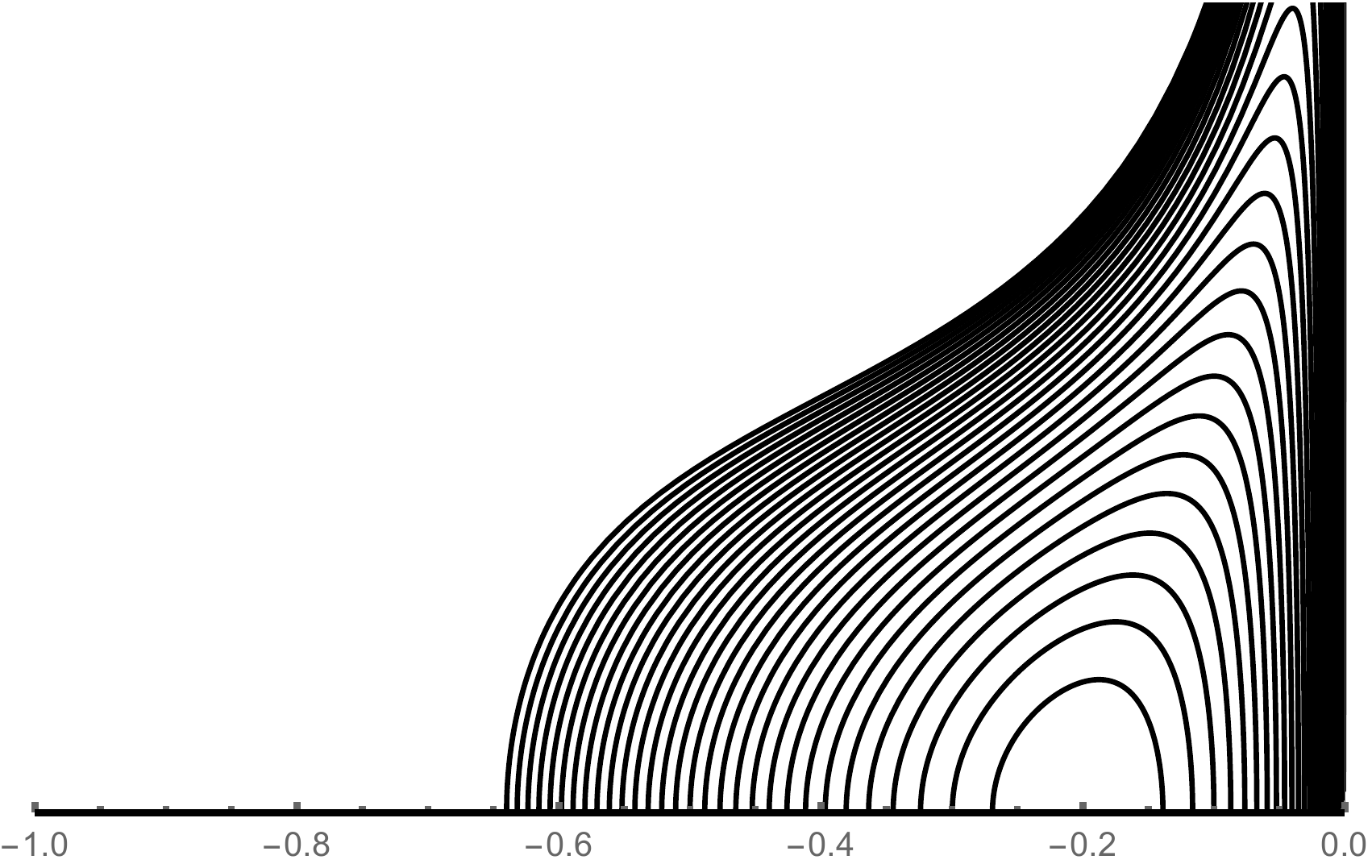}
\includegraphics[width=0.32\textwidth]{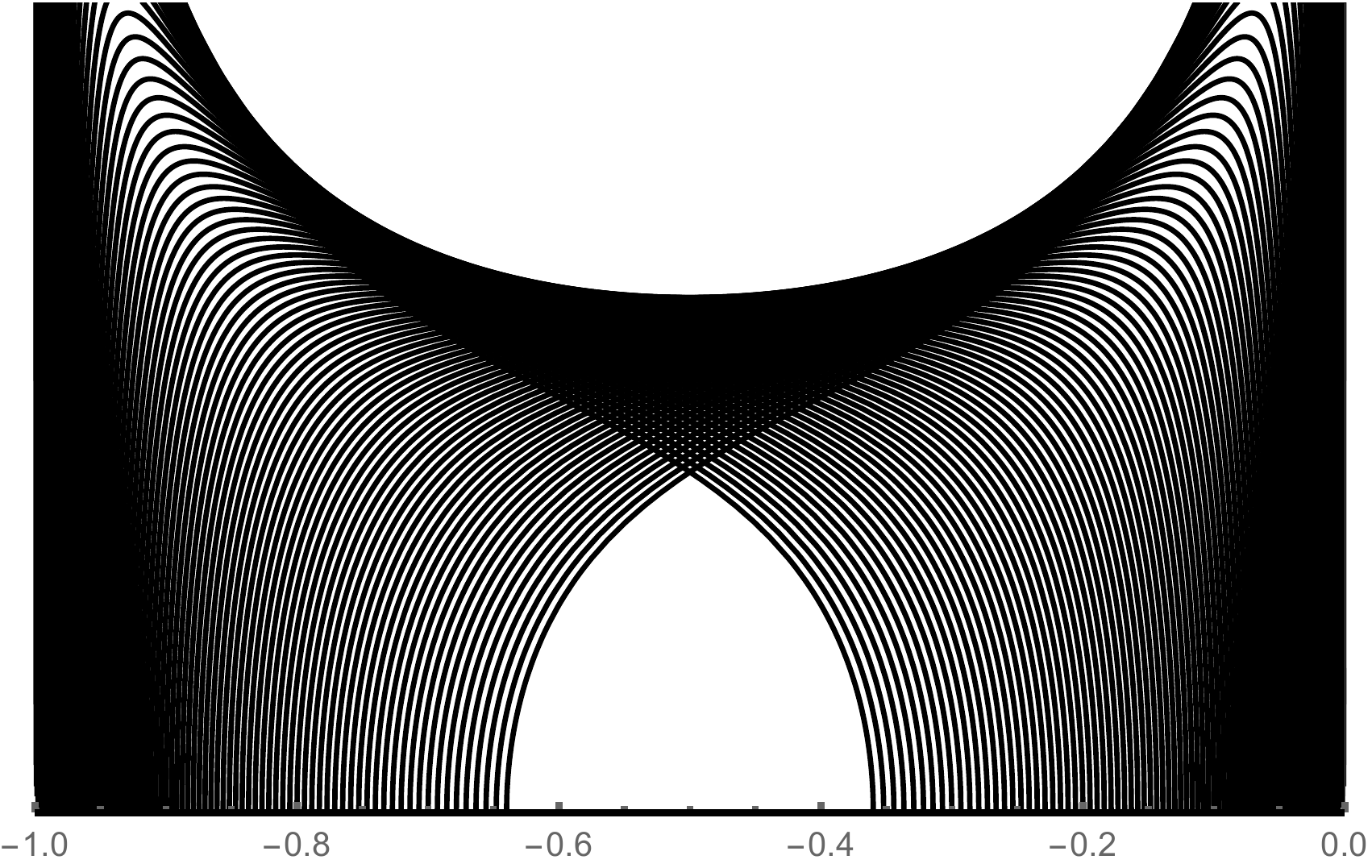}
\includegraphics[width=0.32\textwidth]{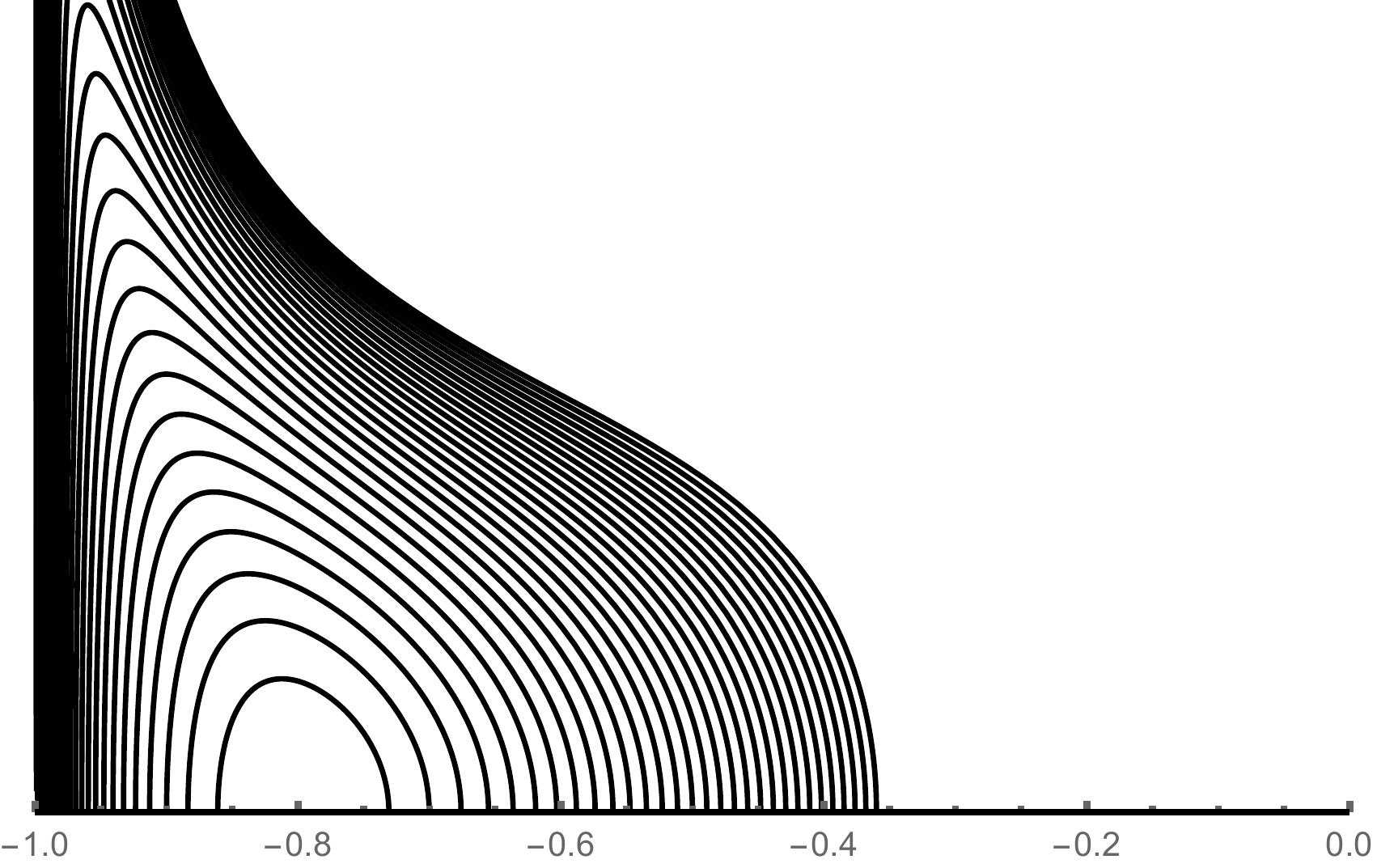}
\caption{The function $u(x,t)$ given by~\eqref{eq:u_x_t_real} with $m_1=1$, $m_2=4$ and $t\in \{\frac 1 {30}, \frac 2 {30}, \ldots, 5-\frac 1 {30}\}$.  The atoms at $0$ and $1$ are not shown.
Left: $0< t\leq 1$. Middle: $1\leq t \leq 4$. Right: $4\leq t <5$.
}
\label{pic:two_zeroes}
\end{figure}

\begin{figure}[!tbp]
\includegraphics[width=0.24\textwidth]{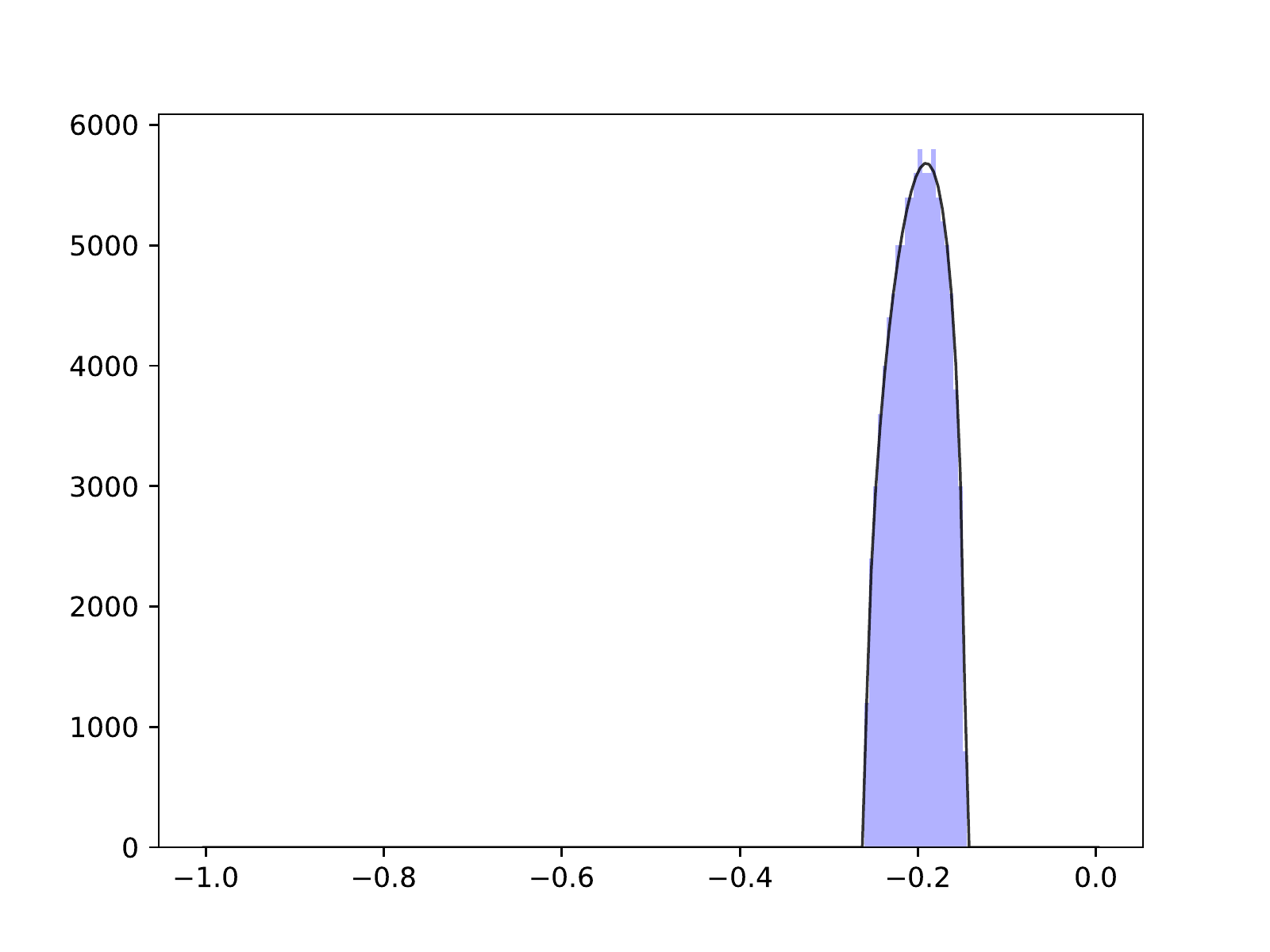}
\includegraphics[width=0.24\textwidth]{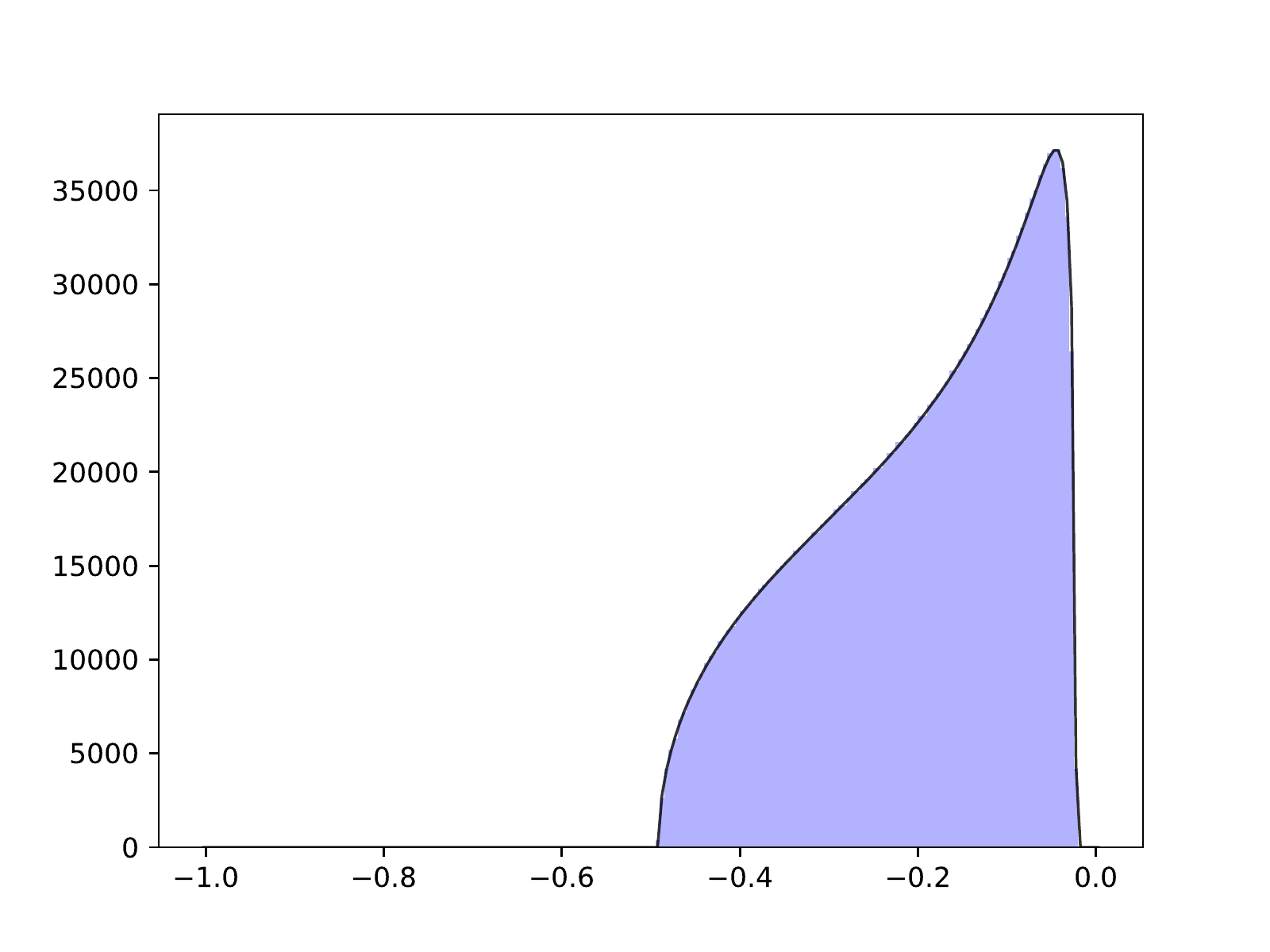}
\includegraphics[width=0.24\textwidth]{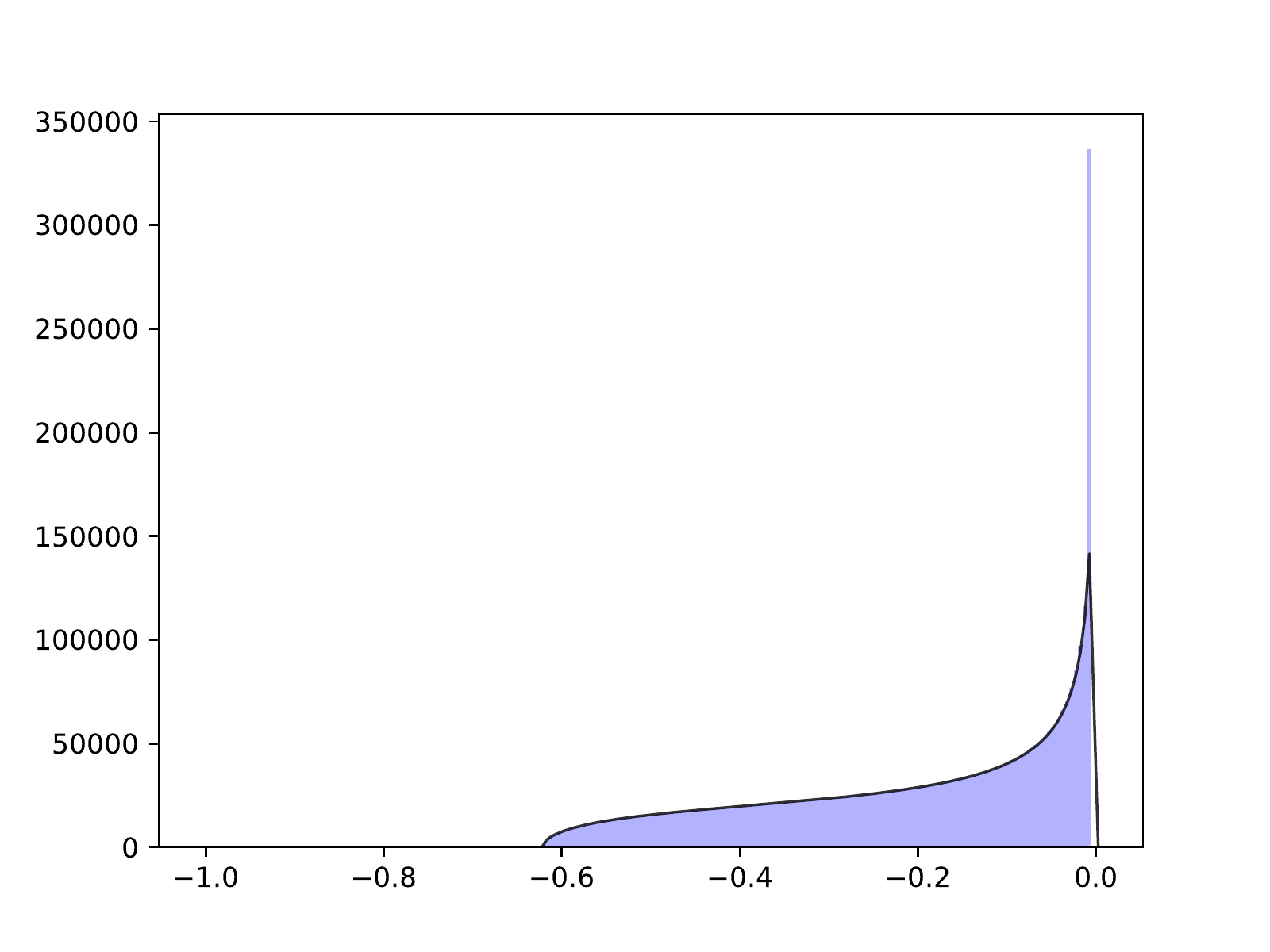}
\includegraphics[width=0.24\textwidth]{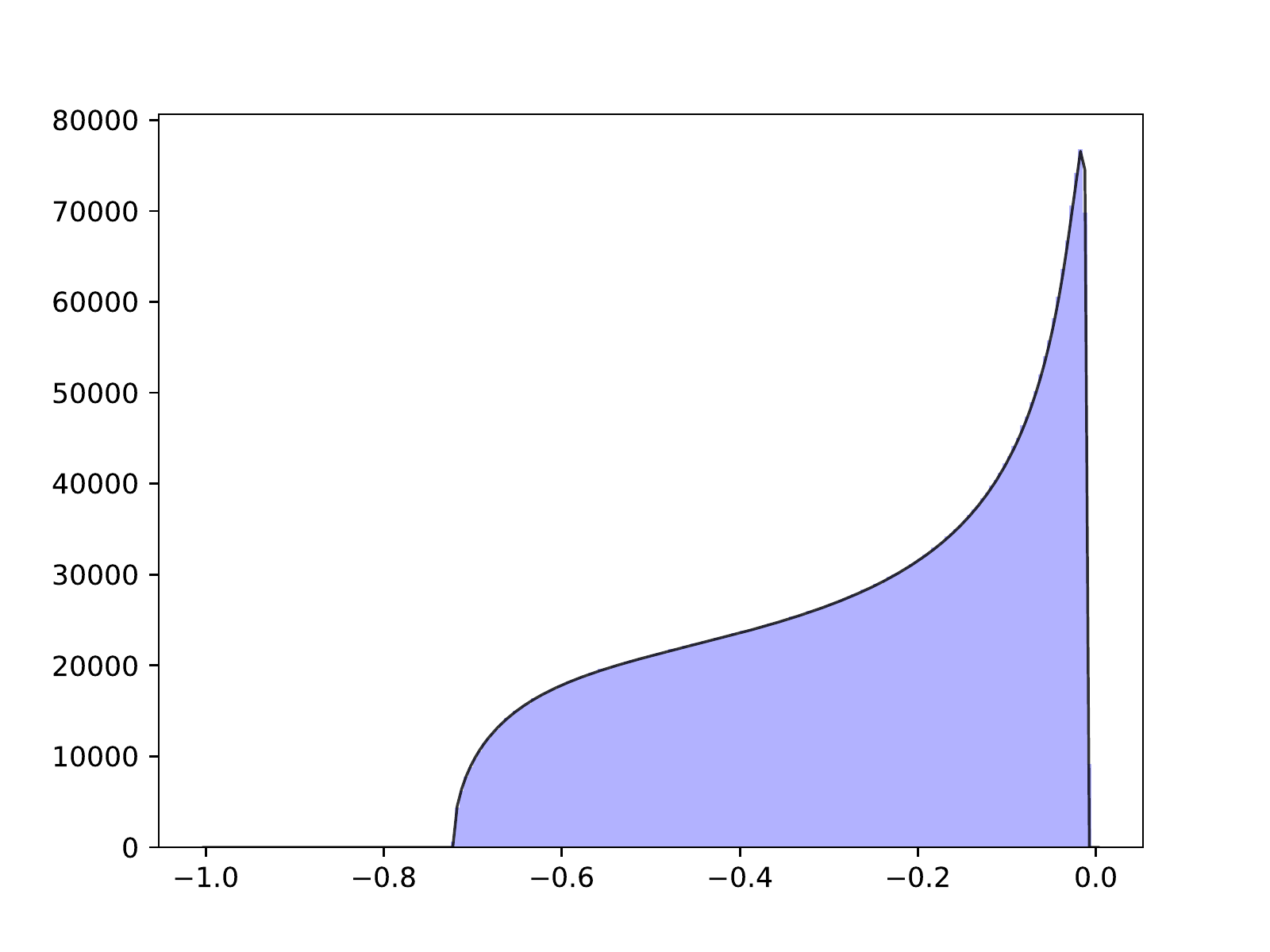}
\includegraphics[width=0.24\textwidth]{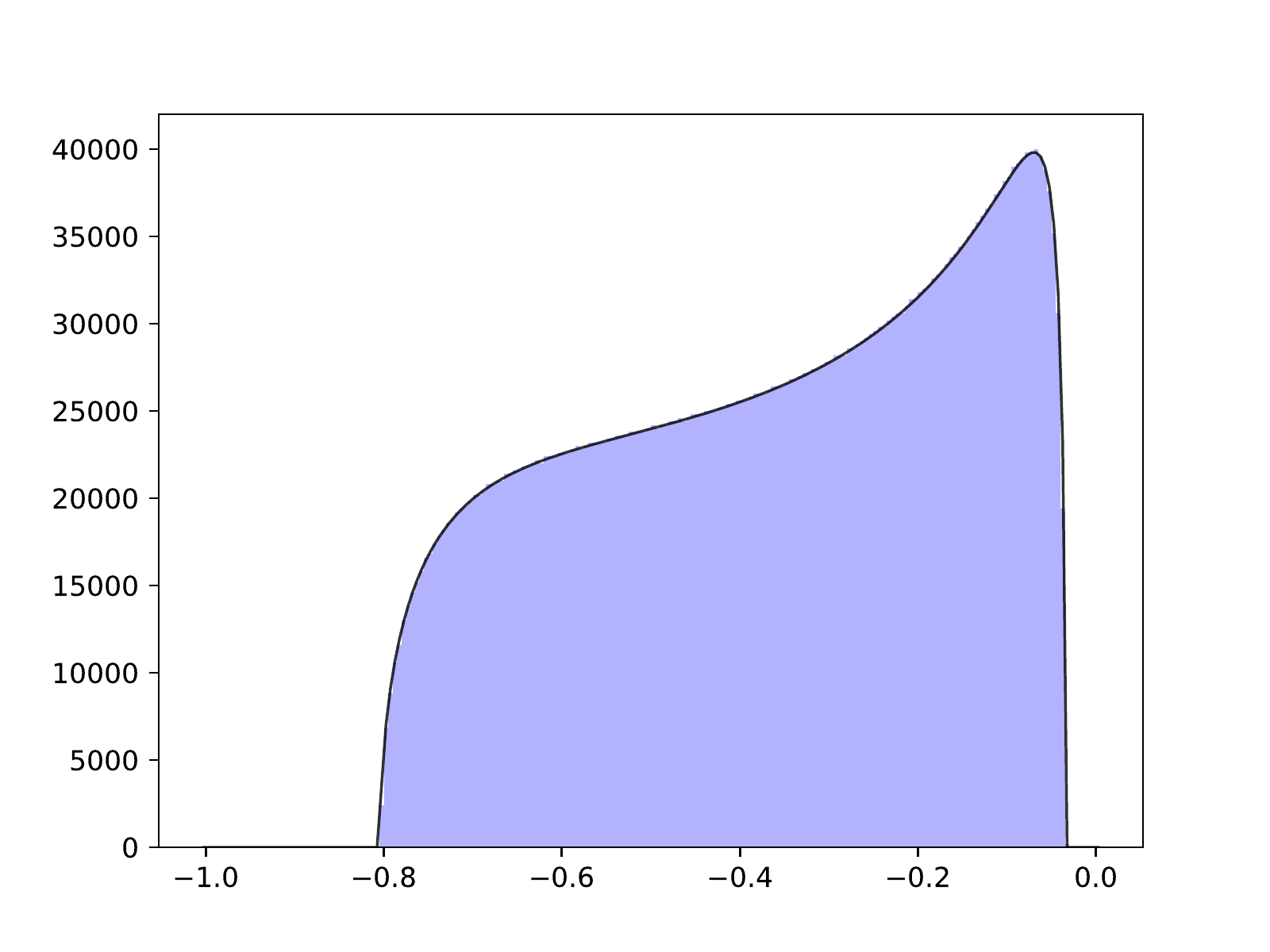}
\includegraphics[width=0.24\textwidth]{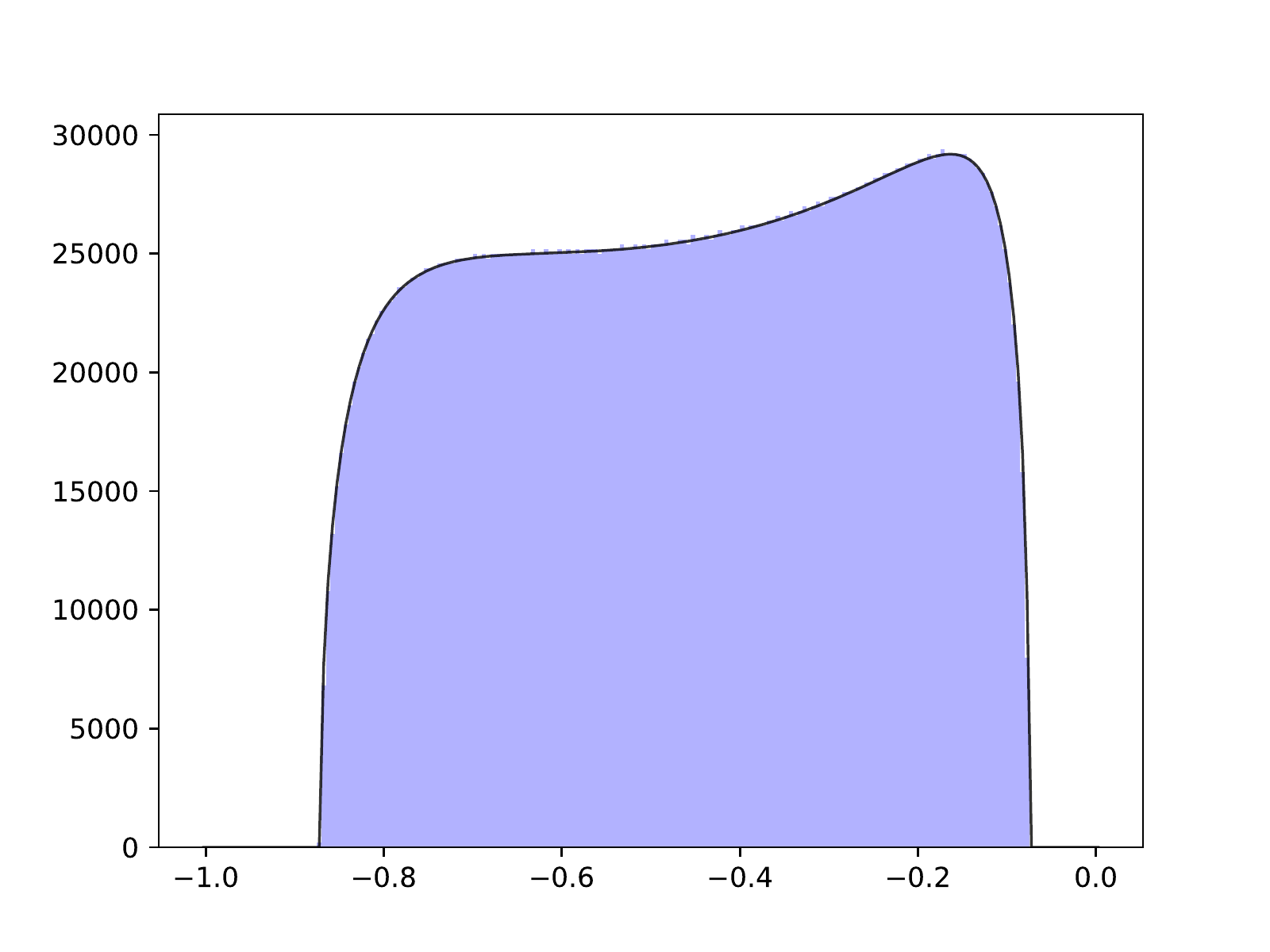}
\includegraphics[width=0.24\textwidth]{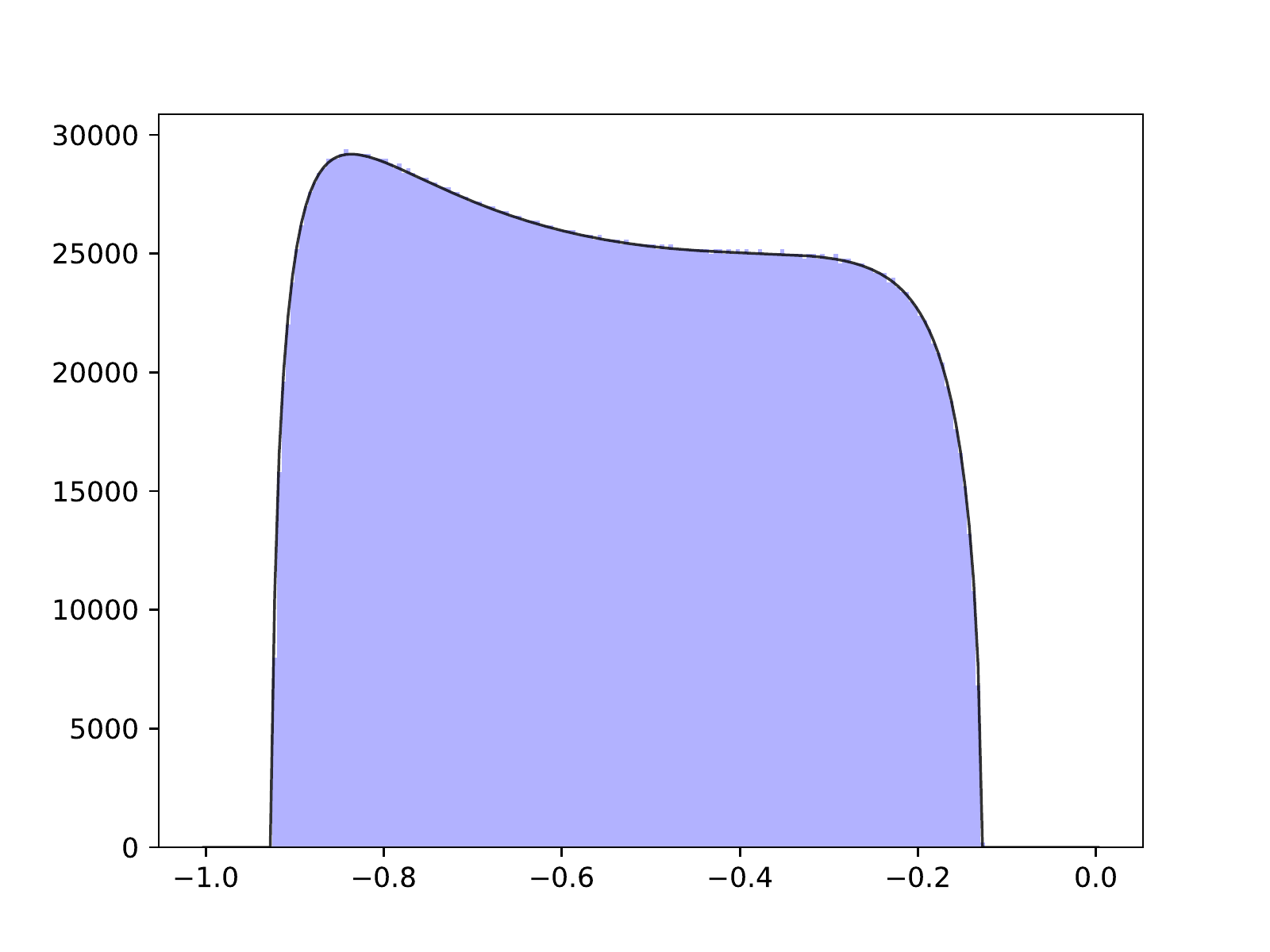}
\includegraphics[width=0.24\textwidth]{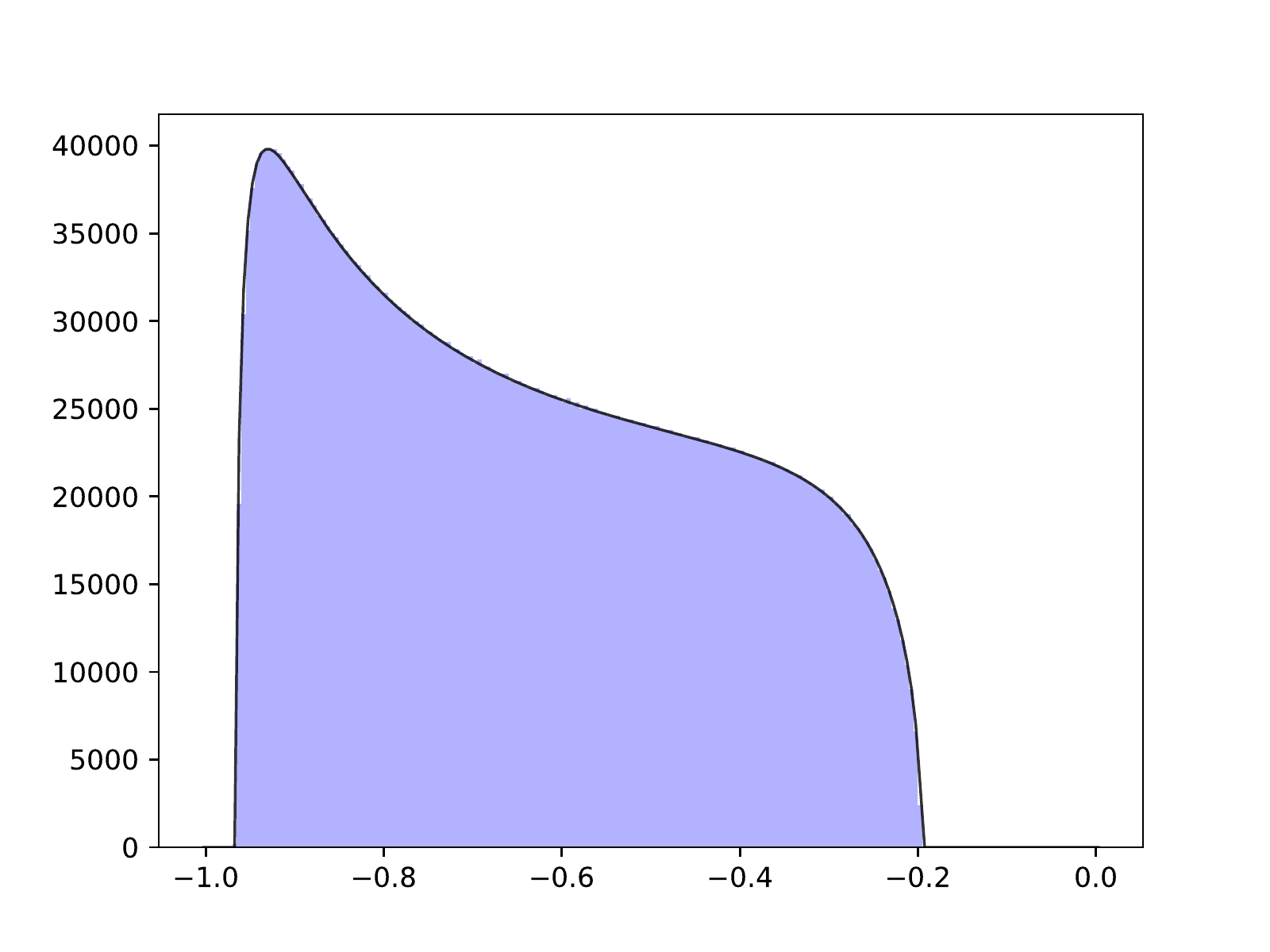}
\includegraphics[width=0.24\textwidth]{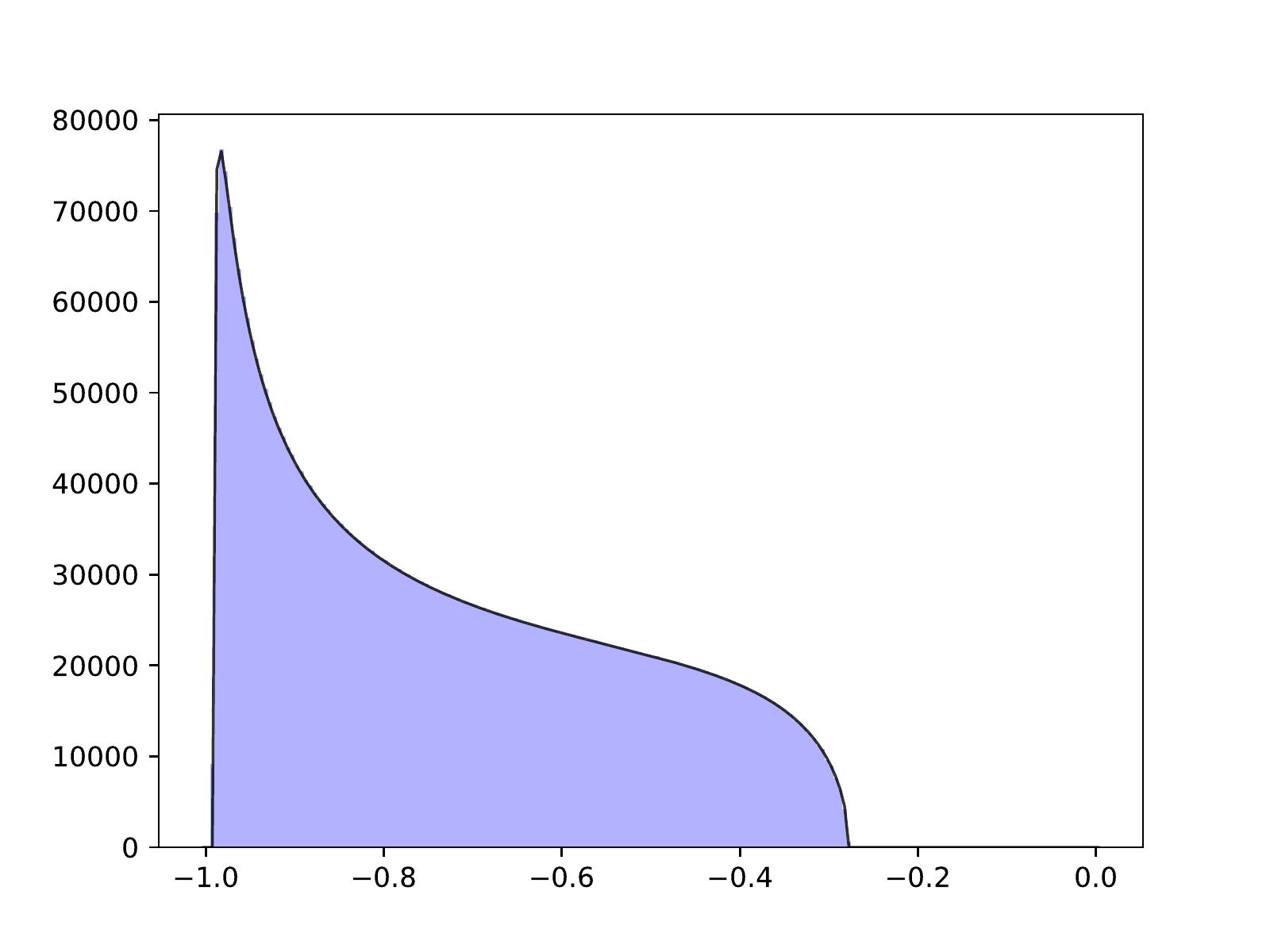}
\includegraphics[width=0.24\textwidth]{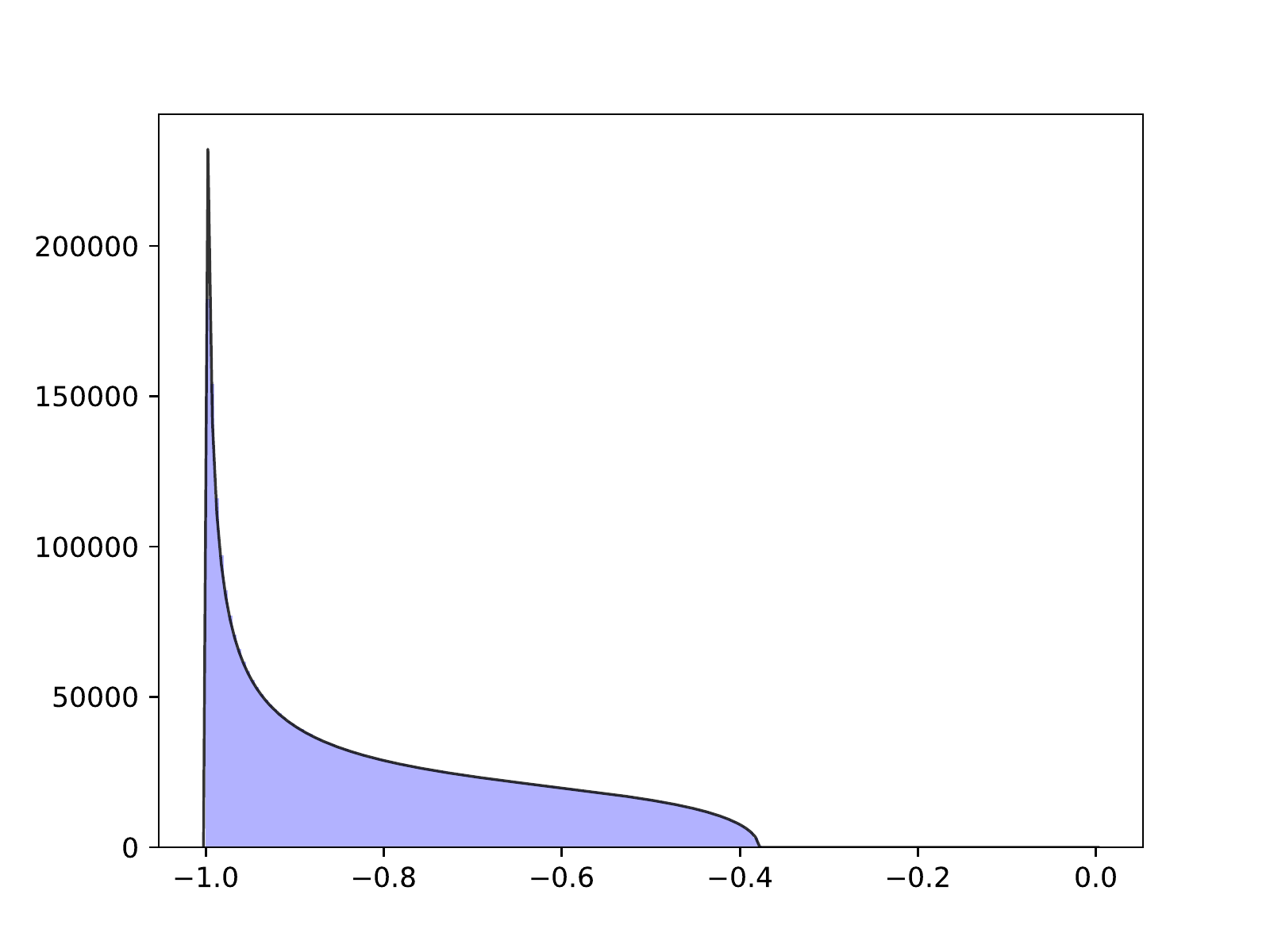}
\includegraphics[width=0.24\textwidth]{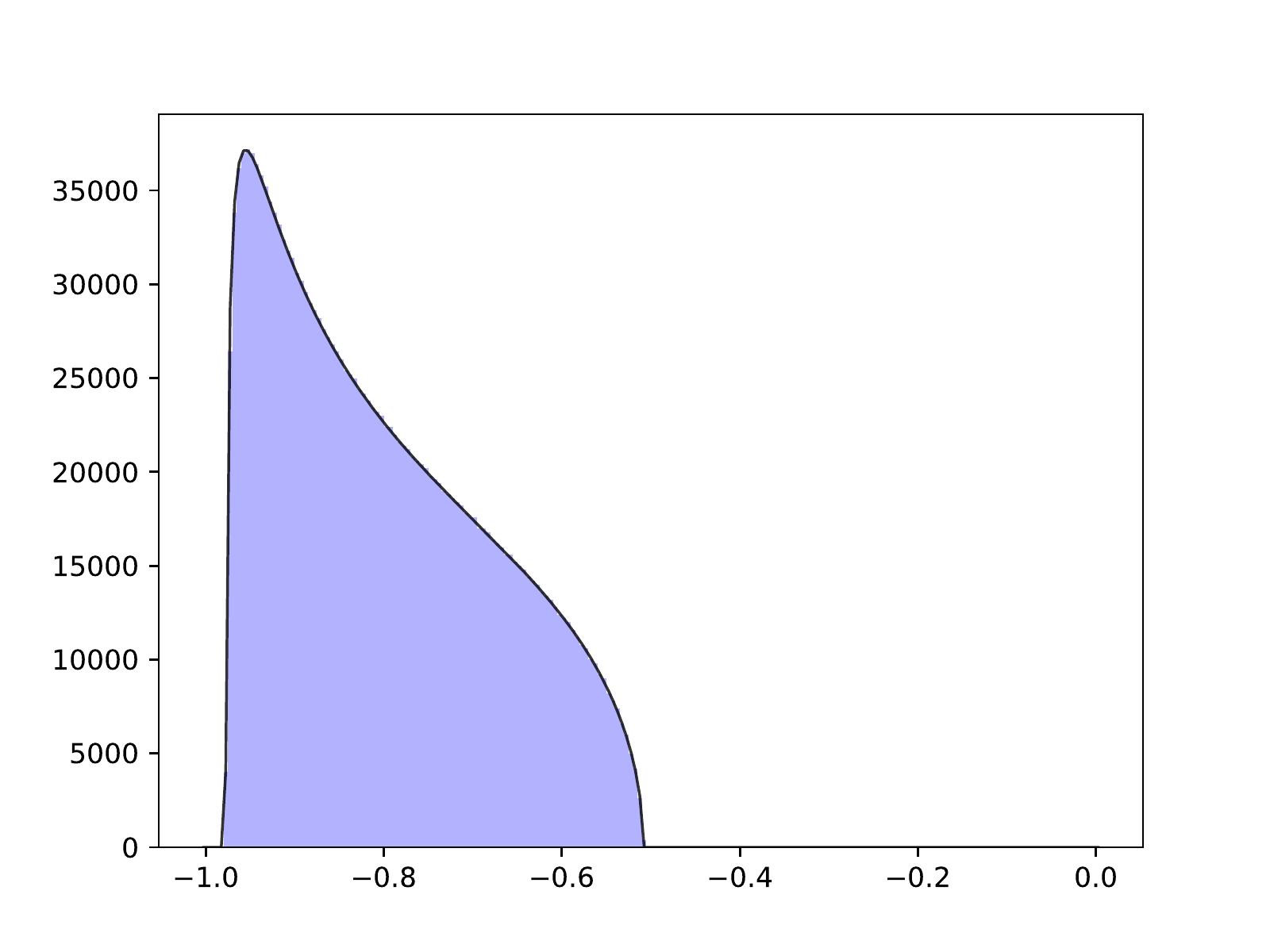}
\includegraphics[width=0.24\textwidth]{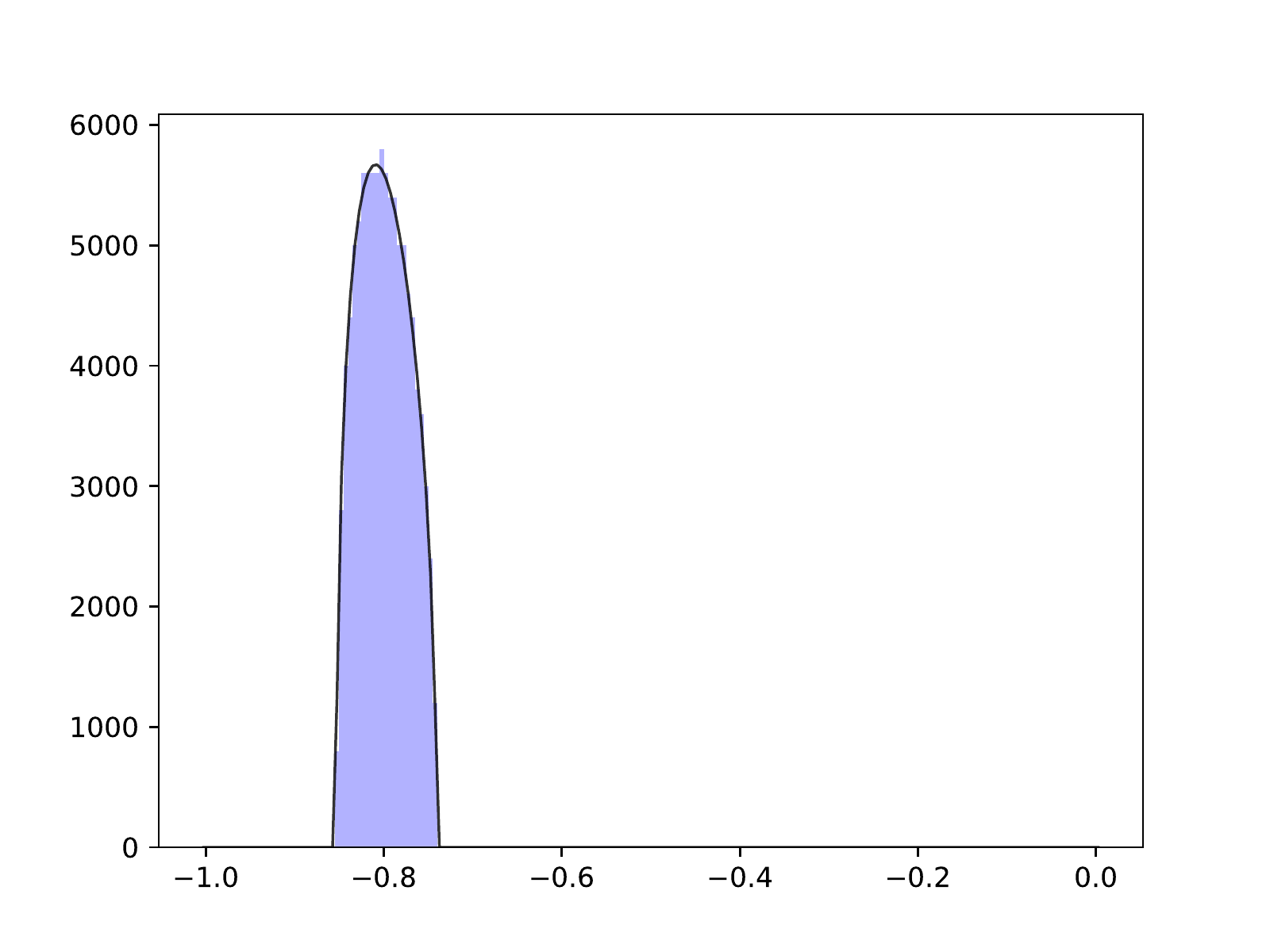}
\caption{
Histograms of zeroes of the repeated derivatives of the polynomial $x^n(1+x)^{4n}$ with $n=20000$ (blue) together with the theoretical densities (black). Multiple zeroes at $-1$ and $0$ are not shown. The orders of the derivatives are $100+1800k$ with  $k\in \{0,\ldots,11\}$.
}
\label{pic:two_zeroes_histogram}
\end{figure}

Let us now derive~\eqref{eq:u_x_t_real} by following the recipe described in  Section~\ref{subsec:recipe_real}. The initial condition is
$$
u(x,0) = m_1 \delta(x) + m_2\delta(x+1).
$$
The Cauchy-Stieltjes transform of $u(x,0)$ is
$$
G_0(x) = \frac {m_1}{x} + \frac{m_2}{x+1}.
$$
Solving~\eqref{eq:w_0_inverse_real} we obtain
$$
w_0(y) = \frac{y-m_1}{m_1 + m_2 - y}, \qquad  m_1 <  y < m_1+m_2.
$$
It follows from~\eqref{eq:w_t_w_0} that
$$
w_t(y) = \frac{(y + t - m_1)y}{(m_1 + m_2 - y - t)(y+t)}, \qquad m_1-t <  y  <  m_1+m_2 - t, \; y>0.
$$
Inserting this into~\eqref{eq:w_t_inverse_real} and solving quadratic equation, we arrive at
$$
G_t(x) = \frac{m_1 - t  + x (m_1+m_2-2t) + \sqrt{(t-m_1-x(m_1+m_2))^2 + 4 x t m_2}}{2x(1+x)}.
$$
The other solution of the quadratic equation can be ignored since the Cauchy-Stieltjes transform must have the property
$$
G_t (x) \sim  \frac{m_1 + m_2 - t}{x}, \quad \text{ as } x\to+\infty.
$$
Depending on the value of $t$, the function $G_t$ can have poles at $x=0$ and $x=-1$. Using the formula for $G_t(x)$, one easily checks a pole at $x=0$ (respectively, $x=-1$) exists provided $0<t<m_1$  (respectively, $0\leq t <m_2$), and the residues are given by
$$
\Res\limits_{x=0} G_t(x) = (m_1 - t)\ind_{\{0<t<m_1\}},
\qquad
\Res\limits_{x=0} G_t(x) = (m_2 - t)\ind_{\{0<t<m_2\}}.
$$
These two poles of $G_t(x)$, if they are present, correspond to the atoms of $u(x,t)$, the residues being their weights~\cite[p.~94]{hiai_petz_book}. To determine the absolute continuous part of $u(x,t)$ we use the Stieltjes inversion formula~\eqref{eq:stieltjes}. Considering the function
$$
D(x,t) = (t-m_1-x(m_1+m_2))^2 + 4 x t m_2
$$
as a quadratic function in $x$, we easily convince ourselves that $D(x) \leq 0$ if and only if $x_-(t)\leq x \leq x_+(t)$, where
$x_-(t)$ and $x_+(t)$, the zeroes of $D(x,t)$,  are given by~\eqref{eq:x_+-}. The function $G_t(x)$ is an analytic function on the complex plane with a cut at $[x_-(t), x_+(t)]$ and eventual poles at $x=0$ and $x=-1$.  The absolute continuous component of $u(x,t)$ vanishes on  $\R\backslash [x_-(t), x_+(t)]$ since $G_t(x)$ is real there.   On the interval $(x_-(t), x_+(t))$  we have $D(x,t) < 0$, and the Stieltjes inversion formula~\eqref{eq:stieltjes} yields
$$
u(x,t)
=
-\frac 1{\pi} \frac{\sqrt {-D(x,t)}}{2x(1+x)} 
=
\frac{(m_1+m_2) \sqrt{(x_+(t)-x)(x-x_-(t))}}{2\pi |x| (1+x)}, 
$$
which completes the derivation of~\eqref{eq:u_x_t_real}.

At least theoretically, the above  method could be generalized to more general initial conditions of the form $u(x,0) = \sum_{j=1}^k m_j \delta(x-x_j)$, but this would require a non-trivial analysis of Riemann surfaces of certain algebraic functions. Zero distribution of repeated derivatives of $(p(x))^n$ for a general polynomial $p$ has been studied by the steepest descent method in~\cite{shapiro}, which leads to topological difficulties when choosing the saddle point contour. It seems that no explicit formula for the simplest special case $p(x) = x^2 - 1$ (which shall be treated in the next Example~\ref{eq:x^2-1}) is stated in the preprint version of~\cite{shapiro} available to us.

\begin{example}\label{eq:x^2-1}
Let us compute the asymptotic zero distribution of the $[tn]$-th derivative of the polynomial $(x^2-1)^n$, where $0<t<2$.  This corresponds to the special  case $m_1=m_2=1$ of the above setting, after passing from the interval $[-1,0]$ to $[-1,1]$ by an affine transformation. The initial condition at time $t=0$ is
\begin{equation}\label{eq:delta_initial}
u(x,0) = \delta(x+1) + \delta(x-1).
\end{equation}
For general $t\in (0, 2)$, the affinely transformed solution~\eqref{eq:u_x_t_real} takes the form
\begin{equation}\label{eq:delta_sol}
u(x,t) =\frac{\sqrt{1 - (t-1)^2 - x^2}}{\pi\cdot(1-x^2)} \ind_{\{x^2< t(2-t)\}} + (1-t)\ind_{\{t\leq 1\}} \cdot (\delta(x-1) + \delta(x+1)).
\end{equation}
The case $t=1$ corresponds to the Legendre polynomials $P_n(x) = \frac {1}{2^n n!}\frac{\dd^n}{\dd x^n} (x^2-1)^n$  whose zeroes are distributed according to the arcsine density
$$
u(x,1) = \frac 1 {\pi\sqrt{1-x^2}} \ind_{\{|x|<1\}}
$$
by the estimates going back to Bruns, Markow and Stieltjes, see~\cite{szegoe}, or by a general theorem of Erd\H{os} and Turan on the distribution of zeroes of orthogonal polynomials; see~\cite{erdoes_turan}, \cite{ullman}, \cite[\S~1.2-1.3]{van_assche_book}.
A plot of the solution $u(x,t)$ is shown on Figure~\ref{pic:arcsine}. Modulo delta functions at $1$ and $-1$,  this solution has a time symmetry around the point $t=1$, namely we have
\begin{equation}\label{eq:time_symmetry_arcsine}
u(x,1+s) = u(x,1-s) - s \cdot (\delta(x-1) + \delta(x+1)),
\end{equation}
for $0<s<1$. At time $t\approx 0$, the evolution starts with an approximately Wigner distribution on a small interval around $1/2$ (together with atoms at $\pm 1$). At time $t=1$, the atoms disappear and the solution becomes the arcsine density. After that, it evolves back to an approximately Wigner distribution on a small interval around $1/2$, this time without atoms. At time $t=2$ the solution vanishes.
\end{example}

\begin{figure}[!tbp]
\includegraphics[width=0.49\textwidth]{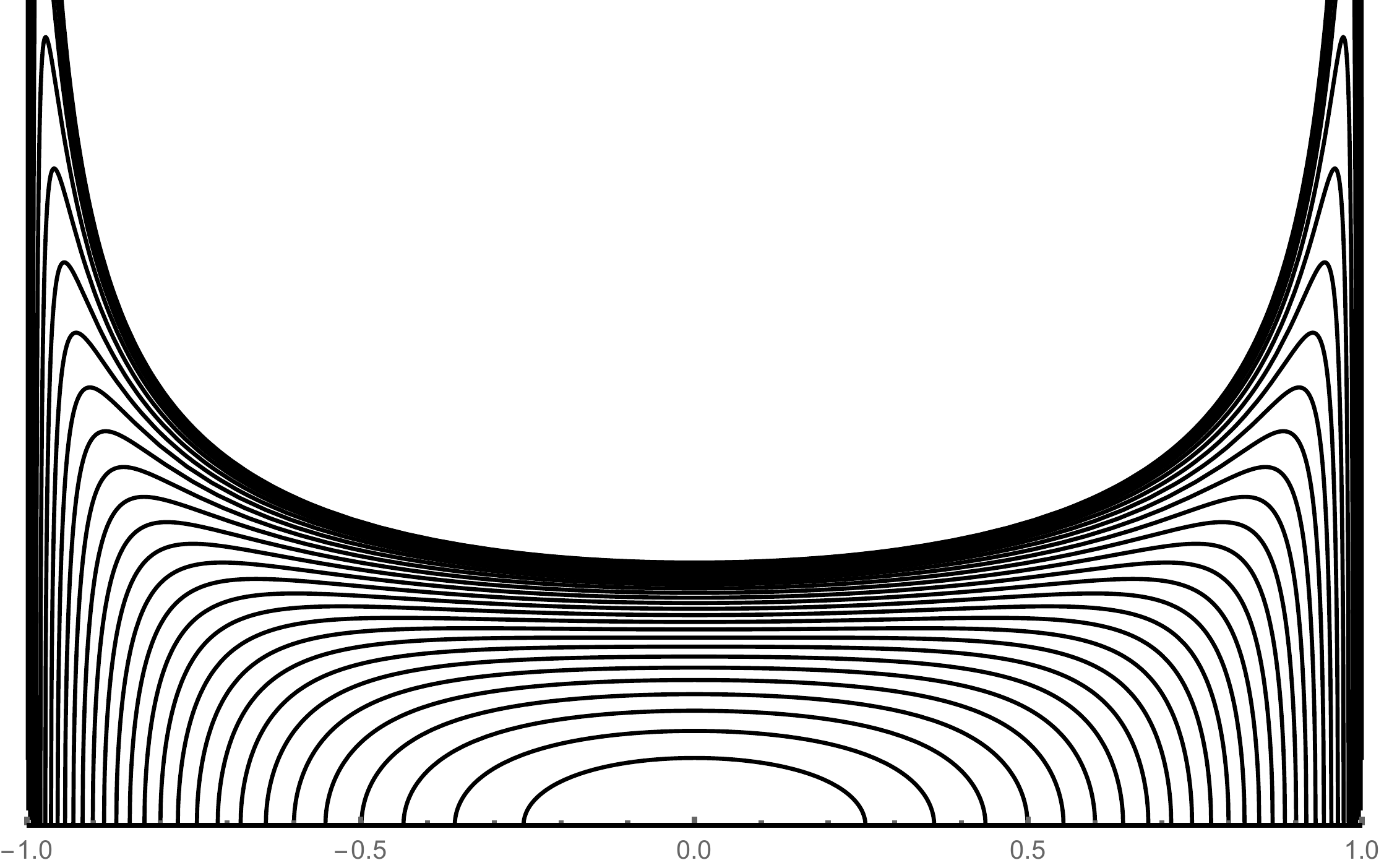}
\caption{
The evolution of the arcsine density~\eqref{eq:arcsine_initial}, \eqref{eq:arcsine_sol} on the interval $[-1,1]$. If we agree to ignore the atoms at $-1$ and $1$, the same figure can serve as an illustration of the evolution given by~\eqref{eq:delta_initial}, \eqref{eq:delta_sol} taking into account the time symmetry stated in~\eqref{eq:time_symmetry_arcsine}.
}
\label{pic:arcsine}
\end{figure}


\begin{figure}[!tbp]
\includegraphics[width=0.24\textwidth]{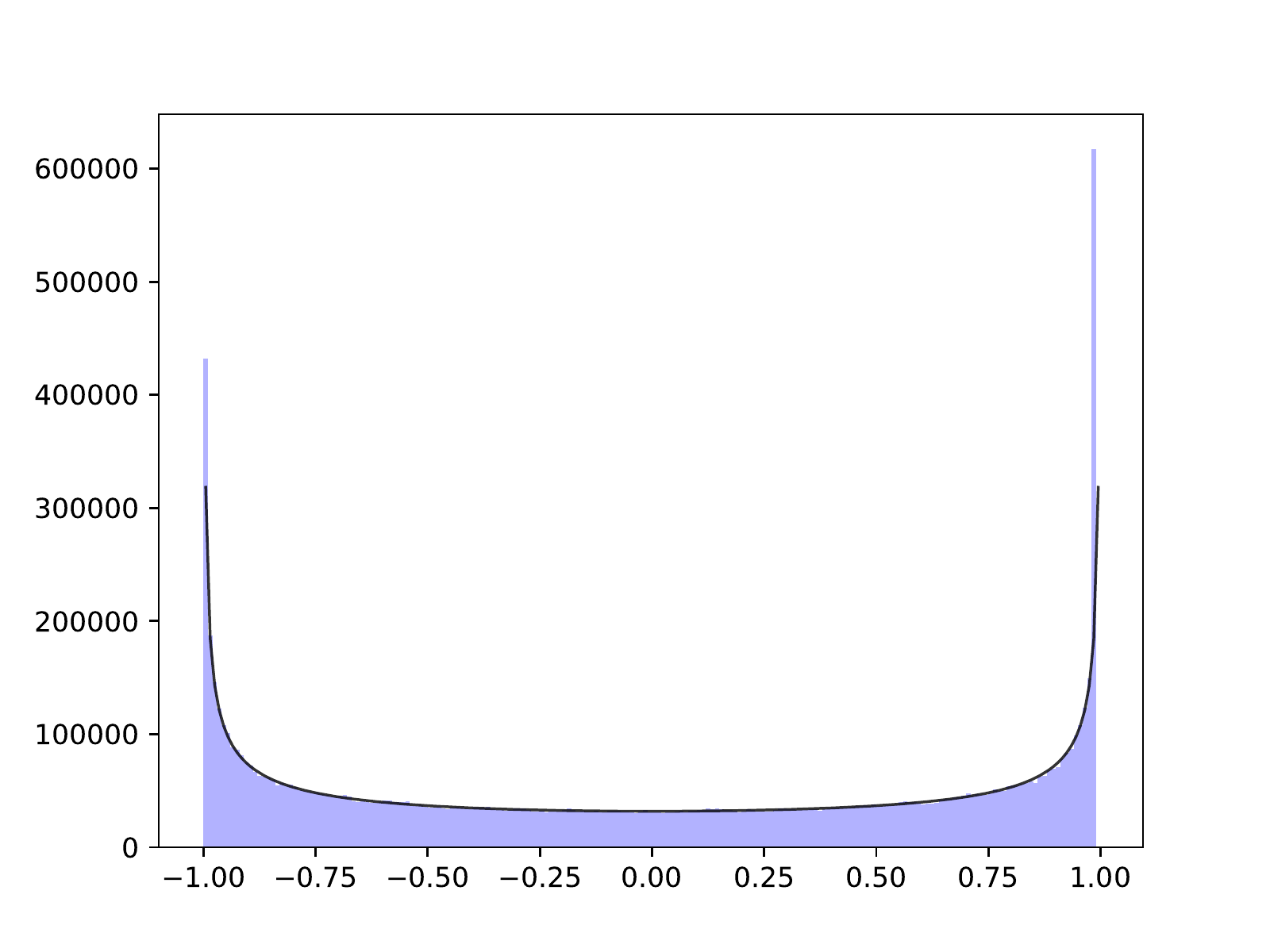}
\includegraphics[width=0.24\textwidth]{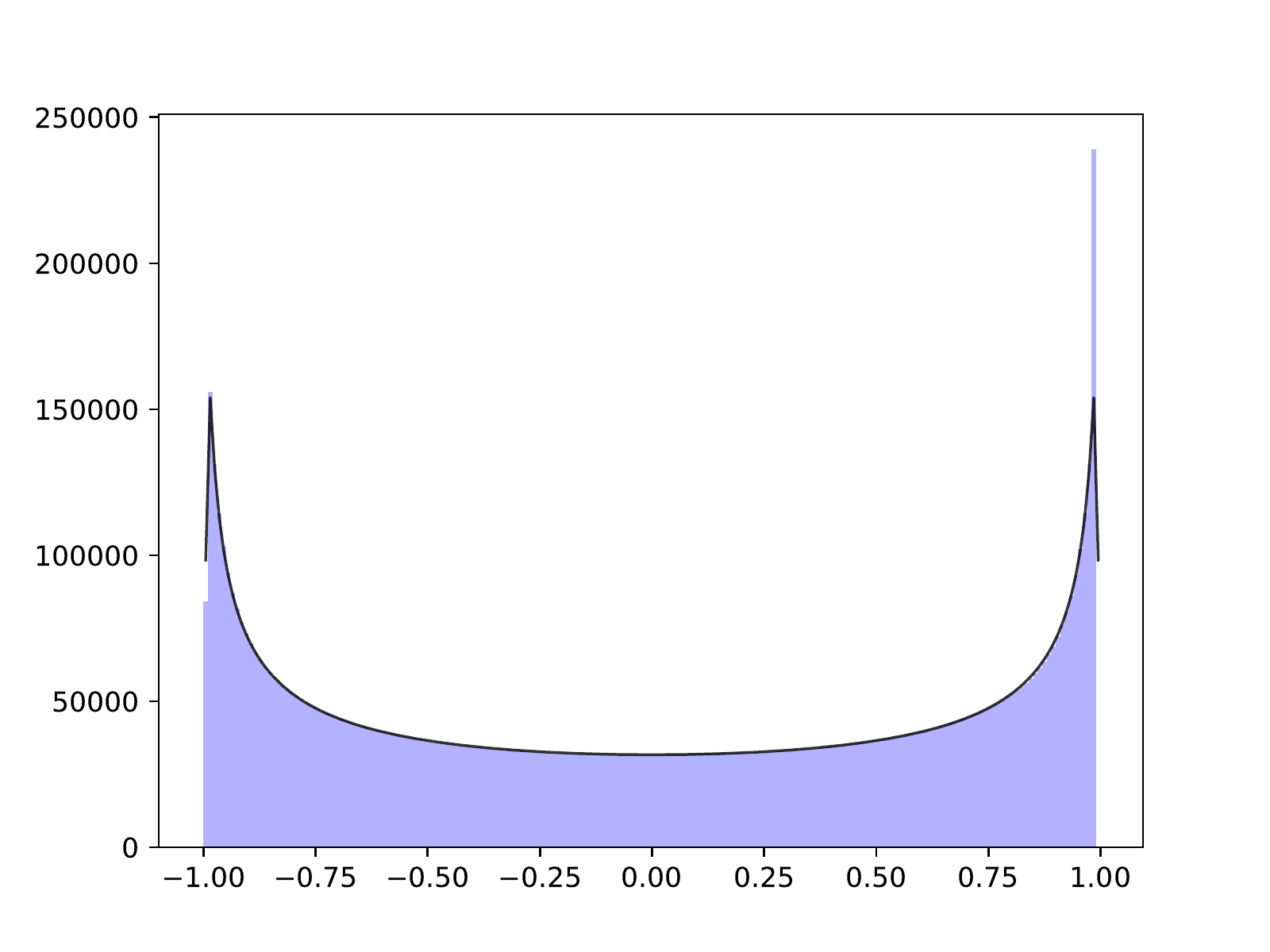}
\includegraphics[width=0.24\textwidth]{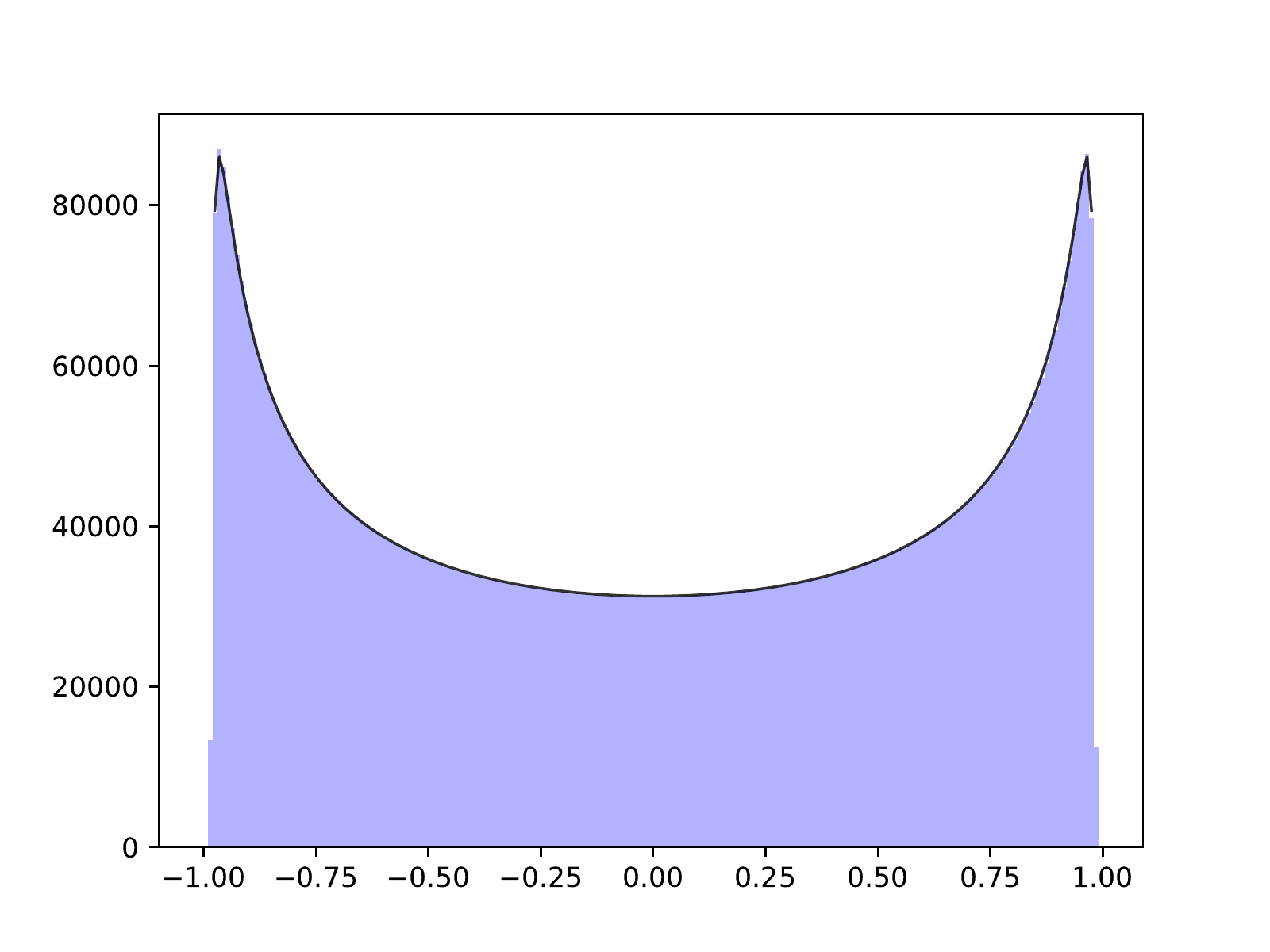}
\includegraphics[width=0.24\textwidth]{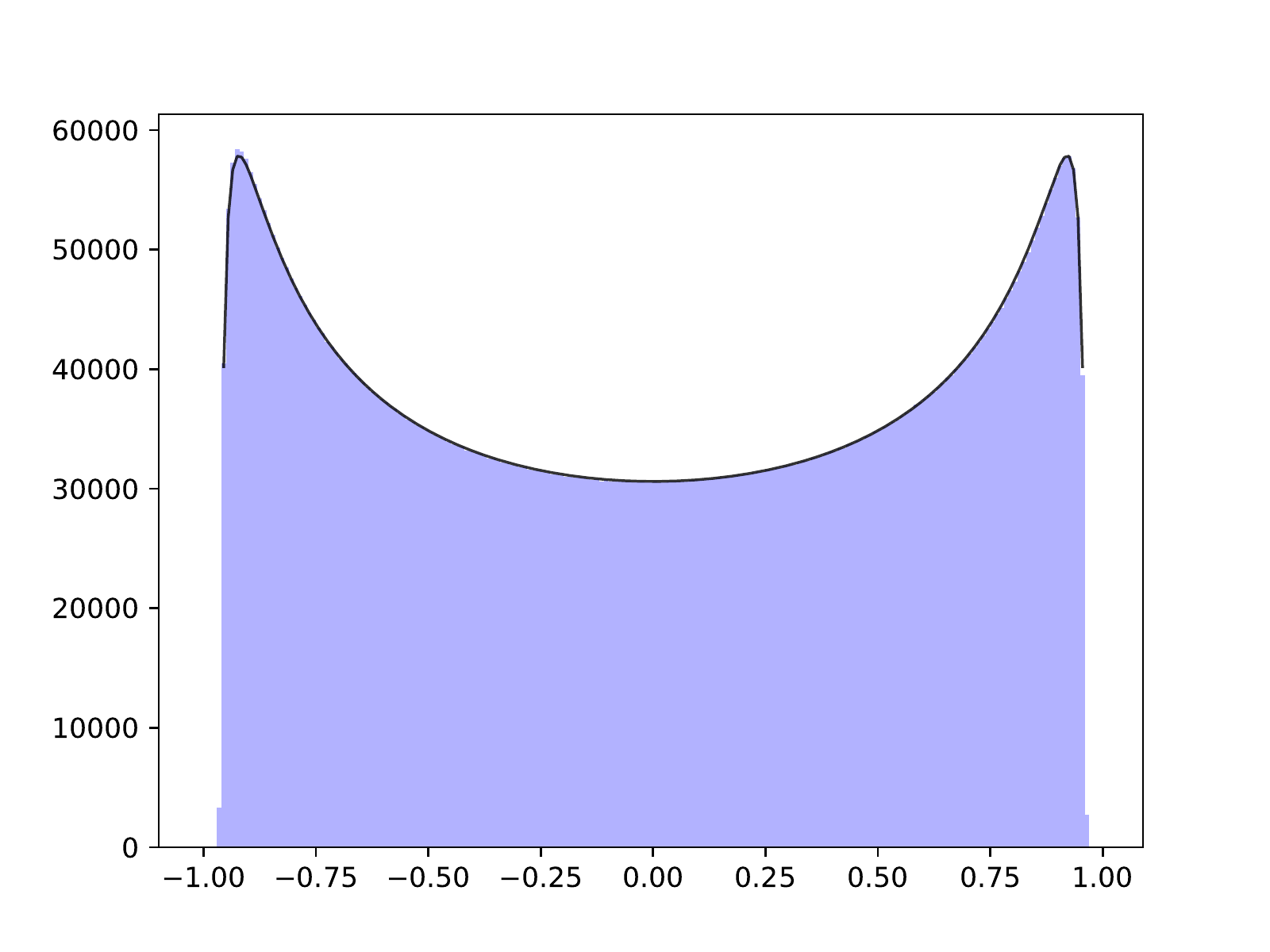}
\includegraphics[width=0.24\textwidth]{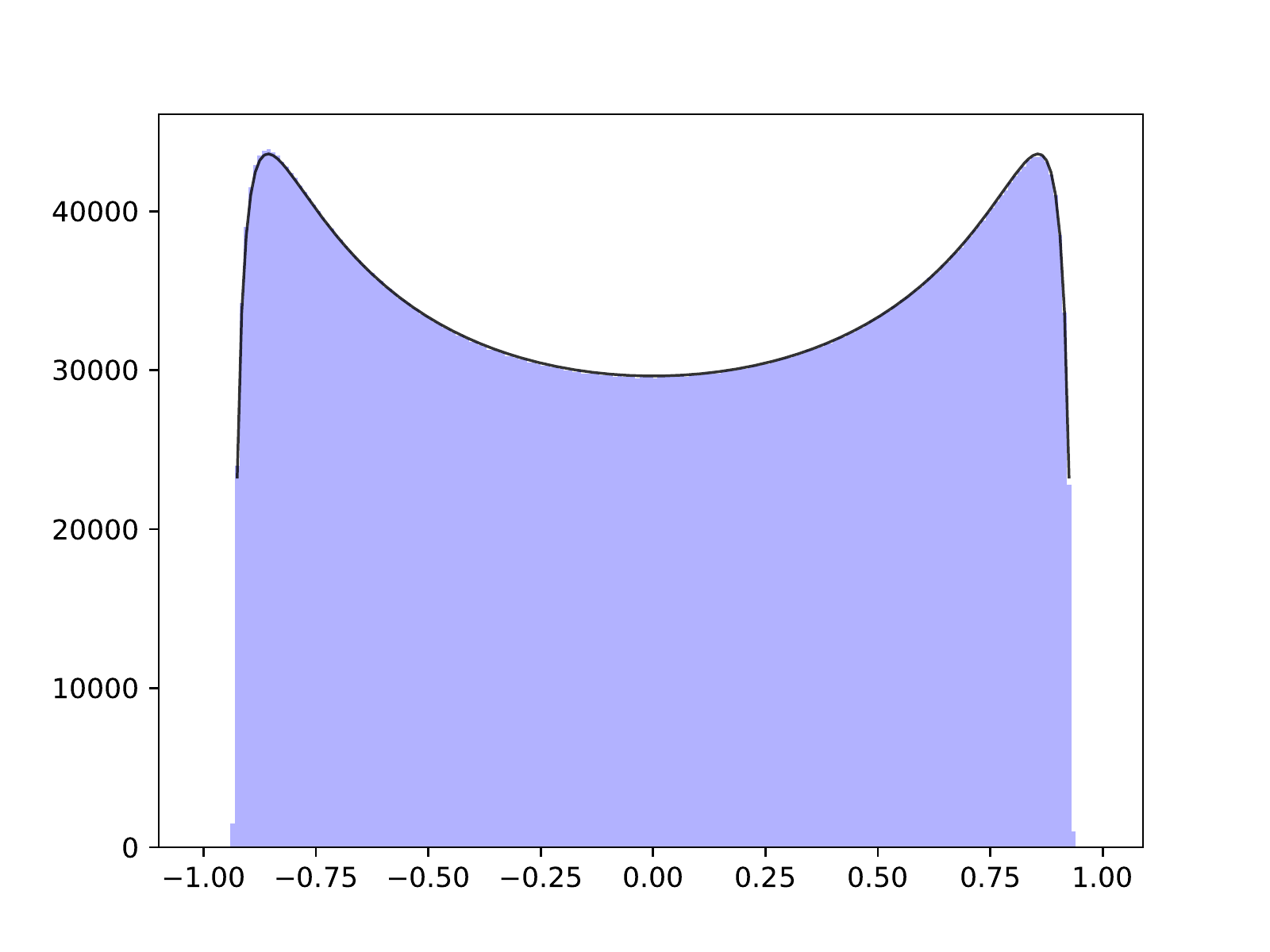}
\includegraphics[width=0.24\textwidth]{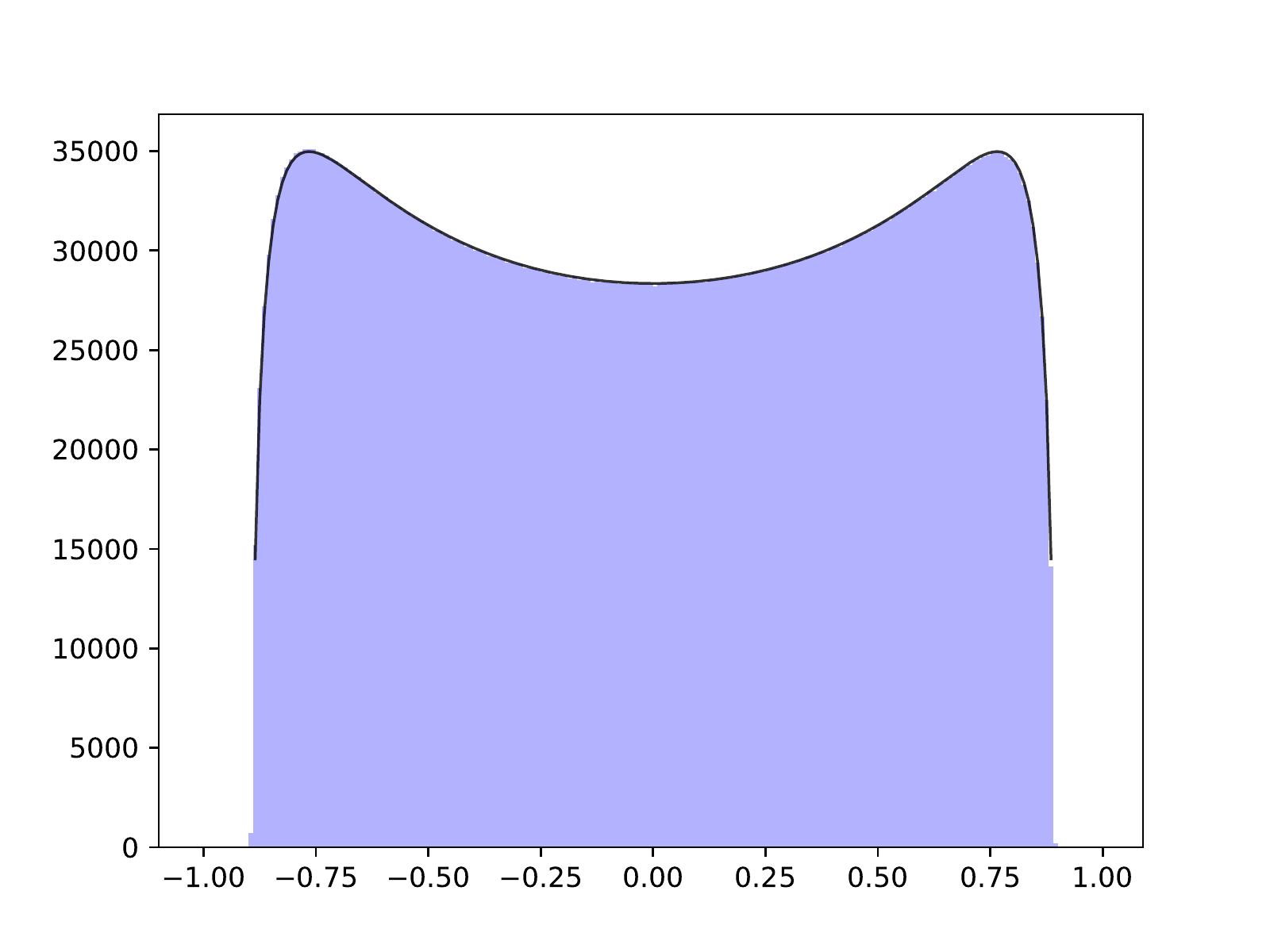}
\includegraphics[width=0.24\textwidth]{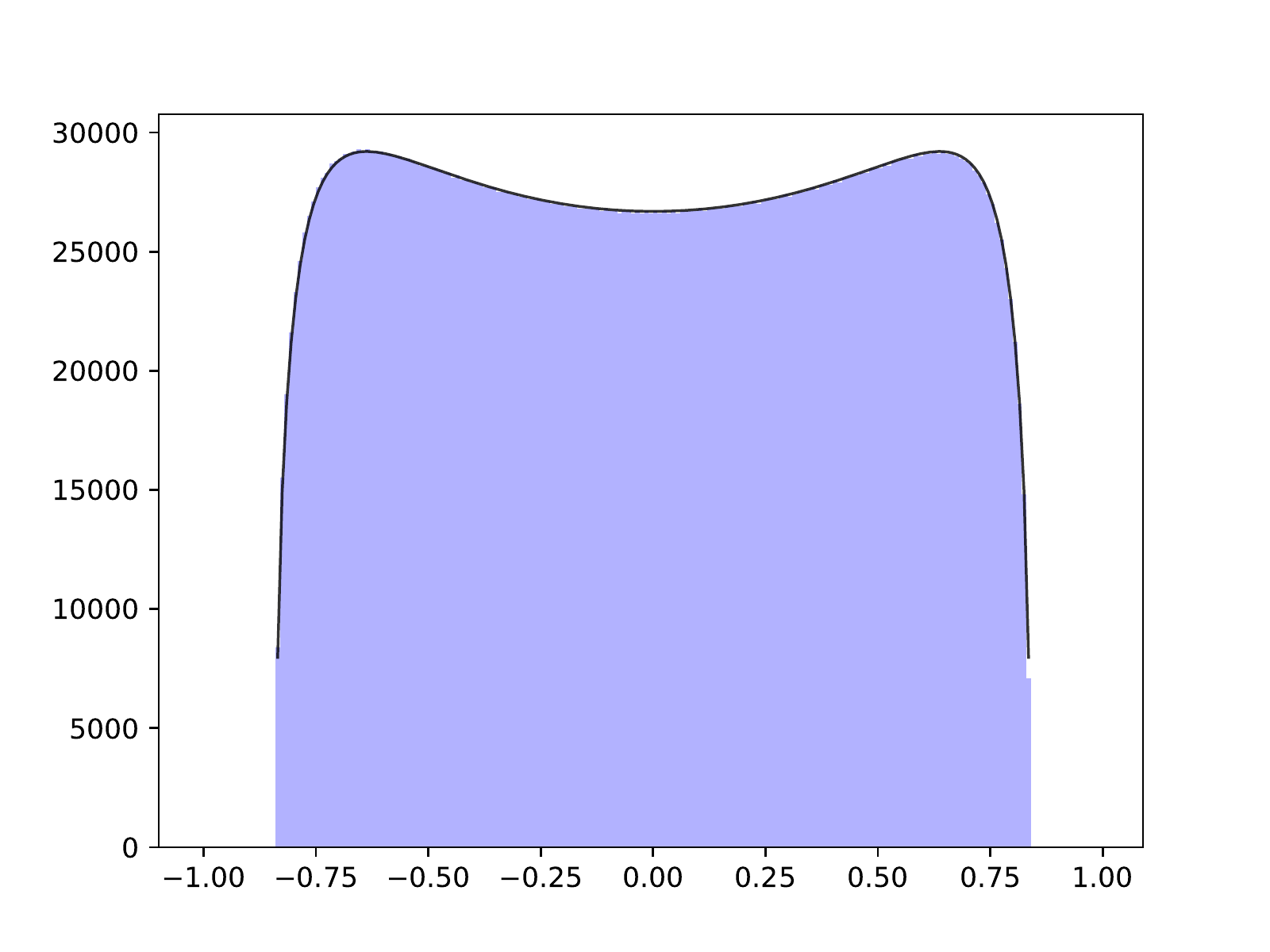}
\includegraphics[width=0.24\textwidth]{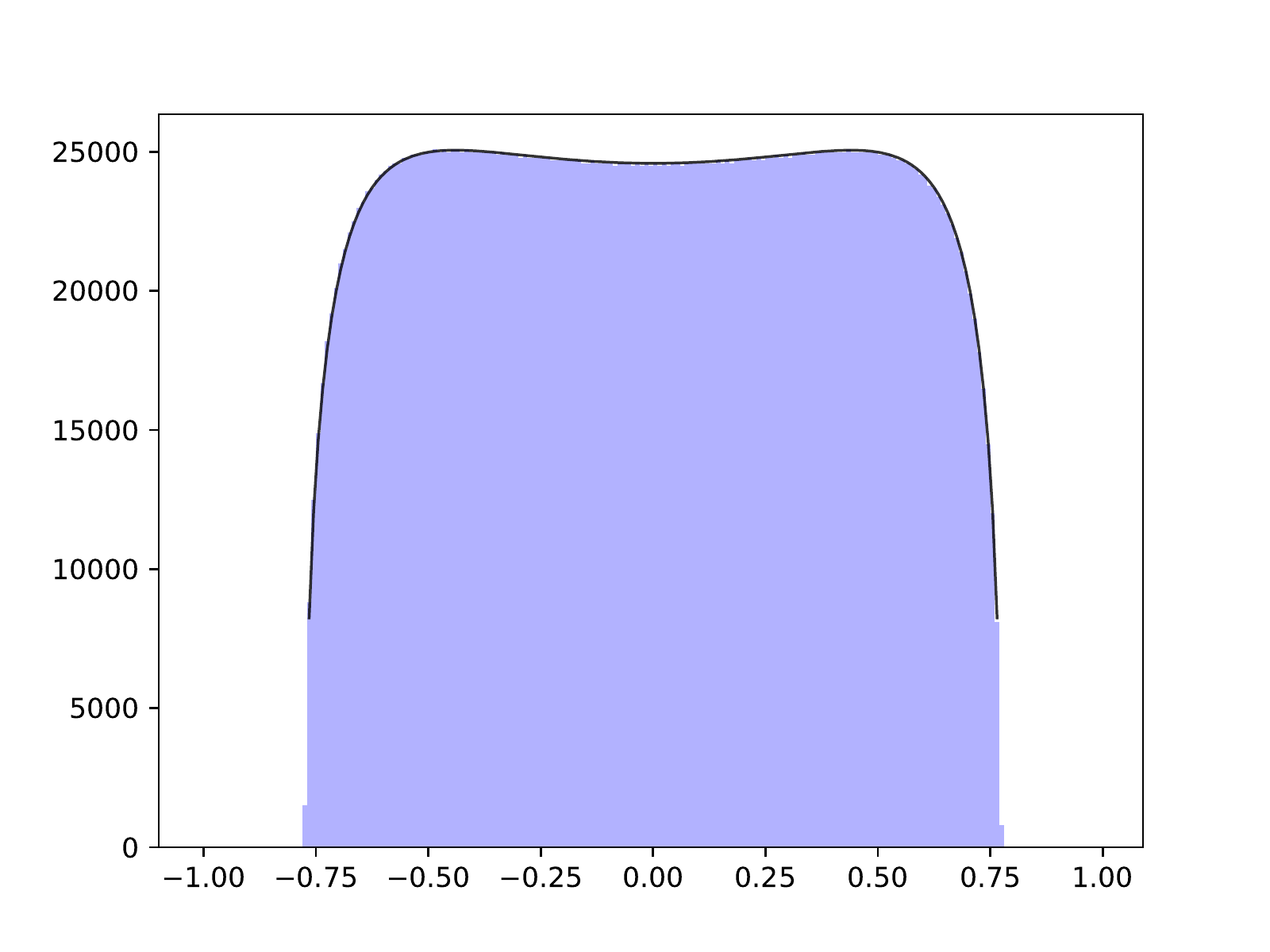}
\includegraphics[width=0.24\textwidth]{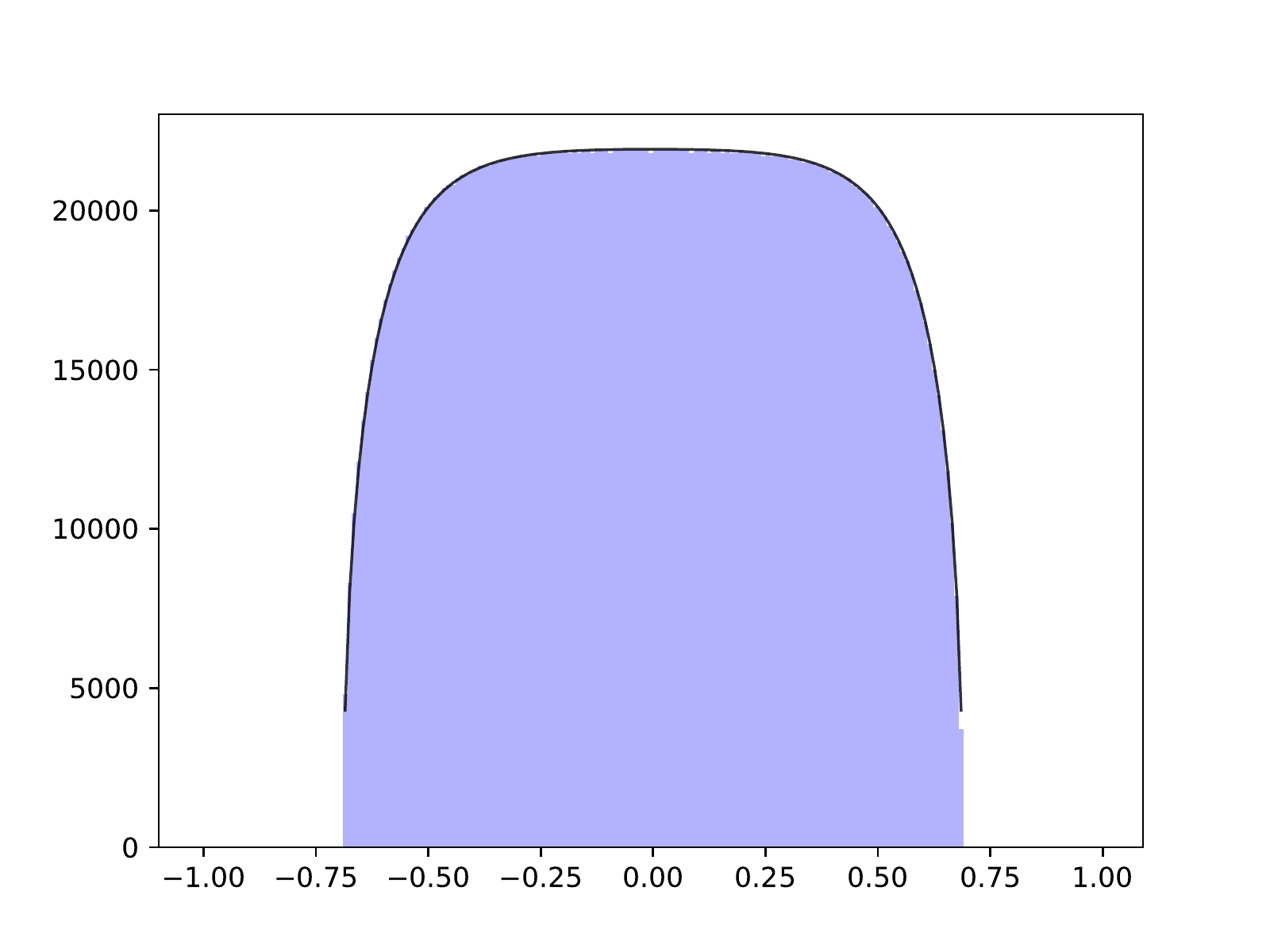}
\includegraphics[width=0.24\textwidth]{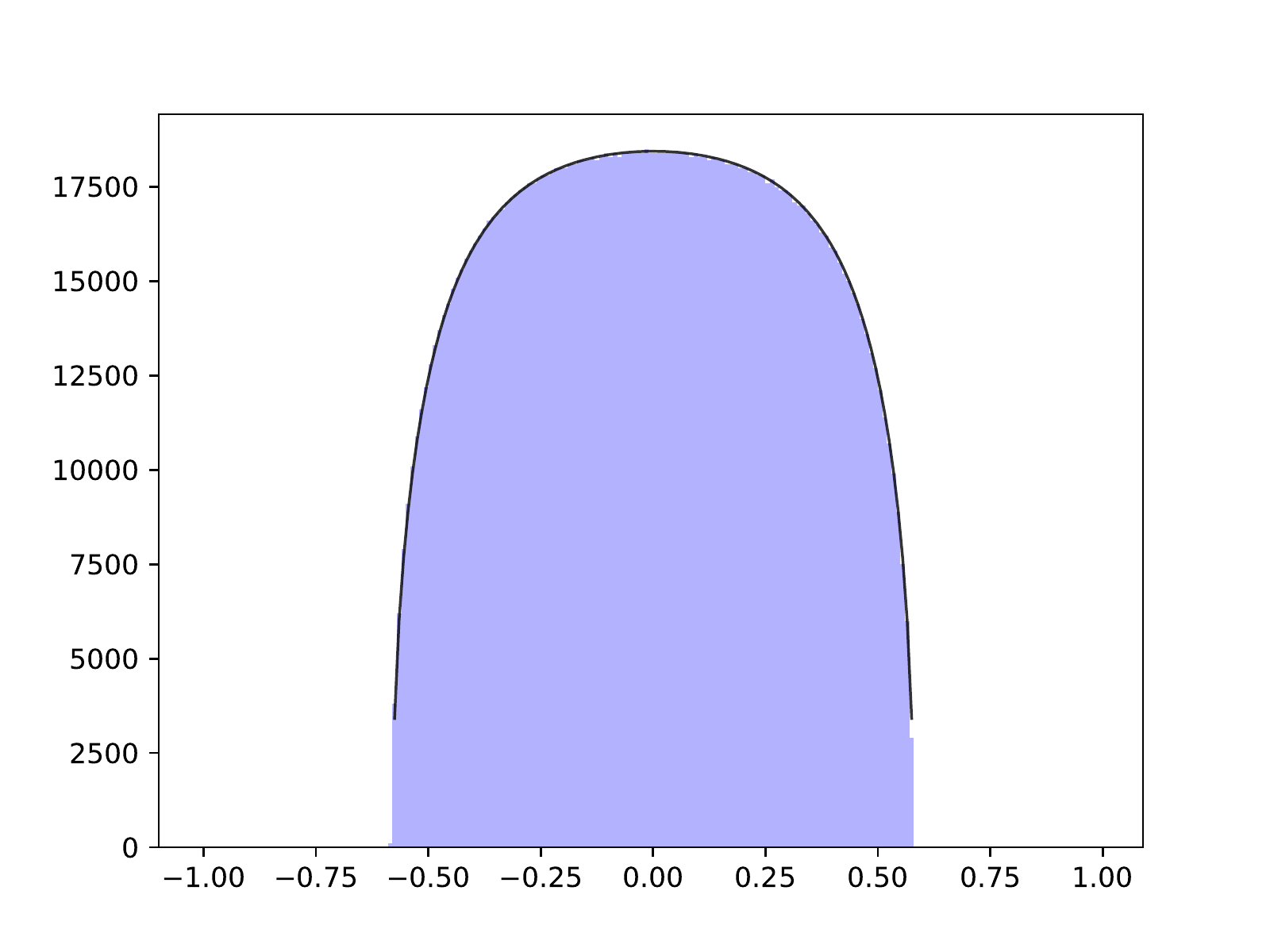}
\includegraphics[width=0.24\textwidth]{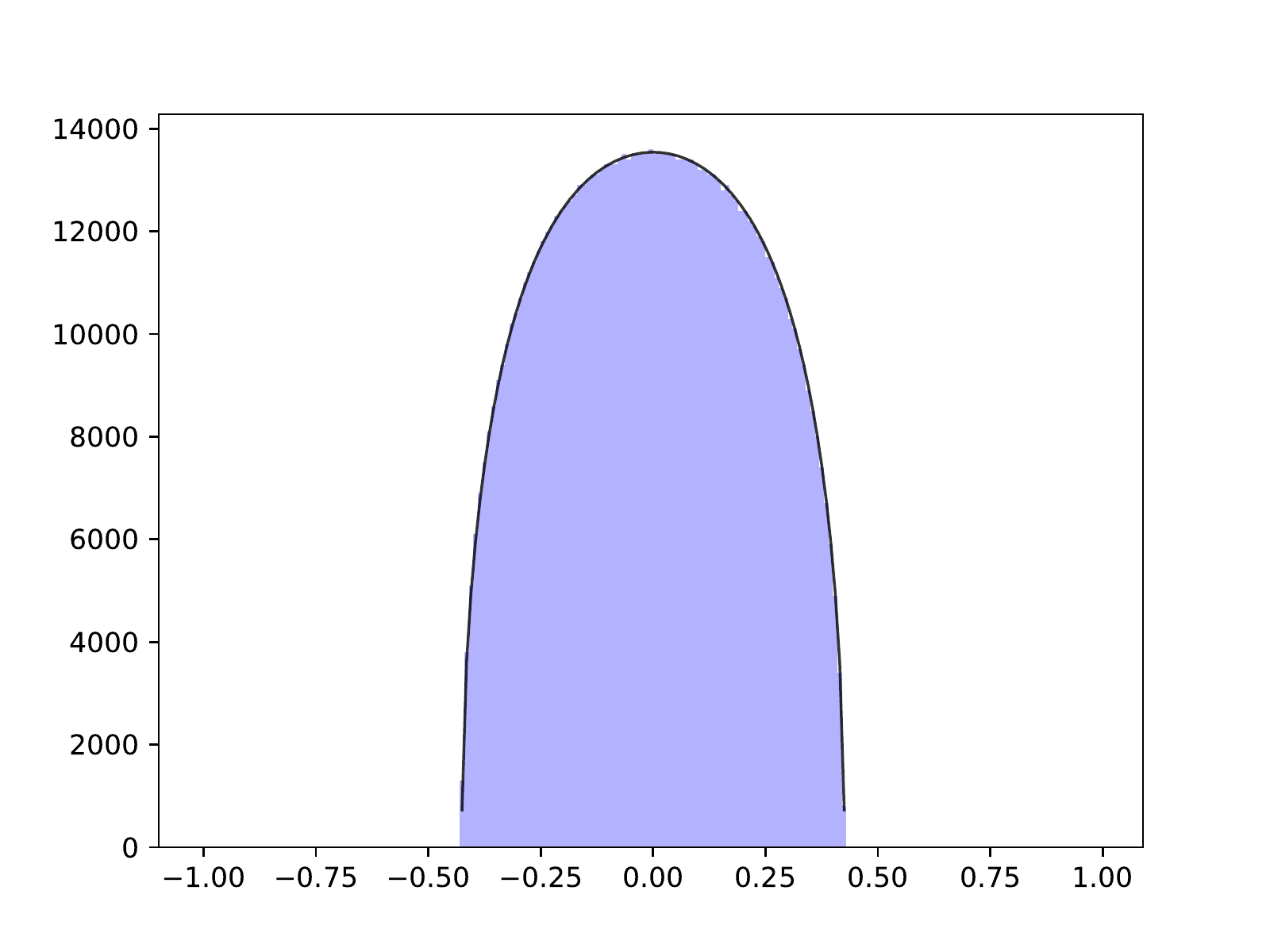}
\includegraphics[width=0.24\textwidth]{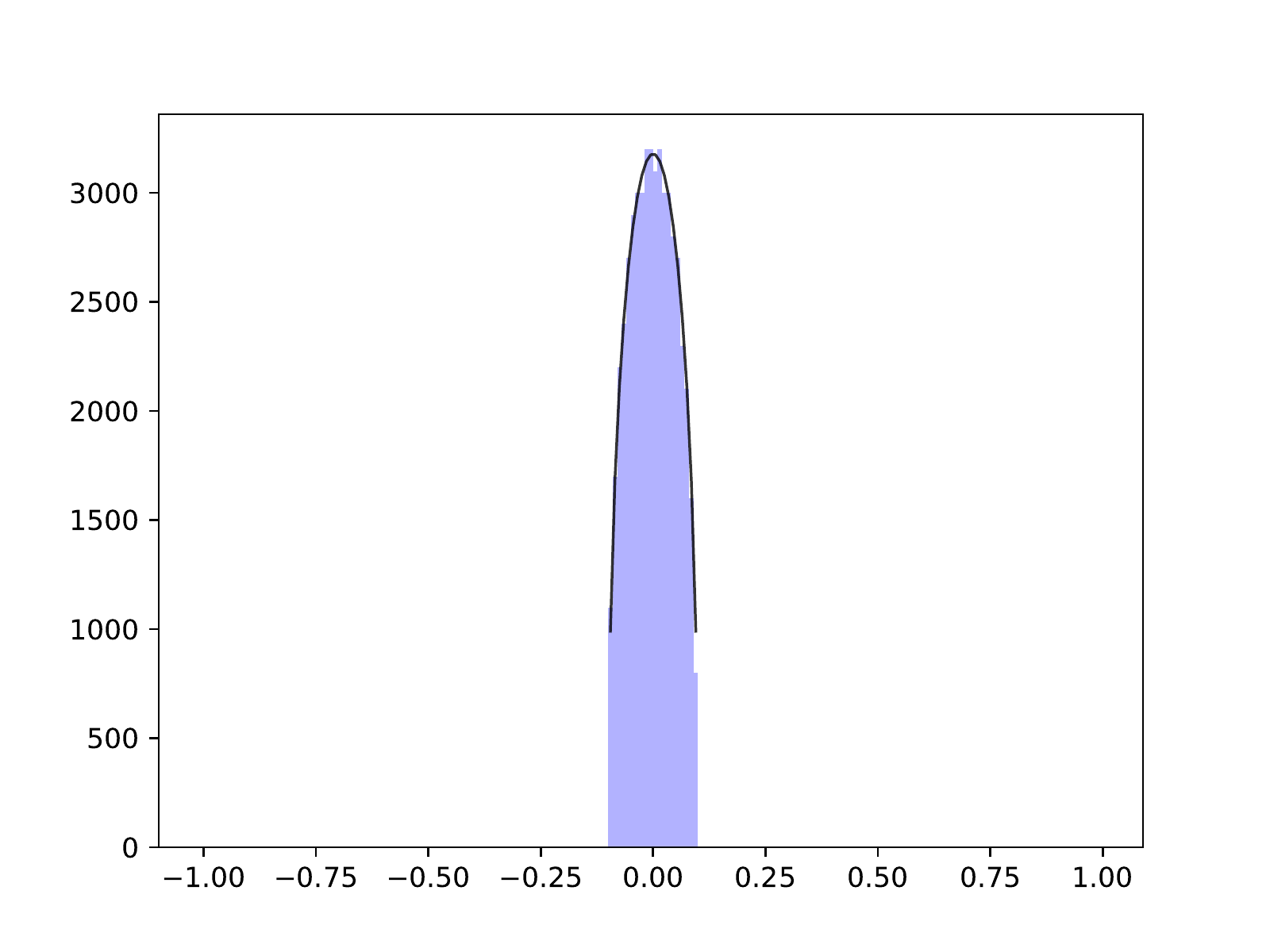}
\caption{Histograms for the evolution of $n=10^5$ i.i.d.\ roots having the arcsine density~\eqref{eq:arcsine_initial}.
The black curve is the solution~\eqref{eq:arcsine_sol}. The orders of the derivatives are $50+900k$  with  $k\in \{0,\ldots,11\}$.}
\label{pic:arcsine_histogram}
\end{figure}

\begin{example}
Let now the zeroes at time $s=0$ have the following arcsine density:
\begin{equation}\label{eq:arcsine_initial}
\rho(x,0) = \frac 1 {\pi\sqrt{1-x^2}} \ind_{\{|x|<1\}}.
\end{equation}
One example to keep in mind are the Legendre polynomials.
Substituting  $t = 1+s$ with $0<s<1$ in the previous example, we obtain the following asymptotic distribution of zeroes of the $[sn]$-th derivative:
\begin{equation}\label{eq:arcsine_sol}
\rho(x,s)
=
\frac {\sqrt{1-x^2-s^2}} {\pi\cdot (1-x^2)} \ind_{\{x^2<1-s^2\}}.
\end{equation}
It is interesting to compare this formula to the findings of Steinerberger~\cite{steinerberger_real}. Recall that he derived the PDE~\eqref{eq:PDE_real} describing the evolution of real roots under repeated differentiation and found three families of explicit solutions to this PDE. One of these solutions, called the stationary arcsine solution, is the arcsine density as in~\eqref{eq:arcsine_initial} without any dependence on $t$; see~\cite{steinerberger_real} and~\cite{coifman_steinerberger}.  Clearly, this solution is different from~\eqref{eq:arcsine_sol}. Numerical simulations confirm that the asymptotic distribution of roots of repeated derivatives is given by~\eqref{eq:arcsine_sol}; see Figure~\ref{pic:arcsine_histogram}.  As Steinerberger mentions, his stationary arcsine solution is a solution on $(-1,1)$, not on $\R$, which may be the reason why in this case the evolution of roots is described by a different formula. Let us finally mention that the other two special solutions of the PDE~\eqref{eq:PDE_real} discovered in~\cite{steinerberger_real}, namely the Wigner semicircle solution and the Marchenko-Pastur solution, can be recovered by the recipe of Section~\ref{subsec:recipe_real}. The Cauchy-Stieltjes transforms of these distributions are well known and given in Examples 3.1.1 and 3.3.5 of~\cite{hiai_petz_book}. We omit the straightforward but lengthy details.
\end{example}

\subsection{Connection to free probability}
Quite recently, Steinerberger~\cite{steinerberger_free} proposed a surprising interpretation of the density of zeroes of repeated derivatives in terms of free probability~\cite{voiculescu_nica_dykema_book,nica_speicher_book,hiai_petz_book}. Using his PDE~\eqref{eq:PDE_real} as a starting point, he has shown that the density of roots at time $t$ is up to a rescaling the $\frac {1}{1-t}$-th free convolution power of the initial distribution $\mu_0$, namely
\begin{equation}\label{eq:free_conv}
\mu_0^{\boxplus \frac{1}{1-t}} = u((1-t) x, t) \dd x, \qquad 0\leq t<1.
\end{equation}

For example, the densities~\eqref{eq:u_x_t_real} and~\eqref{eq:delta_sol} coincide (up to linear transformations) with free binomial distributions defined as  free convolution powers of the Bernoulli distribution; see~\cite[Example~3.6.7]{voiculescu_nica_dykema_book} for the general case,  \cite[Example~3.4.5]{voiculescu_nica_dykema_book}, \cite[Example~3.2.2]{hiai_petz_book} and~\cite[Examples~12.8,14.15,4.5]{nica_speicher_book} for some special cases,  and~\cite{szpojankowski_wesolowski}, \cite{saitoh_yoshida} for further references.

Let us re-derive~\eqref{eq:free_conv} using our approach (which is rigorous).  Referring  to~\cite{voiculescu_nica_dykema_book,nica_speicher_book,hiai_petz_book} for the necessary background on free probability theory, we only recall here the definition of free convolution powers.
If $\mu_0$ is a compactly supported probability measure on $\R$ and $G_0$ is its Cauchy-Stieltjes transform, then the $R$-transform of $\mu_0$ can be defined by the equation
$$
1+ R_0(G_0(z)) = zG_0(z);
$$
see~\cite[Theorem~3.2.1]{hiai_petz_book} or~\cite[Theorem~12.7]{nica_speicher_book} with $\mathcal R(z) = R(z)/z$. It is known that $R_0(z)$ is an analytic function in a sufficiently small complex neighborhood of $0$. For every $s\geq 1$, the $s$-th free convolution power of $\mu_0$ is a probability measure  $\mu_0^{\boxplus s}$ whose $R$-transform equals $sR_0(z)$; see~\cite[Corollary~14.13]{nica_speicher_book} for its existence and interpretation in terms of compressing by free projections.
\begin{theorem}\label{theo:real_free}
Consider a sequence of monic deterministic polynomials $(Q_n)_{n\in\N}$ whose zeroes belong to some bounded interval $J\subset \R$ and satisfy
\begin{equation*}
\frac 1n \sum_{z\in \R: Q_n(z) = 0} \delta_z \toweak \mu_0
\end{equation*}
for some probability measure $\mu_0$ on $J$. Then, for every $0\leq t <1$ we have
\begin{equation}\label{eq:real_free_conv}
\frac 1 {(1-t) n} \sum_{z\in \R: Q_n^{([tn])}(z) = 0} \delta_{\frac{z}{1-t}} \toweak \mu_0^{\boxplus \frac{1}{1-t}},
\end{equation}
where the right-hand side is a free convolution power of $\mu_0$. 
\end{theorem}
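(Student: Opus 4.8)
The plan is to compute the Cauchy--Stieltjes transform of the limit measure $\mu_t$ supplied by Theorem~\ref{theo:real_recipe} and recognize it, after the indicated rescaling, as that of $\mu_0^{\boxplus\frac1{1-t}}$; the Stieltjes inversion formula then finishes the job. First I would reduce to a convenient position: replacing $Q_n(x)$ by $Q_n(x-c)$ shifts the zeroes of every derivative by $c$, shifts $\mu_0$ to $\mu_0*\delta_c$, and --- since $(\mu_0*\delta_c)^{\boxplus s}=\mu_0^{\boxplus s}*\delta_{sc}$ --- transforms the right-hand side of~\eqref{eq:real_free_conv} compatibly with the shift-then-rescale operation performed on the left. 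Hence we may assume $J\subset(-C,0)$, so $\mu_0(\{0\})=0$ and Theorem~\ref{theo:real_recipe} applies with $m=1$, giving $\frac1n\sum_z\delta_z\toweak\mu_t$, a finite measure of mass $1-t$ supported in $J$ whose Cauchy--Stieltjes transform $G_t$ satisfies~\eqref{eq:w_0_inverse_real},~\eqref{eq:w_t_w_0},~\eqref{eq:w_t_inverse_real}. Pushing this convergence forward under $x\mapsto x/(1-t)$ and normalizing by $1-t$, the left-hand side of~\eqref{eq:real_free_conv} converges weakly to the probability measure $\nu_t$ with Cauchy--Stieltjes transform $G_{\nu_t}(z)=G_t((1-t)z)$. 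It then remains to identify $\nu_t$ with $\mu_0^{\boxplus\frac1{1-t}}$.

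Recall that for $s=\frac1{1-t}\ge1$ the transform $H$ of $\mu_0^{\boxplus s}$ is characterized near $\infty$ by $1+sR_0(H(z))=zH(z)$, where the $R$-transform $R_0$ of $\mu_0$ is the function, analytic near $0$, defined by $1+R_0(G_0(w))=wG_0(w)$. Substituting $w=(1-t)z$, the equality $\nu_t=\mu_0^{\boxplus s}$ becomes equivalent to
\[
(1-t)+R_0\big(G_t(w)\big)=w\,G_t(w)
\]
for all $w$ in a neighborhood of $\infty$.

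The key step is to verify this identity from the parametrizations built into the recipe. Putting $x:=w_0(y)$ in~\eqref{eq:w_0_inverse_real} gives $G_0(x)=y/x$, hence $R_0\big(y/w_0(y)\big)=x\cdot(y/x)-1=y-1$; for $y\in(0,1)$ close to $1$ we have $w_0(y)\to+\infty$, so the argument $y/w_0(y)$ lies in the disc where $R_0$ is analytic. Now fix $y\in(0,1-t)$ and set $w:=w_t(y)$. By~\eqref{eq:w_t_inverse_real}, $G_t(w)=y/w$; by~\eqref{eq:w_t_w_0}, $y/w=(y+t)/w_0(y+t)$; applying the identity above with $y$ replaced by $y+t$ gives $R_0(G_t(w))=(y+t)-1$. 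Therefore $wG_t(w)-R_0(G_t(w))=y-(y+t-1)=1-t$, exactly as required. As $y\uparrow 1-t=m-t$ one has $w_0(y+t)\to+\infty$ and $\tfrac{y}{y+t}\to 1-t$, so $w=w_t(y)\to+\infty$; thus the identity holds on a full neighborhood of $\infty$, on which $G_t(w)\to0$. Since $G_{\nu_t}$ is the Cauchy--Stieltjes transform of a compactly supported probability measure satisfying the same defining relation as the transform of $\mu_0^{\boxplus s}$ near $\infty$, the two coincide there, hence everywhere on their common domain of analyticity, and~\eqref{eq:stieltjes} gives $\nu_t=\mu_0^{\boxplus\frac1{1-t}}$. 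Combined with the weak convergence of the left-hand side of~\eqref{eq:real_free_conv} to $\nu_t$, this proves the theorem.

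The part I expect to require the most care is the domain bookkeeping: one must confirm that the functional equation, proved along the real parametrizing curve $y\mapsto w_t(y)$, actually holds on a nonempty open subset of $\C$ --- which follows from the analyticity of $G_t$ off the compact support of $\mu_t$, the analyticity of $R_0$ near $0$, and the fact that $w_t(y)\to\infty$ as $y\uparrow m-t$ --- so that uniqueness of analytic continuation and of the $R$-transform apply. The translation reduction and the behaviour of $\boxplus s$ under shifts and dilations are routine but should be spelled out.
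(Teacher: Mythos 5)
Your proposal is correct and follows essentially the same route as the paper's proof: reduce by translation, invoke Theorem~\ref{theo:real_recipe}, use the parametrization identities~\eqref{eq:w_0_inverse_real}, \eqref{eq:w_t_w_0}, \eqref{eq:w_t_inverse_real} to compute $R_0(G_0(w_0(y)))=y-1$ and its analogue at time $t$, and conclude via uniqueness of analytic continuation as $w_t(y)\to\infty$. The only cosmetic difference is that you verify the functional equation $(1-t)+R_0(G_t(w))=wG_t(w)$ directly on the Cauchy transform, whereas the paper phrases the same computation as the identity $R=\frac{1}{1-t}R_0$ between $R$-transforms; these are equivalent.
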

\begin{proof}
Fix some $0\leq t <1$. Applying a translation, if necessary, we may assume that $J\subset (-\infty,0)$. We know from Theorem~\ref{theo:real_recipe} that the left-hand side of~\eqref{eq:real_free_conv} converges weakly to the probability measure $\mu_t^*$ given by $\mu_t^*(A) = (1-t)^{-1} \mu_t((1-t)A)$, for all Borel sets $A\subset \R$.  The $R$-transforms $R_0$ and $R$ of the probability measures $\mu_0$ and $\mu_t^*$ satisfy the relations
\begin{equation}\label{eq:R_transforms}
1+ R_0(G_0(z)) = zG_0(z), \qquad 1 + R(G_t(z)) = \frac{z}{1-t} G_t(z);
\end{equation}
see~\cite[Theorem~3.2.1]{hiai_petz_book} or~\cite[Theorem~12.7]{nica_speicher_book} with $\mathcal R(z) = R(z)/z$.
For the second relation in~\eqref{eq:R_transforms}, we used that the Cauchy-Stieltjes transform of the probability measure $\mu_t^*$ is given by $G(y) = G_t((1-t) y)$, where $G_t$ is the Cauchy-Stieltjes transform of $\mu_t$.
Recall from~\eqref{eq:w_0_inverse_real}, \eqref{eq:w_t_inverse_real}, \eqref{eq:w_t_w_0} the identities
$$
y = w_0(y) G_0 (w_0(y)), \qquad y = w_t(y) G_t(w_t(y)), \qquad w_t(y) = w_0(y+t)\frac{y}{y+t}.
$$
Taking $z = w_0(y)$ and $z = w_t(y)$ in~\eqref{eq:R_transforms} we can write
\begin{equation}\label{eq:R_G}
R_0(G_0(w_0(y))) = y-1, \qquad    R(G_t(w_t(y))) = \frac{y}{1-t} - 1.
\end{equation}
It follows that for all $0\leq y < 1-t$,
$$
\frac{y}{1-t} - 1 = R(G_t(w_t(y))) = R \left(\frac{y}{w_t(y)}\right) = R \left(\frac{y+t}{w_0(y+t)}\right) = R\left(G_0(w_0(y+t))\right).
$$
On the other hand, it follows from the first equality in~\eqref{eq:R_G} that for all $0\leq y < 1-t$,
$$
R_0(G_0(w_0(y+t))) = y + t - 1 =  y - (1-t).
$$
By comparing these identities, it follows that $R(\lambda_j) = \frac 1{1-t} R_0(\lambda_j)$ for some sequence $\lambda_1>\lambda_2>\ldots >0$ converging to $0$.  Indeed, this follows from $\lim_{y\uparrow 1-t} G_0(w_0(y+t)) = 0$, which in turn follows from $\lim_{y\uparrow 1-t} w_0(y+t)= +\infty$, which has been proved in Section~\ref{subsec:recipe_real}. By the uniqueness principle for analytic functions, we have $R(\lambda) = \frac 1{1-t} R_0(\lambda)$ for all complex $\lambda$ with sufficiently small absolute value. By definition, this means that $\mu_t^*$ is the $\frac{1}{1-t}$-th free convolution of $\mu_0$.
\end{proof}

\section*{Acknowledgement}
We are grateful to the unknown referees for enlightening comments, in particular for suggesting an interpretation of the real case in terms of finite free probability.
ZK has been supported by the German Research Foundation under Germany's Excellence Strategy  EXC 2044 -- 390685587, Mathematics M\"unster: Dynamics - Geometry - Structure.

\bibliography{zeroes_rep_diff_bib}

\begin{thebibliography}{43}
\providecommand{\natexlab}[1]{#1}
\providecommand{\url}[1]{\texttt{#1}}
\expandafter\ifx\csname urlstyle\endcsname\relax
  \providecommand{\doi}[1]{doi: #1}\else
  \providecommand{\doi}{doi: \begingroup \urlstyle{rm}\Url}\fi

\bibitem[Bogvad et~al.(2019)Bogvad, H{\"a}gg, and Shapiro]{shapiro}
R.~Bogvad, Ch. H{\"a}gg, and B.~Shapiro.
\newblock Asymptotics of {R}odrigues' descendants of a polynomial.
\newblock Preprint at
  https://staff.math.su.se/shapiro/Articles/Rodriguespol.pdf, 2019.

\bibitem[Byun et~al.(2018)Byun, Lee, and Reddy]{byun}
S.-S. Byun, J.~Lee, and T.~R.~A. Reddy.
\newblock Zeros of random polynomials and its higher derivatives.
\newblock \emph{arXiv:1801.08974}, 2018.

\bibitem[Coifman and Steinerberger(2019)]{coifman_steinerberger}
R.~R. Coifman and S.~Steinerberger.
\newblock A remark on the arcsine distribution and the {H}ilbert transform.
\newblock \emph{J. Fourier Anal. Appl.}, 25\penalty0 (5):\penalty0 2690--2696,
  2019.
\newblock \doi{10.1007/s00041-019-09678-w}.
\newblock URL \url{https://doi.org/10.1007/s00041-019-09678-w}.

\bibitem[Erd\H{o}s and Tur\'{a}n(1940)]{erdoes_turan}
P.~Erd\H{o}s and P.~Tur\'{a}n.
\newblock On interpolation. {III}. {I}nterpolatory theory of polynomials.
\newblock \emph{Ann. of Math. (2)}, 41:\penalty0 510--553, 1940.
\newblock \doi{10.2307/1968733}.
\newblock URL \url{https://doi.org/10.2307/1968733}.

\bibitem[Fano and Gallavotti(1972)]{fano_gallavotti}
G.~Fano and G.~Gallavotti.
\newblock Dense sums.
\newblock \emph{Ann. Inst. H. Poincar\'{e} Sect. A (N.S.)}, 17:\penalty0
  195--219, 1972.

\bibitem[Feng and Yao(2019)]{feng_yao}
R.~Feng and D.~Yao.
\newblock Zeros of repeated derivatives of random polynomials.
\newblock \emph{Anal. PDE}, 12\penalty0 (6):\penalty0 1489--1512, 2019.
\newblock \doi{10.2140/apde.2019.12.1489}.
\newblock URL \url{https://doi.org/10.2140/apde.2019.12.1489}.

\bibitem[Gorin and Marcus(2020)]{gorin_marcus}
V.~Gorin and A.~W. Marcus.
\newblock Crystallization of random matrix orbits.
\newblock \emph{Int. Math. Res. Not. IMRN}, \penalty0 (3):\penalty0 883--913,
  2020.
\newblock \doi{10.1093/imrn/rny052}.
\newblock URL \url{https://doi.org/10.1093/imrn/rny052}.

\bibitem[Hanin(2015{\natexlab{a}})]{hanin_gauss}
B.~Hanin.
\newblock Correlations and pairing between zeros and critical points of
  {G}aussian random polynomials.
\newblock \emph{Int. Math. Res. Not. IMRN}, \penalty0 (2):\penalty0 381--421,
  2015{\natexlab{a}}.
\newblock \doi{10.1093/imrn/rnt192}.

\bibitem[Hanin(2015{\natexlab{b}})]{hanin_riemann}
B.~Hanin.
\newblock Pairing of zeros and critical points for random meromorphic functions
  on {R}iemann surfaces.
\newblock \emph{Math. Research Let.}, 22\penalty0 (1):\penalty0 111--140,
  2015{\natexlab{b}}.
\newblock \doi{10.4310/MRL.2015.v22.n1.a7}.

\bibitem[Hanin(2017)]{hanin_poly}
B.~Hanin.
\newblock Pairing of zeros and critical points for random polynomials.
\newblock \emph{Ann. Inst. H. Poincar{\'e} Probab. Statist.}, 53\penalty0
  (3):\penalty0 1498--1511, 2017.
\newblock \doi{10.1214/16-AIHP767}.

\bibitem[Hiai and Petz(2000)]{hiai_petz_book}
F.~Hiai and D.~Petz.
\newblock \emph{The semicircle law, free random variables and entropy},
  volume~77 of \emph{Mathematical Surveys and Monographs}.
\newblock American Mathematical Society, Providence, RI, 2000.
\newblock \doi{10.1090/surv/077}.
\newblock URL \url{https://doi.org/10.1090/surv/077}.

\bibitem[Hoskins and Steinerberger(2020)]{hoskins_steinerberger}
J.~G. Hoskins and S.~Steinerberger.
\newblock A semicircle law for derivatives of random polynomials.
\newblock Preprint at http://arxiv.org/abs/2005.09809, 2020.

\bibitem[Hough et~al.(2009)Hough, Krishnapur, Peres, and Vir{\'a}g]{peres_book}
J.~B. Hough, M.~Krishnapur, Y.~Peres, and B.~Vir{\'a}g.
\newblock \emph{Zeros of {G}aussian analytic functions and determinantal point
  processes}, volume~51 of \emph{University Lecture Series}.
\newblock AMS, Providence, RI, 2009.

\bibitem[{Hu} and {Chang}(2017)]{hu_chang}
I.~{Hu} and C.-C. {Chang}.
\newblock {The Common Limit of the Linear Statistics of Zeros of Random
  Polynomials and Their Derivatives}.
\newblock \emph{arXiv:1701.03946}, 2017.

\bibitem[Ibragimov and Zaporozhets(2013)]{iz_log}
I.~A. Ibragimov and D.~N. Zaporozhets.
\newblock On distribution of zeros of random polynomials in complex plane.
\newblock In A.N. Shiryaev, S.R.S. Varadhan, and E.L. Presman, editors,
  \emph{Prokhorov and Contemporary Probability Theory}, volume~33 of
  \emph{Springer Proceedings in Mathematics and Statistics}. Springer--Verlag,
  2013.
\newblock Preprint available at http://arxiv.org/abs/1102.3517.

\bibitem[Kabluchko(2015)]{kabluchko15}
Z.~Kabluchko.
\newblock Critical points of random polynomials with independent identically
  distributed roots.
\newblock \emph{Proc. Amer. Math. Soc.}, 143\penalty0 (2):\penalty0 695--702,
  2015.

\bibitem[Kabluchko and Seidel(2019)]{kabluchko_seidel}
Z.~Kabluchko and H.~Seidel.
\newblock Distances between zeroes and critical points for random polynomials
  with i.i.d. zeroes.
\newblock \emph{Electron. J. Probab.}, 24:\penalty0 Paper No. 34, 25, 2019.
\newblock \doi{10.1214/19-EJP295}.
\newblock URL \url{https://doi.org/10.1214/19-EJP295}.

\bibitem[Kabluchko and Zaporozhets(2014)]{kabluchko_zaporozhets12a}
Z.~Kabluchko and D.~Zaporozhets.
\newblock Asymptotic distribution of complex zeros of random analytic
  functions.
\newblock \emph{Ann. Probab.}, 42\penalty0 (4):\penalty0 1374--1395, 2014.

\bibitem[Kiselev and Tan(2020)]{kiselev_tan}
A.~Kiselev and C.~Tan.
\newblock The flow of polynomial roots under differentiation.
\newblock Preprint at http://arxiv.org/abs/2012.09080, 2020.

\bibitem[Major(1999)]{major}
P.~Major.
\newblock The limit behavior of elementary symmetric polynomials of i.i.d.
  random variables when their order tends to infinity.
\newblock \emph{Ann. Probab.}, 27\penalty0 (4):\penalty0 1980--2010, 1999.
\newblock \doi{10.1214/aop/1022677557}.
\newblock URL \url{https://doi.org/10.1214/aop/1022677557}.

\bibitem[Marcus(2018)]{marcus}
A.~W. Marcus.
\newblock Polynomial convolutions and (finite) free probability.
\newblock Preprint at https://web.math.princeton.edu/$\sim$amarcus/papers/,
  2018.

\bibitem[Marcus et~al.(2019)Marcus, Spielman, and
  Srivastava]{marcus_spielman_srivastava}
A.~W. Marcus, D.~A. Spielman, and N.~Srivastava.
\newblock Finite free convolutions of polynomials.
\newblock Preprint at http://arxiv.org/abs/1504.00350, 2019.

\bibitem[Nica and Speicher(2006)]{nica_speicher_book}
A.~Nica and R.~Speicher.
\newblock \emph{Lectures on the combinatorics of free probability}, volume 335
  of \emph{London Mathematical Society Lecture Note Series}.
\newblock Cambridge University Press, Cambridge, 2006.
\newblock \doi{10.1017/CBO9780511735127}.
\newblock URL \url{https://doi.org/10.1017/CBO9780511735127}.

\bibitem[O'Rourke(2016)]{orourke_matrices}
S.~O'Rourke.
\newblock Critical points of random polynomials and characteristic polynomials
  of random matrices.
\newblock \emph{Int. Math. Res. Not. IMRN}, \penalty0 (18):\penalty0
  5616--5651, 2016.
\newblock \doi{10.1093/imrn/rnv331}.

\bibitem[O'Rourke and Steinerberger(2019)]{orourke_steinerberger_nonlocal}
S.~O'Rourke and S.~Steinerberger.
\newblock A nonlocal transport equation modeling complex roots of polynomials
  under differentiation.
\newblock Preprint at http://arxiv.org/abs/1910.12161, 2019.

\bibitem[O'Rourke and Williams(2019)]{orourke_williams_pairing}
S.~O'Rourke and N.~Williams.
\newblock Pairing between zeros and critical points of random polynomials with
  independent roots.
\newblock \emph{Trans. Amer. Math. Soc.}, 371\penalty0 (4):\penalty0
  2343--2381, 2019.
\newblock \doi{10.1090/tran/7496}.
\newblock URL \url{https://doi.org/10.1090/tran/7496}.

\bibitem[O’Rourke and Williams(2020)]{orourke_williams_local_pairing}
S.~O’Rourke and N.~Williams.
\newblock On the local pairing behavior of critical points and roots of random
  polynomials.
\newblock \emph{Electron. J. Probab.}, 25:\penalty0 68 pp., 2020.
\newblock \doi{10.1214/20-EJP499}.
\newblock URL \url{https://doi.org/10.1214/20-EJP499}.

\bibitem[Pemantle and Rivin(2013)]{pemantle_rivin}
R.~Pemantle and I.~Rivin.
\newblock The distribution of zeros of the derivative of a random polynomial.
\newblock In I.~Kotsireas and E.~V. Zima, editors, \emph{Advances in
  Combinatorics. Waterloo Workshop in Computer Algebra 2011.} Springer, New
  York, 2013.
\newblock Preprint available at http://arxiv.org/abs/1109.5975.

\bibitem[Reddy(2015)]{tulasi_phd}
T.~R.~A. Reddy.
\newblock On critical points of random polynomials and spectrum of certain
  products of random matrices.
\newblock \emph{arXiv:1602.05298}, 2015.
\newblock PhD Thesis, Indian Institute of Science, Bangalore.

\bibitem[Reddy(2017)]{Tulasi16}
T.~R.~A. Reddy.
\newblock Limiting empirical distribution of zeros and critical points of
  random polynomials agree in general.
\newblock \emph{Electron. J. Probab.}, 22, 2017.
\newblock \doi{10.1214/17-EJP85}.

\bibitem[Saitoh and Yoshida(2001)]{saitoh_yoshida}
N.~Saitoh and H.~Yoshida.
\newblock The infinite divisibility and orthogonal polynomials with a constant
  recursion formula in free probability theory.
\newblock \emph{Probab. Math. Statist.}, 21\penalty0 (1, Acta Univ. Wratislav.
  No. 2298):\penalty0 159--170, 2001.

\bibitem[Steinerberger(2019)]{steinerberger_real}
S.~Steinerberger.
\newblock A nonlocal transport equation describing roots of polynomials under
  differentiation.
\newblock \emph{Proc. Amer. Math. Soc.}, 147\penalty0 (11):\penalty0
  4733–4744, Jul 2019.
\newblock \doi{10.1090/proc/14699}.
\newblock URL \url{http://dx.doi.org/10.1090/PROC/14699}.

\bibitem[Steinerberger(2020{\natexlab{a}})]{steinerberger}
S.~Steinerberger.
\newblock {A Stability Version of the {G}auss--{L}ucas Theorem and
  Applications}.
\newblock \emph{J. Austral. Math. Soc.}, 109\penalty0 (2):\penalty0 262–269,
  2020{\natexlab{a}}.
\newblock \doi{10.1017/S1446788719000284}.

\bibitem[Steinerberger(2020{\natexlab{b}})]{steinerberger_conservation}
S.~Steinerberger.
\newblock Conservation laws for the density of roots of polynomials under
  differentiation.
\newblock Preprint at http://arxiv.org/abs/2001.09967, 2020{\natexlab{b}}.

\bibitem[Steinerberger(2020{\natexlab{c}})]{steinerberger_free}
S.~Steinerberger.
\newblock Free convolution of measures via roots of polynomials.
\newblock Preprint at http://arxiv.org/abs/2009.03869, 2020{\natexlab{c}}.

\bibitem[Subramanian(2012)]{subramanian}
S.~D. Subramanian.
\newblock On the distribution of critical points of a polynomial.
\newblock \emph{Elect. Comm. Probab.}, 17:\penalty0 Article 37, 2012.

\bibitem[Subramanian(2014)]{subramanian_phd}
S.~D. Subramanian.
\newblock Zeros, {C}ritical {P}oints, and {C}oefficients of {R}andom
  {F}unctions.
\newblock \emph{Publicly Accessible Penn Dissertations, 1462}, 2014.
\newblock URL \url{https://repository.upenn.edu/edissertations/1462}.
\newblock PhD Thesis, University of Pennsylvania.

\bibitem[Szeg\"{o}(1936)]{szegoe}
G.~Szeg\"{o}.
\newblock Inequalities for the zeros of {L}egendre polynomials and related
  functions.
\newblock \emph{Trans. Amer. Math. Soc.}, 39\penalty0 (1):\penalty0 1--17,
  1936.
\newblock \doi{10.2307/1989641}.
\newblock URL \url{https://doi.org/10.2307/1989641}.

\bibitem[Szpojankowski and Weso{\l}owski(2014)]{szpojankowski_wesolowski}
K.~Szpojankowski and J.~Weso{\l}owski.
\newblock Dual {L}ukacs regressions for non-commutative variables.
\newblock \emph{J. Funct. Anal.}, 266\penalty0 (1):\penalty0 36--54, 2014.
\newblock \doi{10.1016/j.jfa.2013.09.015}.
\newblock URL \url{https://doi.org/10.1016/j.jfa.2013.09.015}.

\bibitem[Ullman(1972)]{ullman}
J.~L. Ullman.
\newblock On the regular behaviour of orthogonal polynomials.
\newblock \emph{Proc. London Math. Soc. (3)}, 24:\penalty0 119--148, 1972.
\newblock \doi{10.1112/plms/s3-24.1.119}.
\newblock URL \url{https://doi.org/10.1112/plms/s3-24.1.119}.

\bibitem[Van~Assche(1987)]{van_assche_book}
W.~Van~Assche.
\newblock \emph{Asymptotics for orthogonal polynomials}, volume 1265 of
  \emph{Lecture Notes in Mathematics}.
\newblock Springer-Verlag, Berlin, 1987.
\newblock \doi{10.1007/BFb0081880}.
\newblock URL \url{https://doi.org/10.1007/BFb0081880}.

\bibitem[Van~Assche et~al.(1987)Van~Assche, Fano, and
  Ortolani]{van_assche_fano_ortolani}
W.~Van~Assche, G.~Fano, and F.~Ortolani.
\newblock Asymptotic behaviour of the coefficients of some sequences of
  polynomials.
\newblock \emph{SIAM J. Math. Anal.}, 18\penalty0 (6):\penalty0 1597--1615,
  1987.
\newblock \doi{10.1137/0518115}.
\newblock URL \url{https://doi.org/10.1137/0518115}.

\bibitem[Voiculescu et~al.(1992)Voiculescu, Dykema, and
  Nica]{voiculescu_nica_dykema_book}
D.~V. Voiculescu, K.~J. Dykema, and A.~Nica.
\newblock \emph{Free random variables}, volume~1 of \emph{CRM Monograph
  Series}.
\newblock American Mathematical Society, Providence, RI, 1992.
\newblock \doi{10.1090/crmm/001}.
\newblock URL \url{https://doi.org/10.1090/crmm/001}.

\end{thebibliography}
\bibliographystyle{plainnat}

\end{document}